\documentclass[leqno]{amsart}

\let\oldtocsection=\tocsection
\let\oldtocsubsection=\tocsubsection
\let\oldtocsubsubsection=\tocsubsubsection

\renewcommand{\tocsection}[2]{\hspace{0em}\oldtocsection{#1}{#2}}
\renewcommand{\tocsubsection}[2]{\hspace{1em}\oldtocsubsection{#1}{#2}}
\renewcommand{\tocsubsubsection}[2]{\hspace{2em}\oldtocsubsubsection{#1}{#2}}

\usepackage{amsmath,amsthm,amssymb,latexsym,graphicx, mathtools, enumerate,bm,cite,todonotes,mathrsfs,esint,amsfonts,amscd,amsbsy, upref,epsf, euscript}

\usepackage[margin=1in]{geometry}

\usepackage{hyperref}
\hypersetup{
colorlinks=true,
allcolors=blue,
}
\usepackage{bbold}
\usepackage[numbers]{natbib}
\newcommand{\bal}{\begin{aligned}}
\newcommand{\eal}{\end{aligned}}
\newcommand{\bsplit}{\begin{split}}
\newcommand{\esplit}{\begin{split}}

\newcommand{\N}{{\mathbb N}}

\newcommand{\R}{{\mathbb R}}

\newcommand{\be}{\begin{equation}}
\newcommand{\ee}{\end{equation}}
\newcommand{\Pt}{\mathcal{P}_2}
\newcommand{\wtilde}{\widetilde}

\newcommand{\ba}{\begin{aligned}}
\newcommand{\ea}{\end{aligned}}

\def\ds{\displaystyle}
\newcommand{\dsum}{\displaystyle \sum}
\newcommand{\del}{\delta}

\newcommand{\pt}{\partial}
\newcommand{\ol}{\overline}

\newcommand{\eps}{\varepsilon}

\newcommand{\oo}{\infty}

\newcommand{\ep}{\epsilon}
\newcommand{\what}{\widehat}

\newcommand{\beq}{\begin{equation}}
\newcommand{\eeq}{\end{equation}}

\newcommand{\la}{\lambda}
\newcommand{\bet}{\beta}
\newcommand{\al}{\alpha}

\newcommand{\dint}{\displaystyle \int}

\newcommand{\mcl}{\mathcal}

\newtheorem{lemma}{Lemma}
\newtheorem{proposition}{Proposition}
\newtheorem{theorem}{Theorem}

\newtheorem{remark}{Remark}

\theoremstyle{definition}
\newtheorem{definition}{Definition}
\newcommand{\vs}{\vskip.05in}

\numberwithin{equation}{section}
\numberwithin{lemma}{section}
\numberwithin{proposition}{section}
\numberwithin{theorem}{section}
\numberwithin{corollary}{section}
\numberwithin{definition}{section}
\numberwithin{remark}{section}

\begin{document}
\title{Optimal control of the Dyson equation}
\author{Charles Bertucci$^1$}
\address{$^1$: CEREMADE, UMR7534, Universit\'e Paris Dauphine-PSL, Place du Mar\'echal de Lattre de Tassigny, 75016 Paris, France}
\author{Pierre-Louis Lions$^{1,2}$}
\address{$^2$: Coll\`ege de France, 3 rue d'Ulm, 75005 Paris, France}
\author{Panagiotis E. Souganidis$^3$}
\address{$^3$: The University of Chicago, Eckhart Hall, 5734 S University Ave,
Chicago, IL 60637, USA}
\date{\today}

\maketitle

%\tableofcontents
\begin{abstract}
We provide a complete study of the optimal control problem associated to the so-called Dyson equation, which describes the mean field evolution of the spectrum of large random matrices. We treat both the case of regular  and singular terminal data. We construct  admissible controls and prove the existence of optimal ones. We then proceed to give a thorough mathematical analysis of the associated Hamilton-Jacobi-Bellman equation by means of the theory of viscosity solutions in the space of measures. In particular, we characterize the value function of the  optimal control problem as the unique viscosity solution to this infinite dimensional HJB equation.
\end{abstract}

\section{introduction}
Motivated by the study of large deviations of the Dyson Brownian motion, we investigate the problem of the optimal control associated to  the Dyson equation and we establish the well-posedness of the resulting infinite dimensional Hamilton-Jacobi-Bellman (HJB) equation. In a subsequent paper \citep{BLS}, we provide a complete and direct proof for the large deviations for the spectrum of large random matrices, and we study the asymptotic behavior of the transition probabilities of the Dyson Brownian motion. 
%The connection  between large deviations on one hand and optimal control and HJB equation on the other hand  is not presented here. % and we rather refer to \citep{?} for such a link in another context.

\vs

We continue with a brief discussion of our results. The  assumptions and  the precise statements are  presented in the main body of the paper. 
\vs

Random matrices are ubiquitous in mathematics and statistics as well as applications. In many settings, the entries of the random matrices are not simply random variables but stochastic processes. In such cases, it is fundamental to understand the dynamics of their eigenvalues. The model case of such phenomenon is the so-called the Dyson Brownian motion, which we present next.

\vs

The Dyson $N\times N-$matrix evolution is the matrix valued stochastic differential equation (SDE)
\beq\label{takis0}
dA_N = d D_N \ \ \text{in} \ \ (0,T),
\eeq
where $D_N$ is a $N\times N-$Hermitian  random matrix. 
\vs
Two classical  examples of $D_N$ are 
\beq\label{takis0.0}
(D_N)_{ij}= \dfrac {W^i + W^j}{\sqrt {2N}}, 
\eeq
where $W^i, W^j$ are independent Brownian motions 
and
\beq\label{takis0.01}
(D_N)_{ij}=\dfrac{W^{ij}}{\sqrt N},
\eeq
where the $W^{ij}$'s are independent Brownian motions when $j\geq i$ and $W^{ij}=W^{ji}$.
\vs

The $N-$dimensional Dyson Brownian motion, which is a stochastic particle system with repulsive  logarithmic Coulomb interaction,  expresses \eqref{takis0} in terms of the eigenvalues $\la_1\leq \ldots \leq \la_N$ of $A_N$, which evolve according to the SDE
\beq\label{takis00}
d\la_{N,t}^i=\frac{1}{N}\sum^{N}_{j=1, j\neq i}\frac{1}{\la_{N,t}^i-\la_{N,t}^j} dt + \sqrt{\dfrac{\bet}{N}} dB^i_t \ \ \text{in } \ \ (0,T), 
\eeq
where $B=(B^i,\ldots, B^N)$ is a $N-$dimensional Brownian motion which can be written in terms of $D_N$ and $\beta\in (0,2]$ is the inverse temperature. 
\vs
The value of $\beta$ %inverse temperature $\beta\in (0,2]$ is a coefficient 
is determined by the type of random matrix $D_N$. For example, physicists are interested in the case $\beta=1$ where $D_N$ is given by \eqref{takis0.0}. On the other hand, probabilists are mostly interested in the case that  $D_N$ is given by  \eqref{takis0.01} for which $\beta=2$.

\vs

The mean field equation of the Dyson evolution, which is obtained in the limit  $N\to \oo$  of the empirical measure 
\beq\label{takis00.01}
m_{N,t}=\dfrac{1}{N} \dsum_{i=1}^N \delta_{\lambda_{N,t}^i}
\eeq
of  the solution $(\la^1_{N,t},\ldots,\la^N_{N,t})$ to  \eqref{takis00} is the Dyson equation
\beq\label{takis0.00}
\begin{cases}
\pt_t m +\pt_x \big(m H[m]\big)=0 \ \ \text{in} \ \ \R\times (0,T),\\[1.2mm]
m(\cdot, 0)= \mu,
\end{cases}
\eeq
where $\mu$ is the limit of the empirical measure of the $\lambda_{N,0}^i$ and $H[m]$ is the Hilbert transform of $m$ given by
\beq\label{takis0.000}
H[m](x)= \text{PV} \left (\frac{1}{\la} \star m\right)(x)=\text{PV}\dint \dfrac{1}{x - y} dm(y)=\underset{\delta \to 0} \lim \;  \dint_{|x-y |\geq \delta}\dfrac{1}{x - y} dm(y);
\eeq
\vs
the Hilbert transform is usually stated with a normalizing constant $1/\pi$ in front of the integral. In this paper, to keep the notation simple, we assume that the constant is $1$. 
% since it is the correct scaling for the Dyson Brownian motion.

\vs
The mean field equation for the particles has been derived in law by Chan \citep{chan} and Rogers and Shi \citep{RC}, and, more recently, 
by Bertucci, Debbah, Lasry and  Lions \citep{BDLL} at the level of the repartition function.
\vs

The problem of large deviations (LDP) of random matrices has been studied extensively; see, for example, Ben Arous and Guionnet \citep{BG}, Ben Arous and Zeitouni \citep{BZ}, Dembo,  Guionnet and Zeitouni \citep{DGZ} and Guionnet and Zeitouni \citep{GZ1, GZ2}. 
\vs

In view of our interest in the LDP theory,  we consider  the controlled version of \eqref{takis0.00},  that is, given   control $\alpha: [0,T]\times\R \mapsto \R$, we study the measure evolving  according to
\beq\label{takis3}
\begin{cases}
\ds {\partial_s}m + {\partial_x}\Big( m\big(H[m] +\alpha\big)\Big)=0 \ \ \text{in} \ \ \R\times (t,T),\\
\ds m(\cdot, t)= \mu.
\end{cases}
\eeq
One of the contributions of this work is a number of  new results about the existence of solutions to \eqref{takis3} for general $\alpha$.  We note that the well-posedness of \eqref{takis3} for general controls $\alpha$ is mostly an open question as only cases with smooth vector fields $\alpha$ have been investigated. 
%Among other things, here we establish here new results of existence for \eqref{takis3} for general $\alpha$.%where $Hm$ is that Hilbert transform of $m$.
\vs

The payoff associated with the dynamics  \eqref{takis3} with running cost $F$ and terminal cost $U_T$  is the functional 

\[
\ds P(\al,m,t) =\dint_t^T\Big[\dint_\R \frac{1}{2} \al(x,s)^2  dm_s(x) + F[m_s] \Big]ds + U_T( m_T).
\]

\vs

In view  of the quadratic nature of the payoff,  we restrict our attention to initial data $\mu \in \mcl P_2$, the set of probability measures on $\R$ with  finite second order moments endowed with its natural $2-$Wasserstein metric; we show here that,  for controls of finite cost, the dynamics stay in $\mcl P_2$. 
%The space $\mcl P_2$ is endowed with its natural $2-$Wasserstein metric. 
\vs
In what follows, we  assume that $F \in C(\mcl{P}_2)$ and that either $U_T\in C(\mcl{P}_2)$ or $U_T(\mu)=\mathbf{1}_\nu$, that is, $U_T(\mu)=\oo, \ \text{if} \ \mu\neq \nu, \ \text{and} \ 0, \ \text{if} \ \mu=\nu$ for $\nu \in \Pt$. The first case is  linked with the so-called soft large deviations, while  the second is related to the LDP asymptotics of the transition probabilities.
\vs

Our results about the control of the Dyson equation and the resulting Hamilton-Jacobi-Bellman (HJB) equation extend to more general than quadratic energies and separated running costs, that is,  to $L(\alpha,m)$ with $L$ uniformly convex in $\alpha$.
% which then yield more general Hamiltonians than the one we study here. 
We leave it up to the interested reader to complete the details. 

\vs

The optimization problem we are interested in is to minimize $P(\al, m,t)$ over all $(\al,m) \in \mcl A(\mu,t)$, the set 
of pairs $(\al,m)$ solving  \eqref{takis3} and such that $(m_s)_{s\in [t,T]} \subset C([t,T],\Pt)$ and  $\al:\R\times [0,T]\to \R$ is measurable and has finite energy.
%\[L(\alpha,m)=\frac{1}{2}\int_t^T\int_\R a^2(x,s)dm(x,s).\]
%  and the pair $(\alpha,m)$ solves \eqref{takis3}.

\vs

One of the main results of the paper is the characterization of the infimum as the unique  solution of a backward   infinite dimensional HJB equation set on $\mathcal{P}_2 \times (0,T)$.
\vs

Indeed, we show that  $U(\mu,t)=\inf_{\al\in \mcl A(\mu,t)} P(\al , m,t)$ is the unique (viscosity) solution to 
\beq\label{manos1000}
\begin{cases}
-\pt_t U + \frac{1}{2}\int_{\R}|D_\mu U(\mu,t)(x)|^2d\mu(x) -\int_\R D_\mu U(\mu,t)(x) \cdot H[\mu](x)d\mu(x)=0 \ \ \text{in} \ \ \Pt\times [0,T),\\[1.5mm]
U(\cdot,T)=U_T,
\end{cases}
\eeq

where $D_\mu U$ is the horizontal or Wasserstein derivative of $U$. Precise definition is given in Section \ref{sec:der} below and more details can be found in \citep{Bbook}.

\vs 
We note that, in the absence of the Hilbert transform and for $F\equiv 0$, the minimization problem discussed above is the problem introduced and studied by Benamou and Brenier \citep{BB} in the context of mass transport, by considering a system of two finite dimensional PDEs instead of the associated HJB equation on $\mcl P_2$.
% but rather by a system of two finite dimensional PDEs. 
 A detailed presentation of this system can be found in Santambrogio \citep{Santambrogio}. The study of associated HJB equation on $\mcl P_2$ has been the subject of several works by: the second author in \citep{Lionscours}, Gangbo and Tudorascu in \citep{GT} and the first author in \citep{B}.
\vs
When, however, $H$ is present, the problem is more complicated and singular. Showing the well-posedness of \eqref{manos1000} and the fact that $U$ is a solution 
requires developing several new ideas on HJ equations on Wasserstein spaces, as well as new results on Dyson equation with a drift. Some of the challenges we encounter here were addressed in the study of the simpler problem of optimal control of the heat equation by Cosso et al \citep{cosso}, Daudin and Seeger \citep{DS}, Bayraktar, Ekren and Zhang \citep{BEZ}, Soner and Yan \citep{SY}, the first and second authors \citep{BL}, Cecchin and Delarue \citep{CD}, Cardaliaguet and the third author \citep{CS}, Daudin, Jackson and Seeger \citep{DJS}, to mention only a few.
\vs

Proposing an appropriate notion of viscosity solution to \eqref{manos1000}, and establishing its well-posedness, especially in the case $U_T =\mathbb{1}_\nu$ %, where $\mathbb{1}_\nu(m)=\oo$ if $m\neq \nu$ and $\mathbb{1}_\nu(\nu)=0$, 
are two of the major contributions of the paper. 
\vs

\subsection*{Organization of the paper} The paper is organized as follows: In Section \ref{sec:pre} we state and prove some preliminary and fundamental facts that are used in the rest of the paper. In Section \ref{sec:contr}, we introduce the controlled Dyson equation and prove a number of  important results about the trajectories that will be needed later.  In Section \ref{sec:bound}, we derive the HJB equation, and obtain various bounds on the value function. Section \ref{sec:visc} is about the notion of (viscosity) solution to the HJB equation of the controlled Dyson problem and the well-posedness of the resulting HJB equation with continuous terminal data. In Section \ref{sec:valuevisc} we establish the continuity of the value function of the controlled Dyson equation and  show that the value function is indeed a viscosity solution to the HJB equation.
% and we conclude showing  which then concludes the argument that it can be characterized as the unique viscosity solution to the associated HJ equation.

\subsection*{Notation and terminology}
The set of probability measures on $\R$ is denoted by $\mathcal{P}$ while the  set of probability measures with finite second moment $M_2(m) = \int_\R x^2dm(x)$ is denoted by $\mcl P_2$, and $\mcl L(X)$ is the law of the random variable $X$. We write $BC(\Pt)$ to denote the set of real valued bounded and continuous maps on $\Pt$. When a measure $m$ has a density, we denote it also by $m$. For $p\in [0,\oo)$, the $L^p-$space  with respect to a positive measure $m$ is denoted by $L^p(m)$, and, when $m$ is the Lebesgue measure, we write simply  $L^p$. The $2$-Wasserstein distance is $W_2$, $H^k$ is   the usual Sobolev space on $\R$, $\del_y$ is the Dirac mass at $y\in \R$, and $\mathbb 1_A$ is  the function which is $0$ in $A$ and $+\oo$ off $A$. Given $a,b\in\R$, we write $a\lesssim_l b$ if there exists $C(l)$ such that $a\leq C(l) b$ and $a\lesssim b$ if there exists $C$ such that $a\leq C b$.

\subsection*{Acknowledgments} The first and second author acknowledge a partial support from the chair FDD (Institut Louis Bachelier). The first author is partially funded by the European Union (ERC, PaDiESeM, 101222038). Views and opinions expressed are however those of the authors only and do not necessarily reflect those of the European Union or the European Research Council. Neither the European Union nor the granting authority can be held responsible for them. The third author was partially supported by  the National Science Foundation grants DMS-2153822 and DMS-2452972,  the Office for Naval Research grant N000141712095 and the Air Force Office for Scientific Research grant FA9550-18-1-0494.

\section{Preliminary facts}\label{sec:pre}

We collect here a number of facts that we will use in our analysis and sketch some of their proofs. 
For details  we refer to \citep{BDLL} and \citep{BLL}. 

\subsection{The Hilbert transform and Cotlar's identity}

The Hilbert transform  of  a measure is defined in  \eqref{takis0.000}. Using the same convention about the constant in front of the integral, if the measure has a density  $m$, then it follows that %defined by  
\beq\label{hilbert}
H[m](x)=\dint_\R\dfrac{m(y)}{x-y}dy = \dint_\R\dfrac{m(y)- m(x)}{x-y}dy
\eeq
with the integrals understood  as principal values.
\vs

We recall that, for any $1 < p < \infty$, $H$ is a bounded linear operator from $L^p$ onto itself. Moreover, $H^\ast = - H$ and $H[H[u]] = -u$.
\vs

An important fact we used several times in the paper is Cotlar's identity, which says that there exists a  universal constant $c_0>0$ such that, 
for smooth functions $u : \R \to \R$,
\beq\label{cotlar}
H[u]^2 = c_0u^2 + 2H[uH[u]],
\eeq
Multiplying \eqref{cotlar} by $u$, integrating and using the antisymmetry of H, we find
\[
\dint_\R H[u]^2 u dx = c_0\dint_\R u^3 dx- 2 \dint_\R H[u]^2 u dx = \dfrac{c_0}{3} \dint_\R u^3 dx,
\]
and, hence,
\beq\label{cotlar1}
\dint_\R H[u]^2 u dx=\dfrac{c_0}{3}\dint_\R u^3 dx.
\eeq
It follows that  for a probability measure $m \in \mathcal{P}$, $H[m] \in L^2(m)$ if and only if $m \in L^3$.
\vs

\subsection{The Wasserstein space $\mcl P_2$ and its calculus}\label{sec:der}
The space $\mcl P_2$ is a complete separable metric space when endowed with the Wasserstein distance
\beq\label{takis112}
W_2(\mu,\nu) = \inf_{\pi \in \Pi(\mu,\nu)}\left(\int_{\R^2}|x-y|^2\pi(dx,dy) \right)^\frac12,
\eeq
where $\Pi(\mu,\nu)$ is the set of couplings between $\mu$ and $\nu$.
\vs

A map $U: \mcl P_2 \mapsto \R$  is said to be horizontally differentiable at $\mu_0 \in \mcl P_2$, if there exists $\varphi \in L^2((\R,\mu); \R)$ such that, for any $\nu \in \mcl P_2$ and $ \pi \in \Pi(\mu_0,\nu)$,
$$
U(\nu) = U(\mu_0) + \int_{\R^2}\varphi(x)\cdot(y-x)\pi(dx,dy) + o\left(\left( \int_{\R^2}|x-y|^2\pi(dx,dy) \right)^\frac12 \right).
$$
In such a case, we write $D_\mu U(\mu_0)=\varphi$.
\vs

Next we  define an associated $C^{1,1}-$type regularity space (see also \citep{BL,tangent}). We remark that this notion of $C^{1,1}-$regularity is naturally linked to the one of the Hilbertian lift of a map $\phi:\Pt\to \R$. 
\vs
\begin{definition}\label{C1}
A function $U :\Pt\to \R$ is in  $C^{1,1}(\Pt)$ if, for any $\mu \in \Pt$, $D_\mu U \in L^2(\mu)$ and there exists $K > 0$
such that,  for all $\mu,\nu \in \Pt$ and  $\gamma\in \Pi(\mu,\nu)$,
\[
\int_{\R^2}|D_\mu U(\mu,x)-D_\mu U(\nu,y)|^2d\gamma(x,y) \leq K \int_{\R^2}|x-y|^2d\gamma(x,y).
\]
\end{definition}
\vs
%In what follows we write $U \in C^{1,1}(\Pt)$ if $U$ satisfies the conditions of Definition~\ref{C1}. 
It is immediate that $U \in C^{1,1}$ if and only if the map $\tilde U: L^2(\Omega,\R) \mapsto \R$ defined by $\tilde U(X) = U(\mathcal L(X))$ is in $C^{1,1}(L^2(\Omega,\R))$, where  $(\Omega,\mathcal A,\mathbb P)$ a standard probability space.
\vs
An important aspect of this notion of regularity is that it allows for the Taylor expansion stated next.
\begin{proposition}
Let $(\alpha,m)$ be such that $\al:\R\times [0,T]\to \R$ is measurable, $(m_s)_{s\in [t,T]} \subset C([t,T],\Pt)$ and the pair is a weak solution to  
\[
\partial_t m + \partial_x(\alpha m) = 0 \ \  \text{ in } \ \  \R \times (0,T),
\]
with 
\[
\int_0^T\int_\R \alpha^2_s(x)dm_s(x) < \infty.
\]
Then, for any $\phi\in C^{1,1}(\Pt)$  % there exists a constant $C=C(\phi)>0$ %$C=C(\|\phi\|_{C^{1.1}})>0$ 
and all $t \in (0,T]$, 
\[
\phi(m_t) - \phi(m_0) - \int_0^t\int_\R D_\mu\phi(m_s,x)\alpha_s(x)dm_s(x)ds \lesssim_{\phi} \left(\int_0^t\int_\R\alpha_s^2(x)dm_s(x)ds\right).
\]
\end{proposition}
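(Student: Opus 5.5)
The strategy is to lift everything to a probability space and use the $C^{1,1}$ property of the Hilbertian lift $\tilde\phi$ on $L^2(\Omega)$. First, by the superposition principle (Ambrosio--Gigli--Savaré) applied to the weak solution $(\alpha,m)$ of the continuity equation with $\int_0^T\int_\R\alpha^2\,dm\,ds<\infty$, there is a probability measure on absolutely continuous curves, equivalently a process $X_s$ on $(\Omega,\mathcal A,\mathbb P)$, such that
\[
\mathcal L(X_s)=m_s,\qquad X_s=X_0+\int_0^s\alpha_r(X_r)\,dr \ \text{ a.s.},
\]
with $\mathbb E\int_0^T|\alpha_r(X_r)|^2dr=\int_0^T\int_\R\alpha_r^2\,dm_r\,dr<\infty$. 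In particular $s\mapsto X_s$ is absolutely continuous in $L^2(\Omega)$ with derivative $\dot X_s=\alpha_s(X_s)$, and $\phi(m_s)=\tilde\phi(X_s)$.

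Next, since $\tilde\phi\in C^{1,1}(L^2(\Omega))$ with gradient $\nabla\tilde\phi(X)=D_\mu\phi(\mathcal L(X),X)$, which is $\sqrt K$-Lipschitz by Definition \ref{C1} (take $\gamma$ the joint law of $(X,Y)$), the map $s\mapsto\tilde\phi(X_s)$ is absolutely continuous and the chain rule gives
\[
\phi(m_t)-\phi(m_0)=\int_0^t\mathbb E\big[\nabla\tilde\phi(X_s)\,\dot X_s\big]ds .
\]
Splitting $\nabla\tilde\phi(X_s)=\nabla\tilde\phi(X_0)+\big(\nabla\tilde\phi(X_s)-\nabla\tilde\phi(X_0)\big)$ would compare with the frozen term. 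The statement, however, has $D_\mu\phi(m_s,x)$ evaluated along the trajectory, so the chain rule already yields exactly
\[
\int_0^t\mathbb E\big[D_\mu\phi(m_s,X_s)\alpha_s(X_s)\big]ds=\int_0^t\int_\R D_\mu\phi(m_s,x)\alpha_s(x)\,dm_s(x)\,ds,
\]
and the left-hand side of the claimed inequality is then $0$, which trivially bounds the right-hand side. To avoid relying on a chain rule in a possibly delicate setting, I would instead prove it by a discretization: for a partition $0=s_0<\dots<s_n=t$, the $C^{1,1}$ Taylor inequality $\tilde\phi(Y)-\tilde\phi(X)-\mathbb E[\nabla\tilde\phi(X)(Y-X)]\le \frac{\sqrt K}{2}\|Y-X\|_{L^2}^2$ applied to $X=X_{s_k}$, $Y=X_{s_{k+1}}$, together with $\|X_{s_{k+1}}-X_{s_k}\|^2\le (s_{k+1}-s_k)\int_{s_k}^{s_{k+1}}\mathbb E|\alpha_r(X_r)|^2dr$ (Cauchy--Schwarz), gives a sum of errors bounded by $\frac{\sqrt K}{2}\max_k(s_{k+1}-s_k)\int_0^t\int\alpha^2\,dm$. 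This is certainly $\lesssim_\phi\int_0^t\int\alpha^2\,dm$ once the mesh is at most $1\vee T$, and it tends to $0$ as the mesh is refined.

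The main obstacle is the passage from the Riemann sums $\sum_k\mathbb E[\nabla\tilde\phi(X_{s_k})(X_{s_{k+1}}-X_{s_k})]$ to $\int_0^t\mathbb E[\nabla\tilde\phi(X_s)\alpha_s(X_s)]ds$. Writing $X_{s_{k+1}}-X_{s_k}=\int_{s_k}^{s_{k+1}}\alpha_r(X_r)dr$, the difference is bounded by
\[
\int_0^t\|\nabla\tilde\phi(X_{\lfloor r\rfloor})-\nabla\tilde\phi(X_r)\|_{L^2}\,\|\alpha_r(X_r)\|_{L^2}\,dr\le\sqrt K\int_0^t\|X_{\lfloor r\rfloor}-X_r\|_{L^2}\|\alpha_r(X_r)\|_{L^2}dr .
\]
This is again at most a constant times mesh$^{1/2}\cdot(\int_0^t\int\alpha^2dm)$ by Cauchy--Schwarz, so it is either absorbed into the bound or vanishes in the limit. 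The superposition step is the only genuinely nontrivial input, and it is standard under the finite-energy assumption and $m\in C([0,T],\Pt)$.
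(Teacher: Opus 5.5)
Your proposal is correct and follows the same route as the paper. Both arguments lift $(m_s)$ to a process $X_s$ via Ambrosio's superposition principle and then use the $C^{1,1}$ regularity of the Hilbertian lift $\tilde\phi$ on $L^2(\Omega)$. Your partition argument makes explicit the Taylor step the paper asserts in one line; in that step the paper's identity $\mathbb E[|X_t-X_0|^2]=\int_0^t\int_\R\alpha_s^2\,dm_s\,ds$ should really read $\le t\int_0^t\int_\R\alpha_s^2\,dm_s\,ds$. Your observation is also correct that, since $D_\mu\phi$ is evaluated at $m_s$, the chain rule makes the left-hand side vanish identically, which is sharper than the stated bound.
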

\begin{proof}
We prove the claim by lifting the relation on the Hilbert space. Indeed, it follows from standard results (see, for example, \citep{ags}) that there exists a real valued process $(X_s)_{s \geq 0}$ on a standard atom-less probability space $(\Omega, \mathcal{A},\mathbb{P})$ such that %$\mathcal{L}(X_s) =m_s$ almost surely   in $[0,T]$ and 
\[
X_t = \int_0^t\alpha_s(X_s) ds \ \ \text{and} \ \ \mathcal{L}(X_s) =m_s \ \ \text{a.s.in} \ \ [0,T].
\]
Since the  lift of $\phi$ on $L^2(\Omega,\R)$  is in $C^{1,1}$, there exists some $C=C(\phi)>0$ such that 
\[
\left|\phi(m_t) - \phi(m_0) - \mathbb{E}\left[\int_0^t\alpha(s,X_s)D_\mu\phi(m_s,X_s)ds\right]\right| \lesssim_{\phi} \mathbb{E}[|X_t - X_0|^2]=\left(\int_0^t\int_\R\alpha^2dm_sds\right).
\]
\end{proof}

We refer to  \citep{Bbook} for more details on the calculus on spaces of measures.
\vs
\subsection{Regularity of the $W_2-$distance} \label{sec:regW2}

If the measure $\mu$ is absolutely continuous with respect to the Lebesgue measure, the minimum in \eqref{takis112} is achieved at a measure $\pi_{\text{opt}}$ such that 
\beq\label{takis113}
\pi_{\text{opt}}(x,y)=\mu(x)\del_{T(x)}(y) \ \ \text{with $T: \R \mapsto \R$ increasing.}
\eeq 
\vs
If the same holds for $\nu$, then the similarly given map $S$ is the inverse of $T$.
\vs

Fix $\nu \in \mcl P_2$. If $\mu_0$ is absolutely continuous with respect to the Lebesgue measure, then it follows from   \citep{AJ} that $\mu \mapsto W^2_2(\mu,\nu)$ is horizontally differentiable at $\mu_0$ and $D_\mu W_2^2(\mu_0,\nu)(x) = 2(x-T(x))$.
%see \citep{AJ}.\\
\vs

We note that, even when it is differentiable at one point, it is not true  that the function $\mu \mapsto W_2^2(\mu,\nu)$ is differentiable everywhere around it. 
\vs

To deal with this difficulty,  we  recall the following regularization of the $W_2-$distance which was introduced in \citep{BL}. For a fixed  $\nu \in \mcl P_2$, let  $\Phi_*(\mu) = W_2^2(\mu,\nu)$ and, for all $\gamma \in(0,1)$, consider  the map $\Phi_\gamma: \mcl P_2 \mapsto \R$ be defined by
$$
\Phi_\gamma(\mu) = \sup_{\mu' \in \mcl P_2}\{ W_2(\mu',\nu) - \gamma^{-1}W_2^2(\mu',\mu)\}. 
$$
%and denote by $\Phi_*(\mu) = W_2^2(\mu,\nu)$. 
The following approximation result is established in \citep{BL}.
\begin{theorem}\label{thm:approxW2}
The family $(\Phi_\gamma)_{\gamma \in (0,1)}$ converges locally uniformly toward $\Phi_*$. Moreover, if $\Phi_*$ is differentiable at $\mu_* \in \mcl P_2$,  then, for any sequences $(\mu_n)_{n \geq 1}$ in $\Pt$ and $ (\gamma_n)_{n \geq 1}$ in $(0,1)$  converging  respectively toward $\mu_*$ and $0$, and,  for any sequence $(\pi_n)_{n\geq 1}$ of couplings between $\mu_n$ and $\mu_*$, 
$$
\lim_{n \to \infty} \int_{\R^2}|D_{\mu}\Phi_{\gamma_n}(\mu_n)(x) - D_{\mu}\Phi_*(\mu_*)(y)|^2\pi_n(dx,dy) = 0.
$$
\end{theorem}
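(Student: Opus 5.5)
The plan is to lift everything to the Hilbert space $L^2(\Omega,\R)$ over an atomless standard probability space $(\Omega,\mathcal A,\mathbb P)$, as in the remark following Definition \ref{C1}, and to recognise $\Phi_\gamma$ as a Lasry--Lions sup-convolution of a semiconcave function. Throughout I read the first term in the definition of $\Phi_\gamma$ as $W_2^2(\mu',\nu)$, since otherwise $\Phi_\gamma$ could not converge to $\Phi_*=W_2^2(\cdot,\nu)$. Set $f(X)=\Phi_*(\mathcal L(X))$. Because $\Omega$ is atomless,
\[
f(X)=\inf_{\mathcal L(Y)=\nu}\mathbb E|X-Y|^2=\mathbb E|X|^2+M_2(\nu)-2\sup_{\mathcal L(Y)=\nu}\mathbb E[XY].
\]
This shows that $f-\mathbb E|\cdot|^2$ is concave, so $f$ is semiconcave with constant $2$. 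The same formula shows that $f$ is Lipschitz on bounded sets, with local constant of order $\|X\|+M_2(\nu)^{1/2}$. Likewise $\tilde\Phi_\gamma(X)=\Phi_\gamma(\mathcal L(X))=\sup_{X'}\{f(X')-\gamma^{-1}\|X'-X\|^2\}$, since every coupling of $\mathcal L(X)$ with $\mu'$ can be realised as the joint law of $(X,X')$ on $\Omega$.

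\textbf{Step 1 (local uniform convergence).} Taking $X'=X$ gives $\tilde\Phi_\gamma\ge f$. In the other direction, I would first observe that any near-maximiser satisfies $\|X'-X\|\lesssim\gamma(1+\|X\|)$. Then the local Lipschitz bound $L$ on $f$ gives
\[
\tilde\Phi_\gamma(X)-f(X)\le\sup_{r\ge0}\big(Lr-\gamma^{-1}r^2\big)=\frac{L^2\gamma}{4}.
\]
Hence $0\le\Phi_\gamma-\Phi_*\lesssim\gamma$ uniformly on $W_2$-bounded sets, which is the first claim.

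\textbf{Step 2 (identifying $D_\mu\Phi_\gamma$).} For $\gamma<1/2$, the function $X'\mapsto f(X')-\gamma^{-1}\|X'-X\|^2$ is uniformly concave. Its supremum is therefore attained at a unique point $X'_\gamma(X)$, which depends Lipschitz-continuously on $X$. Standard sup-convolution arguments then give that $\tilde\Phi_\gamma$ is differentiable, with
\[
\nabla\tilde\Phi_\gamma(X)=\tfrac{2}{\gamma}\big(X'_\gamma(X)-X\big)\in\partial^+f\big(X'_\gamma(X)\big),
\]
where the inclusion is the first-order condition at the maximiser. Since $\tilde\Phi_\gamma$ is rearrangement invariant, this gradient is of the form $D_\mu\Phi_\gamma(\mathcal L(X))(X)$.

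\textbf{Step 3 (gradient convergence).} Given $\pi_n\in\Pi(\mu_n,\mu_*)$, realise $\pi_n$ as the law of a pair $(X_n,X_*^n)$ on $\Omega$. By the atomless property and rearrangement invariance, I may fix a single $X_*$ with law $\mu_*$ throughout and take $X_n$ with $\mathcal L(X_n,X_*)=\pi_n$. The hypothesis that $\Phi_*$ is differentiable at $\mu_*$ means that $f$ is Fréchet differentiable at $X_*$, with gradient $p_*=D_\mu\Phi_*(\mu_*)(X_*)$. The quantity to control is then $\|p_n-p_*\|^2$ with $p_n=\nabla\tilde\Phi_{\gamma_n}(X_n)$. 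By Steps 1 and 2, $Y_n:=X'_{\gamma_n}(X_n)\to X_*$ in $L^2$, $p_n\in\partial^+f(Y_n)$, and $p_n$ is bounded.

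The main obstacle is upgrading this to strong convergence $p_n\to p_*$; weak convergence alone is not enough. I would use the semiconcavity inequality
\[
f(Z)\le f(Y_n)+\langle p_n,Z-Y_n\rangle+\|Z-Y_n\|^2
\]
with $Z=X_*+h e_n$, where $e_n=(p_n-p_*)/\|p_n-p_*\|$ and $h>0$ is small. Expanding $f$ at $X_*$ using its differentiability and letting $n\to\infty$ yields
\[
h\,\limsup_n\|p_n-p_*\|\le o(h)+Ch^2.
\]
Dividing by $h$ and sending $h\to0$ gives $p_n\to p_*$ in $L^2(\Omega)$. Unwinding the lift, this is exactly $\int|D_\mu\Phi_{\gamma_n}(\mu_n)(x)-D_\mu\Phi_*(\mu_*)(y)|^2\,\pi_n(dx,dy)\to0$.
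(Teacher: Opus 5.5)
Your architecture is sound and is essentially the mechanism behind this result, which the paper does not prove but quotes from \citep{BL}. You lift to $L^2(\Omega)$, note that $f-\|\cdot\|^2$ is concave, recognise $\Phi_\gamma$ as a Lasry--Lions sup-convolution (you are right to read the first term as $W_2^2(\mu',\nu)$), and obtain $\nabla\tilde\Phi_\gamma(X)=\frac2\gamma(X'_\gamma(X)-X)\in\partial^+f(X'_\gamma(X))$. Steps 1 and 2 are fine. Step 3, however, has two genuine problems.

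\textbf{The key inequality has the wrong sign.} Take $Z=X_*+he_n$ with $e_n=(p_n-p_*)/\|p_n-p_*\|$. Combining the semiconcavity inequality with $f(X_*+he_n)=f(X_*)+h\langle p_*,e_n\rangle+o(h)$ gives
\[
-h\|p_n-p_*\|\le f(Y_n)-f(X_*)+\langle p_n,X_*-Y_n\rangle+\|X_*-Y_n+he_n\|^2+o(h),
\]
which is vacuous. A supergradient only bounds $f$ from above, so you must test in the direction opposite to $p_n-p_*$. With $Z=X_*-he_n$ you get
\[
h\|p_n-p_*\|\le f(Y_n)-f(X_*)+\|p_n\|\,\|Y_n-X_*\|+\big(\|Y_n-X_*\|+h\big)^2+o(h).
\]
Here the $o(h)$ is uniform in $n$ by Fréchet differentiability, and your limiting argument then goes through.

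\textbf{The convergence $Y_n\to X_*$ is not available for arbitrary couplings.} You do have $\|Y_n-X_n\|\to0$. But $\|X_n-X_*\|^2=\int_{\R^2}|x-y|^2\,\pi_n(dx,dy)$, and this need not tend to $0$ when $\pi_n$ is an arbitrary coupling of $\mu_n$ and $\mu_*$. No argument can repair this, because the statement as written is false.

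Take $\nu=\delta_0$. Then $f(X)=\|X\|^2$ and $\tilde\Phi_\gamma(X)=\|X\|^2/(1-\gamma)$, so $D_\mu\Phi_\gamma(\mu)(x)=2x/(1-\gamma)$ and $D_\mu\Phi_*(\mu)(x)=2x$, with $\Phi_*$ differentiable everywhere. Choose $\mu_n=\mu_*=\frac12(\delta_{-1}+\delta_1)$ and $\pi_n=\mu_*\otimes\mu_*$. The integral in the statement then tends to $4\int_{\R^2}|x-y|^2\,d(\mu_*\otimes\mu_*)=8$.

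The correct hypothesis is $\int_{\R^2}|x-y|^2\,\pi_n(dx,dy)\to0$, for instance $\pi_n$ optimal. This is also how the theorem is used in Lemma~\ref{lem333}, where $X_\gamma\to\ol X$ in $L^2$. You should state this assumption explicitly; under it, and with the sign corrected, your proof is complete.

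\textbf{Minor point on couplings.} It is not true that every coupling of $\mathcal L(X)$ and $\mu'$ is the law of some pair $(X,X')$ on $\Omega$. For example, take $\Omega=[0,1]$, $X(\omega)=\omega$, $\mu'$ not a Dirac mass, and the product coupling. You only need $\inf_{\mathcal L(X')=\mu'}\|X-X'\|^2=W_2^2(\mathcal L(X),\mu')$, which does hold on an atomless space. Likewise, to realise every $\pi_n$ as $\mathcal L(X_n,X_*)$ with a single $X_*$, choose $X_*$ independent of an auxiliary uniform variable.
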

\vs

A functional closely related  to $W_2^2$ is the optimal transport cost
\be\label{defI2}
\mathcal I_2(\mu,\nu) = \inf_{\pi \in \Pi(\mu,\nu)} \Big\{-\int_{\R^2}x\cdot y\,\pi(dx,dy)\Big\}.
\ee
Since, for all $\mu,\nu\in \Pt$, 
\[2\,\mathcal I_2(\mu,\nu) + M_2(\mu) + M_2(\nu) = W_2^2(\mu,\nu),\]
it follows easily that 
 %From this relation, one easily deduce that 
 Theorem \ref{thm:approxW2} extends to approximations of $\mathcal I_2$ by the map $\Psi_\gamma: \mcl P_2 \mapsto \R$ defined by
$$
\Psi_\gamma(\mu) = \sup_{\mu' \in \mcl P_2}\{ \mcl I_2(\mu',\nu) - \gamma^{-1}W_2^2(\mu',\mu)\}.
$$
Hence, fixing $\nu \in \Pt$, writing $\Psi_*(\mu) = \mathcal I_2(\mu,\nu) $ and noting  that the horizontal differentiability of $\Psi_*$ is equivalent to the one of $\Phi_*$, we get the following result. 
\begin{theorem}\label{thm:approxI2}
The family $(\Phi_\gamma)_{\gamma \in (0,1)}$  converges locally uniformly toward $\Psi_*$. Moreover, if 
 %if $\mu_* \in \mcl P_2$ is such that 
 $\Psi_*$ is differentiable at $\mu_* \in \mcl P_2$, then, for any sequence $(\mu_n)_{n \geq 0}, (\gamma_n)_{n \geq 0}$ in $\Pt \times (0,1)$ converging  toward $(\mu_*,0)$, and,  for any sequence $(\pi_n)_{n\geq 1}$ of couplings between $\mu_n$ and $\mu_*$, 
$$
\lim_{n \to \infty} \int_{\R^2}|D_{\mu}\Psi_{\gamma_n}(\mu_n)(x) - D_{\mu}\Psi_*(\mu_*)(y)|^2\pi_n(dx,dy) = 0.
$$
\end{theorem}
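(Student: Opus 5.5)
The plan is to reduce Theorem \ref{thm:approxI2} to Theorem \ref{thm:approxW2} through the identity $W_2^2(\mu,\nu)=2\,\mathcal I_2(\mu,\nu)+M_2(\mu)+M_2(\nu)$. (The family in the first sentence of the statement should read $(\Psi_\gamma)_{\gamma\in(0,1)}$; this is what I will prove.) Two facts do all the work. First, $M_2$ is a smooth, indeed $C^{1,1}$, functional on $\Pt$ with $D_\mu M_2(\mu)(x)=2x$. Second, $\Psi_\gamma$ is the same kind of sup-convolution as $\Phi_\gamma$, but applied to $\Psi_*$ in place of $\Phi_*$. I will use the regularization $\sup_{\mu'}\{\Phi_*(\mu')-\gamma^{-1}W_2^2(\mu',\mu)\}$ of $\Phi_*=W_2^2(\cdot,\nu)$. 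The displayed formula for $\Phi_\gamma$ has $W_2$ rather than $W_2^2$, but only the squared version is consistent with the limit $\Phi_*$.

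\emph{Step 1: local uniform convergence.} I would argue directly rather than through the identity. Taking $\mu'=\mu$ gives $\Psi_\gamma(\mu)\ge\Psi_*(\mu)$. For an upper bound, note that $\Psi_*$ is controlled by second moments, since $|\mathcal I_2(\mu',\nu)|\le \tfrac12(M_2(\mu')+M_2(\nu))$. On a $W_2$-ball around $\mu$, any near-maximizer $\mu'$ must therefore satisfy $W_2^2(\mu',\mu)\lesssim\gamma$, uniformly over the ball. Next I need a modulus of continuity for $\Psi_*$ in $W_2$. This follows from the identity, because $W_2(\cdot,\nu)$ is $1$-Lipschitz and $M_2^{1/2}$ is $1$-Lipschitz in $W_2$; so $\Psi_*$ is locally Lipschitz. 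Combining the two gives $0\le\Psi_\gamma(\mu)-\Psi_*(\mu)\lesssim\gamma^{1/2}$ locally uniformly. The same estimates give $\Psi_\gamma=\tfrac12\Phi_\gamma-\tfrac12 M_2(\nu)$ up to the moment correction discussed next.

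\emph{Step 2: transfer the derivative convergence.} Here I use the key structural fact that $\Psi_*$ is semiconcave in the lifted Hilbert space sense. On $L^2(\Omega)$ the lift is $X\mapsto\inf_{Y\sim\nu}\{-\mathbb E[XY]\}$, which is an infimum of linear functionals and hence concave. The sup-convolution of a concave function is then $C^{1,1}$, and its gradient at $\mu$ is $\gamma^{-1}$ times the displacement to the maximizer. This is exactly the mechanism behind the corresponding statement for $\Phi_*$ in Theorem \ref{thm:approxW2}. The cleanest way to use it is to rerun the proof of Theorem \ref{thm:approxW2} from \citep{BL} with $\Psi_*$ in place of $\Phi_*$; only concavity and local Lipschitz continuity are used there, and $\Psi_*$ has both, with the concavity now exact rather than up to a quadratic.

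Alternatively, one can transfer the statement through the identity. This requires comparing $\Psi_\gamma$ with $\tfrac12(\Phi_\gamma-M_2-M_2(\nu))$. The two differ because the penalty interacts with $M_2(\mu')$. For small $\gamma$, however, $-\tfrac12 M_2(\mu')-\gamma^{-1}W_2^2(\mu',\mu)$ is itself a sup-convolution of a smooth function, which can be absorbed by a change of parameter $\gamma\mapsto\gamma/(1\pm\gamma/2)$, up to an explicitly smooth remainder. In the lifted picture this is the elementary identity $-\tfrac12|Y|^2-\gamma^{-1}|Y-X|^2=-\tfrac{2+\gamma}{2\gamma}|Y-\tfrac{2}{2+\gamma}X|^2+c_\gamma|X|^2$. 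This shows $\Psi_\gamma$ equals a dilation and reparametrization of $\tfrac12\Phi_{\gamma'}$ plus a quadratic in $X$.

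\emph{Step 3: conclude.} Differentiability of $\Psi_*$ at $\mu_*$ is equivalent to that of $\Phi_*$, since the two differ by the differentiable $M_2$, and $D_\mu\Psi_*(\mu_*)(y)=\tfrac12 D_\mu\Phi_*(\mu_*)(y)-y$. Given $\mu_n\to\mu_*$, $\gamma_n\to0$ and couplings $\pi_n$, I then write $D_\mu\Psi_{\gamma_n}(\mu_n)$ through the identity in terms of $D_\mu\Phi_{\gamma_n'}$ evaluated at dilated measures converging to $\mu_*$, plus linear terms $c\,x$. The main term converges by Theorem \ref{thm:approxW2}. The linear terms converge in $L^2(\pi_n)$ because $\int|x-y|^2d\pi_n$ need not vanish, so I would pass to optimal couplings, for which it does, and use the $C^{1,1}$ Lipschitz bound to pass back to arbitrary couplings.

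I expect the main obstacle to be this last point: Theorem \ref{thm:approxW2} is stated for arbitrary couplings $\pi_n$, under which the term $\int|x-y|^2d\pi_n$ does not tend to zero. I would resolve it as the original argument presumably does, by using the uniform bound on $D_\mu\Psi_{\gamma}$ and the fact that $D_\mu\Psi_*(\mu_*)$ is given by a transport map, so that the limit is coupling-independent; otherwise I would simply restrict to the same class of couplings as in \citep{BL}.
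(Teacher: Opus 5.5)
Your route is the paper's. There, Theorem~\ref{thm:approxI2} is obtained in one line from Theorem~\ref{thm:approxW2}, using $W_2^2=2\mathcal I_2+M_2(\mu)+M_2(\nu)$ and the equivalence of differentiability of $\Psi_*$ and $\Phi_*$. Your two corrections to the statement are right: $\Psi_\gamma$ in the first sentence, and $W_2^2$ in the definition of $\Phi_\gamma$.

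What you add is a point the paper passes over. The term $-\tfrac12 M_2(\mu')$ sits inside the supremum, so $\Psi_\gamma$ is not simply $\tfrac12\Phi_\gamma$ minus moments. Your completing-the-square identity settles this exactly. Pull out the factor $\tfrac12$ in front of $W_2^2(\mu',\nu)$; the effective parameter is then $\gamma/(2+\gamma)$, and only the plus sign occurs. This gives
\[
\Psi_\gamma(\mu)=\tfrac12\,\Phi_{\gamma/(2+\gamma)}\big((\lambda_\gamma\,\mathrm{id})_\#\mu\big)-\tfrac{1}{2+\gamma}\,M_2(\mu)-\tfrac12\,M_2(\nu),\qquad \lambda_\gamma=\tfrac{2}{2+\gamma},
\]
hence
\[
D_\mu\Psi_\gamma(\mu)(x)=\tfrac{\lambda_\gamma}{2}\,D_\mu\Phi_{\gamma/(2+\gamma)}\big((\lambda_\gamma\,\mathrm{id})_\#\mu\big)(\lambda_\gamma x)-\lambda_\gamma\, x .
\]
So your second option in Step~2 is an exact reduction, and rerunning the argument of \citep{BL} for the concave lift is unnecessary. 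Step~1 is fine, and in fact holds globally. Since $\mathcal I_2(\cdot,\nu)$ is $\sqrt{M_2(\nu)}$-Lipschitz for $W_2$, you get $0\le\Psi_\gamma-\Psi_*\le\gamma M_2(\nu)/4$.

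The step that would fail is the last one: passing from optimal to arbitrary couplings via the $C^{1,1}$ bound, or arguing that the limit is coupling-independent. No argument can do this, because the conclusion is false for arbitrary couplings. Take $\mu_n=\mu_*$, $\pi_n=\mu_*\otimes\mu_*$ and $g=D_\mu\Psi_*(\mu_*)$. By independence, for every $f\in L^2(\mu_*)$,
\[
\int_{\R^2}|f(x)-g(y)|^2\,\mu_*(dx)\,\mu_*(dy)\ \ge\ \operatorname{Var}_{\mu_*}(g).
\]
For $\nu=\mu_*$ the uniform law on $[0,1]$, one has $g(y)=-y$, so the quantity stays above $1/12$. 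Taking instead $\nu=\delta_0$, where $D_\mu\Phi_*(\mu_*)(y)=2y$, the same bound shows that Theorem~\ref{thm:approxW2} as printed has the same defect.

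The statement has to be read with couplings such that $\int|x-y|^2\,d\pi_n\to0$, e.g.\ optimal ones. This is also the form actually used in Lemma~\ref{lem333}, where $X_\gamma\to\ol X$ in $L^2$. Under that reading your Step~3 is complete:
\begin{enumerate}
\item The map $(x,y)\mapsto(\lambda_{\gamma_n}x,y)$ pushes $\pi_n$ to couplings of $(\lambda_{\gamma_n}\mathrm{id})_\#\mu_n$ and $\mu_*$ with vanishing cost.
\item Theorem~\ref{thm:approxW2} then handles the $D_\mu\Phi$ term.
\item The linear term is controlled by $\int|\lambda_{\gamma_n}x-y|^2\,d\pi_n\to0$.
\end{enumerate}
This restriction is not an artifact of your method: the paper's one-line reduction needs it for the very same linear term $x-y$. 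So drop the coupling-independence remark and keep your fallback of restricting to the couplings of \citep{BL}.
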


\vs

\subsection{A  review  of the Dyson equation and the free entropy}

The well-posedness of the initial value problem of the Dyson equation \eqref{takis3} has been studied in many references. In this note, we use results from the  recent papers \citep{BDLL, BLL}, 
which establish  that, for each $\mu \in \mcl P$ and $T>0$, the initial value problem \eqref{takis3} 

admits a unique solution $m\in C([0,T]; \mcl P)$.
\vs
To facilitate the presentation below, we denote by $(S(t))_{t \geq 0}$ the semi-group of \eqref{takis3}, that is, if  $\mu \in \mcl P$,  then $(S(t) \mu)_{t \geq 0}$ is the unique solution to \eqref{takis3}.
\vs

It is shown in  \citep{BDLL, BLL} that,  for any $\mu\in \mcl P$ and $t>0$, $S(t) \mu\in  L^\oo(\R)$ and, in particular, 

\beq\label{takis501}
\|S(t)\mu\|_\oo \leq t^{-1/2}.
\eeq
\vs
We introduce next the free entropy function $E:
\mcl P\to \R\cup \{\pm \infty\}$ of the physical system associated to the Dyson flow which is given by

\beq\label{entropy}
E(m) = -\dfrac 12 \dint_\R \dint_\R \log(|x-y|)dm(x)dm(y), 
\eeq

and note that, even when finite, $E$ is not necessarily nonnegative. 
\vs
From Jensen's inequality, it easily follows that $E>-\oo$ for measures with finite first moment.  Moreover, when $m$ belong to $\Pt$, 
\beq\label{entropy10}
E(m)\geq -\frac12\log(M_2(m)) -\frac12.
\eeq
\vs

In the other direction, since $\log |z| \geq 0$ if $|z|\geq 1$, we find 
\[
\begin{aligned}
E(m) &\leq -\dfrac{1}{2}\dint \dint_{|x-y|\leq 1}\log(|x-y|) dm(x)dm(y)\\[1.2mm]
& \leq  \sup_{x \in \R} \dint_{|x-y|\leq 1}|\log(|x-y|)|dm(y) \leq \left( \dint_\R m^3 \right)^{\frac 13} \left( \dint_{|z|\leq 1}|\log |z||^{\frac32}\right)^{\frac 23}. %\\[1.2mm]
\ea
\]
It follows that there exists some $C>0$ such that, for any  $m\in L^3$, 
\beq\label{eq:estE}
E(m) \leq C \|m\|_{L^3}.
\eeq
\vs
For future reference,  we introduce the subset $\wtilde {\mcl P}_2$ of $\Pt$ given by 
\beq\label{takisnew}
\widetilde {\mcl P}_2  = \big \{ m \in \mcl P_2 \; :  E(m) \in \R \big\},
\eeq
and note that \eqref{entropy10} and  \eqref{eq:estE} imply that 
\[
L^3 \cap \Pt \subset \widetilde {\mcl P}_2 .
\]

We also recall  that  in \citep{BLL} it is proved that 
\beq\label{takis34}
E(m)<\infty \ \text{ if and only if} \ m \in H^{-1/2}.
\eeq
\vs
For future use, we formulate here as a lemma an important property of the free entropy.

\begin{lemma}\label{T1}
The free entropy is lower semicontinuous with respect to the weak convergence of measures. 
\end{lemma}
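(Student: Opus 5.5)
The plan is to split the logarithmic kernel into its two singular parts and handle each separately. Write $k(x,y)=-\log|x-y|$ and decompose $k=k^+-k^-$ with
\[
k^+(x,y)=\max\{-\log|x-y|,0\}, \qquad k^-(x,y)=\log^+|x-y| .
\]
\begin{itemize}
\item The part $k^+$ carries the singularity on the diagonal. It is nonnegative and lower semicontinuous with values in $[0,\infty]$.
\item The part $k^-$ carries the growth at infinity. It is finite and continuous, and satisfies $k^-(x,y)\le \log(1+|x|+|y|)$.
\end{itemize}
The first step is the standard fact that if $m_n\to m$ weakly in $\mathcal P$, then $m_n\otimes m_n\to m\otimes m$ weakly in $\mathcal P(\R^2)$. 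This holds because tensor products $f(x)g(y)$ of bounded continuous functions form a convergence-determining class, and because the family $(m_n\otimes m_n)_n$ is tight.

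The second step treats the diagonal part. For $M>0$, the truncation $\min\{k^+,M\}$ is bounded and continuous on $\R^2$. Hence
\[
\int \min\{k^+,M\}\,d(m\otimes m)=\lim_n \int \min\{k^+,M\}\,d(m_n\otimes m_n)\le \liminf_n \int k^+\,d(m_n\otimes m_n).
\]
Letting $M\to\infty$ and using monotone convergence gives $\int k^+ d(m\otimes m)\le \liminf_n \int k^+ d(m_n\otimes m_n)$. This covers the case where $E(m)=+\infty$ as well.

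The third step, which I expect to be the real obstacle, is the behaviour at infinity. Here $-k^-$ enters $E$ with a negative sign, so a small amount of mass escaping to infinity can push $E(m_n)$ to $-\infty$. For example, take
\[
m_n=(1-\tfrac1n)\mu+\tfrac1n\delta_{e^{n^2}} .
\]
Then $m_n\to\mu$ weakly, yet the cross term forces $E(m_n)\to-\infty$, so $E(\mu)\le\liminf_n E(m_n)$ fails whenever $E(\mu)>-\infty$. The lemma therefore has to be read in the setting where it is used, namely in $\Pt$: weak convergence together with a uniform bound $\sup_n M_2(m_n)<\infty$ (for instance $W_2$-convergence). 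I will state this restriction explicitly.

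Under this moment bound, $k^-(x,y)\le \log(1+|x|+|y|)\le 1+|x|+|y|$. Since $\int (|x|+|y|)^2\,d(m_n\otimes m_n)$ is uniformly bounded, the function $k^-$ is uniformly integrable with respect to $(m_n\otimes m_n)_n$. Combined with continuity and weak convergence, this gives
\[
\int k^-\,d(m_n\otimes m_n)\to\int k^-\,d(m\otimes m)<\infty .
\]
Subtracting this limit from the liminf inequality of the second step yields $E(m)\le\liminf_n E(m_n)$.

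All the quantities involved are well defined in $\R\cup\{+\infty\}$, because the $k^-$ integrals are finite by the bound $k^-\le 1+|x|+|y|$ and the finite second moments, consistent with \eqref{entropy10}.
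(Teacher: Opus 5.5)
Your proof is correct for the statement you actually prove: weak convergence together with $\sup_n M_2(m_n)<\infty$, in particular $W_2$-convergence. The three steps are sound. Capping $k^+$ at $M$ gives bounded continuous test functions for $m_n\otimes m_n\rightharpoonup m\otimes m$, and monotone convergence finishes the diagonal part. The far-field part converges by uniform integrability; a uniform bound on $\int|x|^p\,dm_n$ for any $p>0$ would already suffice, since $k^-\le \log(1+|x|)+\log(1+|y|)$. You are also right that some moment control is indispensable.

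Your counterexample, however, fails as written. $m_n$ has an atom of mass $\frac1n$, so $\int k^+\,d(m_n\otimes m_n)\ge n^{-2}\cdot(+\infty)$ and $E(m_n)=+\infty$ for every $n$. The inequality therefore holds trivially along that sequence. Spread the escaping mass instead. Take $\mu$ uniform on $[0,1]$ and $\lambda_n$ uniform on $[e^{n^2},e^{n^2}+1]$. Then $m_n=(1-\frac1n)\mu+\frac1n\lambda_n\rightharpoonup\mu$, and
\[
E(m_n)\le \Big(1-\frac1n\Big)^2E(\mu)+\frac{3}{4n^2}-\Big(1-\frac1n\Big)\frac{\log(e^{n^2}-1)}{n}\longrightarrow-\infty ,
\]
while $E(\mu)=\frac34$. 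So the failure occurs when $E(\mu)<+\infty$, not ``whenever $E(\mu)>-\infty$''.

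The paper argues differently. It truncates at large distances, replacing $\log|x-y|$ by $\log(|x-y|\wedge K)$. It then approximates this bounded-below, lower semicontinuous kernel from below by smooth functions and lets $K\to\infty$, aiming at the unrestricted statement. But $-\log(|x-y|\wedge K)\ge-\log|x-y|$, so \eqref{takis34.1} runs the wrong way for a lower bound on $E(m_n)$.

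What is lost is exactly the far-field term $\frac12\int\!\!\int\log^+(|x-y|/K)\,dm_n\,dm_n$. It must vanish uniformly in $n$ as $K\to\infty$. That is precisely the uniform integrability you derive from the moment bound, and the corrected example shows it can fail under weak convergence alone.

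So your decomposition is the right repair: cap the diagonal singularity, and control infinity by uniform integrability. Your restricted statement is also the form the paper needs. There $E$ enters through the upper semicontinuity of $u\wedge A-\phi-\delta E-\mathcal I_2$ on $(\Pt,W_2)$ in Definition~\ref{takis122} and Theorem~\ref{thm:stegall}.
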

\begin{proof}

Assume that $m_n\rightharpoonup m$. Then, for any smooth function $\phi$, we have 
\beq\label{manos}
\underset{n\to \oo}\lim  \dint_\R \dint_\R \phi(x,y) dm_n(x)dm_n(y)) =   \dint_\R \dint_\R  
\phi(x,y) dm(x)dm(y)).
\eeq
Next we observe that, for each $K>0$,
\beq\label{takis34.1}
E(m_n)= -\dfrac 12 \dint_\R \dint_\R \log(|x-y|)dm_n(x)dm_n(y) \geq   \dfrac 12 \dint_\R \dint_\R  \big(-\log(|x-y| \wedge K )dm_n(x)dm_n(y)).
\eeq

Approximating  $\big(-\log(|x-y| \wedge K \big)$ from below by an increasing  sequence of smooth  functions yields,  after letting $K\to \oo$ and using \eqref{manos} and \eqref{takis34.1},  that 
\beq\label{takis34.12}
\underset{n\to \infty}\liminf\; E(m_n) \geq E(m). 
\eeq
\end{proof}

Formally, we also have 
\beq\label{gradE}
 D_\mu E(m,x) =-H[m](x).
\eeq

We state now an important  identity for solutions to \eqref{takis3}. This is Proposition~5.6 in \citep{BLL},
where we refer for the complete proof. 
%We recall the convention that we use the same symbol for a measure and its density. 

\begin{proposition}\label{takis502}
Let $(m_t)_{t \geq 0}$ be a $\mcl P_2-$valued  solution to \eqref{takis3}. Then, for all $0<t<t'<T$,  
\beq\label{takis502.1}
E(m_{t'}) = E(m_{t}) +\dint_t^{t'} \dint_\R (H[m_s])^2 dm_s (x) ds
= E(m_{t}) + \dfrac{c_0}{3} \dint_t^{t'} \dint_\R m_s^3 dx ds,
\eeq
and, in particular,  the map  $t \to E(m_t) $ is decreasing. 
\end{proposition}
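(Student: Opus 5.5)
The plan is to differentiate the free entropy along the flow after regularizing the logarithmic kernel, and then remove the regularization. On $[t,t']$ with $t>0$, \eqref{takis501} gives $\|m_s\|_\infty\le t^{-1/2}$ uniformly in $s\in[t,t']$. Hence $m_s\in L^1\cap L^\infty\subset L^p$ for all $p\in[1,\infty]$, uniformly in $s$. In particular $H[m_s]$ is bounded in every $L^p$, $1<p<\infty$, and the flux $m_sH[m_s]$ is in $L^1_{loc}$, so the weak formulation of \eqref{takis3} (with $\alpha=0$) makes sense.

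\textbf{Step 1 (regularized entropy).} For $\eps>0$ set $\ell_\eps(z)=\frac12\log(z^2+\eps^2)$ and
\[
E_\eps(m)=-\frac12\int_\R\int_\R \ell_\eps(x-y)\,dm(x)\,dm(y).
\]
The kernel $\ell_\eps$ is smooth, all its derivatives of order $\ge1$ are bounded, and it grows only logarithmically. Since $m\in C([t,T],\Pt)$, the tensorized measure $m_s\otimes m_s$ solves a transport equation on $\R^2$, and one can test it with $\ell_\eps(x-y)$. To do this rigorously, I would truncate with a cutoff $\chi(x/R)\chi(y/R)$ and let $R\to\infty$, using the bound on second moments.

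Using the symmetry of $\ell_\eps$, this gives
\[
E_\eps(m_{t'})-E_\eps(m_t)=-\int_t^{t'}\int_\R H_\eps[m_s](x)\,H[m_s](x)\,m_s(x)\,dx\,ds,
\qquad
H_\eps[m](x)=\int_\R\frac{x-y}{(x-y)^2+\eps^2}\,dm(y).
\]
Here $H_\eps[m]$ is the Poisson-regularized Hilbert transform, i.e.\ the real part of the Cauchy transform at height $\eps$.

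\textbf{Step 2 ($\eps\to0$).}

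For the left side, $\ell_\eps\downarrow\log|\cdot|$ as $\eps\downarrow0$, on $\{|z|\le1\}$ and also globally. Monotone convergence therefore gives $E_\eps(m_s)\to E(m_s)$. This limit is finite because $m_s\in L^3\cap\Pt$, by \eqref{entropy10} and \eqref{eq:estE}.

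For the right side, $H_\eps[m_s]=P_\eps*H[m_s]$, where $P_\eps$ is the Poisson kernel. So $H_\eps[m_s]\to H[m_s]$ in $L^3$, and $\|H_\eps[m_s]\|_{L^3}\le\|H[m_s]\|_{L^3}\lesssim\|m_s\|_{L^3}$ uniformly in $s$. Since $H[m_s]m_s$ is uniformly bounded in $L^{3/2}$, dominated convergence in $s$ yields
\[
\int_t^{t'}\int_\R (H[m_s])^2\,dm_s\,ds.
\]
This gives the first identity. Note that the computation produces a minus sign: formally, $\frac{d}{ds}E(m_s)=\int D_\mu E\cdot H[m_s]\,dm_s=-\int H[m_s]^2\,dm_s$ by \eqref{gradE}. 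This sign is consistent with the assertion that $t\mapsto E(m_t)$ is decreasing, so I would read \eqref{takis502.1} with that sign (equivalently, with the roles of $t$ and $t'$ exchanged).

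\textbf{Step 3 (Cotlar).} The second equality follows from \eqref{cotlar1} applied to $u=m_s$. Since \eqref{cotlar1} is stated for smooth $u$, I would extend it to $u\in L^3$ by density, using that both sides are continuous in $L^3$: $u\mapsto\int H[u]^2u$ is continuous there by $L^3$-boundedness of $H$ and Hölder's inequality.

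The main obstacle is Step 1. One must justify the chain rule for the nonlocal quadratic functional when only a weak solution with the singular velocity $H[m]$ is available. This means controlling the cutoff terms at infinity, where $\ell_\eps$ grows and $H[m_s]m_s$ is only integrable. I would handle this through the $\Pt$ bound together with $|\partial_x\ell_\eps(z)|\le \min(1/|z|,1/\eps)$.
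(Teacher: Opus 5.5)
Your argument is correct, and you read the sign correctly. The paper's proof itself computes $\frac{d}{dt}E(m_t)=-\int_\R H[m_t]^2\,dm_t$, so the identity should carry a minus sign, as the monotonicity claim requires. The equation is indeed the uncontrolled one, $\alpha=0$.

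Your route differs from the paper's in where the regularization sits.

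\textbf{The paper's route.} The paper assumes the solution is smooth and differentiates $E$ along the flow using $D_\mu E=-H$ from \eqref{gradE}. It gets the second equality from Cotlar's identity and refers to \citep{BLL} for general solutions. Completing that route would require approximating the nonlinear flow by smooth solutions and passing to the limit in the dissipation $\int_t^{t'}\int_\R H[m_s]^2\,dm_s\,ds=\frac{c_0}{3}\int_t^{t'}\|m_s\|_{L^3}^3\,ds$. That quantity is only lower semicontinuous under weak convergence, so extra compactness would be needed to obtain an equality rather than an inequality.

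\textbf{Your route.} You keep the given weak solution and regularize the functional instead. With the kernel $\frac12\log(z^2+\eps^2)$, the chain rule only needs $\int_t^{t'}\int_\R|H[m_s]|\,dm_s\,ds<\infty$. The limit $\eps\to0$ is then handled in two pieces: monotone convergence on the entropy side, and on the dissipation side the identity $H_\eps[m]=P_\eps*H[m]$ with $L^3$ domination from the smoothing bound \eqref{takis501}.

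\textbf{Comparison.} Your approach gives a self-contained proof on $[t,t']$ for $t>0$. Its price is the reliance on \eqref{takis501}, and hence on uniqueness to identify $m_s$ with $S(s)m_0$; the paper quotes that uniqueness from \citep{BDLL,BLL}.
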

\begin{proof}
The decreasing property of the entropy flow follows from \eqref{takis502.1}, while the second equality in \eqref{takis502.1} is an immediate consequence of Cotlar's identity. Note that, since $m_t\in L^\oo \cap \Pt$ for $t>0$, we know that $m_t\in L^3$.
\vs
We present the proof of the first equality in \eqref{takis502.1} assuming that $(m_t)_{t \geq 0}$ is a smooth solution to \eqref{takis3} and refer to \citep{BLL} for the complete argument.
\vs 
Using \eqref{gradE} leads to
\[\dfrac{d}{dt} E(m_t) = - \dint_\R H[m_t]^2 m_t dx.\]

\end{proof} 

Given $m \in \widetilde {\mcl P}_2$, the map $t \to E(S(t) m)$ is a continuous function. It then follows from \eqref{takis501} and  \eqref{takis502.1} that, for any $m \in \mathcal{P}$ and $t > 0$,
\beq\label{manos1}
E(S(t) m) \leq -\dfrac{c_0}{3}\log t.
\eeq

We also recall the following result from \citep{BLL} and sketch its proof.
\begin{proposition}\label{prop:decreas}
Let $(m_t)_{t \geq 0}$ and $(\widetilde {m}_t)_{t \geq 0}$ be two $\mcl P_2-$valued solutions to  \eqref{takis3}. Then, for any $t \leq s$, \[W_2(m_s,\widetilde {m}_s) \leq W_2(m_t,\widetilde {m}_t).\]
\end{proposition}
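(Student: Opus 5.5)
Both solutions solve the uncontrolled Dyson equation (\eqref{takis3} with $\alpha=0$, i.e. \eqref{takis0.00}). I would prove the contraction through the one-dimensional structure of optimal transport, namely the quantile (inverse distribution function) representation. For $m\in\Pt$ let $X_m:(0,1)\to\R$ be its quantile function. Then
\[
W_2^2(m,\widetilde m)=\int_0^1 |X_m(u)-X_{\widetilde m}(u)|^2\,du,
\]
since the monotone rearrangement \eqref{takis113} is optimal on $\R$. The first step is to rewrite the Dyson flow as an evolution of $X_t:=X_{m_t}$. Formally, the Lagrangian characteristics $\dot x = H[m_t](x)$ preserve order, so $X_t$ solves
\[
\partial_t X_t(u)=H[m_t](X_t(u))=\mathrm{PV}\int_0^1 \frac{dv}{X_t(u)-X_t(v)} .
\]
This is the continuum analogue of the particle system \eqref{takis00} without noise.

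The second step is the key monotonicity. Put $Y_t=X_{\widetilde m_t}$ and compute
\[
\frac{d}{dt}\frac12\int_0^1|X-Y|^2du=\int_0^1\!\!\int_0^1 (X(u)-Y(u))\Big[\frac1{X(u)-X(v)}-\frac1{Y(u)-Y(v)}\Big]dv\,du .
\]
Symmetrizing in $(u,v)$, with $a=X(u)-X(v)$ and $b=Y(u)-Y(v)$, the integrand becomes $\frac12 (a-b)\big(\frac1a-\frac1b\big)=-\frac{(a-b)^2}{2ab}$. Since $X$ and $Y$ are both nondecreasing, $a$ and $b$ have the same sign, so $ab\ge0$ and the integrand is $\le 0$. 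Hence $t\mapsto W_2(m_t,\widetilde m_t)$ is nonincreasing. Equivalently, this is the displacement convexity of the free entropy $E$ in one dimension (by \eqref{gradE}, the flow is the $W_2$-gradient flow of $E$). The contraction then also follows from the EVI for gradient flows of geodesically convex functionals in \citep{ags}, and I would mention this as an alternative route.

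The third step, which is the main obstacle, is making the computation rigorous, because of the principal values and the singularity $1/(ab)$. My plan is to work on $[t+\eps,s]$ with $\eps>0$. There \eqref{takis501} gives $m_\tau\in L^\infty$, hence $m_\tau\in L^3$, and Proposition \ref{takis502} together with Cotlar's identity \eqref{cotlar1} gives $H[m_\tau]\in L^2(m_\tau)$. So the velocity field lies in $L^2$ and $W_2^2(m_\tau,\widetilde m_\tau)$ is absolutely continuous in $\tau$.

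To handle the singular integrand, I would regularize the kernel, replacing $1/z$ by the odd nondecreasing function $z/(z^2+\delta^2)$, or by a truncated version. For an odd nondecreasing kernel $k$ the same symmetrization gives $(a-b)(k(a)-k(b))\ge0$, so the sign argument goes through.

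Alternatively, I would use the particle approximation. For the deterministic system $\dot\la^i=\frac1N\sum_{j\ne i}(\la^i-\la^j)^{-1}$ the ordering is preserved, and the discrete version of the above calculation gives $\frac{d}{dt}\sum_i|\la^i-\widetilde\la^i|^2\le0$. The limit of the empirical measures, i.e. the mean field limit cited from \citep{chan,RC,BDLL}, together with lower semicontinuity of $W_2$ then yields the inequality.

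Finally, I would let $\eps\to0$, using the continuity $m\in C([0,T],\Pt)$, to get the inequality down to time $t$.
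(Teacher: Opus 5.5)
Your core computation is the paper's. The paper couples the two flows through an optimal plan $\gamma$ at the initial time and bounds $\frac{d}{dt}W_2^2$ by $\frac{d}{dt}\mathbb E|X_t-Y_t|^2$ at $t=0$. It then uses $(x-x')(y-y')\ge 0$ $\gamma\otimes\gamma$-a.e., the inequality $2(x-y)(x'-y')\le (x-y)^2+(x'-y')^2$, and antisymmetry. Your quantile representation keeps the coupling optimal for all times and symmetrizes exactly to $-\frac{(a-b)^2}{2ab}$, which is the same integrand, so this part is fine and slightly cleaner. Restricting to $[t+\eps,s]$ via \eqref{takis501} is also a sensible way to reach the $L^3$ (indeed $L^\infty$) setting in which the paper does the computation.

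The regularization step, however, is wrong as written.

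\begin{itemize}
\item \textbf{Wrong sign requirement.} With a kernel $k$ in place of $1/z$, the symmetrized derivative is $\frac12\iint (a-b)(k(a)-k(b))\,du\,dv$. You therefore need $(a-b)(k(a)-k(b))\le 0$ whenever $ab\ge 0$, i.e.\ $k$ nonincreasing on each half-line, which is exactly the property of $1/z$ that makes the argument work. An odd nondecreasing kernel gives $\ge 0$, i.e.\ expansion (take $k(z)=z$).
\item \textbf{Your kernels are not monotone.} $z/(z^2+\delta^2)$ increases on $(-\delta,\delta)$ and decreases outside. The usual truncation $z^{-1}\mathbb{1}_{\{|z|\ge\delta\}}$ is not monotone on $(0,\infty)$ either, since it jumps up at $\delta$.
\item \textbf{What must be shown instead.} For these kernels the symmetrized integrand has a positive part, bounded by $\frac12$ and supported in $\{\min(|a|,|b|)<\delta\}$. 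You must show this vanishes as $\delta\to0$. It does on $[t+\eps,s]$: for $u>v$, $X_\tau(u)-X_\tau(v)\ge (u-v)/\|m_\tau\|_\infty$. Hence the bad set has measure at most $2(\|m_\tau\|_\infty+\|\widetilde m_\tau\|_\infty)\delta$.
\item \textbf{Simpler fix.} Use the capped kernel $k_\delta(z)=\mathrm{sign}(z)\min(|z|^{-1},\delta^{-1})$, which is odd and nonincreasing on $(0,\infty)$ and gives the sign directly.
\item \textbf{Where to regularize.} In either case, use the regularized kernel only to approximate the principal value in the derivative of the true flow; the $L^\infty$ bound gives convergence in $L^2(m_\tau)$. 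If you regularize the dynamics themselves, you also need a convergence theorem for the regularized flows.
\end{itemize}

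Your particle-approximation route does not have this problem. Your gradient-flow alternative can also be made rigorous with tools already in the paper. The second proof of Lemma \ref{lemma:brezis} realizes the Dyson flow as the Brezis flow of a convex function on $L^2([0,1])$ restricted to $\mcl K$. Monotonicity of the subdifferential then gives $\|X_s-Y_s\|\le\|X_t-Y_t\|$, which is the claim since $X,Y\in\mcl K$.
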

\begin{proof}
We only give the proof in the case that  $(m_t)_{t \geq 0}$ and $(\widetilde {m}_t)_{t \geq 0}$ are $L^3-$valued and refer 
to  \citep{BLL} for the complete argument.
\vs
Let $\gamma$ be an optimal measure for $W_2(m_0, \widetilde m )$ and  $(X_0,Y_0)$ 
a couple of real random variables on a standard probability space such that $\mathcal{L}((X_0,Y_0)) = \gamma$. Since it is assumed that  the processes are valued in $L^3$, we deduce from Cotlar's identity that $H[\mathcal{L}(X_0)](X_0)$ is a squared integrable random variable.
\vs
Consider now the odes
\vskip.0025in
\[
\dfrac{d}{dt} X_t = H[\mathcal{L}(X_t)](X_t) \quad \text{and}  \quad \dfrac{d}{dt} Y_t = H[\mathcal{L}(Y_t)](Y_t).
\]
\vskip.05in
By construction, for any $t \geq 0$, $\mathcal{L}(X_t) = m_t$ and $\mathcal{L}(Y_t) = \widetilde {m}_t$. It then follows that, for all $t \geq 0$,
\[
\bal
\dfrac{d}{dt}W_2^2(m_t,\widetilde {m}_t)\bigg |_{t = 0} \leq \dfrac{d}{dt}\mathbb{E}[|X_t - Y_t|^2]\bigg |_{t = 0}.
\eal
\]
Then
\[
\bal
\dfrac12\frac{d}{dt}\mathbb{E}[|X_t - Y_t|^2]\bigg |_{t = 0} &= \mathbb{E}[(X_0-Y_0)(H[\mathcal{L}(X_0)](X_0) - H[\mathcal{L}(Y_0)](Y_0)]\\[1.2mm]
& = \dint_\R\dint_\R\dint_\R\dint_\R (x-y)\left(\dfrac{1}{x-x'} - \dfrac{1}{y -y'}\right)d\gamma(x,y)d\gamma(x',y')\\[1.2mm]
& = \dint_\R\dint_\R\dint_\R\dint_\R \frac{-(x-y)^2 + (x-y)(x'-y')}{(x-x')(y-y')} d\gamma(x,y )d\gamma(x',y').
\eal
\]
\vs
Recall next that, since $\gamma$ is an optimal coupling, it follows that, $\gamma\times \gamma$-almost everywhere,  $(x-x')(y-y') \geq 0.$ 
\vs
Then, using that $2(x-x')(y-y') \leq (x-y)^2 + (x'-y')^2$, we find that
\vs
\[
\bal
\dfrac 12\dfrac{d}{dt}\mathbb{E}[|X_t - Y_t|^2]\bigg |_{t = 0} &\leq \dfrac 12\dint_\R\dint_\R\dint_\R\dint_\R \frac{(x'-y')^2-(x-y)^2 }{(x-x')(y-y')} d\gamma(x,y)d\gamma(x',y').
\eal
\]
\vs
Since the term on the right side vanishes by symmetry, we get  that 
\[\dfrac{d}{dt}\mathbb{E}[|X_t - Y_t|^2]\bigg |_{t = 0} \leq 0,\].

The proof is now complete given that  the same argument can be made for any time.

\end{proof}
The following result is the analogous of its counterpart for the heat equation.
It has been proven using the language of free probability and their associated Wasserstein distances, see \citep{BV}. We provide here a more analytical approach.
\begin{proposition}
For any  $\mu \in \mcl P_2$ and $t \geq 0$, we have  $$W_2(S_t\mu,\mu) \leq \sqrt{t}.$$
\end{proposition}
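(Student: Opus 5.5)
We use the Lagrangian representation of the Dyson flow, as in the proof of Proposition \ref{prop:decreas}, together with the entropy identity of Proposition \ref{takis502}. The idea is to bound the length of the curve $s\mapsto S(s)\mu$ in $W_2$ by the kinetic energy of its velocity field $H[m_s]$, and to control that energy through the entropy dissipation. A direct Cauchy--Schwarz argument gives
\[
W_2^2(S(t)\mu,\mu)\le t\int_0^t\!\!\int_\R H[m_s]^2\,dm_s\,ds = t\,\big(E(m_0)-E(m_t)\big),
\]
which only yields $\sqrt t$ when the entropy drop is at most one. That is not uniform in $\mu$ (it is infinite for Dirac masses). So we will instead differentiate the squared distance to the fixed initial measure, in the spirit of an evolution variational inequality.

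Fix $\mu\in\Pt$ and set $f(t)=W_2^2(m_t,\mu)$ with $m_t=S(t)\mu$. By Proposition \ref{prop:decreas} it suffices, after an approximation argument, to treat $\mu\in L^3\cap\Pt$ and then pass to the limit using the $W_2$-contraction of $S(t)$. For $t>0$, $m_t\in L^\infty$ is absolutely continuous, so \eqref{takis113} gives an increasing optimal map $T_t$ pushing $m_t$ to $\mu$. By the differentiability of $W_2^2$ recalled in Section \ref{sec:regW2} (via \citep{AJ}) and the chain rule along the continuity equation, for a.e. $t$,
\[
f'(t)=\int_\R 2\big(x-T_t(x)\big)H[m_t](x)\,dm_t(x).
\]
We symmetrize exactly as in Proposition \ref{prop:decreas}. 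Write $y=T_t(x)$, $y'=T_t(x')$, and use $H[m_t](x)=\int (x-x')^{-1}dm_t(x')$ to get
\[
f'(t)=\iint \frac{(x-y)-(x'-y')}{x-x'}\,dm_t(x)\,dm_t(x')=\iint\Big(1-\frac{y-y'}{x-x'}\Big)\,dm_t(x)\,dm_t(x').
\]
Since $T_t$ is increasing, $(y-y')/(x-x')\ge 0$ on the off-diagonal. Because $m_t$ has a density, the diagonal is $m_t\otimes m_t$-null. Hence $f'(t)\le 1$.

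Integrating from $0$ with $f(0)=0$ gives $W_2^2(S(t)\mu,\mu)\le t$, which is the claim.

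The main obstacle is justifying the derivative formula and the principal-value symmetrization. Concretely, we need: absolute continuity of $t\mapsto f(t)$; validity of the chain rule with the merely a.e.\ differentiable $W_2^2$; and the fact that $(x-y)H[m_t](x)$ can be symmetrized when the kernel is only a principal value. For the chain rule, I would first try the superdifferential inequality $W_2^2(m_{t+h},\mu)\le \int |X_{t+h}-T_t(X_t)|^2$, using the Lagrangian flow $X$ with $\dot X=H[m](X)$. This gives the upper Dini derivative bound directly, and the bound $f'\le1$ only needs one-sided control. For the principal value, I would truncate the kernel at $|x-x'|\ge\delta$ and let $\delta\to0$. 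This is legitimate because $m_t\in L^3$ and $x-T_t(x)\in L^2(m_t)$, so $H[m_t]\in L^2(m_t)$ by \eqref{cotlar1}. Finally, the reduction from general $\mu\in\Pt$ uses that $S(\varepsilon)\mu\in L^\infty$, the semigroup property, and $W_2(S(\varepsilon)\mu,\mu)\to0$. Alternatively, one can start the differential inequality at $t=\varepsilon$ with reference measure $\mu$ and let $\varepsilon\to0$.
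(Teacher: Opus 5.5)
Your argument is correct and is essentially the paper's proof. The paper works with the quantile functions $X_t$ of $S_t\mu$, for which $W_2^2(S_t\mu,\mu)=\int_0^1 (X_t-X_0)^2\,du$, and the same symmetrization gives $\frac{d}{dt}W_2^2(S_t\mu,\mu)=\int_0^1\!\int_0^1 \big[1-\frac{X_0(u)-X_0(v)}{X_t(u)-X_t(v)}\big]\,du\,dv\le 1$; this is exactly your formula with $T_t=X_0\circ X_t^{-1}$, and the only difference is in justification: the paper reduces to smooth densities bounded below on compacts so that $X_t$ is smooth, whereas you handle general $\mu$ through the one-sided Dini-derivative bound started at $t=\varepsilon$, which is more careful but not a different idea.
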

\begin{proof}
In view of  the continuity of $S_t$ for $t \geq 0$, we prove the estimate for $\mu \in \mcl P_2$ with a smooth density, bounded from below by a positive constant on any compact set. %as the result then extends by continuity. 
\vs
For such a $\mu$, denote by $X_t: [0,1] \mapsto \R$ the inverse cumulative distribution function of $S_t\mu$. We remark that $X_t$ is a smooth function, since $S_t\mu$ is smooth and, in particular,  satisfies the equation
$$
\partial_t X_t(u) = \int_{0}^1\frac{1}{X_t(v)-X_t(u)}dv \text{ on } (0,\infty)\times (0,1).
$$
Furthermore, 
$$
W_2^2(S_t \mu,\mu) = \int_0^1(X_t(u) - X_0(u))^2du.
$$
In particular,  we can compute
$$
\ba
\frac{d}{dt}W_2^2(S_t \mu,\mu) &= 2 \int_0^1(X_t(u) - X_0(u))\int_{0}^1\frac{1}{X_t(v)-X_t(u)}dvdu\\
& = \int_0^1\int_0^1 \Big[1 - \frac{X_0(u) - X_0(v)}{X_t(u) - X_t(v)}\Big]dudv \leq 1.
\ea
$$
The result follows. 
\end{proof}

Next we recall without proofs two results about the Wasserstein distance which are well known in the transport theory. The first is about the displacement convexity of the $L^p-$norms.

\begin{proposition}\label{prop:displace}
Fix  $m^1, m^2 \in \mcl P_2(\R)$ and let  $(m_s)_{s \in [0,1]}$ be a $W_2-$geodesic between $m^1$ and $m^2$. Then, for all $1\leq p \leq \infty$ and $s \in [0,1]$, 
\[
\|m_s\|_p \leq \max(\|m^1\|_p,\|m^2\|_p).
\]
\end{proposition}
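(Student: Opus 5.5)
We work in one dimension and use the monotone rearrangement description of $W_2$-geodesics. If one of $m^1,m^2$ fails to have a density in $L^p$, the right-hand side is $+\infty$ and there is nothing to prove. So we assume both are absolutely continuous with densities in $L^p$.

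\emph{Step 1: the geodesic as a push-forward.} Let $T$ be the nondecreasing optimal map from \eqref{takis113}, so that $T_\# m^1=m^2$. The geodesic is then $m_s=(T_s)_\# m^1$ with $T_s=(1-s)\,\mathrm{id}+sT$. Since $T_s$ is nondecreasing, and strictly increasing for $s<1$, the change of variables formula (valid a.e.\ on the set where $m^1>0$, using the a.e.\ differentiability of monotone maps) gives
\[
m_s(T_s(x))\,T_s'(x)=m^1(x), \qquad T_s'(x)=(1-s)+sT'(x).
\]
Because $m^2$ is absolutely continuous, the endpoint case $s=1$ gives $m^2(T(x))\,T'(x)=m^1(x)$ $m^1$-a.e., with $T'(x)>0$ there.

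\emph{Step 2: $p=\infty$.} Write $a=m^1(x)$ and $b=m^2(T(x))$, so that $T'(x)=a/b$. Then
\[
m_s(T_s(x))=\frac{a}{(1-s)+s\,a/b}=\Big(\frac{1-s}{a}+\frac{s}{b}\Big)^{-1}.
\]
This is a weighted harmonic mean of $a$ and $b$, hence at most $\max(a,b)\le\max(\|m^1\|_\infty,\|m^2\|_\infty)$. Every point in the support of $m_s$ is of the form $T_s(x)$ for such $x$, up to null sets, so the case $p=\infty$ follows.

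\emph{Step 3: $1\le p<\infty$.} For $p=1$ all three quantities equal $1$, so assume $p>1$. By the push-forward formula,
\[
\|m_s\|_p^p=\int m_s^{p-1}\,dm_s=\int_\R m_s(T_s(x))^{p-1}\,m^1(x)\,dx .
\]
The function $t\mapsto t^{1-p}$ is convex on $(0,\infty)$, and the harmonic mean satisfies $\big((1-s)/a+s/b\big)^{-1}=\big((1-s)a^{-1}+sb^{-1}\big)^{-1}$. Applying convexity of $u\mapsto u^{-(p-1)}$ composed with this mean, or more directly convexity in $u=1/m$ of $u^{1-p}$ along the affine interpolation $u_s=(1-s)/a+s/b$, we obtain
\[
m_s(T_s(x))^{p-1}=u_s^{1-p}\le (1-s)\,a^{p-1}+s\,b^{p-1}.
\]
Integrating against $m^1\,dx$ gives
\[
\|m_s\|_p^p\le (1-s)\int (m^1)^{p}\,dx+s\int m^2(T(x))^{p-1}m^1(x)\,dx .
\]
Since $T_\# m^1=m^2$, the last integral equals $\int (m^2)^p$. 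Hence
\[
\|m_s\|_p^p\le (1-s)\|m^1\|_p^p+s\|m^2\|_p^p\le\max(\|m^1\|_p,\|m^2\|_p)^p .
\]

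\emph{Main obstacle.} The expected difficulty is justifying the Jacobian identity for merely monotone (not smooth) $T$, together with the null-set issues. We would handle this either by approximating $m^1,m^2$ by smooth positive densities, for which $T$ is smooth, and passing to the limit using the lower semicontinuity of $\|\cdot\|_p$ under weak convergence and the stability of geodesics. Alternatively, we would work with inverse distribution functions: writing $X_s=(1-s)X_0+sX_1$, the density of $m_s$ at $X_s(u)$ is $1/X_s'(u)$, and the same convexity computation applies on $[0,1]$.
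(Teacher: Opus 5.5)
Your proof is correct. The paper does not prove this proposition: it is recalled ``without proofs'' as a standard fact from optimal transport, so there is no argument in the paper to compare against. What you give is McCann's displacement-convexity argument specialised to one dimension: the density of $m_s$ along $T_s$ is the weighted harmonic mean of the endpoint densities, and $u\mapsto u^{1-p}$ is convex. For $p<\infty$ this yields $\|m_s\|_p^p\le (1-s)\|m^1\|_p^p+s\|m^2\|_p^p$, which is stronger than the stated bound by the maximum.

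There are three small points worth a line each.

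\emph{Absolute continuity of $m_s$.} When you write $m_s(T_s(x))$ for $0<s<1$, you are implicitly using that $m_s$ is absolutely continuous. This holds because $T_s(x)-T_s(y)\ge (1-s)(x-y)$ for $x>y$. Hence $T_s^{-1}$ is Lipschitz on the range of $T_s$, and $m_s(N)=m^1(T_s^{-1}(N))=0$ for every Lebesgue-null $N$.

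\emph{The ``main obstacle''.} No approximation is needed. Set $F_1(x)=m^1((-\infty,x])$ and $G_s(y)=m_s((-\infty,y])$. Then $F_1=G_s\circ T_s$; for $s=1$ this uses that $m^2$ has no atoms, so the level sets of $T$ are $m^1$-null. Apply the chain rule at points where $F_1'(x)=m^1(x)$, where $T_s'(x)$ exists, and where $G_s$ is differentiable at $T_s(x)$. This gives $m^1(x)=m_s(T_s(x))\,T_s'(x)$ directly. The exceptional set is $m^1$-null because both $m_s$ and $m^1$ are absolutely continuous with respect to $\mathrm{Leb}$.

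\emph{The case $p=\infty$.} Your argument gives $m_s\le\max(\|m^1\|_\infty,\|m^2\|_\infty)$ first only $m_s$-a.e. On $\{m_s>0\}$ this coincides with Lebesgue-a.e., and elsewhere the bound is trivial, so this suffices.
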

\vs
The second result is a useful upper bound for the $2-$Wasserstein metric between measures in $\Pt$.

\begin{proposition}\label{takis505}
Fix $\mu, \nu \in \mcl P_2$,  $t > 0$ and a pair $(\alpha, m)$ such that $m \in C([t,T];\mcl P_2)$, $m(t) =\mu, m_T = \nu$ and 
\[\partial_s m+ \partial_x (\alpha m) = 0 \   \text{ in } \  \R \times (t,T).\]
\vs
Then
\be\label{rep:w2}
W_2^2(\mu,\nu) \leq (T-t) \dint_t^T\dint_\R\alpha_s^2(x)dm_s(x).
\ee
\end{proposition}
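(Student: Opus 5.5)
The plan is to prove \eqref{rep:w2} as a Benamou--Brenier type bound: build a random curve whose marginals are $m_s$ and compare its endpoints. This is the same lifting device used in the proof of the Taylor expansion proposition above. We may assume $\int_t^T\int_\R \alpha_s^2\,dm_s\,ds<\infty$, since otherwise there is nothing to prove.

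\textbf{Step 1 (probabilistic representation).} By the superposition principle (see \citep{ags}), since $m\in C([t,T];\Pt)$ solves the continuity equation with a velocity field of finite energy, there exists a process $(X_s)_{s\in[t,T]}$ on a standard atomless probability space with the following properties. Its paths are absolutely continuous, and for a.e.\ $s$,
\[
X_s = X_t + \int_t^s \alpha_r(X_r)\,dr, \qquad \mathcal L(X_s)=m_s \ \ \text{for all } s\in[t,T].
\]
Equivalently, there is a probability measure $\eta$ on $C([t,T];\R)$ concentrated on absolutely continuous solutions of $\dot\gamma=\alpha(\gamma,s)$, with $(e_s)_\#\eta=m_s$. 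This step is the main obstacle, because $\alpha$ is only measurable and no uniqueness for the ODE is available. It is exactly what Ambrosio's superposition principle supplies under the integrability $\int\int|\alpha|\,dm_s\,ds<\infty$, which follows from the $L^2$ bound and the Cauchy--Schwarz inequality.

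\textbf{Step 2 (coupling and Cauchy--Schwarz).} The law of $(X_t,X_T)$ is a coupling of $\mu$ and $\nu$. Therefore
\[
W_2^2(\mu,\nu)\le \mathbb E\big[|X_T-X_t|^2\big] = \mathbb E\Big[\Big|\int_t^T \alpha_s(X_s)\,ds\Big|^2\Big] \le (T-t)\,\mathbb E\Big[\int_t^T \alpha_s(X_s)^2\,ds\Big],
\]
where the last inequality is Cauchy--Schwarz in time, applied pathwise.

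\textbf{Step 3 (identification).} By Fubini's theorem and $\mathcal L(X_s)=m_s$, the last expectation equals $\int_t^T\int_\R \alpha_s^2\,dm_s\,ds$, which gives \eqref{rep:w2}.

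An alternative route avoids the superposition principle: mollify $m$ and $\alpha m$ in space, set $\alpha^\varepsilon=(\alpha m)^\varepsilon/m^\varepsilon$, and run the same argument with the resulting smooth flow. The energy does not increase by Jensen's inequality (convexity of $(a,b)\mapsto a^2/b$), and one then passes to the limit using the lower semicontinuity of $W_2$. I would use this only if a self-contained proof is needed.
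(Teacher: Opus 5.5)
Your argument is correct. The paper recalls this bound without proof as a standard transport fact. Your route is the usual proof: lift $m$ to a process by the superposition principle, take $(X_t,X_T)$ as a coupling of $\mu$ and $\nu$, then apply Cauchy--Schwarz in time and Fubini. It is the same lifting device the paper uses in its Taylor-expansion proposition.

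Two small points to tidy up. First, the identity $X_s=X_t+\int_t^s\alpha_r(X_r)\,dr$ holds for every $s$, in particular $s=T$, by absolute continuity of the paths, not just for a.e.\ $s$. Second, your $L^2$ energy bound already gives the $p>1$ integrability under which Theorem 8.2.1 of \citep{ags} is stated; the $L^1$ condition you quote requires a refined version of the superposition principle.
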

\vs

\section{The controlled Dyson equation}\label{sec:contr}
We introduce and study some properties of the controlled Dyson equation 
\beq\label{takis30.1}
\pt_s m + \pt_x \big(m_s\big(H[m_s] +\al_s\big)\big)=0  \ \ \text{in} \ \ \R\times (t,T) \ \ \text{and} \ \ m_t=\mu.
\eeq
with a measurable control or velocity $\alpha:[t,T]\times \R \mapsto \R$. 

\vs
We say that the pair $(\alpha,m)$ is a weak solution to \eqref{takis30.1} if  $m \in C([t,T],\mcl P_2)$ and, 
%  $(\alpha,m)$ is a weak solution to \eqref{takis30.1}, meaning that, 
for any smooth test functions $\varphi$ with compact support in $\R\times [0,T)$, the weak formulation of \eqref{takis30.1} against $\varphi$ holds.

%
% is satisfied in the sense of $m \in C([t,T],\mcl P_2)$ and 
%%we say that $(\alpha,m)$ is a weak solution to \eqref{takis30.1} if 
%for any smooth test functions $\varphi$ with compact support, the weak formulation of \eqref{takis30.1} against $\varphi$ holds. 
%\vs
%We denote the set of such pairs $\mcl A(\mu,t)$.
\vs
Given a pair $(\alpha,m)$ we define its energy $L(\alpha,m)$ by
\beq\label{manos15}
L(\alpha,m)=\dfrac{1}{2}\dint_t^T\dint_\R (\alpha_s(x))^2 dm_s(x).
\eeq

We consider three problems. The first one is about compactness of sequences of solutions of bounded energy. The second one is related to  the question of existence of solutions for controls arising from the use of $C^{1,1}$ functions. The third one tackles the transposition of a control $\alpha$ used for one solution to another solution.

\vs
For the first,  given $\mu\in \Pt$ and $t\in [0,T)$,  we denote by $\mcl A(\mu, t)$ the set of  pairs 
$(\al,m)$ such that $m\in C([t,T]; \Pt)$, $\al: \R\times [t,T]\to \R$ is measurable and has finite energy, and  $(\al,m)$  is a weak solution to \eqref{takis30.1}
\vs

For the second,  given $\mu  \in \Pt$, $\nu \in \widetilde {\mcl P}_2$ and $t\in [0,T)$, we consider  the set  $\mcl A(\mu,\nu,t)$ of  pairs $(\al,m)$ such that $m\in C([t,T]; \Pt)$, $\al: \R\times [t,T]\to \R$ is measurable and has finite energy and $(\al,m)$ is  a weak  solution to 
\beq\label{takis30.5}
\begin{cases}
\pt_s m_s + \pt_x \big(m_s\big(H[m_s] +\al\big)\big)=0  \ \ \text{in} \ \ \R\times (t,T),\\
m_t=\mu \ \ \text{and} \ \  m_T=\nu.
\end{cases}
\eeq

We note that for \eqref{takis30.5}, we assume that the terminal measure has finite entropy. It turns out, as we will see below, that this is necessary in order to have a solution. 

%\vs
%
%Given any pair $(\alpha,m)$ in $\mcl A(\mu,t)$ or $\mcl A(\mu,\nu,t)$ we define its energy $L(\alpha,m)$ by
%\beq\label{manos15}
%L(\alpha,m)=\dfrac{1}{2}\dint_t^T\dint_\R (\alpha_s(x))^2 dm_s(x).
%\eeq
%

\subsection{The compactness of sub-levels of the energy}

The following result implies that sequences of pairs, solutions to \eqref{takis30.1}, which have bounded energy are pre-compact. 
The claim also asserts that the controlled Dyson equation has a regularizing effect manifested by the fact that, for $s>t$, $m_s$ has finite entropy and, for almost all $s \in (t,T)$, $m_s\in L^3$.

\begin{proposition}\label{prop:comp}
Fix $t\in (0,T)$ and $(\alpha,m)\in \mcl A(\mu,t)$. Then, there exists $C>0$ depending only on $L$, $T-t$, and $M_2(\mu)$ such that, for all $s\in (t,T]$,
\[
M_2(m_s) \leq C  \ \  \text{and} \ \ E(m_s) \leq C(1+ (s-t)^{-\frac12}).
\]

Moreover, for all $s,s'\in (t, T]$ such that $ s\leq s' $, 
\be\label{eq:est1}
\dint_s^{s'}\dint_\R m_\tau^3 dx d\tau \leq 2\dint_s^{s'}\dint_\R\alpha^2m_\tau(dx) d\tau + C(s'-s) + E(m_s) - E(m_{s'}).
\ee
\end{proposition}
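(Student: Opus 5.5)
We argue formally, assuming $(\alpha,m)$ smooth with $m_s\in L^3$; the general case follows by a standard regularization (mollifying $\alpha$ and $\mu$) together with the lower semicontinuity of $E$ (Lemma \ref{T1}) and of the energy under weak convergence. The two computations are the evolution of the second moment and of the free entropy along \eqref{takis30.1}, both tested against the velocity field $H[m_s]+\alpha_s$.

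\textbf{Step 1: second moment.} Test \eqref{takis30.1} with $x^2$ to get
\[
\frac{d}{ds}M_2(m_s)=2\int_\R x\,H[m_s](x)\,dm_s(x)+2\int_\R x\,\alpha_s\,dm_s .
\]
Symmetrizing the Hilbert term gives
\[
\int x\,H[m]\,dm=\iint \frac{x}{x-y}\,dm\,dm=\tfrac12 .
\]
By Cauchy--Schwarz, $2\int x\alpha_s\,dm_s\le M_2(m_s)+\int\alpha_s^2\,dm_s$. Gronwall then yields $M_2(m_s)\le e^{T-t}\big(M_2(\mu)+(T-t)+2L\big)$, which is the first bound.

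\textbf{Step 2: entropy identity.} Using \eqref{gradE}, as in Proposition \ref{takis502},
\[
\frac{d}{ds}E(m_s)=-\int_\R H[m_s]\big(H[m_s]+\alpha_s\big)\,dm_s .
\]
Young's inequality $|H\alpha|\le \tfrac12 H^2+\tfrac12\alpha^2$ gives
\[
\frac{d}{ds}E(m_s)\le-\tfrac12\int H[m_s]^2\,dm_s+\tfrac12\int\alpha_s^2\,dm_s .
\]
Cotlar's identity \eqref{cotlar1} converts $\int H[m]^2\,dm$ into $\frac{c_0}{3}\int m^3$. Integrating from $s$ to $s'$ gives \eqref{eq:est1}, up to fixing constants. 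The stated normalization (coefficient $1$ on $\int m^3$, factor $2$ on $\alpha^2$) is recovered by rescaling with $c_0$, or by using the weaker Young inequality with an absorbed constant. The extra $C(s'-s)$ term is harmless slack.

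\textbf{Step 3: entropy bound.} The main obstacle is the bound $E(m_s)\le C(1+(s-t)^{-1/2})$, since $E(\mu)$ may be $+\infty$. The plan is to exploit the dissipation. From Step 2, $E$ decreases up to a term controlled by the energy, and by \eqref{entropy10} together with Step 1, $E(m_\tau)\ge -C$. Integrating the inequality of Step 2 over $(\tau,s)$ then gives
\[
\int_\tau^s\int m^3\,dx\,d\sigma\le C+E(m_\tau)+CL .
\]
This does not yet close, because $E(m_\tau)$ is unbounded as $\tau\to t$. I would instead combine \eqref{eq:estE}, namely $E\le C\|m\|_{3}$, with the differential inequality
\[
\frac{d}{ds}E\le -c\,\|m_s\|_3^3+\tfrac12\int\alpha_s^2\,dm_s\le -c'\,(E_+)^3+\tfrac12\int\alpha_s^2\,dm_s .
\]
An ODE comparison of the type $y'\le -cy^3+f$ with $\int f\le L$ gives $y(s)\lesssim (s-t)^{-1/2}+L$, uniformly in the initial value, since solutions of $y'=-cy^3$ come down from infinity like $(s-t)^{-1/2}$.

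The delicate point is the forcing: bound $y(s)$ by the supersolution $\bar y(s)=a(s-t)^{-1/2}+\int_t^s f$ plus a constant. At any point where $y$ exceeds $\bar y$, the cubic term dominates, which gives the comparison. Justifying this rigorously for non-smooth solutions, via the approximation argument and the lower semicontinuity of $E$ (Lemma \ref{T1}), is the step where most of the care is needed.
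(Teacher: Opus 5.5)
Your proposal follows the paper's proof essentially step for step: the same smooth-case computation of $\frac{d}{ds}M_2(m_s)$ using $2\int_\R xH[m]\,dm=1$ and Gronwall; the same entropy inequality $\frac{d}{ds}E(m_s)\le\frac12\int_\R\alpha_s^2\,dm_s-\frac{c_0}{6}\int_\R m_s^3\,dx$ obtained from Young and Cotlar and integrated to give \eqref{eq:est1} (with constants equally loose); and the same use of $E\le C\|m\|_3$ to reach $E'+cE^3\le\frac12\int_\R\alpha^2\,dm$ and hence the $(s-t)^{-1/2}$ bound independent of $E(\mu)$. One caution concerns the approximation step, which the paper also only asserts: mollifying $\alpha$ and $\mu$ and re-solving does not obviously recover the given pair, because uniqueness for rough controls is open and the new energy is computed along the new trajectory, so the approximating pairs would have to be built from the given $(\alpha m,m)$ itself rather than by re-solving.
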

\vs
\begin{proof}
To prove the estimates, we assume that the path $(\alpha_s,m_s)_{s\in [t,T]}$ is smooth. The general case then follows from standard approximation results.
\vs

Since 

\[2\dint_\R x m H[m] dx= \dint_\R\dint_\R \dfrac{x m_s(x) m_s(y)}{x-y} dxdy=\dint_\R\dint_\R \dfrac{x-y}{x-y} m_s(x) m_s(y) dx dy=1, 
\]
a direct calculation yields 
\[
\frac{d}{ds}M_2(m_s)  = 2\dint_\R x\big ( \alpha_s(x) + H[m_s]\big ) m_sdx 
\leq  M_2(m_s) + \dint_\R \alpha^2_s(x)m_s dx + 1. 
\]

%Since 
%
%\[2\dint_\R x m H[m] dx= \dint_\R\dint_\R \dfrac{x m_s(x) m_s(y)}{x-y} dxdy=\dint_\R\dint_\R \dfrac{x-y}{x-y} m_s(x) m_s(y) dx dy=1, 
%\]
%

\vs
Next we  compute the evolution of the free entropy along the trajectory. 
Using Cotlar's identity we find that,   for all $s > t$, 
\be\label{est:fond}
\bal
\dfrac{d}{ds}E(m_s) &= -\dint_\R H[m_s](x)(\alpha_s (x) + H[m_s](x)])dm_s(x)\\
&\leq \dfrac{1}{2} \dint_\R \alpha_s(x)^2 dm_s(x)  - \dfrac 12 \dint_\R (H[m_s](x))^2 dm_s(x)\\
&= \dfrac 12 \dint_\R \alpha_s(x)^2 dm_s(x) -\dfrac{c_0}{6} \dint_\R m^3(x) dx.
\eal
\ee
Recall that, in view of  \eqref{eq:estE},  there exists some  $C>0$ such that 
\[
E(m)^3\leq C  \dint_\R m^3(x) dx.
\]

It then follows that, for some other $C>0$, 
\[
\dfrac{d}{ds}E(m_s) + C (E(m_s))^3 \leq \dfrac 12 \dint_\R \alpha_s(x)^2 dm_s(x).
\]
Since the map $s \to \dint_\R \alpha_s^2(x) dm_s(x)$ is in $L^1$ with a norm bounded by $L$, we find  that, for all $s \in (t,T]$, 
\[
E(m_s) \leq C(1+ (s-t)^{-\frac12}).
\]

The last claim is obtained by integrating \eqref{est:fond} between $s$ and $s'$.

\end{proof}

Proposition~\ref{prop:comp} implies that pairs in $\mcl A(\mu,t)$ with finite energy have finite free entropy for all $s>t$. Note that this regularizing property is weaker than the one of the Dyson equation, since we are only able to obtain a decay in $t^{-\frac 12}$ compared to the logarithmic one. This is of course the effect of the term in $\alpha$. 
\vs

It also follows that, as soon as the energy of the solution of the controlled Dyson equation is finite, then $E(m_s)$ stays bounded. As a result,  with finite energy, we can only reach states $m_T$ such that $E(m_T)\in \R$.
\vs

We show next that the subsets of $\mcl A(\mu,t)$  with bounded energy are pre-compact.

\begin{proposition}\label{takis1250}
Let $(\mu_n,t_n)_{n\geq 1}\in \Pt\times [0,T)$ and $(\al_n, m_n)_{n\geq 1}$ such that $(\al_n,m_n) \in \mathcal A(\mu_n,t_n)$ %be  sequences in $\Pt\times \Pt\times [0,T)$ and $\mathcal A(\mu_n,\nu_n,t_n)$ respectively$
be such that, for some $K>0$, % and $\ol t \in [0,T)$,
\[\underset{n\geq 1}\sup \dint_{t_n}^T \dint_\R \al_n^2 m_{n,s} (dx) ds\leq K.\]
%and assume that $\underset{n\to \infty} \lim t_n = \ol t \in [0,T).$ 
Then there exists a modulus of continuity $\omega$ such that, for all $s,s'\in [ t_n,T)$, %and $n$ such that $s,s'\in[t_n,T)$,
\[W_2(m_{n,s}, m_{n,s'})\leq \omega(|s-s'|).\]
%(ii)~Let $(\mu_n,\nu_n,t_n)_{n\geq 1}\in \Pt\times \Pt\times [0,T)$ and $(\al_n, m_n)_{n\geq 1 \in \mathcal A(\mu_n,\nu_n,t_n)}$ %be  sequences in $\Pt\times \Pt\times [0,T)$ and $\mathcal A(\mu_n,\nu_n,t_n)$ respectively$
%be such that, for some $K>0$, % and $\ol t \in [0,T)$,
%\[\underset{n\geq 1}\sup \dint_{t_n}^T \dint_\R \al_n^2 m_{n,s} (dx) ds\leq K.\]
%%and assume that $\underset{n\to \infty} \lim t_n = \ol t \in [0,T).$ 
%Then there exists a modulus of continuity $\omega$ such that, for all $s,s'\in [ t_n,T)$ %and $n$ such that $s,s'\in[t_n,T)$,
%\[W_2(m_{n,s}, m_{n,s'})\leq \omega(|s-s'|).\]
\end{proposition}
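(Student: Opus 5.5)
The plan is to write $m_n$ as the pushforward along a flow driven by the full velocity $v_n = \alpha_n + H[m_n]$, and to bound $W_2(m_{n,s},m_{n,s'})$ through the kinetic energy of $v_n$ over $[s,s']$, split into the control part and the Hilbert-transform part. By Proposition~\ref{takis505}, applied on $[s,s']$ with velocity $v_n$ (so that $\partial_\tau m+\partial_x(v_n m)=0$), we get
\[
W_2^2(m_{n,s},m_{n,s'}) \leq (s'-s)\int_s^{s'}\int_\R |\alpha_n+H[m_{n,\tau}]|^2 dm_{n,\tau}\,d\tau \leq 2(s'-s)\Big(K + \int_s^{s'}\int_\R H[m_{n,\tau}]^2dm_{n,\tau}\,d\tau\Big).
\]
By Cotlar's identity \eqref{cotlar1}, the last integral equals $\frac{c_0}{3}\int_s^{s'}\int m_{n,\tau}^3dxd\tau$. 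That integral is controlled by \eqref{eq:est1} in terms of $K$, $T$, and the entropy drop $E(m_{n,s})-E(m_{n,s'})$.

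The entropy drop is handled by two bounds. From below, \eqref{entropy10} together with the uniform second moment bound of Proposition~\ref{prop:comp} gives $E(m_{n,s'})\geq -C$. From above, Proposition~\ref{prop:comp} gives $E(m_{n,s})\leq C(1+(s-t_n)^{-1/2})$. This yields a uniform bound only away from the initial time. The constants are uniform in $n$ provided $M_2(\mu_n)$ is bounded, which I will assume, as it is implicit in the setting (e.g. $\mu_n$ converging in $\Pt$).

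The main obstacle is the blow-up of the entropy near $t_n$. I would not use the entropy there. Instead, near $t_n$ I would compare with the uncontrolled Dyson flow: define $\tilde m_{n,\tau}$ by $\tilde m_{n,\tau} = S(\tau - s)m_{n,s}$. Then I would estimate
\[
W_2(m_{n,s'},m_{n,s}) \leq W_2(m_{n,s'},\tilde m_{n,s'}) + W_2(S(s'-s)m_{n,s},m_{n,s}).
\]
The second term is at most $\sqrt{s'-s}$ by the proposition $W_2(S_t\mu,\mu)\le\sqrt t$. For the first term I would argue formally with the lifted ODEs of Proposition~\ref{prop:decreas}: the Hilbert parts give a nonpositive contribution to $\frac{d}{d\tau}\mathbb E|X_\tau-Y_\tau|^2$ along an optimal coupling, and the control contributes at most $2\|\alpha_n\|_{L^2(m_{n,\tau})}\sqrt{\mathbb E|X-Y|^2}$. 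This gives $W_2(m_{n,s'},\tilde m_{n,s'})\leq \int_s^{s'}\|\alpha_n\|_{L^2(m_{n,\tau})}d\tau\leq \sqrt{2K(s'-s)}$. Justifying this stability estimate for general measurable $\alpha_n$ is where care is needed; it would be done by approximation with smooth data as in Proposition~\ref{prop:comp}. If this works, it already gives $\omega(r)=C\sqrt r$ uniformly on all of $[t_n,T]$, and the entropy argument serves only as a backup for $s\geq t_n+\delta$. In that backup scheme, the combination is: take $\delta$ small, use the comparison bound on $[t_n,t_n+\delta]$, and use the entropy bound $W_2\lesssim \sqrt{(s'-s)(1+\delta^{-1/2})}$ afterward, then optimize over $\delta$ to obtain a modulus $\omega$.
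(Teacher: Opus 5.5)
Your main argument is correct, and it takes a genuinely different route from the paper's. The paper uses neither entropy, nor Cotlar's identity, nor the Wasserstein contraction here. It tests the equation against $\phi\in H^k$, $k\geq 3$. For such $\phi$ the Hilbert term in weak form, $\frac12\iint\frac{\phi'(x)-\phi'(y)}{x-y}\,dm\,dm$, is bounded by $\|\phi''\|_\infty$ for \emph{any} probability measure, so $\partial_s m_n$ is bounded in $L^2_sH^{-k}_x$ uniformly in $n$. It then mollifies by a Gaussian of variance $\eps$, uses $W_2(m,m\ast\rho_\eps)\leq\sqrt\eps$, and obtains the non-explicit modulus $2\sqrt\eps+C_\eps|s-s'|^{1/2}$.

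That route needs no regularity of $m_{n,s}$. However, it only gives equicontinuity in a negative Sobolev norm. Converting the $H^1$ bound on the mollified densities into a $W_2$ bound is exactly where uniform control of second-moment tails has to enter.

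Your comparison with $S(\tau-s)m_{n,s}$ instead gives, directly in $W_2$,
\[
W_2(m_{n,s},m_{n,s'})\leq(1+\sqrt K)\,|s-s'|^{1/2},
\]
uniformly in $n$ and $t_n$ and with no bound on $M_2(\mu_n)$. So you can drop the entropy ``backup'', which does need bounded $M_2(\mu_n)$ and that is not assumed in the statement. The price is the perturbed contraction estimate. This is the same estimate the paper itself uses in Theorem~\ref{thm:estlip} and Proposition~\ref{prop:regularitytime}.

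Do not justify that estimate by smoothing $\al_n$. There is no stability theory for the controlled equation with rough controls, so you could not pass back to the given $m_n$. Work on the given curves for $t_n<s<s'$ instead:
\begin{itemize}
\item By Proposition~\ref{prop:comp} and \eqref{eq:est1}, both $m_n$ and $S(\cdot-s)m_{n,s}$ are absolutely continuous in $\Pt$ on $[s,s']$, with square-integrable velocities, and lie in $L^3$ for a.e.\ $\tau$.
\item In one dimension the velocity field is determined by the curve. The quantile functions $X_\tau$ realize the optimal coupling $\gamma_\tau$ at every time and satisfy $\partial_\tau X_\tau=v_\tau(X_\tau)$.
\item Set $a=x-x'$ and $b=y-y'$, so that $ab\geq0$ holds $\gamma_\tau\otimes\gamma_\tau$-a.e. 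Then the Hilbert contribution to $\frac{d}{d\tau}\frac12W_2^2$ is $-\frac12\iint\frac{(a-b)^2}{ab}\,d\gamma_\tau\,d\gamma_\tau\leq0$.
\item Finally, let $s\downarrow t_n$ using the continuity of $m_n$ in $\Pt$.
\end{itemize}
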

\begin{proof}
Fix $\phi \in H^k$ with $k\geq 3$ so that $H^k$ is continuously  imbedded in $C^2(\R)$, and assume that $\|\phi\|_{H^k}\leq 1.$
\vs

Using  the equation satisfied by $m_n$ and for any $n\geq 1$ and $s\in [t_n,T)$ we have

\[\dfrac{d}{ds}\bigg \rvert \dint_\R \phi(x)m_{n,s}(dx)\bigg \lvert \leq \dint_\R |\phi'(x)||\al_n(x,s)|m_{n,s} (dx) + \dint_\R\dint_\R \bigg \rvert\dfrac{\phi'(x)-\phi'(y)}{x-y} \bigg \lvert m_{n,s}(dx)m_{n,s}(dy).
\]
\vs
After using H\"{o}lder's inequality, we find that,  for some $C=C(\|\phi\|_{H^k})$,

\[\dfrac{d}{ds}\bigg \rvert \dint_\R \phi(x)m_{n,s}(dx)\bigg \lvert \leq C\Big[\Big(\dint \al_n^2 m_{n,s}(dx)\Big)^{1/2} +1\Big].
\] 
%for some $C=C(\|\phi\|_{H^k})$. % such that $\|phi'\|_\oo+\|\phi''\|_\oo$. 
\vs
It follows that, for some other constant $C$ which depends on K and is independent of $n$, we have 
\beq\label{takis2001.1}
\underset{s\in [t_n,T]}\sup \|\pt_s m_n\|_{L^2_s(H^{-k}_x)}\leq C.
\eeq
We show next that \eqref{takis2001.1} implies some compactness for the sequence $\big((m_{n,s})_{s\in (t,T]} \big)_{n\geq 1}$. 
\vs

Indeed, let $m_{n,\ep}=m_n\ast_x \rho_\ep$, where $\rho_\ep$ is a Gaussian with mean $0$ and variance $\ep>0$. The previous argument yields an independent of $n$ constant $C_\ep>0$ such that 
\beq\label{takis2002}
\underset{n\geq 1} \sup \underset{s\in [t_n,T]}\sup \|\pt_s m_{n,\ep}\|_{L^2_s(H^{-k}_x)}\leq C_\ep.
\eeq
\vs
It follows that,  for all $n\geq 1$ and $s,s'\in [t_n,T]$,
\beq\label{takis2002}
\|m_{n,\ep,s}-m_{n,\ep,s'}\|_{H^1_x}\leq C_\ep|s-s'|^{1/2}.
\eeq
Moreover, we also have, for all $n\geq 1$, $\ep>0$  and $s \in [t_n,T]$, that 
\beq\label{takis2003}
W_2(m_{n,s}, m_{n,\ep,s})\leq \ep^{1/2}.
\eeq
\vs
Hence, for all $n\geq 1$ and $s,s'\in [t_n,T]$,
\[
\ba
W_2(m_{n,s}, m_{n,s'})&\leq W_2(m_{n,s}, m_{n,\ep, s}) + W_2(m_{n,\ep, s}, m_{n,\ep, s'}) +W_2(m_{n,\ep, s'}, m_{n, s'})\\
&\leq \ep^{1/2} + C_\ep|s-s'|^{1/2} + \ep^{1/2},
\ea
\]
and the claim follows.

\end{proof} 

As a consequence of the two previous propositions, we have the following result.

\begin{proposition}\label{takis2004}
Let $(\mu_n,t_n)_{n\geq 1} \in \Pt\times [0,T)$ and $(\al_n, m_n)_{n\geq 1} \in \mathcal A(\mu_n,t_n) $ be  sequences 
%in $\Pt\times \Pt\times [0,T)$ and $\mathcal A(\mu_n,\nu_n,t_n)$ respectively 
such that, for some $K>0$,
\beq\label{takis2001.11}
\underset{n\geq 1}\sup \dint_{t_n}^T \dint_\R \al_n^2 dm_n(x)\leq K,
\eeq
and assume that, as $n\to \oo$, $ t_n \to \ol t \in [0,T)$ and  $\mu_n\rightharpoonup \ol \mu$.
% and $\nu_n\rightharpoonup \ol \nu$.
%$\underset{n\to \infty} \lim ( \mu_n, \nu_n) = (\ol \mu,\ol \nu).$
Then,  there exists $(\ol \al, \ol m)\in \mathcal A(\ol \mu,t)$ such that 
\[\dint_{\ol t}^T\dint_\R \ol {\al} ^2 \ol m (dx) \leq K,\]
and, moreover, after extracting, if necessary,  a subsequence,     
\[\underset{n\to \oo} \lim \underset{s\in [\max(t_n, \ol t), T]} \sup W_2(m_{n,s}, \ol m_s)=0.\]  
%(ii)~Let $(\mu_n,\nu_n,t_n)_{n\geq 1} \in \Pt\times \Pt\times [0,T)$ and $(\al_n, m_n)_{n\geq 1} \in \mathcal A(\mu_n,\nu_n,t_n) $ be  sequences 
%%in $\Pt\times \Pt\times [0,T)$ and $\mathcal A(\mu_n,\nu_n,t_n)$ respectively 
%such that, for some $K>0$,
%\beq\label{takis2001.11}
%\underset{n\geq 1}\sup \dint_{t_n}^T \dint_\R \al_n^2 dm_n(x)\leq K,
%\eeq
%and assume that, as $n\to \oo$, $ t_n =\ol t \in [0,T)$, $\mu_n\rightharpoonup \ol \mu$ and $\nu_n\rightharpoonup \ol \nu$.
%%$\underset{n\to \infty} \lim ( \mu_n, \nu_n) = (\ol \mu,\ol \nu).$
%Then there exists some  $(\ol \al, \ol m)\in \mathcal A(\ol \mu,\ol \nu,t)$ such that 
%\[\dint_{\ol t}^T\dint_\R \ol {\al} ^2 \ol m (dx) \leq K,\]
%and, moreover, after extracting if necessary a subsequence,     
%\[\underset{n\to \oo} \lim \underset{s\in [\max(t_n, \ol t), T]} \sup W_2(m_{n,s}, \ol m_s)=0.\]                                           
\end{proposition}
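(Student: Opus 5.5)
We will argue by compactness in $C([\ol t,T];\Pt)$, and then pass to the limit in the momentum variable $E_n=\al_n m_n$ using the joint convexity of the kinetic energy. The Dyson nonlinearity will be handled with the $L^3$ bound of Proposition~\ref{prop:comp}.

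\emph{Compactness of the trajectories.} By Proposition~\ref{prop:comp}, the second moments $M_2(m_{n,s})$ are bounded uniformly in $n$ and $s$. Here we use that $M_2(\mu_n)$ is bounded, which we assume, as it is implicit if $\mu_n\to\ol\mu$ in $\Pt$. By Proposition~\ref{takis1250}, the family is equicontinuous in $W_2$ with a common modulus $\omega$. To work on the fixed interval $[\ol t,T]$ when $t_n>\ol t$, extend $m_n$ by $m_{n,s}=\mu_n$ for $s\in[\ol t,t_n]$. The bounded second moments give tightness, and hence relative compactness, in $W_1$ and weakly. Ascoli then yields a subsequence and a limit $\ol m\in C([\ol t,T];\mcl P)$ with $\sup_s W_1(m_{n,s},\ol m_s)\to 0$.

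To upgrade this to $W_2$, we need uniform integrability of $x^2$. This follows from a uniform bound on a higher moment, obtained as in Proposition~\ref{prop:comp}. Since $\frac{d}{ds}\int x^4 dm_s = 4\int x^3(\al+H[m])dm$, and $\int x^3 H[m]dm=\frac12\iint (x^2+xy+y^2)\,dm\,dm$ is controlled by $M_2$, Cauchy--Schwarz gives $\int x^3\al\, dm\le (\int x^6dm)^{1/2}(\int\al^2dm)^{1/2}$. This is not closed, so instead we use the uniform integrability of $x^2$ under $\mu_n$ (from $W_2$ convergence) together with the flow. For a convex increasing $\psi$ with $\psi(x^2)$ uniformly integrable and $\psi'$ bounded, $\frac{d}{ds}\int\psi(x^2)dm$ is controlled by $\int|x|\,|\al|\,dm$ plus bounded terms. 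This shows the tails stay uniformly small, which gives $\sup_s W_2(m_{n,s},\ol m_s)\to0$. We expect this moment bookkeeping to be the routine but delicate part.

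\emph{Passing to the limit in the equation.} Set $E_n=\al_n m_n\,ds$. By Cauchy--Schwarz, $|E_n|$ has total mass bounded by $\sqrt{TK}$, so up to a subsequence $E_n\rightharpoonup \ol E$ as measures. The Benamou--Brenier functional $(E,m)\mapsto\int |dE/dm|^2dm$ is jointly convex and weakly l.s.c. Hence $\ol E=\ol\al\,\ol m$ with $\int\ol\al^2d\ol m\le\liminf\int\al_n^2dm_n\le K$. The linear terms in the weak formulation pass to the limit directly. The main obstacle is the nonlinear term. Symmetrizing, it equals
\[
\int_{\ol t}^T\iint \frac{\pt_x\varphi(x,s)-\pt_x\varphi(y,s)}{2(x-y)}\,dm_{n,s}(x)\,dm_{n,s}(y)\,ds ,
\]
whose integrand is bounded and continuous on $\R^2$ (with value $\frac12\pt_{xx}\varphi$ on the diagonal). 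Since $m_{n,s}\otimes m_{n,s}\rightharpoonup\ol m_s\otimes\ol m_s$ uniformly in $s$, dominated convergence in $s$ gives convergence. This is the key point: in symmetrized form the Hilbert transform term is weakly continuous, so the $L^3$ estimate \eqref{eq:est1} is not even needed here. Finally, $\ol m_{\ol t}=\lim\mu_n=\ol\mu$ by equicontinuity, so $(\ol\al,\ol m)\in\mcl A(\ol\mu,\ol t)$. This establishes both the energy bound and the uniform convergence on $[\max(t_n,\ol t),T]$.
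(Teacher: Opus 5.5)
\textbf{The step that fails is the upgrade from $W_1$ to $W_2$ convergence, and it cannot be repaired.} Your tail argument does not work. If $\psi'$ is bounded, then $\psi(x^2)\lesssim 1+x^2$, so a bound on $\int\psi(x^2)\,dm_{n,s}$ gives nothing beyond bounded second moments. If $\psi$ is superlinear, as de la Vall\'ee-Poussin requires, then the term $\int x\,\psi'(x^2)\,\al_n\,dm_{n,s}$ is not controlled by $\int\psi(x^2)\,dm_{n,s}$ and the energy. More fundamentally, the rate at which $\al_n$ feeds $x^2$ into $\{|x|>R\}$ is only bounded by a quantity of the form $2\big(\int_{|x|>R}x^2\,dm_{n,s}\big)^{1/2}\|\al_{n,s}\|_{L^2(m_{n,s})}$. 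Integrated in time, this gives an increment of order $\sqrt{TK}$ that does not become small as $R\to\infty$: bounded energy can carry an $O(1)$ amount of second moment off to infinity.

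In fact the $W_2$ conclusion of the statement is false.
\begin{itemize}
\item \emph{Construction.} Take $t_n=0$ and $\mu_n=\mu$, with bounded density supported in $[-2,2]$. Use the monotone Lagrangian formulation on $([0,1],\mathrm{Leb})$ of Section~\ref{sec:contr} (cf.\ Lemma~\ref{lemma:brezis}), with process $X_s$ of law $m_{n,s}$. Give the labels $\omega\in A_n=(1-\frac1n,1]$ the extra velocity $v_n=\sqrt n/T$, i.e.\ $f_s(\omega)=v_n$ on $A_n$ and $0$ otherwise; this is nondecreasing in $\omega$.
\item \emph{Energy.} The energy is $Tv_n^2/n=1/T$.
\item \emph{Second moment of $m_n$.} Since $A_n$ sits to the right of all other particles, $\int_{A_n}H[m_{n,s}](X_s)\,d\omega\ge 0$. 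Hence $\int x\,\al_{n,s}\,dm_{n,s}=v_n\int_{A_n}X_s\,d\omega\ge \frac{s}{T^2}-\frac{2}{T\sqrt n}$. The identity $\frac{d}{ds}M_2(m_s)=1+2\int x\,\al_s\,dm_s$ from the proof of Proposition~\ref{prop:comp} then gives $M_2(m_{n,T})\ge M_2(\mu)+T+1-4/\sqrt n$.
\item \emph{The limit.} On the other hand $\int_0^T\int|\al_n|\,dm_{n,s}\,ds=1/\sqrt n\to 0$, so every limit point has zero momentum. By uniqueness for the Dyson equation, $\ol m_s=S_s\mu$, and $M_2(\ol m_T)=M_2(\mu)+T$.
\end{itemize}
Hence $W_2(m_{n,T},\ol m_T)\not\to 0$ along every subsequence.

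What is true, and what your argument actually proves, is uniform convergence in $W_p$ for every $p<2$. In addition, $\ol m\in C([\ol t,T];\Pt)$, because $W_2(\ol m_s,\ol m_{s'})\le\liminf_n W_2(m_{n,s},m_{n,s'})\le\omega(|s-s'|)$. The paper's proof has the same lacuna. It applies Ascoli--Arzel\`a directly, which in $(\Pt,W_2)$ would need pointwise relative compactness, i.e.\ uniformly integrable second moments. It also derives $\sup_n M_2(\mu_n)<\infty$ from weak convergence, which you rightly replaced by an explicit assumption.

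Apart from this, your route is sound and more complete than the paper's.
\begin{itemize}
\item The paper bounds $\beta_n=\al_n m_n$ in $L^{3/2}$ via the $L^3$ estimate of Proposition~\ref{prop:comp}. That estimate degenerates at $t_n$, so it only works on $(t,T)$ with $t>\ol t$.
\item The paper asserts without proof both $H[m_n]m_n\rightharpoonup H[\ol m]\ol m$ and the limiting energy bound.
\item Your joint lower semicontinuity of the Benamou--Brenier functional gives $\ol E\ll\ol m$ and $\int\!\!\int\ol\al^2\,d\ol m_s\,ds\le K$ at once, all the way down to $s=\ol t$.
\item Your symmetrized kernel actually proves the convergence of the Dyson term, with no $L^3$ bound needed.
\end{itemize}
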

\begin{proof}
It follows from Proposition~\ref{takis1250} that the  curves $\big(m_{n,s}\big)_{s\in [t_n, T]}$
are equicontinuous. Hence, in view of  the Ascoli-Arzela theorem and up, if necessary, to a subsequence, the $m_n$'s converge uniformly to a curve $(\ol m)_{s\in [\ol t,T]}$. In addition, we know that the sequence $\big(H[m_n]m_n)_{n\in \N}$  converges weakly to $H[\ol m]\ol m$. To conclude we need to show the existence of $\ol \al$, and for this we argue as follows. 
\vs

Fix  $t\in (\ol t,T)$.  Then \eqref{takis2001.11} 
%The assumption on the uniform bound on the cost for $(\al_n,m_n)$ 
yields that the measurable maps $\beta_n=\al_n m_n:\R\times[t_n,T]\to \R$ satisfy the uniform bound
\[\dint_{t_n}^T \dint_\R \dfrac{\beta_n^2}{m_n} dx ds\leq K,\]

while, in view of Proposition~\ref{prop:comp} and the fact that  

\[\underset{n\geq 1}\sup \; M_2(\mu_n)<\oo,\]
which is a consequence of the assumption of the weak-$\ast$ convergence of  $\mu_n$'s, we know that 
\[\underset{n\geq 1}\sup \dint_{t_n}^T\dint_\R m_n^2 dx ds <\oo. \]
Then, using H\"{o}lder's inequality,  we find  that 
\[
\ba
\underset{n\geq 1}\sup \dint_{t_n}^T\dint_\R |\beta_n|^{3/2} dx ds &= \underset{n\geq 1}\sup \dint_{t_n}^T\dint_\R  \bigg\rvert \dfrac{\beta_n}{m_n^{1/2}}\bigg\lvert^{3/2} m_n^{3/4} dx ds\\
& \leq \underset{n\geq 1}\sup \Big(\dint_t^T \dfrac{\beta^2_n}{m_n} dx ds \Big)^{3/4} \underset{n\geq 1}\sup \Big(\dint_t^T \dint_\R m_n^3  dx ds\Big)^{1/4}<\oo.
\ea
\]
It then follows that the sequence $(\beta_n)_{n\geq1}$ converges weakly in $L^{3/2}$ to some $\ol \beta$ and the limit $\ol m$ of the $m_n$'s is a weak solution to 
\[\pt_s \ol m + \pt_x \ol \beta + \pt_x \big(H[\ol m] \ol m\big)=0 \ \ \text{in} \ \ \R\times (t,T).\]
Setting $\ol \al=\dfrac{\ol \beta}{\ol m}$, with the understanding that $\ol \al=0$ when $\ol m=0$,  yields the result.

%& 1 %\leq \big(\dint_t^T \dfrac{\beta^2_n}{m_n} dx ds \big)^{3/4} \big(\dint_t^T m_n^3 \big)^{1/4}.
%\ea
\end{proof}

\subsection{Existence of solutions with smooth controls}
We now prove a result which highlights types of controls which yields a well posed Dyson equation.
\begin{proposition}\label{T2}
Given   $\phi \in C^{1,1}(\Pt)$ and  $\mu \in \Pt\cap L^3$,  there exists a unique solution $m\in C_t(\Pt \cap L^3)$ to 
\be\label{eq:mixed}
\partial_t m + \partial_x(m(D_\mu\phi(m) + H[m])) = 0 \  \text{ in } \ \R \times (0,\oo) \ \ \text{and} \ \ m(\cdot, 0)=\mu.
\ee
\end{proposition}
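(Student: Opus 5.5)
We would treat \eqref{eq:mixed} as a perturbation of the Dyson flow by a Lipschitz (in the lifted sense) velocity field and build the solution through a time-splitting or fixed-point scheme in Lagrangian variables. The key structural facts are these. By Definition~\ref{C1}, the lift $X\mapsto D_\mu\phi(\mathcal L(X),X)$ is Lipschitz on $L^2(\Omega)$ with constant $\sqrt K$. The Dyson part is $W_2$-contractive by Proposition~\ref{prop:decreas}. Both facts are most naturally expressed through the inverse cumulative distribution function $X_t:(0,1)\to\R$, for which $W_2$ is exactly the $L^2(0,1)$ distance and monotone rearrangement is preserved.

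\textbf{Step 1 (a priori bounds).} For a smooth solution, we compute $\frac{d}{dt}\int m^3$. Using $\alpha=D_\mu\phi(m)$ and Cotlar's identity as in \eqref{est:fond}, the Dyson term dissipates $L^3$-type quantities. The drift term contributes $\int m^3\,\partial_x\alpha$, which requires $\partial_x D_\mu\phi$ bounded. Since we only have $L^2$-Lipschitz control of $D_\mu\phi$, we would instead work with the entropy identity \eqref{est:fond} together with \eqref{eq:est1} of Proposition~\ref{prop:comp}. Because $\int |D_\mu\phi(m)|^2dm\le C(1+M_2(m))$ and $M_2$ stays bounded, these give $\int_0^T\!\int m^3<\infty$ and finite entropy. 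To get $m\in C_t(L^3)$ pointwise in time, we would use that in one dimension the monotone rearrangement $X_t$ satisfies $m(X_t(u))=1/\partial_uX_t(u)$. We then estimate $\partial_u X_t$ from below along the flow, using that $D_\mu\phi(m,\cdot)$ composed with $X_t$ is Lipschitz in $u$ in an averaged sense. Showing that this suffices is delicate.

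\textbf{Step 2 (existence).} We set up a Picard iteration in $C([0,T];L^2(0,1))$ on the quantile functions. Given a path $Y_\cdot$, let $X$ solve the Dyson equation with frozen external drift $b_t=D_\mu\phi(\mathcal L(Y_t),Y_t)$. For frozen drifts, we would use the contraction argument of Proposition~\ref{prop:decreas} with two different drifts. It yields
\[\tfrac{d}{dt}\|X_t-X'_t\|_{L^2}^2\le 2\langle X_t-X'_t,\,b_t-b'_t\rangle,\]
since the Hilbert-transform term contributes nonpositively. Combined with the Lipschitz bound on the lift, this gives $\sup_t\|X_t-X_t'\|\le \sqrt K\, T\sup_t\|Y_t-Y_t'\|$. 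The map is therefore a contraction on short intervals. We iterate over time, using the uniform bounds of Step 1 so that the step length does not shrink.

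\textbf{Step 3 (uniqueness).} The same differential inequality applied to two solutions gives Gronwall: $\frac{d}{dt}W_2^2(m_t,\tilde m_t)\le 2\sqrt K\,W_2^2$. Hence $m=\tilde m$ when the initial data agree.

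The main obstacle is the linear Dyson problem with a merely $L^2(m)$-Lipschitz external drift, that is, making sense of the frozen-drift step and propagating $L^3$. The drift is not a function with bounded spatial derivative, so the nonlinear transport could in principle concentrate mass. We would first regularize $\phi$ by sup/inf-convolution on the Hilbert lift, keeping the $C^{1,1}$ constant, and mollify $\mu$. We would then prove all estimates uniformly in these regularizations, in particular the entropy and $\int_0^T\!\int m^3$ bounds from Proposition~\ref{prop:comp}. Finally, we would pass to the limit with Proposition~\ref{takis2004}, identifying the limiting drift by the $L^2$-stability of $D_\mu\phi$ in Definition~\ref{C1}.
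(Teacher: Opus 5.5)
Your existence argument takes a genuinely different route from the paper's. The paper uses a Trotter--Kato splitting. On alternating intervals of length $T/(N+1)$ it runs, at double speed, either the pure Dyson flow or the ODE $dX_t=D_\mu\phi(\mathcal L(X_t),X_t)\,dt$ in $L^2(\Omega)$, which is well posed because the lifted drift is Lipschitz. Proposition~\ref{takis1250} and the Lipschitz ODE give a uniform $W_2$-modulus, Ascoli--Arzel\`a gives a limit, and the splitting error in the weak formulation is $O(\omega(N^{-1}))$. Uniqueness is proved as in your Step~3.

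Your contraction in quantile variables is sound. Two quantile functions are comonotonically, hence optimally, coupled, so the Hilbert term is nonpositive as in Proposition~\ref{prop:decreas}. The constant $\sqrt K\tau$ depends on nothing else, so no a priori bounds are needed to iterate in time. The scheme gives existence, uniqueness and Lipschitz dependence on the data without any compactness.

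Its only nontrivial input is the Dyson flow with an arbitrary Lagrangian drift $b\in L^2(0,T;L^2(0,1))$, not necessarily monotone in $u$. This is not an obstacle: Lemma~\ref{lemma:brezis} provides it (second proof, without the $(g-X)/(t-s)$ term), via Brezis' theory for the convex functional $X\mapsto-\frac12\iint\log|X(\omega)-X(\omega')|\,d\omega\,d\omega'+\mathbb 1_{\mathcal K}(X)$ on $L^2(0,1)$. You could even skip Picard and treat $X\mapsto D_\mu\phi(\mathcal L(X),X)$ as a Lipschitz perturbation of its subdifferential. The remedy in your last paragraph is misdirected anyway: sup/inf-convolving a lift that is already $C^{1,1}$ produces nothing smoother in $x$.

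What your proposal leaves genuinely open is $m\in C_t(L^3)$. Step~1 gives only $\int_0^T\!\int m^3<\infty$, and you call the pointwise-in-time bound delicate. The paper's proof does not address this either; its limit is only shown to lie in $C([0,T];\Pt)$.

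The missing idea is that Definition~\ref{C1} gives more than $L^2(m)$-control of $D_\mu\phi$. Apply it with $\mu=\nu=m$ and the following coupling of $m$ with itself: the identity off two small neighbourhoods $A\ni x_0$, $B\ni y_0$ of equal mass $c$, and $c(m_A\otimes m_B+m_B\otimes m_A)$ on them, where $m_A,m_B$ are the normalized restrictions. Shrinking $A,B$ yields $|D_\mu\phi(m,x_0)-D_\mu\phi(m,y_0)|\le\sqrt K|x_0-y_0|$ for $m$-a.e. $x_0,y_0$. Consequently, formally,
\[
\frac{d}{dt}\int_\R m_t^3\,dx=-2\int_\R m_t^3\,\partial_x\big(D_\mu\phi(m_t)+H[m_t]\big)\,dx\le 2\sqrt K\int_\R m_t^3\,dx.
\]
This holds because, with the paper's normalization, $\partial_xH[m]=\pi(-\partial_{xx})^{1/2}m$ and $\int m^3(-\partial_{xx})^{1/2}m\,dx\ge0$ for $m\ge0$. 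Hence $\|m_t\|_3^3\le e^{2\sqrt Kt}\|\mu\|_3^3$. This can be made rigorous on an approximation scheme and passed to the limit by weak lower semicontinuity of the $L^3$ norm. Equivalently, your quantile idea closes, since $\partial_x D_\mu\phi(m_t,\cdot)\ge-\sqrt K$ holds pointwise rather than only on average.
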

\begin{proof}
We start with the existence which follows from a compactness argument. Let $(\Omega, \mathcal{A},\mathbb{P})$ be  a standard atomless probability space and consider a random variable $X_0 \in L^2(\Omega, \R)$ such that $\mathcal{L}(X_0) = \mu$. The claim is equivalent to proving that there exists a process $(X_t)_{t \geq 0}$ satisfying, almost surely,
\[
X_t = X_0 + \int_0^t \Big [D_\mu\phi(\mathcal{L}(X_s),X_s) + H[\mathcal{L}(X_s)](X_s)\Big ]ds.
\]
%Note that thanks to Proposition 3.1 (in the full version...), it is sufficient to prove a local result of existence of such a process. Indeed, once a local existence result is established, Proposition 3.1 yields an a priori estimate which allows to extend it to arbitrary long time interval.
We construct such a process using a Trotter-Kato scheme as follows. We define, for $N \geq 1$ and $i \leq N$, the times $t_{i,N} = iT/(N+1)$, and consider  the trajectory $m^N$  given by 
\beq\label{eveni}
\partial_s m^N + 2\partial_x(m^NH[m^N])  = 0 \  \text{ in } \ \R \times (t_{i,N},t_{i+1,N}) \ \  \text{if \ \ $i$ is even},
\eeq
and 
\beq\label{oddi}
\partial_s m^N + 2\partial_x(m^ND_\mu\phi(m^N))  = 0 \text{ in }  \ \R \times (t_{i,N},t_{i+1,N}) \ \ \text{if $i$ is odd}.
\eeq
\vs
Note that \eqref{oddi} is  uniquely solvable on any time interval given an initial condition, since it simply amounts to solving in $L^2(\Omega, \R)$ the ode
\beq\label{takis1.00}
dX_t = D_\mu\phi(\mathcal{L}(X_t),X_t)dt.
\eeq
The assumed $C^{1,1}-$regularity of $\phi$ means that the drift $D_\mu\phi$ 
is  uniformly Lipschitz continuous in $L^2(\Omega,\R)$. Hence,  \eqref{oddi} is uniquely solvable for each initial condition in $\Pt$.
\vs
Furthermore, the Dyson equation \eqref{eveni} is also uniquely solvable for any initial condition in $\Pt$. Moreover the second moments $M_2(m^N_t)$ are  bounded locally uniformly in $t$ and uniformly  in $N$.
% and locally in $t$.
%uniform bounds on $M_2(m^N_t)$ is standard.
\vs
Next we note that that the sequence  $((m^N)_{t \in [0,T]})_{N \geq 0}$ is precompact for any $T> 0$. Indeed, 
%the bound above and the  uniform continuity in time of the sequence $((m^N)_{t \in [0,T]})_{N \geq 0}$ 
%yield that the sequence  $((m^N)_{t \in [0,T]})_{N \geq 0}$ is precompact for any $T> 0$. 
%\vs
%It follows  that any limit point of this sequence is a weak solution of \eqref{eq:mixed}. Indeed, 
%\vs
%The former follows from the  uniform continuity in time of the sequence $((m^N)_{t \in [0,T]})_{N \geq 1}$. For the latter, 
in the time intervals $[t_{i,N},t_{i+1,N}]$ for $i$ odd,  the regularity of the ode on $L^2(\Omega,\R)$ yields that there exists $C>0$ such that, for any $N >1$, odd $i \leq N$ and $s,s' \in [t_{i,N},t_{i+1,N}]$,
\[
W_2(m^N_s,m_{s'}^N) \leq C|s-s'|.
\]

Moreover, Proposition~\ref{takis1250}  yields a uniform modulus of continuity which holds for $s,s'$ in the time intervals $[t_{i,N},t_{i+1,N}]$ for $i$ even. 
\vs
Thus, there exists a continuous $\omega: \R_+ \to \R_+$ such that, for any $N \geq 1$, $s,s' \in [0,T]$,
\[
W_2(m^N_s,m_{s'}^N) \leq \omega(|s-s'|).
\]
Hence, extracting, if necessary,  a subsequence, the sequence $((m^N)_{t \in [0,T]})_{N \geq 1}$ converges uniformly toward some $\ol m \in \mathcal{C}([0,T],\Pt)$.
\vs
Next we observe that, for all  $\psi \in C^\infty (\R\times [0,T])$  and  $N \geq 1$,
\[
\ba
\int_\R\psi(T,x)dm^N_t(x) &= \int_\R\psi(0,x)dm_0(x) 
+2\sum_{j = 1}^{N/2} \int_{t_{2j,N}}^{t_{2j +1,N}}\int_\R\int_\R \frac{\psi(x,s)-\psi(y,s)}{x-y}dm^N_s(x)dm^N_s(y)ds\\[1.2mm]
& +2\sum_{j = 1}^{N/2} \int_{t_{2j+1,N}}^{t_{2j +2,N}}\int_\R\partial_x\psi(x,s)D_\mu\phi(m^N_s,x)dm^N_s(x)ds.
%\\[1.2mm]
%&+\sum_{j = 1}^{N/2} \int_{t_{2j+1,N}}^{t_{2j +2,N}}\int_\R\partial_x\psi(x,s)D_\mu\phi(m^N_s,x)dm^N_s(x)ds.
\ea
\]
Using the uniform continuity of the sequence $((m^N)_{t \in [0,T]})_{N \geq 1}$, we find  a modulus of continuity $\omega$ such that, for any $N \geq 1$ and for all $j \leq N/2 -1$,
%\be\label{eq:estimateH}

\[
\ba
\bigg| \int_{t_{2j,N}}^{t_{2j +1,N}}\int_\R\int_\R \frac{\psi(x,s)-\psi(y,s)}{x-y}dm^N_s(x)dm^N_s(y)ds % \\[1.2mm]
& - \int_{t_{2j+1,N}}^{t_{2j +2,N}}\int_\R\int_\R \frac{\psi(x,s)-\psi(y,s)}{x-y}dm^N_s(x) dm^N_s(y)ds\bigg |\\[1.2mm]
& \leq C\|\psi\|_{C^2}\int_{t_{2j,N}}^{t_{2j+1,N}}\omega(N^{-1})ds,
\ea
\]
%\ee
and, similarly, given any process $(X^N_s)_{s \geq 0}$ with law  $m^N_s$ at time $s$,
\[
\ba
\bigg| &\int_{t_{2j,N}}^{t_{2j +1,N}}\int_\R\partial_x\psi(s,x)D_\mu\phi(m^N_s,x)dm^N_s(x)ds \\
&- \int_{t_{2j+1,N}}^{t_{2j +2,N}} \int_\R\partial_x\psi(x,s)D_\mu\phi(m^N_s,x)dm^N_s(x)ds\bigg|\\
&= \bigg|\int_{t_{2j,N}}^{t_{2j +1,N}} \mathbb{E}\Big[\partial_x\psi(X_s,s)D_\mu\phi(\mathcal{L}(X^N_s),X^N_s) -\partial_x\psi(s+ N^{-1},X_{s+ N^{-1}})D_\mu\phi(\mathcal{L}(X^N_{s+ N^{-1}}),X^N_{s+ N^{-1}})\Big]ds\bigg| \\
& \hskip.5in  \leq C\|\psi\|_{C^2}\int_{t_{2j,N}}^{t_{2j+1,N}}(N^{-1} + \mathbb{E}[|X^N_s - X^N_{s + N^{-1}}|^2]^\frac12)ds.
\ea
\]

Since it is always possible to choose a process such that, for almost every $s \in [t_{2j,N},t_{2j+1,N}]$, \[\mathbb{E}[|X_s - X_{s + N^{-1}}|^2] = W_2^2(m^N_s,m^N_{s + N^{-1}}),\] we finally 
%arrive at an inequality similar to \eqref{eq:estimateH}, that is, 
we  deduce that there exists $r(N,\psi)$ such that 
\[|r(N,\psi)| \leq C\sum_{i = 0}^N\int_{t_{i,N}}^{t_{i+1,N}}\|\psi\|_{C^2}\omega(N^{-1})ds\leq C \|\psi\|_{C^2}\omega(N^{-1})\] 
and 
\[
\ba
\int_\R\psi(x,T)dm^N_t(x) =  & \int_{0}^{t}\int_\R\int_\R \frac{\psi(x,s)-\psi(y,s)}{x-y}dm^N_s(x)dm^N_s(y) \\
&+ \int_\R\partial_x\psi(s,x)D_\mu\phi(m^N_s,x)dm^N_s(x)ds +\int_\R\psi(0,x)dm_0(x) + r(N,\psi).
\ea
\]
%with  
%\[|r(N,\psi)| \leq C\sum_{i = 0}^N\int_{t_{i,N}}^{t_{i+1,N}}\|\psi\|_{C^2}\omega(N^{-1})ds\leq C \|\psi\|_{C^2}\omega(N^{-1}).%\] 
The existence follows by passing to the limit $N \to \infty$, along a subsequence if necessary.\\

%To show the uniqueness of the  solutions,  %follows from a Gronwall like estimate. 
Next, consider two solutions $m$ and $\tilde m$ as in the statement, with respective initial conditions $\mu$ and $\tilde \mu$, as well as an optimal coupling $\gamma \in W_2(\mu,\tilde \mu)$. Then, from the contraction of the Wasserstein distances by the Dyson flow, we deduce
$$
\begin{aligned}
\frac{d}{dt}W_2^2(m_t,\tilde m_t)|_{t =0} \leq \int_{\R\times \R}|D_\mu \phi(\mu)(x) - D_\mu \phi(\tilde \mu)(y)|^2\gamma(dx,dy)\\
\leq  C\int_{\R\times \R}|x - y|^2\gamma(dx,dy)= C W_2^2(\mu,\tilde \mu),
\end{aligned}
$$
where the second inequality comes from the fact that $\phi \in C^{1,1}$. The uniqueness follows from Gronwall's Lemma.
\end{proof}

% In what follows we consider  
%\beq\label{manos10}
%\begin{cases}
%\pt_s m + \pt_x(m(H[m]+\alpha)=0 \in \ \text{in} \ R\times (0,T),\\[1.2mm]
%m_0=\mu,
%\end{cases}
%\eeq
%and
%\beq\label{manos11} 
%\begin{cases}
%\pt_s m + \pt_x(m(H[m]+\alpha)=0 \in \ \text{in} \ R\times (0,T),\\[1.2mm]
%m_0=\mu \ \text{and} \ m_T=\nu.
%\end{cases}
%\eeq
%
%

\subsection{Translating the controls}
To make rigorous the idea of translating a control, we shall adopt a Lagrangian point of view, and we refer to \citep{tangent} for more on the topic. Hence, instead of working on the density of the eigenvalues $\mu \in \mcl P_2$, we shall work with a random variable $X$ of law $\mu$, and interpret $X(\omega)$ as the value of the eigenvalue labeled $\omega$. Furthermore, to fix ideas and to ease some arguments later on, we concentrate on the case of the probability space $([0,1],\text{Leb})$. We note that $\mathcal H=L^2([0,1]; \R)$ is the usual Hilbert space of square integrable real valued functions defined on $[0,1]$, and denote by $\mcl K$ the cone of $\mcl H$ of non-decreasing functions.
\vs
Given $(\al,m) \in \mcl A(t,\mu)$, we  consider a process $(Y_s)_{s \in [t,T]}$ solving 
$$
\frac{d}{ds} Y_s = \alpha(s,Y_s) + H[\mcl L(Y_s)](Y_s)  \text{ in } (t,T) \ \ \text{and} \ \ \mathcal L(Y_s) = m_s \ \ \text{for all} \ \ s \in [t,T].
$$
%such that, for all $s \in [t,T]$, $\mathcal L(Y_s) = m_s$. 
The existence of such a process is for instance given by Ambrosio's superposition principle, Theorem 8.2.1 in \citep{ags}, applied on $[t+ \eps,T]$, and then extended by taking the limit $\eps \to 0$. 
\vs
Let $\mcl E$ be defined on $\mcl H$ by $\mcl E(X) = E(\mcl L(X))$. It follows from Proposition \ref{prop:comp} that, for $\ep>0$,
 $$\sup_{ s \geq t +\eps}\mcl E(Y_s) \leq \oo.$$
 In particular, because $(Y_s)_{s \in [t,T]}$ is continuous in $\mcl H$, we know that,
 \[\text{ if $Y_t \in \mcl K$, then $Y_s \in \mcl K$ for all $s \in [t,T]$}.\]
\vs
Next we find a solution $(X_s)_{s \in [t,T]}$ to
$$
\frac{d}{ds} X_s = \alpha(s,Y_s) + H[\mcl L(X_s)](X_s)  \text{ in } (t,T),
$$
with the property that,
\[\text{if $X_t \in \mcl K$, then $X_s \in \mcl K$ for all $s \in [t,T]$}.\]

%and, to ease the arguments which are to come, we would also like to guarantee that if $X_t \in \mcl K$, then $X_s \in \mcl K$ for all $s \in [t,T]$.
%\vs
The existence of such process follows from the next result, which is stated in a more general way as it will also be useful in Section \ref{sec:valuevisc}.
\vs

\vs
\begin{lemma}\label{lemma:brezis}
Let $f,g \in L^2([0,t];\mathcal H)$ and $X_0 \in \mathcal K$.
% and $X_0 \in \mcl K$. 
 Then, for any $\eps>0$, there exists $X \in \mathcal C([0,t),\mcl H)$  with initial condition $X_0$ and such that 
\beq\label{odeH}
\frac{d}{ds} X_s = f(s) + H[\mcl L(X_s)](X_s) + \frac{g(s)-X_s}{t-s} \text{ in } (0,t) \ \ \text{and} \ \  X_s \in \mcl K \ \ \text{ for all} \ \  s \in [0,t).
\eeq
%with initial condition $X_0$, and which satisfies $X_s \in \mcl K$ for any $s \in [0,t)$.
\end{lemma}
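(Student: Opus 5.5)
We will construct $X$ as the limit of an implicit (proximal) time discretization in $\mcl H$, based on the observation that, on the cone $\mcl K$, the Dyson drift $X\mapsto H[\mcl L(X)](X)$ is minus the subdifferential of the convex functional $\mcl E$. For $X\in\mcl K$ with $\mcl E(X)<\oo$ we have $\mcl E(X)=-\frac12\int_0^1\int_0^1\log|X(u)-X(v)|\,du\,dv$, and $(a,b)\mapsto-\log|a-b|$ is convex on the half-plane $\{a>b\}$. Hence $\mcl E+\mathbb 1_{\mcl K}$ is a proper, convex, lower semicontinuous functional on $\mcl H$; lower semicontinuity follows from Lemma \ref{T1} and the closedness of $\mcl K$. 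Its minimal subgradient at $X$ is $-H[\mcl L(X)](X)$ whenever $\mcl L(X)\in L^3$, which we check using Cotlar's identity and \eqref{gradE}. Equation \eqref{odeH} then takes the form
\[
\frac{d}{ds}X_s+\partial(\mcl E+\mathbb 1_{\mcl K})(X_s)\ni f(s)+\frac{g(s)-X_s}{t-s},
\]
a gradient-flow inclusion perturbed by an $L^2$ forcing term and a linear term with time-dependent coefficient.

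To handle the singular coefficient $(t-s)^{-1}$ we work on $[0,t-\eps]$ for each $\eps>0$ (this is presumably the role of the $\eps$ in the statement). On that interval the map $X\mapsto \frac{g(s)-X}{t-s}$ is Lipschitz with constant $\eps^{-1}$ and has an $L^2_s$ inhomogeneity. We then apply Brezis' theorem on evolution equations $X'+\partial\varphi(X)\ni B(s,X)$, with $\varphi$ convex and lsc and $B$ Lipschitz in $X$ with $L^2$ dependence in $s$. This gives a unique strong solution $X\in C([0,t-\eps],\mcl H)$ with $X_s\in\overline{D(\varphi)}\subset\mcl K$, provided $X_0\in\overline{D(\varphi)}$, which holds because $\overline{D(\varphi)}=\mcl K$ (smooth strictly increasing maps are dense in $\mcl K$). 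By uniqueness, the solutions built on $[0,t-\eps]$ for different $\eps$ agree on their common interval, so they patch together into a solution on $[0,t)$. Alternatively, one can avoid citing Brezis' theorem directly: run the Trotter-type splitting of Proposition \ref{T2}, or the implicit Euler scheme $X^{k+1}=\arg\min\{\varphi(Y)+\frac{1}{2h}|Y-X^k-hB(s_k,X^k)|^2\}$, and pass to the limit using the $\lambda$-monotonicity estimate $\frac{d}{ds}|X_s-\tilde X_s|^2\le 2\eps^{-1}|X_s-\tilde X_s|^2$. That estimate comes from Proposition \ref{prop:decreas}, which is exactly the monotonicity of the Dyson drift on $\mcl K$.

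The remaining step is to identify the abstract subgradient with the pointwise drift $H[\mcl L(X_s)](X_s)$ for almost every $s$. Brezis' regularizing effect gives $\partial^0\varphi(X_s)\in L^2_{loc}$, that is, $\int_0^1|\partial^0\varphi(X_s)|^2<\oo$ for a.e. $s$. We must show that this forces $\mcl L(X_s)\in L^3$, so that the minimal section equals $-H[\mcl L(X_s)](X_s)$. For this we compute directional derivatives of $\mcl E$ along smooth increasing perturbations $X+\delta\psi(X)$ and test against $\psi$, using \eqref{cotlar1} and the entropy dissipation bound \eqref{eq:est1}.

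We expect this identification of $\partial(\mcl E+\mathbb 1_{\mcl K})$ to be the main obstacle. The difficulty comes from the boundary of $\mcl K$, where atoms or flat pieces make the entropy infinite, and from the need to show that the constraint $\mathbb 1_{\mcl K}$ contributes no normal-cone term. We plan to argue that on $D(\partial\varphi)$ the measure $\mcl L(X)$ has no atoms, since $\mcl E<\oo$, so $X$ is strictly increasing. A normal-cone element would then have to vanish against all increasing perturbations, and these are dense in the relevant directions.
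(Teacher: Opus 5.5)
Your proposal is correct and is essentially the paper's second proof. The paper also writes the drift as minus the subdifferential of the convex functional $\mcl E+\mathbb 1_{\mcl K}$ on $\mcl H$ and applies Brezis' theory on $[0,t-\delta]$. It identifies the subgradient with the Hilbert-transform drift in the same way you plan to: finite entropy forces $X$ strictly increasing, one differentiates along the two-sided admissible perturbations $X+\eps h(X)$ with $h$ Lipschitz, and $\{h(X)\}$ is dense in $L^2([0,1])$.

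The differences are minor. You treat $(g-X)/(t-s)$ as a Lipschitz perturbation, where the paper uses a time-dependent monotone term. You also check explicitly that $\overline{D(\varphi)}=\mcl K$, needed because the lemma only assumes $X_0\in\mcl K$, and that $\mcl L(X_s)\in L^3$; the paper passes over both points.
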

\vs
Note that we do not assume that $f_s, g_s  \in \mcl K$. Furthermore, since we only look for solutions on $[0,t)$, we work on $[0,t-\delta]$ with $\delta>0$ and, hence, the singularity at the terminal condition $s = t$ does not appear, and we treat the term with $g$ as part of $f$.  The difficulty due  the singularity  at $s = t$ is  addressed in Section \ref{sec:valuevisc}. 
\vs
We present two proofs of this result since,  each of  them,  introduces some new ideas.
\begin{proof}[First proof] 
%\textcolor{blue}{The last term in \eqref{odeH} can be treated using the techniques we used below so we do not detail it here.} 
%\vs
We provide a proof by approximation by piecewise constant solutions defined, for each $N \geq 2$,  on fixed a uniform discretization of $[0,1]$ by $n$ subintervals $(I^N_j)_{1 \leq j \leq N}$. 
\vs
%For each $n \geq 2$,  we fix a uniform discretization of $[0,1]$ by $n$ subintervals $(I^n_j)_{1 \leq j \leq n}$, and 
Then we approximate $X_0, f$ and $g$ by  sequences $(X^N_0)_{ N \geq 2}$ in $\mcl K$, and $(f^N)_{ N \geq 2}$ and $(g^N)_{ N\geq 2}$ in $L^2([0,t);\mathcal H)$ respectively,  such that, for all $N$ and $s\in [0,t)$, $X^N_0$, $f^N_s$ and $g^N_s$  are constant on any of the partition intervals $I^N_j$, and converge, as $N \to \infty$,  toward $X_0$ in $\mcl H$, and $f$ and $g$  in $L^2([0,t];\mathcal H)$.

\vs
%Next we find $(f_n)_{ n \geq 2}$
%
%$g$ by $(g_n)_{ n \geq 2}$ which are constant, for almost every $s$, on each of the $I^n_j$ and approximate $g$ in $L^2([0,t];\mathcal H)$,  and, finally,  $f$  by $(f_n)_{ n \geq 2}$  which are constant, for almost every $s$, on each of the $I^n_j$ and approximate $f$ in $L^2([0,t];\mathcal H)$.
%

%%, $f$ and $g$ by piecewise constant functions on each $I^n_j$. 
%\vs
%
%Indeed, let $(f_n)_{ n \geq 2}$ and $(g_n)_{ n \geq 2}$ in $\in L^2([0,t];\mathcal H)$ be approximations of $f$ and $g$ which are constant, for almost every $s$, on each of the $I^n_j$, 
%
%We consider a sequence $(X^n_0)_{ n \geq 2}$ in $\mcl H$ such that, for all $n$, $X^n_0$ is constant on any of the $I^n_j$ and it converges, as $n \to \infty$,  toward $X_0$ in $\mcl H$and $(f_n)_{ n \geq 2}$ and $(g_n)_{ n \geq 2}$ in $\in L^2([0,t];\mathcal H)$ which are constant, for almost every $s$, on each of the $I^n_j$. 
%
%
% be the uniform discretization of $[0,1]$ into $n$ subintervals, and 
% consider a sequence $(X^n_0)_{ n \geq 2}$ in $\mcl H$ such that, for all $n$, $X^n_0$ is constant on any of the $I^n_j$ and it converges, as $n \to \infty$,  toward $X_0$ in $\mcl H$. Similarly, 
% we consider an approximation $f^n$ of $f$, which is also, for almost every $s$, is constant on each of the $I^n_j$. 
% 
 \vs
 
 We define, for each $N\geq 2$, $i\in \{1,\ldots, N\}$ and $s\in [0,t)$, 
 \beq\label{alex0}
 \lambda_0^{N,i}=X_0^N(x), \ \ f_s^{N,i}=f_s^{N,i}(x) \ \  \text{and}  \ \ g_s^{N,i}=g_s^{N,i}(x) \ \ \text{for some} \ \ x\in I^N_i,
 \eeq
 and consider the process $\big((\lambda^{N,j}_s)_{s\in [0,t)}\big)_{1\leq i\leq n}$ evolving by 
 \beq\label{alex1}
 \ds \frac{d \lambda^{N,i}_s}{ds}= f^{N,i}_s + \frac{1}{N}\sum_{j=1, j\neq i}^N \frac{1}{\lambda^{N,j}_s-\lambda^{N,i}_s} -\frac{g_s^{N,i}}{t-s}. 
 \eeq
 
We now claim %from standard arguments (see, for example, \citep{AGZ}) 
 that, for  there exists a unique process solving \eqref{alex0} and  \eqref{alex1} such that 
\beq\label{alex2}
\lambda^{N,i}_s<\lambda^{N,i+1}_s \ \ \text{for all} \ \ s\in [0,t),
\eeq
and
\beq\label{alex3}
\ds W_2\big(\frac{1}{N}\sum_{j=1}^N \delta_{\lambda^{N,j}_s}, \frac{1}{N}\sum_{j=1}^N \delta_{\lambda^{N,j}_{\hat s}}\big)\leq \omega(|s-\hat s|) \ \ \text{for all} \ \ s, \hat s \in [0,t), 
\eeq
for some modulus $\omega$ which depends on $\|f\|_{L^2([0,1]\times [0,1])}$ and $\|g\|_{L^2([0,1]\times [0,1])}$ and is  independent of $N$.
\vs

The existence and \eqref{alex2} imply the uniqueness. The existence follows by an  immediate adaptation to the deterministic setting of the  standard techniques used to handle the finite dimensional Dyson Brownian motion (see \citep{AGZ}). 
For  \eqref{alex2}, we  use
$$\ds I(s)=\frac{1}{N^2} \sum_{i, j=1, i\neq j}^N \log |\lambda^{N,i}_s - \lambda^{N,j}_s|$$
as a Lyapunov functional. Finally,  \eqref{alex3} is proved using arguments similar to the ones used 
in the proof of Proposition~\ref{takis1250}. 
 \vs
 
Next we reconstruct from $\big((\lambda^{N,j}_s)_{s\in [0,t)}\big)_{1\leq i\leq n}$ a process 
$(X^n_s)_{s \in [0,t)}$ such that 
\beq\label{zoe100}
((X^n_s)_{s \in [0,t-\delta]})_{n \geq 2} \ \ \text{ is uniformly continuous for any $\delta > 0$}, 
% This is a translation of the usual continuity estimate on the empirical measure of the Dyson Brownian motion.}
\eeq 
and
\beq\label{zoe101}
X^n_s \in \mcl K \ \ \text{for all $n \geq 2$ \ and \ $s \in [0,t)$,}
\eeq
which is a solution to
\[\frac{d}{ds} X^n_s = f^n(s) + H[\mcl L(X_s)](X_s) + \frac{g^n-X^n_s}{t-s} \text{ in } (0,t). \]

%and such that
%\beq\label{zoe100}
%((X^n_s)_{s \in [0,t-\delta]})_{n \geq 2} \ \ \text{ is uniformly continuous for any $\delta > 0$}, 
%% This is a translation of the usual continuity estimate on the empirical measure of the Dyson Brownian motion.}
%\eeq 
%and
%\beq\label{zoe101}
%X^n_s \in \mcl K \ \ \text{for all $n \geq 2$ \ and \ $s \in [0,t)$.}
%\eeq
%We note that \eqref{zoe100} is  a translation of the usual continuity estimate on the empirical measure of the Dyson Brownian motion, while \eqref{zoe101} results from the the fact that the particles do not cross.
%\begin{itemize}
%\item $((X^n_s)_{s \in [0,t-\delta]})_{n \geq 2}$ is uniformly continuous for any $\delta > 0$. This is a translation of the usual continuity estimate on the empirical measure of the Dyson Brownian motion.
%\item For all $n \geq 2$, $s \in [0,t)$ $X^n_s \in \mcl K$. This is the translation of the fact that the particles do not cross.
%\end{itemize}
%% (it is the usual fact that the particles do not cross for the Dyson Brownian motion)

% \eqref{odeH} which verifies: it is uniformly continuous in $\mcl H$ (it is the usual continuity argument for $W_2$ like distance) provided that the $L^2(ds\,d\omega)$ norm of $f^n$ is bounded by the one of $f$, it is non-decreasing in $\omega$ for any time (it is the usual fact that the particles do not cross for the Dyson Brownian motion). 

We can then pass to the $N \to \infty$ limit  to obtain a process $(X_s)_{s\in [0,t)}$ which satisfies all the requirements except for the fact that is solves \eqref{odeH}.  To complete the proof, we remark that we can indeed obtain \eqref{odeH} by passing to the limit in the  weak formulation of \eqref{odeH} involving smooth test functions, like is usually done using Ito's Lemma for stochastic processes.

\end{proof}

\begin{proof}[Second proof]
%This time also, the last term in \eqref{odeH} does not raise any particular issue. 
We fix $\delta > 0$ and we work on $[0,t-\delta]$. We omit the term $g_s/(t-s)$ as it can be added to the source term $f_s$. Then  \eqref{odeH} is a standard evolution equation associated to a maximal monotone operator, except for the fact that the term $-\frac{X}{t-s}$, which is of course associated to a monotone operator which is the gradient of the convex function $(t-s)^{-1}|x|^2$, depends explicitly on the time variable $s$. This does not raise any particular issue and we do not comment further on it. The rest of the equation also comes from the gradient of a convex function. Indeed, because of the displacement convexity of the entropy, we know that the function $$\phi:X \mapsto -\frac12 \int_0^1\int_0^1\log(|X(\omega) - X (\omega')|) d\omega d \omega' + \mathbb 1_{\mcl K}(X)$$ is convex on $\mcl H$. 
\vs
From standard arguments (see, for example,  Theorem 3.4 and Theorem 3.6 in \citep{brezis}) we obtain, for any $X_0$ in the domain of $\phi$, a strong solution $X: [0,t-\delta]\mapsto \mathcal H$ to  
$$
\frac{d}{ds} X_s + \partial \phi(X_s) \ni f(s),
$$
with the property that $X_s$ belongs to the domain of $\partial \phi $ for almost every $s \in [0,t-\delta]$. 
% we only consider strong solution of the previous evolution equation.

%Because, once the existence of such a weak solution is established, the estimates of Proposition \ref{prop:?} implies absolute continuity of $X$, and thus the fact that it is also a strong solution, we only consider strong solution of the previous evolution equation. 
%
\vs
It now remains to check that  process $X\in \mathcal C([0,t-\delta],\mcl H)$ is indeed a solution to \eqref{odeH}, that is, to establish that, 
if $X \in \mcl K$ is such that there exists $ \xi \in \partial \phi(X)$, then necessary $\xi = H[\mcl L(X)](X)$.
% Observe that, if such a requirement holds, then the result is proved. To establish this, we consider such a couple $(X,\xi)$. 
\vs
Observe first that $X_s$ is increasing. Indeed, it is non decreasing because it belongs to $\mcl K$, and is  actually increasing,  because otherwise its entropy would not be finite. 
\vs

Let $h: \R \mapsto \R$ be a Lipschitz function with constant $L$, and, for  $Y= h(X)$,  define $X_\eps = X + \eps Y$. For $|\eps|$ small enough, $X_\eps \in \mcl K$,  since $X_\eps(x) - X_\eps(x') \geq (1-\eps L)(X(x) - X(x'))$. 
\vs
Next we  compute
$$
\frac{\phi(X_\eps) - \phi(X)}{\eps} = -\frac{1}{2\eps} \int_{[0,1]^2}\log\left(1 + \eps\frac{h(X(x))-h(X(y))}{X(x) -X(y)}\right)dxdy.
$$
Since the quotient inside the $\log$ is uniformly bounded, we can apply the dominated convergence theorem to pass to the limit $\eps \to 0$ to obtain
$$
\lim_{\eps \to 0}\frac{\phi(X_\eps) - \phi(X)}{\eps}= -\frac{1}{2} \int_{[0,1]^2}\frac{h(X(x))-h(X(y))}{X(x) -X(y)}dxdy.
$$
But, because we can choose $\eps$ with both signs, we deduce from $\xi \in \partial \phi(X)$ that, for all $h$ as above,
$$
\langle \xi,h(X)\rangle = -\frac{1}{2} \int_{[0,1]^2}\frac{h(X(x))-h(X(y))}{X(x) -X(y)}dxdy.
$$
Since the right hand side of this last equality is the Hilbert transform of $X$, the result follows once we show that the set of such $h(X)$ is dense in $L^2([0,1];\R)$. Indeed, Lipschitz continuous functions are dense in $L^2(\R;\mathcal L(X))$, and using that $X$ is strictly increasing, we have that $X^{-1}$ is a well defined measurable map. It then follows  %from the density of the set of Lipschitz functions on $L^2(\R,\mathcal L(X))$ 
that the set  $\{f(X)|f \text{ Lipschitz}\}$ is indeed dense in $L^2([0,1],\R)$.
%from the one of the set of Lipschitz functions on $L^2(\R,\mathcal L(X))$. 
\vs
Observe in particular that there is no component in the gradient arising from $\mathbb 1_{\mcl K}$, and that it is an important consequence of the fact that $X$ is increasing.

\end{proof}

We believe that this approach on the limit Dyson equation, or more generally on interacting particle systems could have an interest outside of the present paper. Note that techniques of evolution equation associated to monotone operators were also used, at the $N$ particle level, by C\'epa and L\'epingle in \citep{CL}.

\section{The value function of the controlled Dyson equation}\label{sec:bound}

We introduce  here  the value function of the controlled Dyson equation \eqref{takis3}, develop several important facts and make the connection with  a backward in time Hamilton-Jacobi-Bellman equation. %in $\Pt\times [0,T)$. 
\vs

%We consider two types of terminal  data $U_T$.  
%The first is ``regular'', that is $U_T \in C(\mcl P_2)$. The second corresponds to the planning problem, that is, to driving $\mu \in \Pt$ to $\nu \in \mcl P_2$, in which case the terminal condition  is  $U_T=\mathbb{1}_{\nu}$. Finally, we consider a running cost $F\in BC(\Pt)$.
%\beq\label{takis29}
%U_T=\mathbb{1}_{\nu}.
%\eeq
%where $\mathbb{1}_{\nu}(m)=0$, if $m=\nu$, and   $\infty$ otherwise.
%\vs

%When the terminal data is in $\Pt$, the value function $U : [0,T)\times \Pt\times \times [0,T)  \R$ is 

%The functional for  controlling \eqref{takis32.1} is 
%\beq\label{takis31}
%J[m,\al]=\dint_0^T  \dint [\frac{1}{2} |\al|^2 m dx]  + F[m]  dt  +U_T(m(T)).
%\eeq

\subsection{The regular terminal condition} 
%For each $(\mu, t)\in \Pt \times (0,T]$, we  consider the set of admissible contros  $\mcl A(\mu, t)$  introduced in section~3,  
%the set of pairs $(\al,m)$ such that $\al: \R\times [t,T]\to \R$ is measurable, $m\in C([t,T]; \Pt)$, % m_t=\mu$
%%and $(\al,m)$  is a weak solution to % is the solution to \eqref{takis32.1}.
%%\beq\label{takis30.1}
%%\pt_s m + \pt_x \big(m_s\big(H[m_s] +\al_s\big)\big)=0  \ \ \text{in} \ \ \R\times (t,T) \ \ \text{and} \ \ m_t=\mu.
%%\eeq

For each admissible pair $(\al,m)\in \mcl A(\mu,t)$--recall that  $\mcl A(\mu,t)$ was introduced in Section~3--the payoff $P(\al,m,t)$ of the ``regular'' controlled Dyson problem is 
\beq\label{payoff1}
P(\al,m,t)=\dint_t^T \big [\dint_\R  \frac{1}{2} \al(x,s)^2 dm_s(x)  + F[m_s]\big]  ds  +U_T(m_T),
\eeq
and the value function $U :  \Pt \times [0,T] \to  \R$ is 
\beq\label{takis30}
U(\mu, t)=\underset{(\al,m)\in A(\mu,t)} \inf P(\al,m,t) =\underset{(\al,m)\in \mcl A(\mu,t)} \inf \Big\{\dint_t^T \big [\dint_\R  \frac{1}{2} \al(x,s)^2 dm_s(x)  + F[m_s]\big]  ds  +U_T(m_T) \Big\}. 
\eeq
\vs
Let $\ol {\mcl A}(\mu,t)$ be the set  of pairs $(\beta,m)$ such that  $m\in \mathcal{C}([t,T],\Pt)$ and $\beta$ 
is a measure on $\R\times [t,T]$ which, for each $s\in[t,T]$, is absolutely continuous with respect to $m_s$ and, in the weak sense, 
% m(t) = \mu$,
%$\beta <<m$, and $(\beta,m)$ satisfies in the weak sense 
\be\label{dysoncontr2}
\partial_s m_s + \partial_x (\beta + m_sH[m_s]) = 0 \  \text{ in } \  \R \times (t,T)  \ \ \text{and} \ \ m_t=\mu.
\ee
Writing  $\beta=\al m$ in ${\mcl A}(\mu,t)$ we see that  $U(\mu,t)$ can be equivalently defined by
%the following equivalent rep
%Both value functions  can be written in an equivalent form with the minimization taking place over a convex set. Indeed,
\beq\label{takis31.1}
U(\mu,t)=\underset{(\beta,m) \in \ol {\mcl A}(\mu,t)}\inf \Big\{\dint_t^T\left[\dint_\R \dfrac{1}{2} \dfrac{\beta(x,s)^2}{m_s(x)} dx + F[m_s]\right] ds + U_T(m_T) \Big\}.
\eeq
%where $\ol {\mcl A}(\mu,t)$ is the set  of pairs $(\beta,m)$ such that $\beta:\R\times [t,T]$ is measurable,  $m\in \mathcal{C}([t,T],\Pt)$,
%% m(t) = \mu$,
%$\beta <<m$, and $(\beta,m)$ satisfies in the weak sense
%\be\label{dysoncontr2}
%\partial_s m + \partial_x (\beta + mH[m]) = 0 \  \text{ in } \  \R \times (t,T) \ \  \text{and} \ \ m_t=\mu,
%\ee

\subsection{The target problem}
%The ``target'' Dyson control problem is to ``connect''  two measures  $\mu \in \Pt$ and  $\nu \in \widetilde \Pt$ over the time interval $[t,T]$ using admissible controls 
For $t \in [0,T)$, $\mu,\nu \in \Pt$, we introduce $\mcl A(\mu,\nu,t) = \{ (\alpha,m) \in \mcl A(\mu,t) | m_T = \nu\}$. For each pair 
$(\alpha,m) \in \mcl A(\mu,\nu,t)$
%of admissible controls  consists   of pairs $(\al,m)$ such that $\al: \R\times [t,T]\to \R$ is measurable, $m\in C([t,T]; \Pt)$, %m(t)=\mu, m(T)=\nu$, 
%%and $(\al,m)$ is  a weak  solution to %qref{takis30.1}.
%%\beq\label{takis30.5}
%%\pt_s m_s + \pt_x \big(m_s\big(H[m_s] +\al\big)\big)=0  \ \ \text{in} \ \ \R\times (t,T) \ \ \text{and} \ \ m_t=\mu \ \text{and} \  m_T=\nu.
%%\eeq
%%
%%We introduce next the pay-off functional of the controlled Dyson problem which has two forms depending on whether the terminal datum  $U_T\in \Pt$ is regular, that is, continuous, or singular, that is, $U_T=\mathbb{1}_{\nu }.$
%
%%for $\nu\in \Pt$, 
%%\beq\label{takis29}
%%U_T=\mathbb{1}_{\nu },
%%\eeq
%%where $\mathbb{1}_{\nu}(m)=0$, if $m=\nu$, and   $\infty$ otherwise.
%\vs
%
%%In the former case, for each $(\al,m)\in \mcl A(\mu,t)$, the payoff is 
%%\beq\label{payoff1}
%%J(\al,m,t)=\dint_t^T \big [\dint_\R  \frac{1}{2} |\al |^2 m dx  + F(m(s))\big]  ds  +U_T(m_T),
%%\eeq
%%while for 
%For each $(\al,m) \in \mcl A(\mu, \nu, t)$, 
the payoff of the target problem is
\beq\label{payoff2}
\widehat P(\al,m, t)=\dint_t^T \left [\dint_\R  \frac{1}{2} \al(x,s)^2 \;dm_s(x)  + F[m_s]\right]  ds.
\eeq
%The value function 
%When the terminal condition is regular,  the value function $U :  \Pt \times \times [0,T) \to  \R$ is 
%%\beq\label{takis30}
%%U(\mu, t)=\underset{(\al,m)\in \mcl A(\mu,t)} \inf \Big\{\dint_t^T \big [\dint_\R  \frac{1}{2} |\al|^2 m dx  + F(m(s))\big]  ds  +U_T(m(T)) \Big\},
%%\eeq
%\beq\label{takis30}
%U(\mu, t)=\underset{(\al,m)} \inf J(\al,m) =\underset{(\al,m)\in \mcl A(\mu,t)} \inf \Big\{\dint_t^T \big [\dint_\R  \frac{1}{2} |\al|^2 m dx  + F(m(s))\big]  ds  +U_T(m(T)) \Big\},
%\eeq
%for each $m_0\in \mcl P_2$, $F \in C(\mcl P_2)$ and either $U_0 \in C(\mcl P_2)$ or, for  $m_1\in\mcl P_2$,
%\beq\label{takis29}
%U_0=\mathbb{1}_{m_1},
%\eeq
%where $\mathbb{1}_{m_1}(m)=0$ if $m=m_1$ or $\infty$ otherwise, 
%the variational problem 
%\beq\label{takis30}
%\underset{m,\al} \inf \big\{J[m,\al] \; : (m,\al) \in \mcl A(\mu, t) \big\},
%\eeq
%where $\mcl A(\mu, t)$ is the set of pairs $(\al,m)$ such that $\al: \R\times [0,T]\to \R$ is measurable, $m\in C([0,T]; \Pt), m(t)=\mu$
%and $(\al,m)$  is a weak solution to % is the solution to \eqref{takis32.1}.
%\beq\label{takis30.1}
%\pt_s m + \pt_x \big(m\big(Hm +\al\big)\big)=0  \ \ \text{in} \ \ \R\times (t,T).
%\eeq
%When the terminal condition is \eqref{takis29},
Since, in view of Proposition \ref{prop:comp}, $\mcl A(\mu,\nu,t) = \emptyset$ if $E(\nu) = \oo$, 
%for $\mu,\nu \in \mcl P_2$, $t < T$ and $\mcl A(\mu,\nu,t) = \emptyset$ if $E(\nu) = \oo$. 
we restrict our attention to target $\nu \in \tilde \Pt$. 
\vs
The value function $\widehat U : \Pt \times \Pt \times [0,T] \to  \R$ is 
\beq\label{takis30.2}
\widehat U(\mu, \nu, t)=\underset{(\al,m) \in \mcl A(\mu,\nu,t)} \inf \widehat P(\al, m,t)=\underset{(\al,m) \in \mcl A(\mu,\nu,t)} \inf \Big\{\dint_t^T \left [\dint_\R  \frac{1}{2} \al(x,s)^2 dm_s(x)  + F[m_s]\right]  ds  \Big\},
\eeq
and, equivalently, 
\beq\label{takis31.2}
\widehat U(\mu,\nu, t)=\underset{(\beta, m) \in \ol {\mcl A}(\mu,\nu, t)}\inf \Big\{\dint_t^T\left[\dint_\R \dfrac{1}{2} \dfrac{\beta(x,s)^2}{m_s(x)} dx + F[m_s]\right] ds\Big\}, 
\eeq

where $\ol {\mcl A}(\mu,\nu,t)=\{(\beta,m) \in \ol {\mcl A}(\mu,t) \; : m_T=\nu\}$.
% is  the set  of pairs $(\beta,m)$ such that  $m\in \mathcal{C}([t,T],\Pt)$,   $\beta$ is a measure on $\R\times [t,T]$ which, for each $s\in[t,T]$, is absolutely continuous with respect to $m_s$ and, in the weak sense, 
%% m(t) = \mu$,
%%$\beta <<m$, and $(\beta,m)$ satisfies in the weak sense 
%\be\label{dysoncontr2}
%\partial_s m_s + \partial_x (\beta + m_sH[m_s]) = 0 \  \text{ in } \  \R \times (t,T) \ \ \text{and} \ \ m_t=\mu \ \text{and} \  m_T=\nu.
%\ee

%
%$\beta:\R\times [0,t]\to \R$ is measurable, 
%$\beta <<m$, 
%and  $(\beta,m)$ satisfies %\eqref{dysoncontr2}  
%in the  weak sense 
%\be\label{dysoncontr3}
%\partial_s m_s + \partial_x (\beta + m_sH[m_s]) = 0 \  \text{ in } \  \R \times (0,t) \ \ \text{and} \ \ m_0=\mu \ \  m_t=\nu,
%\ee
%then                        
%\beq\label{takis31.2}
%\widehat U(\mu,\nu, t)=\underset{(\beta, m) \in \ol {\mcl A}(\mu,\nu, t)}\inf \Big\{\dint_0^t\big[\dint_\R \dfrac{1}{2} \dfrac{\beta^2}{m} dx + F(m(s))\big] ds\Big\}.
%\eeq
%where $\ol {\mcl A}(\mu,\nu, t)$ is the set  of pairs $(\beta,m)$ such that $\beta:\R\times [t,T]$ is measurable, $m\in \mathcal{C}([t,T],\Pt)$,
%%m(t) = \mu, m(T)=\nu$, 
%$\beta <<m$, 
%and  $(\beta,m)$ satisfies %\eqref{dysoncontr2}  
%in the  weak sense 
%\be\label{dysoncontr3}
%\partial_s m + \partial_x (\beta + mH[m]) = 0 \  \text{ in } \  \R \times (t,T) \ \ \text{and} \ \ m_t=\mu \ \  m_T=\nu.
%\ee

\vs

\subsection{The Hamilon-Jacobi-Bellman equation and dynamic programming principle}
The value functions $U$ and $\widehat U$ are  expected to be respectively the unique viscosity solutions to 
\beq\label{HJB1}
\begin{cases}
-\partial_t U + \dfrac 12 \dint_\R(D_\mu U)^2d \mu - \dint_\R D_\mu U H[\mu] d\mu  = F[m] \  \text{ in } \  \Pt \times [0,T),\\[1.5mm]
U(\cdot,T)=U_T, 
\end{cases}
\eeq
and 
\beq\label{HJB2}
\begin{cases}
-\partial_t \widehat U + \dfrac 12 \dint_\R(D_\mu \widehat U)^2d \mu - \dint_\R D_\mu \widehat U H[\mu] d\mu  = F[m] \  \text{ in } \  \Pt \times [0,T),\\[1.5mm]
\widehat U(\cdot,T)=\mathbb{1}_\nu \ \text{on} \ \Pt.
\end{cases}
\eeq
These two equations are the HJB equations of the optimal control problems at hand. They are an infinitesimal version of the dynamic programming principle (DPP), which trivially holds in this deterministic setting. Stating it, for instance,  for $U$,  the DPP says that, for every $(\mu,s)\in \Pt\times [t,T]$, 
$$
U(\mu,t) = \inf_{(\alpha,m) \in \mcl A(\mu,t)} \left\{ \int_t^s\left[\int_\R \frac12  \alpha(x,u)^2m_u(dx)+ F[m_u] \right] du + U(m_s,s) \right\}.
$$
%Using the formula for $ \| \pt_{\mu} U \|^2$ in the introduction, we remark that we can rewrite the equations in \eqref{HJB1} and \eqref{HJB2} as 
%%\beq\label{HJB1}
%%\begin{cases}
%\[-\partial_t U + \dfrac 12 \dint_\R(D_\mu U)^2d \mu - \dint_\R D_\mu U H[\mu] d\mu  = F[m] \  \text{ in } \  \Pt \times [0,T),\]
%%\\[1.2mm]
%%U(\cdot,T)=U_T \ \text{on} \ \Pt,
%%\end{cases}
%%\eeq
%%\beq\label{HJB2}
%%\begin{cases}
%and 
%\[
%-\partial_t \widehat U + \dfrac 12 \dint_\R(D_\mu \widehat U)^2d \mu - \dint_\R D_\mu \widehat U H[\mu] d \mu  = F[m] \  \text{ in } \  \Pt \times [0,T).
%\]
%\\[1.2mm]
%\widehat U(\cdot,T)=\mathbb{1}_\nu \ \text{on} \ \Pt.
%\end{cases}
%\ee
%where $\kappa>0$ is a constant determined by the data.
\vs
The definition of the viscosity solution as well as the precise way in which $\widehat U$ satisfies the terminal condition are   explained later in the paper. 

\vs

In order to establish the well-posedness of solutions to \eqref{HJB1} and \eqref{HJB2} we need (i)~to investigate whether and under what conditions on $\mu$ and $\nu$  the value  functions are finite, (ii)~to give a definition for viscosity solutions to \eqref{HJB1} and \eqref{HJB2} (see Section \ref{sec:visc}), and, finally, (iii)~to show  that $U$ and $\widehat U$ are  viscosity solutions  to \eqref{HJB1} and  \eqref{HJB2} (see Section \ref{sec:valuevisc}).
% this is the subject of Section \ref{sec:valuevisc}. 
%and to establish their continuity. 
%The second is about the conditions needed for the controls $\al$ and/or $\beta$  for   the forced Dyson equation to have a weak solution. 
%The second is to define viscosity solutions to \eqref{HJB1} and \eqref{HJB2}, which is done in Section \ref{sec:visc}. %which is done in Section \ref{sec:visc}.The third  is to justify that the value functions 
%Finally, it is shown in Section \ref{sec:valuevisc} that $U$ and $\widehat U$ are  viscosity solutions  to \eqref{HJB1} and  \eqref{HJB2}.
% this is the subject of Section \ref{sec:valuevisc}. 
%An important step in this direction is to establish  $U$  and ref{HJB2}. 
%An important step in this direction is to establish  that  $U$  and  $\widehat U$ are   continuous  in $\Pt\times [0,T]$  and  $\Pt\times\Pt\times [0,T)$ respectively as well as  the behavior of $\widehat U$ as $t\to T$. 
% as well as to understand the precise way  that $\widehat U$ attains the initial  condition. 

\vs
\subsection{An alternative formulation}
Before establishing bounds on the value function, we present another formulation for the value function which is equivalent to \eqref{takis30} and \eqref{takis30.2} when the free entropy of the initial condition of the controlled Dyson equation is finite. 
% and allows to minimize a convex functional over a convex set of constraints. 
%To simplify the notation, we write in what follows 
%\beq\label{665}
%c_0=\dfrac{\pi^2}{3}
%\eeq
%for the constant in Cotlar's identity.

%The advantage of this equivalent formu
%
%Using the free entropy \eqref{entropy} it is possible to provide another formulation for the value functions 
%We introduce next the entropy of the physical system associated to the Dyson system, which is the functional $E:\mcl P_2\to \R$ given %by 
%\beq\label{takis33}
%E[m]=-\frac{1}{2}\dint \dint log (|x-y|) dm(x) dm(y). 
%\eeq
%It turns out that, although $E$ may not be nonnegative, it is bounded from below. Moreover,  in \citep{BLL} it is proven that 
%\beq\label{takis34}
%E[m]<\infty \ \text{ if and only if} \ m \in H^{-1/2}.
%\eeq

\vs

We begin with \eqref{takis30},  assume that 
%with the observation that   the infimum in \eqref{takis30} % and \eqref{takis30.2}  
%is finite if
%It is also shown in  \citep{BLL}  that $E$ is decreasing along the Darcy flow, that is, for \eqref{takis32} with $\al\equiv 0$. This suggests that for the finiteness of the infimum in \eqref{takis30} it is necessary to assume that 
\beq\label{takis35}
E(\mu)<\infty,
\eeq
write  $\beta=Hm +\al$ and note that $(\beta,m)$ satisfies 
\beq\label{takis36}
%\begin{cases}
\pt_s m_s + \pt_x (m\beta)=0  \ \text{in} \ \R\times (t,T) \ \ \text{and} \ \  m_t=\mu.
%\end{cases}
\eeq
Then, 
%\beq\label{takis37}
\[
P(\al, m, t)= \dint_t^T \Big[ \dint_\R \left[ \frac{1}{2}  (\beta^2  + H[m_s]^2 ) -\beta m H[m] \right] dm_s(x)  + F[m_s] \Big] ds +U_T(m_T), % - \beta m Hm  dxdt,
\]
%\eeq
and, after using Cotlar's identity, % and \eqref{takis36}  and assuming that $m\in L^3$, we find 
\beq\label{takis38}
P(\al,m,t )= \dint_t^T \Big[ \dint_\R \left[\frac{1}{2} m \beta^2  +\dfrac{c_0}{6}  m^3  -\beta m H[m]\right] dx   + F[m_s[ \Big ] ds +U_T(m_T). 
\eeq
% - \beta m Hm  dxdt,
%\eeq
%where
%\beq\label{takis38}
%\tilde J[m,\beta] = \dint_0^T [ \dint (\frac{1}{2} m |\beta|^2  + c_0 m^3 )dx  + F[m(t)] ] dt +U_0(m(T).
%\eeq 
Multiplying \eqref{takis36} by $-\log  \star m$ and integrating over $\R$ we find 
\beq\label{takis39}
\dfrac{d}{ds} E(m_s)= \dint_\R \beta H[m_s] m_s dx.
\eeq
Inserting \eqref{takis39} in \eqref{takis38} gives %and using Cotlar's identity gives 
\beq\label{takis40}
P(\al,m,t)=\dint_t^T \Big[ \dint_\R \big[\frac{1}{2} m \beta^2  + \dfrac{c_0}{6}  m^3 \big ]dx - \frac{d}{ds} E(m_s)  + F[m_s] \Big] ds +U_T(m_T),
\eeq
and, finally, 
%\beq\label{takis41}
\[
P(\al,m,t)=\widetilde P(\beta,m,t)+ E[\mu],
\]
%\eeq
with
\beq\label{takis42}
\widetilde P(\beta,m,t)=\dint_t^T \Big [ \dint_\R\left[\frac{1}{2} m \beta^2  + \dfrac{c_0}{6} m^3\right]dx  + F[m_s] \Big ] ds +U_T(m_T)-E(m_T).
\eeq

%\beq\label{takis42}
%\widetilde J{m,\beta)=\dint_0^T [ \dint (\frac{1}{2} m |\beta|^2  + c_0 m^3)dx  + F[m(t)] ] dt +U_0(m(T))-E[m(T)].
%\eeq
\vs
Since
%\beq\label{takis43}
\[ \left|\dint_\R \beta H[m] m dx\right| \leq \frac{1}{2} \dint_\R \beta^2 m dx+ \frac{1}{2} \dint_\R H[m]^2 m dx, \]
%\eeq
it follows %from \eqref{takis39} 
that, if  
\[\dint_t^T\dint_\R \big[\beta^2 m + H[m]^2 m \big] dx <\infty,\] 
something which is true if, for example, $\beta\equiv 0$ and $m\equiv \mu \in L^3$, then 
\[\inf_{(\beta, m) \in \mcl B(\mu,t)} \widetilde P(\beta,m,t) <\infty.\]
\vs
Hence,
\[U(\mu,t)=\wtilde U(\mu, t) +E(\mu)\] 
where
\beq\label{takis679}
\wtilde U(\mu, t)=\underset{(\beta, m) \in \mcl B(\mu,t)} \inf \wtilde P(\beta, m,t),
\eeq
with $\mcl B(\mu,t)$ the collection of pairs $(\beta, m)$  such that $\beta:\R\times (t,T)\to \R$ is measurable,  $(m_s)_{t\leq s\leq T} \in C([t,T]; \Pt)$,  $m_t=\mu$ and $(\beta,m)$ solves 
\eqref{takis36}.
\vs

Similarly, 
if $E(\mu)<\oo$ and $E(\nu)<\oo$, then
\[\what U(\mu, \nu,t)=\wtilde{\widehat U}(\mu,\nu, t) +E(\mu)\]
where
\[\wtilde{\widehat U}(\mu,\nu, t)=\underset{(\beta, m) \in \mcl B(\mu,\nu, t)} \inf \wtilde{\widehat P}(\beta, m,t),\]
with 
\[\wtilde{\widehat P}(\beta, m,t)=\dint_t^T \Big [ \dint_\R\big[\frac{1}{2} m \beta^2  + \dfrac{c_0}{6} m^3\big]dx  + F[m_s] \Big ] ds -E(\nu),\]
and $\mcl B(\mu,\nu, t)$ the collection of pairs $(\beta,m)$ of measurable maps $\beta:\R\times (t,T)\to \R$ and paths  $(m_s)_{t\leq s\leq T} \in C([t,T]; \Pt)$ such that $m_t=\mu$ and $m_T=\nu$ solving \eqref{takis36}.
\vs

The HJB equation associated to this optimal control problems is naturally
$$
-\partial_t \wtilde U + \dfrac 12 \dint_\R(D_\mu \wtilde U)^2d \mu  = F[m] + \frac{c_0}{6}\|m\|_{L^3} \  \text{ in } \  \Pt \times [0,T),
$$

\subsection{Bounds on the value function}

We investigate here bounds for $U(\cdot,t)$ and $\widehat U(\cdot, \cdot, t)$. To simplify statements, we assume that $$ U_T \leq  \mathbb 1_{\nu},$$ which is equivalent to $U_T(\nu) \leq 0$.

\begin{proposition}\label{prop:bounds}
For any $h >0,  t \in [0,T-h), \mu \in \Pt$,  $ \nu \in \widetilde \Pt$ and $U_T: \Pt \mapsto \R$, bounded from below by $-K$, there exists a constant $C>0$ depending only on $T-h, \|F\|_\oo$ and $M_2(\nu)$ such that 
\beq\label{takis410}
-K - (T-t) \|F\|_\oo \leq U(\mu, t)\leq \widehat U(\mu,\nu,t) \leq C\Big(U_T(\nu)+(T-t) \| F \|_\oo|+ W^2_2(\mu, \nu) + |E(\nu)|\Big).
\eeq
% wit the constant  $C$ depends only on $T-\kappa$ and $M_2(\nu)$.
Moreover, if  $\nu=\mu$ and $E(\nu)<\oo$, then
\beq\label{takis299.10}
\widehat U(\nu, \nu,t)\leq   2\big[E(S_{\frac{T-t}{2}}\nu)-E(\nu)\big] +  (T-t) \|F\|_\oo.
\eeq
%Moreover, if $\nu \notin \wtilde \Pt$, then, for all $t<T$ and $\mu \in \Pt$, $\widehat U(\mu,\nu,t) =  \infty$.
Finally, 
\beq\label{takis246}
\widehat U(\mu,\nu,t)<\infty \ \ \text{if and only if} \ \ E(\nu)<\oo
\eeq
\end{proposition}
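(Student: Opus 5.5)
The lower bound is immediate. For any $(\alpha,m)\in\mcl A(\mu,t)$ the running energy is nonnegative, $F[m_s]\geq -\|F\|_\oo$ and $U_T\geq -K$, which gives $U(\mu,t)\geq -K-(T-t)\|F\|_\oo$. The inequality $U(\mu,t)\leq \widehat U(\mu,\nu,t)$ follows because $\mcl A(\mu,\nu,t)\subset\mcl A(\mu,t)$ and, on this subset, $P=\widehat P+U_T(\nu)\leq \widehat P$, using the standing assumption $U_T(\nu)\leq 0$. In \eqref{takis410} I keep the term $U_T(\nu)$ in the upper bound to cover the general case.

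For the upper bound on $\widehat U$ I will build an explicit admissible pair in three phases on $[t,T]$, setting $\tau=(T-t)/3\geq h/3$.

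\textbf{Phase 1, on $[t,t+\tau]$.} Use $\alpha=0$, that is, $m_s=S(s-t)\mu$. This costs at most $\tau\|F\|_\oo$. It regularizes: $m_{t+\tau}\in L^\infty$ by \eqref{takis501}, with $E(m_{t+\tau})$ bounded by \eqref{manos1}. Moreover $W_2(m_{t+\tau},\mu)\leq\sqrt\tau$.

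\textbf{Phase 3, on $[T-\tau,T]$.} Run time backwards from $\nu$. Let $n_r=S(r)\nu$. The curve $s\mapsto n_{T-s}$ solves $\pt_s m+\pt_x(m(H[m]+\alpha))=0$ with $\alpha=-2H[m]$. Its energy is $2\int\int H[n_r]^2\,dn_r\,dr=2\big(E(\nu)-E(S_\tau\nu)\big)$ by Proposition \ref{takis502}. This is finite because $E(\nu)<\oo$, and it is bounded by $C(|E(\nu)|+\log(1/\tau)+M_2(\nu))$ using \eqref{entropy10} and \eqref{manos1}.

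\textbf{Phase 2, on $[t+\tau,T-\tau]$.} Join $m_{t+\tau}$ to $S_\tau\nu$, both of which lie in $L^\infty\cap\Pt$ with controlled norms. Take the $W_2$-geodesic $(\rho_s)$ between them, reparametrized on this interval, with velocity field $v$ such that $\int\int v^2\,d\rho\,ds=W_2^2/\tau$. Set $\alpha=v-H[\rho]$. Then
\[
\int\int\alpha^2\,d\rho\leq 2\int\int v^2\,d\rho+2\int\int H[\rho]^2\,d\rho,
\]
and by Cotlar \eqref{cotlar1} the last term equals $\frac{2c_0}{3}\int\int\rho^3$. Proposition \ref{prop:displace} bounds $\|\rho_s\|_3$ by the maximum of the endpoint $L^3$ norms, which is controlled by $\tau^{-1/2}$ via \eqref{takis501} and mass one. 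The geodesic term is bounded using $W_2^2(m_{t+\tau},S_\tau\nu)\leq 3\big(\tau+W_2^2(\mu,\nu)+\tau\big)$.

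Summing the three costs gives \eqref{takis410}, with $C$ depending on $h$, $\|F\|_\oo$ and $M_2(\nu)$; I will absorb the $M_2(\nu)$-dependent term from the entropy lower bound into $C$.

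For \eqref{takis299.10} I use only two phases: go forward by the Dyson flow on $[t,(t+T)/2]$ from $\nu$ at zero cost, then reverse the flow back to $\nu$ on $[(t+T)/2,T]$. By the reversal computation above, the second phase costs exactly $2[E(\nu)-E(S_{(T-t)/2}\nu)]$. I need to check the sign convention here: the paper's identity \eqref{takis502.1} is written with $E$ increasing, so the cost is $2\,|E(S_{(T-t)/2}\nu)-E(\nu)|$, matching the stated form. Adding the running cost $(T-t)\|F\|_\oo$ completes the bound.

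For \eqref{takis246}, the "if" direction is the construction above. The "only if" direction follows from Proposition \ref{prop:comp}: a finite-energy pair gives $E(m_T)\leq C$, so $E(\nu)<\oo$; finiteness from below holds by \eqref{entropy10}.

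The main obstacle is making Phases 2 and 3 rigorous as weak solutions in $\mcl A$, i.e.\ justifying the time-reversed Dyson equation and the entropy identity for general $\nu$ with merely finite entropy. I would handle this by applying Proposition \ref{takis502} on $[\eps,\tau]$ and letting $\eps\to0$, using the continuity of $s\mapsto E(S_s\nu)$ for $\nu\in\widetilde{\mcl P}_2$.
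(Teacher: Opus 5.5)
Your proposal is correct and follows the paper's proof essentially step by step. It uses the same three-phase construction: the free Dyson flow from $\mu$, then a $W_2$-geodesic between the regularized endpoints with control $\tilde\alpha-H[\rho]$ bounded via Cotlar and displacement convexity, then the time-reversed Dyson flow into $\nu$ with control $-2H[S_{T-s}\nu]$. It also matches the paper in using the two-phase variant for \eqref{takis299.10} and Proposition~\ref{prop:comp} for the converse in \eqref{takis246}. The only differences are minor. First, the paper bounds the middle-phase distance directly by contraction, $W_2(S_\epsilon\mu,S_\epsilon\nu)\le W_2(\mu,\nu)$ (Proposition~\ref{prop:decreas}), rather than by the triangle inequality, which avoids your extra additive constant. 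Second, your reading of the entropy sign is the right one: since $E$ decreases along the flow, the reversed phase costs $2\big(E(\nu)-E(S_{(T-t)/2}\nu)\big)\ge 0$, despite the sign typo in \eqref{takis502.1}.
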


\begin{proof}
In the proof we assume, without any loss of generality, that $F=0$. Moreover, to simplify the argument,

\vs

The inequality between $U(\mu,t)$ and $\widehat U(\mu,\nu,t)$ as well as the lower bound in \eqref{takis410} are immediate from the definitions.
\vs
For the upper bound, we fix  $\ep \in (0, \min((T-t)/2, h)]$, write  $m_{t+\epsilon} = S_\epsilon\mu$ and $m_{T-\epsilon} = S_\epsilon\nu$, and recall that 
\[
\|m_\epsilon\|_\infty +  \|m_{T-\epsilon}\|_\infty \leq 2\ep^{-1/2}, 
\ \  \dint_\R x^2dm_\eps(x) \lesssim \dint_\R x^2 d\mu(x) \ \ \text{and} \ \  \dint_\R x^2m_{T-\eps}(dx) \lesssim  \dint_\R x^2 d\nu(x).
\]

Let  $(\wtilde \alpha,\rho)$  be an optimal pair for the optimal transport of $m_\eps$ to $m_{T - \eps}$ in time $T-t - 2 \eps$, that is, 
\[
\partial_s \rho + \partial_x(\wtilde \alpha \rho) = 0 \ \ \text{ in } \R\times (t+\ep, T-\ep), \quad \rho_{t+\epsilon} = m_\epsilon \ \ \text{and} \ \ \rho_{T-\epsilon} = m_{T-\epsilon},\]
and 
\[
\dint_{t+\epsilon}^{T-\epsilon} \dint_\R \wtilde \al(x,s)^2 d\rho_s(x)ds = \frac{W_2^2(m_\epsilon,m_{T-\epsilon})}{T-t -2 \epsilon}.
\]

\vs

Define next the maps $\alpha : \R \times  [t,T] \to \R$ and $m:\R \times [t,T]\to \R$ by
\[
\alpha(x,s) = \begin{cases} 0 \ \  \text{ if } \ \ s <t+ \epsilon,\\[1.2mm]
\tilde \alpha(x,s) - H[\rho_s](x) \  \  \text{ if } \ \  s \in [t+\ep, T-\epsilon], \\[1.2mm]
-2H[S_{T-s}\nu](x) \ \  \text{ if } \ \  T-\ep<s<T, \end{cases}
\]
and 
\[
m_s = \begin{cases} S_s\mu \ \   \text{ if } \ \  s <t+ \epsilon,\\[1.2mm]
\rho_s \  \ \text{ if } \  \ s \in [t+\epsilon, T-\epsilon], \\[1.2mm]
S_{ T -s}\nu \  \ \text{ if } \ \ T-\ep<s<T. \end{cases}
\]
Then, $(\alpha,m) \in \mathcal{A}(\mu,\nu,t)$ and 

\[
\bal
\dint_t^T \dint_\R \alpha(x,s)^2 dm_s(x)ds &= \dint_{t+\epsilon}^{T-\epsilon}(\wtilde \al (x,s) - H[\rho_s](x))^2d\rho_s(x) + 4\dint_0^\epsilon H[S_s\nu]^2(x)S_s d\nu(x)ds\\[1.5mm]
& \leq 2\dint_{t+\ep}^{T-\ep}\dint_\R \wtilde \al^2(x,s) d\rho_s(x) ds + 2\dint_{t+\ep}^{T-\ep}\dint_\R H[\rho_s]^2(x) d\rho_s(x) +4(E(S_\epsilon\nu) - E(\nu))\\[1.5mm]
& \leq 2  \dfrac{W_2^2(m_\epsilon,m_{T-\epsilon})}{T-t -2 \epsilon} + \dfrac{2c_0}{3} \dint_{t+\epsilon}^{T-\epsilon}\rho_s^3(x)dxds + 4(E(S_\epsilon\nu) - E(\nu)).
\eal
\]
\vs
The first term of the last inequality above comes from the definition of $(\wtilde \al,\rho)$, the second from Cotlar's identity and the third from the properties of the Dyson flow in $(T-\ep, T)$.
\vs

The upper bound in \eqref{takis410} follows from the inequalities
\[\sup_{s \in [ t+\eps,T-\eps]}\|\rho_s\|_3^3 \leq \eps^{-1},\]
which is a consequence of Proposition~\ref{prop:displace} and the regularizing properties of the Dyson flow, and 
\[W_2^2(m_\epsilon,m_{T-\epsilon}) \leq W_2^2(\mu,\nu),\]
that follows from the fact that the Dyson flow is a contraction for the Wasserstein distance. 
%
%is due to  the fact that, since  the Dyson flow is a contraction for the Wasserstein distance, we have 
%\[W_2^2(m_\epsilon,m_{T-\epsilon}) \leq W_2^2(\mu,\nu),\]
%and the inequality 
%\[\sup_{s \in [ t+\eps,T-\eps]}\|\rho_s\|_3^3 \leq \eps^{-1},\]
%which is a consequence of Proposition~\ref{prop:displace} and the regularizing properties of the Dyson flow. 
\vs

The argument leading to  \eqref{takis410} can be simplified when $\nu=\mu$ and $E[\mu]<\oo$.
Indeed to get \eqref{takis299.10}, we do not need to divide $(t,T)$ in three subintervals, but rather choose $\ep=(T-t)/2$, so that $t+\ep=T-\ep=\frac{t+T}{2}$ and skip the step about the optimal  transportation. 
\vs

Let 

\qquad \qquad $
\alpha(x,s) = \begin{cases} 0 \ \text{ if } \ t<s < \frac{t+T}{2},\\[1.2mm]
%\tilde \alpha(x,s) - H[\rho_s](x) \  \text{ if } \  s \in [t+\ep, T-\epsilon], \\[1.2mm]
-2H[S_{T-s}\nu](x) \ \text{ if } \ \frac{t+T}{2}<s<T, \end{cases}$ \ \ and \ \ $m_s = \begin{cases} S_s\mu \  \text{ if } \  t\leq s <\frac{t+T}{2},\\[1.2mm]
%\rho_s  \ \text{ if } \  s \in [t+\epsilon, T-\epsilon], \\[1.2mm]
S_{ T -s}\nu  \ \text{ if } \ \frac{t+T}{2}\leq s\leq T. \end{cases}$

\vs

%and 
%\[
%m_s = \begin{cases} S_s\mu \  \text{ if } \  t\leq s <\frac{t+T}{2},\\[1.2mm]
%%\rho_s  \ \text{ if } \  s \in [t+\epsilon, T-\epsilon], \\[1.2mm]
%S_{ T -s}\nu  \ \text{ if } \ \frac{t+T}{2}\leq s\leq T. \end{cases}
%\]
Since $\mu=\nu$, $(\alpha,m) \in \mathcal{A}(\nu,\nu,t)$, and, in view of the choice of $(\alpha,m)$ and Proposition~\ref{takis502}, we have

\[
%\ba
\dint_t^T \dint_\R\dfrac{1}{2}\al^2_s(x)m_s(x) dx = 2 \dint_{\frac{t+T}{2}}^T \dint_\R \big(H[S_{T-s} \mu]\big)^2 S_{T-s} \mu dx ds \leq 2\big[E(S_{\frac{T-t}{2}}\nu)-E(\nu)\big] + \|F\|_\oo(T-t).%\\[1.2mm]
%&\leq 2\big[E(S_{\frac{T-t}{2}}\nu)-E(\nu)\big] + \|F\|_\oo(T-t).
%\ea
\]

\vs

Finally, if $\widehat U(\mu,\nu,t)<\infty$, it was already shown earlier that, for each $h\in (t,T)$, $\underset{s\in [h,T]}\sup E(m_s)\lesssim_h 1,$ and, hence, $E(\nu)<\oo$. The other direction of \eqref{takis246} follows from \eqref{takis410} and \eqref{takis299.10}.

%there exists $C_0(h)$ such that 
%$$\underset{s\in [h,T]}\sup E(m_s)\leq C_0(h),$$ and, hence, $E(\nu)<\oo$.
%\vs
%
%The other direction of \eqref{takis246} follows from \eqref{takis410} and \eqref{takis299.10}.

\end{proof}

Finally, we  provide a stronger bound from below on the value function when the terminal cost is bounded from below by a quadratic function. 
\begin{proposition}
Assume that there exists $K > 0$ and $\nu \in \tilde{\mcl P_2}$ such that,  for all $\mu \in \mcl P_2$, $U_T(\mu) \geq KW_2^2(\mu,\nu)$. Then, there exists $C > 0$ that depends only on $T$ and $K$ such that, for all $t \in [0,T]$ and $\mu \in \mcl P_2$,
$$
U(\mu,t) \geq C^{-1}W_2^2(\mu,\nu) - C.
$$
\end{proposition}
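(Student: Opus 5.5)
We fix $(\mu,t)$ and an arbitrary admissible pair $(\alpha,m)\in\mcl A(\mu,t)$ with finite payoff, and bound $P(\alpha,m,t)$ from below. Since $F$ is bounded, $P(\alpha,m,t)\geq \frac12\int_t^T\int_\R\alpha^2dm_sds - (T-t)\|F\|_\oo + K W_2^2(m_T,\nu)$. It then suffices to show that
\[
W_2^2(\mu,\nu)\leq C\Big(\int_t^T\int_\R\alpha^2 dm_sds + W_2^2(m_T,\nu)+1\Big),
\]
with $C$ depending only on $T$ and $\nu$ (hence, through $\nu$, the constant is allowed to depend on $M_2(\nu)$ and $E(\nu)$, which is presumably what ``depends only on $T$ and $K$'' tacitly includes).

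The key step is to compare $m_s$ with the uncontrolled Dyson flow issued from $\nu$, run backwards in the sense of the contraction property. I would use the triangle inequality
\[
W_2(\mu,\nu)\leq W_2(\mu, S_{T-t}\mu) + W_2(S_{T-t}\mu, m_T) + W_2(m_T,\nu).
\]
The first term is at most $\sqrt{T-t}\leq\sqrt T$ by the proposition $W_2(S_t\mu,\mu)\leq\sqrt t$. For the middle term, I would estimate the $W_2$-distance between the controlled trajectory and the uncontrolled one starting from the same point. Lift both to $\mcl H$ as in Section 3.3, with $X_s$ driven by $H[\mcl L(X_s)](X_s)$ and $Y_s$ driven by $\alpha(s,Y_s)+H[\mcl L(Y_s)](Y_s)$, both monotone with $X_t=Y_t$. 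The monotonicity argument in Proposition \ref{prop:decreas} shows that the Hilbert-transform parts contribute a nonpositive term to $\frac{d}{ds}\mathbb E|X_s-Y_s|^2$, since monotone rearrangements give the optimal coupling. Hence $\frac{d}{ds}\mathbb E|X_s-Y_s|^2\leq 2\mathbb E[|X_s-Y_s||\alpha(s,Y_s)|]$, which gives $\frac{d}{ds}\|X_s-Y_s\|\leq\|\alpha(s,Y_s)\|_{L^2}$. Integrating and using Cauchy--Schwarz yields $W_2^2(S_{T-t}\mu,m_T)\leq (T-t)\int_t^T\int\alpha^2dm_sds$. Equivalently, one can invoke the maximal monotone (Brezis) framework of the second proof of Lemma \ref{lemma:brezis}, where $\partial\phi$ is monotone and the perturbation $f=\alpha(\cdot,Y)$ is Lipschitz-controlled in $L^2$.

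Combining these bounds gives $W_2^2(\mu,\nu)\leq 3\big(T + T\cdot 2L(\alpha,m) + W_2^2(m_T,\nu)\big)$. Therefore $P\geq \min(\tfrac{1}{6T},\tfrac{K}{3})W_2^2(\mu,\nu)- T - T\|F\|_\oo$, and taking the infimum over admissible pairs gives the claim. Note that $\nu$ actually plays no role beyond the terminal cost, so one might even avoid dependence on $M_2(\nu)$.

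The main obstacle is the rigorous justification of the monotonicity/contraction step for a general finite-energy control $\alpha$. Proposition \ref{prop:decreas} is only proved for the pure Dyson flow with $L^3$-valued trajectories, while $\alpha$ is merely $L^2(m_s)$ and the superposition lift $Y$ need not be unique. I would handle this by working on $[t+\eps,T]$, where Proposition \ref{prop:comp} gives finite entropy and $m_s\in L^3$ for a.e. $s$. There I would construct $X$ directly via Lemma \ref{lemma:brezis} with $f=\alpha(s,Y_s)$ and $g=0$ (dropping the singular term), so that the equation is a monotone perturbation and the standard Brezis estimate $\|X_s-X'_s\|\leq \|X_{t+\eps}-X'_{t+\eps}\|+\int\|f-f'\|$ applies. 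The $\eps$-portion is then absorbed via continuity of $m$ in $W_2$ and $W_2(S_\eps\mu,\mu)\leq\sqrt\eps$, after which we let $\eps\to0$.
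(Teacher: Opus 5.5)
Your proof is correct, but it takes a different route from the paper.

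The paper does not compare with the free Dyson flow. It uses the exact identity $\frac{d}{ds}M_2(m_s)=1+2\int_\R x\,\alpha_s(x)\,dm_s(x)$, in which the Hilbert transform contributes only the constant $+1$ (from $2\int_\R xH[m]\,dm=1$). By Cauchy--Schwarz, $\sqrt{M_2(m_s)}$ therefore decreases at most at rate $\|\alpha_s\|_{L^2(m_s)}$, which gives $\sqrt{M_2(\mu)}\le\sqrt{M_2(m_T)}+\big((T-t)\int_t^T\|\alpha_s\|^2_{L^2(m_s)}ds\big)^{1/2}$. It then passes through $\delta_0$, using $W_2(\mu,\nu)\le\sqrt{M_2(\mu)}+\sqrt{M_2(\nu)}$ and $\sqrt{M_2(m_T)}\le W_2(m_T,\nu)+\sqrt{M_2(\nu)}$.

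That argument is completely elementary. It needs no Lagrangian lift, no $L^3$ regularity and no stability estimate for the controlled equation, because the singular term only enters with a favorable sign. Its constant, however, depends on $M_2(\nu)$ as well as on $\|F\|_\infty$.

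Your route combines the triangle inequality through $S_{T-t}\mu$, the bound $W_2(S_\tau\mu,\mu)\le\sqrt\tau$, and the perturbed contraction $W_2(S_{T-t}\mu,m_T)\le\int_t^T\|\alpha_s\|_{L^2(m_s)}ds$. What it buys:
\begin{itemize}
\item a constant independent of $\nu$, which is closer to the dependence claimed in the statement;
\item working with an arbitrary admissible pair rather than an optimal one, so you do not need existence of minimizers.
\end{itemize}
The price is a stability estimate that the paper proves only for two uncontrolled solutions (Proposition \ref{prop:decreas}), so the extra justification you sketch is genuinely needed.

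On that justification, one point to tidy. A solution built via Lemma \ref{lemma:brezis} with forcing $f(s)=\alpha(s,Y_s)$ has law $m_s$ only if it coincides with $Y$. That would require checking $-H[m_s](Y_s)\in\partial\phi(Y_s)$ and invoking uniqueness. It is simpler to argue as follows:
\begin{itemize}
\item take for $Y_s$ the monotone lift (in one dimension, simply the quantile function of $m_s$);
\item observe that on $[t+\eps,T]$ it is absolutely continuous in $\mcl H$, since $\int_{t+\eps}^T\int_\R(\alpha_s+H[m_s])^2\,dm_s\,ds<\infty$ by \eqref{eq:est1} and \eqref{cotlar1};
\item differentiate $\|Y_s-X_s\|^2$ directly with the monotonicity computation of Proposition \ref{prop:decreas}, exactly as in your main argument.
\end{itemize}

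Finally, a harmless arithmetic slip. With $c=\min(\frac{1}{6T},\frac K3)$, the additive constant you obtain is $3cT+T\|F\|_\infty$ with $3cT\le\frac12$, not $T+T\|F\|_\infty$; the two differ when $c>\frac13$. This does not affect the conclusion.
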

\begin{proof}
Let $t \in [0,T), \mu \in \mcl P_2$ and consider an optimal control $(\alpha,m)$ for $U(\mu,t)$. Then
$$
\frac{d}{dh}M_2(m_{t+h})= 1 + 2 \int_{\R}\alpha(t+h,x)x dm_{t+h}(x),
$$
and, after employing Cauchy-Schwarz inequality, % and integrating we deduce 
$$
\sqrt{M_2(m_T)} - \sqrt{M_2(\mu)} \geq -\int_t^T\|\alpha\|_{L^2(m_s)}ds \geq -\sqrt{T-t}\sqrt{\int_t^T\|\alpha\|^2_{L^2(m_s)}ds}.
$$
Moreover, using twice the triangular inequality, we find 
$$
W_2(\mu,\nu) \leq W_2(m_T,\nu) + \Big((T-t)\int_t^T\|\alpha\|^2_{L^2(m_s)}ds \Big)^{1/2} + 2 \Big(M_2(\nu)\Big)^{1/2},
%\sqrt{(T-t)\int_t^T\|\alpha\|^2_{L^2(m_s)}ds} + 2 \sqrt{M_2(\nu)}.
$$
and, thus %Taking the square leads to
$$
W_2^2(\mu,\nu) \leq 3 W_2^2(m_T,\nu) + 3(T-t)\int_t^T\|\alpha\|^2_{L^2(m_s)}ds + 12 M_2(\nu),
$$
from which the result easily follows.

\end{proof}

\subsection{The behavior of $\what U$ near the final time} 
The next proposition describes the behavior of $\what U(\mu,\nu,t)$ as $t\to T$. 

\begin{proposition}\label{takis252.1} Assume that $\mu, \nu \in  \wtilde \Pt$. Then: % $E(\mu)<\oo$ and $\nu\in \wtilde \Pt $. Then:
\vskip.05in
(i)~$\what U(\mu,\nu,t)\geq -(T-t)  \|F\|_\oo.$
\vskip.05in
(ii)~$\underset{t\to T}\lim \; \what U(\nu,\nu,t)=0.$

\vskip.05in
(iii)~For every $\rho>0$,  $\underset{t\to T} \lim \; \what U(\mu,\nu,t)=\oo$ uniformly in $\mu \in \{m \in \mcl P_2 \; : W_2(m,\nu)\geq \rho \}$.  
\end{proposition}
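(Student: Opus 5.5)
The three claims rely on different facts already proved: (i) follows from the definition, (ii) from the explicit bound \eqref{takis299.10}, and (iii) from the transport inequality of Proposition~\ref{takis505} together with the entropy control of Proposition~\ref{prop:comp}.

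For (i), every admissible pair $(\al,m)\in\mcl A(\mu,\nu,t)$ has nonnegative kinetic energy. Since $F\geq -\|F\|_\oo$, we get $\widehat P(\al,m,t)\geq -(T-t)\|F\|_\oo$. Taking the infimum gives the claim; if $\mcl A(\mu,\nu,t)$ is empty, $\what U=+\oo$ and the bound is trivial.

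For (ii), the lower bound from (i) gives $\liminf_{t\to T}\what U(\nu,\nu,t)\geq 0$. For the upper bound I use \eqref{takis299.10}:
\[
\what U(\nu,\nu,t)\leq 2\big[E(S_{\frac{T-t}{2}}\nu)-E(\nu)\big]+(T-t)\|F\|_\oo .
\]
Since $\nu\in\wtilde\Pt$, the map $s\mapsto E(S_s\nu)$ is continuous at $s=0$ (as noted after Proposition~\ref{takis502}). Hence the bracket tends to $0$ as $t\to T$, and so does the last term. If continuity at $0$ needed justification, I would combine monotonicity of $s\mapsto E(S_s\nu)$, which gives $E(S_s\nu)\leq E(\nu)$, with the lower semicontinuity of Lemma~\ref{T1}, since $S_s\nu\rightharpoonup\nu$.

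For (iii), fix $\rho>0$, a measure $\mu$ with $W_2(\mu,\nu)\geq\rho$, and $(\al,m)\in\mcl A(\mu,\nu,t)$ with finite cost. The curve $m$ solves a continuity equation with velocity $\al+H[m]$. Proposition~\ref{takis505} then gives
\[
\rho^2\leq W_2^2(\mu,\nu)\leq (T-t)\int_t^T\!\!\int_\R(\al+H[m_s])^2dm_s\,ds\leq 2(T-t)\Big(2L(\al,m)+\int_t^T\!\!\int_\R H[m_s]^2dm_s\,ds\Big).
\]
The main step is to bound the Hilbert-transform energy by the control energy plus something small. Cotlar's identity and the computation \eqref{est:fond} give
\[
\int_t^T\!\!\int_\R H[m_s]^2dm_s\,ds\leq 2L(\al,m)+2\big(E(m_t)-E(m_T)\big)= 2L(\al,m)+2\big(E(\mu)-E(\nu)\big).
\]
This is where $\mu\in\wtilde\Pt$ is used, and the estimate should be applied on $[t+\eps,T]$ with $\eps\to0$, using lower semicontinuity of $E$ to control the endpoint. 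The resulting bound is
\[
\rho^2\leq 2(T-t)\big(6L(\al,m)+2(E(\mu)-E(\nu))\big).
\]

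The obstacle is uniformity in $\mu$, because $E(\mu)$ is not bounded over the set $\{W_2(\mu,\nu)\geq\rho\}$. My first remedy is to truncate at time $t+\eps$, bounding $E(m_{t+\eps})\leq C(1+\eps^{-1/2})$ by Proposition~\ref{prop:comp}. That constant, however, depends on $L$ and $M_2(\mu)$, and may not close the argument. I therefore plan a cleaner route that avoids $H[m]$ altogether. Choose a smooth convex test function, for instance $\varphi(x)=x^2$ if the second moments differ, or more generally a Lipschitz $\varphi$ that separates $\mu$ and $\nu$. Then $\frac{d}{ds}\int\varphi\,dm_s=\int\varphi'\al\,dm_s+\frac12\iint\frac{\varphi'(x)-\varphi'(y)}{x-y}dm_sdm_s$, and the last term is bounded by $\|\varphi''\|_\oo/2$. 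This yields $|\int\varphi\,d\mu-\int\varphi\,d\nu|\leq C\sqrt{(T-t)L}+C(T-t)$, uniformly in $\mu$, which is the bounded Dyson drift argument of Proposition~\ref{takis1250}. A single $\varphi$ does not detect $W_2\geq\rho$ uniformly. I would therefore combine the bound over a countable family of smooth $\varphi$ with norms at most $1$ with tightness from the second-moment bound of Proposition~\ref{prop:comp}: if $L$ stayed bounded as $t\to T$, the smoothed $W_2$ estimate of Proposition~\ref{takis1250} would force $W_2(\mu,\nu)\leq\omega(T-t)\to0$, contradicting $W_2\geq\rho$. Concretely, the estimate $W_2(m_{t},m_T)\leq\omega_K(T-t)$ holds with a modulus depending only on the energy bound $K$, and hence $L(\al,m)\to\oo$ uniformly.
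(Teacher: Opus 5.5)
Your proof is correct and follows the paper's. Parts (i) and (ii) are exactly the paper's arguments: nonnegativity of the kinetic term, and \eqref{takis299.10} combined with the continuity of $s\mapsto E(S_s\nu)$ at $0$. Your final argument for (iii) uses the same mechanism as the paper, namely that paths of energy at most $K$ satisfy $W_2(m_t,m_T)\le\omega_K(T-t)$ by Proposition~\ref{takis1250}, so a distance $\rho$ cannot be covered as $t\to T$ unless the energy blows up. You apply this directly rather than by contradiction through the compactness result Proposition~\ref{takis2004}, which is slightly cleaner because you never need a weak limit of the initial measures, and such a limit need not preserve $W_2(\cdot,\nu)\ge\rho$. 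Drop the two abandoned attempts for (iii) from the write-up.
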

\begin{proof}
Since, for  each $(m,\al)\in \mcl A(\mu,\nu,t)$,  
$m\geq 0$ and 
\beq\label{takis253}
\dint_t^T \Big[\dint \dfrac{1}{2} \al_s^2 m_sdx +F[m_s]) \Big]ds \geq -(T-t) \|F\|_\oo,
\eeq
the first claim is immediate.
\vs
The proof of  (ii) follows from (i) and \eqref{takis299.10}, since $\underset{t\to T}\lim E(S_{\frac{T-t}{2}}\nu)=E(\nu)$.

\vs

The third claim follows  from the compactness of minimizing paths arguing by contradiction. Indeed, if the claim were false,
there would exist some $M\in (0,\oo)$ and sequences $(t_n)_{n\geq 1}$, $(\mu_n)_{n\geq 1}$, and $(\al_n)_{n\geq 1}$ such that  
\[
\ba
& (\al_n, m_n) \in  A(\mu_n,\nu,t_n) \ \text{ is an approximate minimizer of} \  \what U(\mu_n,\nu, t_n), \text{and, as $n\to \oo$, }\\[1.2mm]
& t_n\to t \ \text{and} \ \mu_n \rightharpoonup \mu  \ \text{ with} \  W_2(\mu,\nu)\geq \rho \ \text{and} \  \dint_{t_n}^T\dint_\R m_n \al_n^2 dx ds \leq M.
%& t_n \underset{n\to \oo} \to T,  \ \mu_n \underset{n\to \oo}\rightharpoonup \mu \ \text{ with} \  W_2(\mu,\nu)\geq \rho, \  (\al_n,m_n) \in \mcl A (\mu_n,\nu,t_n) \ \text{and} \  \dint_{t_n}^T\dint_\R m_n \al_n^2 dx ds \leq M.
\ea
\]

It would then follow from Proposition~\ref{takis2004} that there exists $(\ol \al,\ol m)\in \mcl A(\mu,\nu,T)$ which, of course, is impossible since  $\mu\neq \nu$.

\end{proof}

\section{The Hamilton-Jacobi-Bellman equation and the comparison principle}\label{sec:visc}

In this section we introduce the notion of (viscosity) solution to the HJB equation of the controlled Dyson problem 
\beq\label{takis100}
%\begin{cases} 
\ds -\pt_t u + \dfrac{1}{2}\int_\R(D_\mu U)^2 d\mu - \int_\R D_\mu U H[\mu] d\mu =F[m]  \ \ \text{in} \ \ \mcl P_2 \times (0,T),
%\\[2mm]
%u(\cdot, T)=U_T,
%\end{cases}
\eeq
and state and prove the comparison between continuous sub- and super-solutions with continuous terminal data. The comparison for the target problem, which is based on the control interpretation of the problem, is stated and proved in the next section. 

\subsection{The notion of viscosity solutions }%Motivation of the choice of test functions}
Since, in general, \eqref{takis100} does not have smooth solutions, we will use the theory of viscosity solutions in  $\mcl P_2 \times (0,T)$. 
However, there are various difficulties hidden in \eqref{takis100}, which complicate the notion of viscosity solution.
%particularly heavy in terms of notation. Those difficulties are:
They are related to the facts that (i)~the term $\ds \int_\R D_\mu U H[\mu] d\mu$  is singular and to make sense of it we need more regularity for $\mu$ than merely $\mu\in \mcl P_2$, (ii)~the space $\mcl P_2$ is unbounded and not locally compact, and (iii)~the solutions to \eqref{takis100} that we are interested in are unbounded.
\vs
Of course, in the absence of the singular term due to the Hilbert transform, HJB equations set in $\Pt$
%\eqref{takis100} is 
are well understood, see, for example, \citep{Lionscours,B}. 
\vs
Before we state  the  notion of viscosity solution for \eqref{takis100}, we present some computations and results which explain how we address the three difficulties mentioned above.

\vs

\textbf{The singular term involving the Hilbert transform.}
To introduce the notion of a viscosity  solution $u$ to \eqref{takis100}, we look at the equation satisfied by $u - \delta E$ where $E$ is the free entropy defined by \eqref{entropy}--recall that  $D_\mu E=-H$. 
\vs
We assume next that $u$ is a smooth sub-solution to \eqref{takis100}, and, for $\del$ small enough, we consider the function
\beq\label{takis104}
u_\del(\mu,t)=u(\mu,t)-\delta E(\mu).
\eeq

Inserting \eqref{takis104}  in  \eqref{takis100} we find, formally, that %after using \eqref{gradE}, that
\beq\label{takis105}
-\pt_t u_\del +\dfrac{1}{2} \dint_\R \Big(D_\mu u_\del- \del H[\mu]\Big)^2d\mu \;  - \dint_\R \Big(D_\mu u  - \del H[\mu] \Big) H[\mu] d\mu \leq F,
\eeq

and, after rearranging terms 
\beq\label{takis105}
-\pt_t u_\del +\dfrac{1}{2} \dint_\R\Big(D_\mu u_\del\Big)^2d\mu+ (1 +\delta)\dint_\R \Big(D_\mu u_\del \Big) H[\mu] d\mu + ( \del +\dfrac{\del^{2}}{2})\dint (H[\mu])^2 d\mu \leq F.
\eeq

We assume next that, for  a smooth function $\phi:\mcl P_2 \times (0,\infty)\mapsto \R$, $u_\delta-\phi$ attains a global maximum at some $(\mu_0,t_0)$. Then, $(\partial_t phi, D_\mu \phi) = (\partial_t u_\delta,D_\mu u_\delta)$ at that point which implies
\beq\label{takis105}
\bal
- \pt_t \phi(\mu_0,t_0)  +\dfrac{1}{2} \dint_\R\Big(D_\mu \phi(\mu_0,t_0)\Big)^2d\mu_0 &+ (1+\delta)\dint \Big(D_\mu \phi(\mu_0,t_0) \Big) H[\mu_0] d\mu_0\\
&+ ( \del +\dfrac{\del^{2}}{2})\dint (H[\mu_0])^2 d\mu_0 \leq F(\mu_0).
\eal
\eeq
It follows from Cotlar's identity and \eqref{takis105} that there exists a constant $C(\phi)$, such that  
%for $\ol \nu_\del=\dfrac{c_0}{3}\del $,
\beq\label{takis106}
\dint \mu_0^3 dx= \dfrac{3}{c_0}\dint (H[\mu_0])^2 d\mu_0 \leq  \dfrac{C(\phi)}{\delta}. %  \dfrac{3C(\phi)}{c_0\del}.
\eeq

In particular, the maximum point $\mu_0$ is regular, that is,  in $L^3$,  which is exactly what is needed to make sense of the singular term, with an estimate that depends on the test function $\phi$. 
%the previous yields a regularity estimate on the point $\mu_0$, which depends on the regularity of the test function $\phi$, and which is exactly the regularity we need to make sense of the singular term. 
\vs
Of course, this comes at the price of adding error terms in $\del$ in the relation \eqref{takis105}.\vs

\textbf{The lack of local compactness of $\mcl P_2$.} To define and establish the uniqueness of viscosity solutions to \eqref{takis100}, we  need to consider points of maximum of $u - \phi$, when  $u$ a sub-solution to \eqref{takis100}, and points of minimum of $u - \phi$, when  $u$ a sub-solution to \eqref{takis100},  and $\phi$ a smooth test function.
\vs
% $\phi$ a smooth test function, points of maximum of $u - \phi$ when  $u$ a sub-solution to \eqref{takis100} and $\phi$ a smooth test function, 
 
The existence of such points is not obvious because of the lack of local compactness of $\mcl P_2$. For example, even if we know that $u(\mu) - \phi(\mu) \to -\infty$ as $\ds \int |x|^2d\mu \to +\infty$ and $u$ and $\phi$ are continuous, there is no reason for $u-\phi$ to have a maximum precisely. 
\vs
To deal with this difficulty and to create points of maximum, we use 
% because of the lack of local compactness of $\mcl P_2$. In order to be able to create such points of maximum, we shall need 
the following non-linear variant of Stegall's Lemma \citep{stegall} which was established in \citep{BL2}. For the statement, we recall 
that a point of maximum is strongly exposed if all maximizing sequences converge toward it and that $\mathcal I_2$ is defined in \eqref{defI2}.
\begin{theorem}\label{thm:stegall}
Let $f: \mcl P_2\mapsto \mathbb R$ be a bounded from above upper semicontinuous  function such that $f(\mu) \to -\infty$ as $\ds \int |x|^2\mu(dx) \to +\infty$. Then, for any $\eta > 0$, there exists $\xi \in \mcl P_2$ such that $M_2(\xi) \leq \eta$ and 
$$
\mu \mapsto f(\mu) - \mathcal I_2(\xi,\mu)
$$
has a strongly exposed point of maximum on $\mcl P_2$.
\end{theorem}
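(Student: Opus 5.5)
We lift the problem to the Hilbert space of quantile functions and apply Stegall's variational principle there, using the sign-definite monotone structure of $-\mathcal I_2$ on the cone of non-decreasing functions.

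\textbf{Step 1: the lift.} Let $\mcl H = L^2([0,1];\R)$ and let $\mcl K\subset\mcl H$ be the closed convex cone of non-decreasing functions. Every $\mu\in\Pt$ has a unique quantile function $X_\mu\in\mcl K$, and $\mu\mapsto X_\mu$ is an isometry from $(\Pt,W_2)$ onto $\mcl K$. For $\xi,\mu\in\Pt$, the monotone coupling is optimal in one dimension, so
\[
\mathcal I_2(\xi,\mu) = -\langle X_\xi, X_\mu\rangle_{\mcl H}.
\]
Define $g:\mcl H\to\R\cup\{-\infty\}$ by $g(X)=f(\mcl L(X))$ for $X\in\mcl K$ and $g=-\infty$ off $\mcl K$. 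Since $\mcl K$ is closed, $g$ is upper semicontinuous and bounded above. Because $\|X\|_{\mcl H}^2 = M_2(\mcl L(X))$, the hypothesis on $f$ gives $g(X)\to-\infty$ as $\|X\|_{\mcl H}\to\infty$.

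\textbf{Step 2: Stegall's variational principle.} Stegall's principle \citep{stegall} applies to usc, bounded-above functions on a set with the Radon--Nikodym property, here the Hilbert space $\mcl H$, provided the relevant set is bounded. Coercivity means maximizing sequences stay in a bounded ball $B_R$, so it suffices to apply the principle to $g$ restricted to $B_R$. It yields a dense set of perturbations $p\in\mcl H$ such that $X\mapsto g(X)+\langle p,X\rangle$ has a strongly exposed maximum on $B_R$. Choose such a $p$ with $\|p\|\le\sqrt\eta$, small enough that the maximizer and all maximizing sequences lie in the interior of $B_R$. Then the maximum is strongly exposed on all of $\mcl H$.

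\textbf{Step 3: making the perturbation monotone (main obstacle).} The perturbation $p$ must correspond to some $\xi$, that is, $p$ should lie in $\mcl K$. A general $p$ need not, and density of good perturbations in $\mcl H$ says nothing about density in $\mcl K$.

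The first remedy to try uses the projection $P_{\mcl K}$ onto $\mcl K$. For $X\in\mcl K$ we have $\langle p - P_{\mcl K}p, X\rangle\le 0$, by the characterization of projections onto cones. So replacing $p$ by $P_{\mcl K}p$ changes the functional on $\mcl K$ by adding the nonnegative, concave and Lipschitz term $-\langle p-P_{\mcl K}p,X\rangle$. This does not by itself preserve strong exposure.

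A more robust route is to use the nonlinear version from \citep{BL2}: run Stegall's argument directly on the closed bounded convex set $\mcl K\cap B_R$, which has the Radon--Nikodym property. The standard proof of Stegall's principle goes through the subdifferential of the convex conjugate of $-g$, restricted to $\mcl K\cap B_R$. The conjugate $\sup_{X\in \mcl K\cap B_R}\{g(X)+\langle p,X\rangle\}$ is convex and continuous, and its dependence on $p$ factors through $P_{\mcl K}$-type directions. One then shows it is Fréchet differentiable on a dense $G_\delta$ subset of $\mcl K$ itself, using that $\mcl K$ has nonempty interior relative to its closed span, or by approximating with strictly increasing elements. Fréchet differentiability at $p$ is equivalent to strong exposure of the maximum of $g+\langle p,\cdot\rangle$. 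Picking $p\in\mcl K$ with $\|p\|^2\le\eta$ and setting $\xi=\mcl L(p)$, we get $M_2(\xi)=\|p\|^2\le\eta$ and $\langle p,X_\mu\rangle=-\mathcal I_2(\xi,\mu)$.

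\textbf{Step 4: transfer back.} Transferring through the isometry $\mu\mapsto X_\mu$ gives a strongly exposed maximum of $\mu\mapsto f(\mu)-\mathcal I_2(\xi,\mu)$ on $\Pt$, which is the statement of Theorem~\ref{thm:stegall}.
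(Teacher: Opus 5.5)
The paper gives no proof of this statement; it quotes it from \citep{BL2}. Judged on its own, your proposal has a genuine gap at Step~3, which you correctly identify as the crux.

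\textbf{Step 3.} The justification you give there is false. $\mcl K-\mcl K$ contains all step functions, so the closed span of $\mcl K$ is all of $\mcl H$. Moreover $\mcl K$ has empty interior in $\mcl H$, since every monotone function is an $L^2$-limit of non-monotone ones; so $\mcl K$ is closed and nowhere dense. The Asplund/RNP argument only gives a dense $G_\delta$ of Fr\'echet-differentiability points of the conjugate, and such a set can miss $\mcl K$ entirely. Strictly increasing elements are no more interior than the others. You also cannot run the differentiability argument ``inside'' $\mcl K$: proving that maximizing sequences satisfy $X_n\to X_0$ uses perturbations of $p$ in the direction $X_n-X_0$, which is not monotone. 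Finally, the conjugate over $\mcl K\cap B_R$ does not factor through $P_{\mcl K}$. As written, nothing produces an admissible $p\in\mcl K$.

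\textbf{The missing idea} is law invariance, with the nondecreasing rearrangement $p\mapsto p^*$ (not the projection) as the map onto $\mcl K$.
\begin{itemize}
\item Apply Stegall to the full lift $\tilde g(X)=f(\mcl L(X))$ on $\mcl H$. It is upper semicontinuous because $X\mapsto\mcl L(X)$ is $1$-Lipschitz into $(\Pt,W_2)$. This gives $p$ with $\|p\|\le\sqrt\eta$ such that $\tilde g+\langle p,\cdot\rangle$ has a strongly exposed maximum.
\item By Hardy--Littlewood, $\sup_{\mcl L(X)=\mu}\langle p,X\rangle=\langle p^*,X_\mu\rangle$. Hence $g+\langle p^*,\cdot\rangle$ on $\mcl K$ has the same supremum as $\tilde g+\langle p,\cdot\rangle$ on $\mcl H$.
\item Take a measure-preserving $\sigma$ with $p=p^*\circ\sigma$ (Ryff's theorem). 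Then $Y\mapsto Y\circ\sigma$ is an isometry sending maximizing sequences of $g+\langle p^*,\cdot\rangle$ in $\mcl K$ to maximizing sequences of $\tilde g+\langle p,\cdot\rangle$, which converge.
\item So the former sequences are Cauchy. Interleaving two of them and using upper semicontinuity gives a strongly exposed maximizer in $\mcl K$. Set $\xi=\mcl L(p^*)$; then $M_2(\xi)=\|p\|^2\le\eta$.
\end{itemize}
Since $\|p^*-q^*\|\le\|p-q\|$, this also shows that your conjugate is Fr\'echet differentiable on a dense subset of $\mcl K$. But that holds for this reason, not the one you state.

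\textbf{Step 2.} There is a second problem here. Choosing $p$ small so that maximizing sequences stay inside $B_R$ requires $g(X)+\sqrt\eta\,\|X\|\to-\infty$. This does not follow from $g\to-\infty$. In fact the statement as written fails for $f(\mu)=-\log(1+M_2(\mu))+\min(1,M_2(\mu))$:
\begin{itemize}
\item For $\xi=\delta_0$, the maximum is attained on the whole set $\{M_2=1\}$, for instance at both $\delta_1$ and $\delta_{-1}$, so it is not strongly exposed.
\item For $\xi\neq\delta_0$, along $\mu_t=\mcl L(tX_\xi)$ one has $f(\mu_t)-\mathcal I_2(\xi,\mu_t)=1+tM_2(\xi)-\log(1+t^2M_2(\xi))\to+\infty$ for large $t$, so there is no maximum at all.
\end{itemize}
You therefore need an explicit rate, for example $f\le C-cM_2$. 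Such a rate holds in the paper's own uses of the theorem, through the penalizations $-\alpha M_2(\mu)$ and $-\frac{1}{2\eps}W_2^2$ (or a restriction to bounded sets).
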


%A point of maximum is strongly exposed if all maximizing sequences converge toward it. 
This perturbation introduces a number of  terms in the definitions of sub-and super-solution to \eqref{takis100} which are of order $\eta$ and  need to be compensated, as it is shown below,   using different terms in the equation. 
\vs

Assume that $u$ is a smooth solution to \eqref{takis100} and, for some smooth function $\phi$, $\eta>0$ and $\nu \in \Pt$ such that $M_2(\nu) \leq \eta$, let  $(t_0,\mu_0) \in [0,T)\times L^3$  be a maximum point of the map 
$$(t,\mu) \mapsto u(t,\mu) - \mcl I_2(\mu,\nu) - \phi(t,\mu).$$
\vs

%we  consider a maximum point  $\nu \in \Pt, M_2(\nu) \leq \eta$ and let $(t_0,\mu_0)$ be a point of maximum of $(t,\mu) \mapsto u(t,\mu) - \mcl I_2(\mu,\nu) - \phi(t,\mu)$ for some smooth function $\phi$, such that $\mu_0 \in L^3$. 

%Let $T: \R\mapsto \R$ the optimal transport of $\mu_0$ toward $\nu$. 
Using the differentiability of $\mcl I_2(\cdot,\nu)$ at $\mu_0$,  we deduce  that 
\beq\label{alex5}
D_\mu u(t_0,\mu_0) = D_\mu \phi(t_0,\mu_0) - \mcl T, 
\eeq
where $\mcl T: \R\mapsto \R$ is  the optimal transport of $\mu_0$ toward $\nu$.
\vs

Inserting  \eqref{alex5}  in \eqref{takis100} we find 
$$
-\partial_t \phi(t_0,\mu_0) + \frac12\int_\R \left(D_\mu \phi(t_0,\mu_0) - \mcl T\right)^2d\mu_0 - \int_\R(D_\mu \phi(t_0,\mu_0) - \mcl T)H[\mu_0]d\mu_0 = F[\mu_0].
$$
Expanding the square in the equality above and making  use of the fact that $\int_\R \mcl T^2 d\mu_0 = M_2(\nu) \leq \eta$ leads to
\[
\begin{split}
-\partial_t \phi(t_0,\mu_0) + &\frac12\int_\R \left(D_\mu \phi(t_0,\mu_0)\right)^2d\mu_0 - \int_\R D_\mu \phi(t_0,\mu_0)H[\mu_0]d\mu_0\\
& \geq F[\mu_0] - \frac\eta2 -\sqrt\eta\big(\|D_\mu\phi(t_0,\mu_0)\|_{L^2(\mu_0)} +\|H[\mu_0]\|_{L^2(\mu_0)} \big),
\end{split}
\]
which is a good justification of the formula for sub-solutions below. 
\vs
When we  introduce  the viscosity super-solution property, we naturally arrive at a different formulation because of the effect of the  perturbation on the level of the underlying controlled trajectories. 
\vs
Indeed, we recall that %in \eqref{takis100} the term $\frac12\int_\R (D_\mu u)^2d\mu$ arises from
$$
\frac12\int_\R (D_\mu u)^2d\mu = - \inf_{\alpha} \int_\R (\frac12 \alpha^2 - \alpha \cdot D_\mu u)d\mu.
$$
\vs
It turns out that some of the $\frac 12 \alpha^2$ has to be used to compensate for the fact that $\mcl I_2$ is simply $\sqrt{\eta}$ Lipschitz continuous, and this leaves us with  
$$
- \inf_{\alpha} \int_\R ((\frac12- \eta) \alpha^2 - \alpha \cdot D_\mu u)d\mu,
$$
and, after optimizing over $\alpha$,  \eqref{takis124}. 
\vs
%We note that the above  should not be a concern to  the careful reader, as, following the proof of the fact that the value function is a viscosity sub-solution below. We can recover an estimate similar to the one of viscosity solutions from the one of super-solutions, after some additional approximation steps...

%Those terms will be denoted by $\eta$, and the form of the terms in $\eta$ will be justified in the proofs where we establish that the value function is indeed a viscosity solution to \eqref{takis100}.

\color{black}
\vs

%Those terms will be denoted by $\eta$, and the form of the terms in $\eta$ will be justified in the proofs where we establish that the value function is indeed a viscosity solution to \eqref{takis100}.

\vs

\textbf{The unboundedness of the solution.} Dealing with the growth of viscosity solutions to HJB equations is a classical question which dates back to the work of Ishii \citep{Ishii}. In view of the bounds obtained in  the previous section, we expect that the value functions will  grow linearly in $M_2(\mu)$, which is a critical growth for the "quadratic" nonlinearity in \eqref{takis100}. As a result,   it is not possible to employ techniques similar to the ones of Ishii. Instead, we use the fact that \eqref{takis100} is convex in $D_\mu u$ and the observation of Alvarez \citep{alvarez} that, for such equations, the minimum of two sub-solutions is also a sub-solution, and, thus,  the minimum of two solutions is a solution since it is always true for super-solutions. This idea was made rigorous for equations on Wasserstein spaces without singular terms by the first two authors in \citep{BL2}. Since constants are clearly solutions to \eqref{takis100}, instead of sub-solutions $u$ to \eqref{takis100}, we will  thus work with $u\wedge A$ for $A \in \mathbb R\cup\{+\infty\}$.
\vs
Note that when working with super-solutions, we shall not face this problem, since, we expect the solutions to \eqref{takis100} to be bounded from below.
 
 \vs
\textbf{The definition of viscosity solutions.} 
We are now in position to present the rigorous definitions of viscosity solutions to \eqref{takis100}. We remark that, instead of assuming that the test functions  are smooth, here we restrict our attention to tests functions in $C^{1,1}(\Pt\times [0,T])$.  The reason for this  is that in the proof of the comparison principle, due to available regularity of the Wasserstein distance, we can construct approximations which are at most in $C^{1,1}$.

\vs
\begin{definition}\label{takis122}
(i)~ An upper semicontinuous map $u:\mcl P_2\times (0,T)\to \R$ is a viscosity sub-solution to \eqref{takis100}, if for every $\phi\in C^{1,1}( \mcl P_2\times (0,T); \R)$, $A \in \mathbb R \cup \{+ \infty\}$, $\eta > 0$, $\xi \in \mcl P_2$ such that $M_2(\xi) \leq \eta$, $\del >0$ and a global maximum $(\ol \mu,\ol t) \in \mcl P_2\times (0,T)$   of $(\mu,t) \mapsto u\wedge A-\phi-\del E(\mu) - \mathcal I_2(\xi,\mu)$, there exists $C=C(\|\phi\|_{C^{1,1}}, \delta)>0$ such that 
\beq\label{takis123.0}
\ol \mu\in L^3 \ \ \text{and} \ \  \dint \ol \mu^3 dx \leq C
\eeq
and 
\beq\label{takis123}
\bal
\bal
\color{black}\left(\delta+ \frac12\delta^2\right)\dint_\R & (H[\ol \mu])^2\ol \mu dx -(1+\delta)\sqrt\eta\|H[\ol \mu]\|_{L^2(\ol \mu)}- \partial_t \phi(\ol \mu, \ol t) - ( 1+\delta)\dint_\R D_\mu \phi(\ol \mu,\ol t) H[\ol \mu]\ol \mu dx+\\
&+\frac12\dint_\R(D_\mu \phi(\ol \mu,\ol t))^2 \ol \mu dx \leq F[\ol \mu]+ \frac12\eta + \sqrt{\eta}\|D_\mu\phi(\ol \mu,\ol t)\|_{L^2(\ol \mu)}.
\eal
\eal
\eeq

(ii)~A lower semicontinuous map $u:\mcl P_2\times (0,T)\to \R$ is a viscosity super-solution to \eqref{takis100} if for every smooth $\phi \in C^{1,1}( \mcl P_2\times (0,T); \R)$, $\delta \in (0,1)$, $\eta \in (0,\min(\frac14;\delta \frac{1-\delta}{2}))$, $\xi \in \mcl P_2$ such that $M_2(\xi) \leq \eta$,
and $(\ol \mu,\ol t) \in \mcl P_2\times (0,T)$ a global minimum of $u-\phi+\del E(\mu) + \mathcal I_2(\xi,\mu)$,  there exists $C=C(\|\phi\|_{C^{1,1}})>0$ such that 
\beq\label{takis124.0}
\ol \mu\in L^3 \ \ \text{and} \ \  \dint \ol \mu^3 dx \leq C
\eeq
and 
\beq\label{takis124}
\bal
-\partial_t \phi(\ol \mu, \ol t) &+ \dfrac{1}{2(1-2\eta)} \dint_\R(D_\mu \phi(\ol \mu, \ol t))^2d\ol \mu - \left(1 - \frac{\delta}{1-2\eta}\right)\dint_\R D_\mu \phi H(\ol \mu) d\ol \mu- \\
&\qquad \qquad -\left( \delta-\eta - \dfrac{\delta^2}{2(1-2\eta)}\right)\dint_\R H[\ol \mu]^2d\ol \mu \geq F[\ol \mu] -\frac \eta 2.
\eal
\eeq

(iii)~A continuous function $u:\mcl P_2\times (0,T)\to \R$ is a viscosity solution to \eqref{takis100} if it is both a sub-and super-solution. 
\end{definition}

A very important property of viscosity solutions is their stability under limits. In view of fact that the entropy is only weakly lower semicontinuous with respect to the weak convergence of measures, stability appears to be problematic for the solutions. However, the definition of the sub-solution (resp.  super-solution) yields a uniform estimate in $L^3$ for the maximizing (resp. minimizing) measures. With this uniform estimate it is then possible to show the continuity of the entropy provided we know that the approximations converge locally uniformly.
\vs

We formulate the observation as a separate lemma which we state without a proof.

\begin{lemma}\label{TT1}
(i)~Let $(u_n)_{n\in \N}$ be a sequence of sub-solutions to \eqref{takis100} and assume that, as $n\to \infty$, the $u_n$'s converge locally uniformly in $\Pt\times [0,T)$ to some $u$, then $u$ is a sub-solution to \eqref{takis100}.

(ii)~Let $(u_n)_{n\in \N}$ be a sequence of super-solutions to \eqref{takis100} and assume that, as $n\to \infty$, the $u_n$'s converge locally uniformly in $\Pt\times [0,T)$ to some $u$, then $u$ is a super-solution to \eqref{takis100}. 
\end{lemma}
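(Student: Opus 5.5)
We follow the classical stability argument for viscosity solutions and treat (i) in detail; (ii) is symmetric. Fix $\phi\in C^{1,1}$, $A$, $\eta$, $\xi$ with $M_2(\xi)\le\eta$, $\delta>0$, and a global maximum $(\ol\mu,\ol t)$ of $\Phi:=u\wedge A-\phi-\delta E-\mathcal I_2(\xi,\cdot)$. We first make the maximum strict by replacing $\phi$ with
\[
\phi+W_2^2(\cdot,\ol\mu)\,\text{(suitably regularized)}+|t-\ol t|^2 .
\]
The square distance is not $C^{1,1}$, so we use the regularization $\Phi_\gamma$ of Theorem \ref{thm:approxW2}, or absorb the perturbation via Stegall's lemma, Theorem \ref{thm:stegall}, applied to $u_n\wedge A-\phi-\delta E$. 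The Stegall route is cleaner because it produces the needed $\xi_n$ term directly. Concretely, for each $n$ we apply Theorem \ref{thm:stegall} to $u_n\wedge A-\phi-\delta E-\mathcal I_2(\xi,\cdot)-\Psi(\cdot)$, where $\Psi$ is a small $C^{1,1}$ penalization localizing near $(\ol\mu,\ol t)$. This gives $\xi_n$ with $M_2(\xi_n)\to0$ and a maximum point $(\mu_n,t_n)$ of the perturbed functional. Since $\mathcal I_2(\xi,\cdot)+\mathcal I_2(\xi_n,\cdot)$ is not of the form in the definition, we take a further step: we merge them using the convexity of $\nu\mapsto\mathcal I_2(\nu,\mu)$ in the Hilbert lift, or simply perturb $\xi$ to $\xi_n'$ with $M_2(\xi_n')\le\eta+o(1)$. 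Slack in $\eta$ is acceptable because the inequality \eqref{takis123} is continuous in $\eta$.

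The sub-solution property of $u_n$ gives the uniform estimate \eqref{takis123.0}, $\int\mu_n^3\le C$, with $C$ depending only on $\|\phi\|_{C^{1,1}}$, $\delta$ and the penalization. It also gives inequality \eqref{takis123} at $(\mu_n,t_n)$. Next we show $(\mu_n,t_n)\to(\ol\mu,\ol t)$ in $W_2$. Upper semicontinuity and the lower semicontinuity of $E$ (Lemma \ref{T1}) show that $\limsup$ of the maximal values is at most the value at any limit point. Combined with local uniform convergence of $u_n$ and strictness of the maximum, this forces convergence, once we have compactness. Compactness comes from the $L^3$ bound together with a uniform second moment bound from the penalization and coercivity. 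The $L^3$ bound gives tightness plus equi-integrability, hence weak compactness; strict maximality then upgrades this to $W_2$ convergence.

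The main obstacle is passing to the limit in the singular terms of \eqref{takis123}. Uniform $L^3$ bounds and weak convergence give $\mu_n\to\ol\mu$ weakly in $L^3$. We then need:
\begin{itemize}
\item lower semicontinuity of $\int H[\mu_n]^2\mu_n=\frac{c_0}{3}\int\mu_n^3$, which holds by weak lsc of the $L^3$ norm; this is on the favorable side of the inequality, with positive coefficient $\delta+\delta^2/2$;
\item convergence of $\int D_\mu\phi(\mu_n)H[\mu_n]\mu_n$ and of $\|H[\mu_n]\|_{L^2(\mu_n)}$.
\end{itemize}
The second item is delicate. The Lipschitz bound on $D_\mu\phi$ in the $C^{1,1}$ sense gives strong convergence in the coupled sense. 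To pass to the limit in the product $H[\mu_n]\mu_n$, which is bounded in $L^{3/2}$, we would exploit the maximality to get strong convergence. Since $E(\mu_n)\to E(\ol\mu)$ follows from optimality (values converge and every other term is continuous), the $H^{-1/2}$ norms converge. Hence $\mu_n\to\ol\mu$ strongly in $H^{-1/2}$, and by interpolation with the $L^3$ bound, strongly in $L^p$ for $p<3$. This yields convergence of $H[\mu_n]\mu_n$ in $L^1_{loc}$ and of $\int\mu_n^3$ along the relevant subsequence, up to a favorable-sign defect. This is the step where we expect the most care, especially handling the term $-(1+\delta)\sqrt\eta\|H[\mu_n]\|$: it has unfavorable sign and must be absorbed by the $\delta\int H^2$ term through Young's inequality.

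The remaining terms, $\partial_t\phi$ and $\|D_\mu\phi\|_{L^2}$, converge by continuity of $\phi\in C^{1,1}$ and $F$. For (ii), we argue identically with minima and $+\delta E$. The $L^3$ estimate \eqref{takis124.0} again supplies compactness. In this case the entropy-continuity argument is where the condition $\eta<\delta(1-\delta)/2$ guarantees that the coefficient of $\int H^2$ has the right sign to use lower semicontinuity.
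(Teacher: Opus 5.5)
\textbf{There is a genuine gap in part (i).} The paper states Lemma \ref{TT1} without proof, so I compare your argument with the paper's analogous limit arguments: Lemma \ref{lemma:Uwas}, Lemma \ref{lem333}, and the end of the proof of the truncated sub-solution property.

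\textbf{Part (ii) essentially works.} In \eqref{takis124} the only term not continuous along $\mu_n\to\ol\mu$ is $-\big(\delta-\eta-\frac{\delta^2}{2(1-2\eta)}\big)\int H[\mu_n]^2\,d\mu_n$. It has the favorable sign for weak lower semicontinuity of $\int\mu^3$. Your merging $\mathcal I_2(\xi,\cdot)+\mathcal I_2(\xi_n,\cdot)=\mathcal I_2(\xi_n',\cdot)$ is exact in one dimension: add the quantile functions, and use $\eta_n'=(\sqrt\eta+\sqrt{M_2(\xi_n)})^2\to\eta$, which stays admissible. The pairing $\int D_\mu\phi\,H[\mu_n]\,d\mu_n$ converges by the symmetrized formula and the uniform $L^3$ bound, as in Lemma \ref{lem333}. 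No strong convergence is needed for this.

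\textbf{Why (i) fails as written.} You keep a non-vanishing $\eta_n'\to\eta>0$ in the inequality for $u_n$. Then the term $-(1+\delta)\sqrt{\eta_n'}\,\|H[\mu_n]\|_{L^2(\mu_n)}$ of \eqref{takis123} survives in the limit with the wrong sign. Knowing only $\liminf\|H[\mu_n]\|\ge\|H[\ol\mu]\|$ is not enough, because $a\mapsto(\delta+\frac12\delta^2)a^2-(1+\delta)\sqrt\eta\,a$ is decreasing for small $a$. Your three remedies each fail:
\begin{itemize}
\item Strong $\dot H^{-1/2}$ convergence (equivalently $E(\mu_n)\to E(\ol\mu)$) plus an $L^3$ bound only interpolates to strong convergence in negative-order spaces, never in $L^p$. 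For example, take $\mu_n=\ol\mu\,(1+\frac12\sin(nx))/Z_n$ with $\ol\mu$ smooth and compactly supported. It is bounded in $L^\infty$ and converges in $W_2$ and in entropy, yet $\int\mu_n^3\to\frac{11}{8}\int\ol\mu^3$ and $\|\mu_n-\ol\mu\|_{L^1}\not\to0$.
\item Even strong $L^p$ convergence for $p<3$ would not give $\int\mu_n^3\to\int\ol\mu^3$.
\item Absorbing the term by Young's inequality produces a different, weaker inequality, not \eqref{takis123}.
\end{itemize}

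\textbf{The missing idea: make the $\eta$-errors vanish for $u_n$ and reintroduce $\eta$ only after the limit.}
\begin{itemize}
\item Put $\mathcal I_2(\xi,\cdot)$ into the test function through its sup-convolution $\Psi_{\gamma_n}$ from Theorem \ref{thm:approxI2}. Its Lipschitz constant, hence $\|D_\mu\Psi_{\gamma_n}\|_{L^2}$, is uniform in $\gamma_n$. So Lemma \ref{lemma:estL3} still gives a uniform $L^3$ bound, even though the $C^{1,1}$ norm blows up.
\item Localize with a $C^{1,1}$ regularization of $K W_2^2(\cdot,\ol\mu)$ that dominates $KW_2^2(\cdot,\ol\mu)$ and vanishes to first order at $\ol\mu$, plus $|t-\ol t|^2$. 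This forces $W_2$-convergence of the maxima.
\item Create maxima for $u_n$ with Stegall perturbations $\xi_n$ such that $M_2(\xi_n)\to0$. All $\sqrt{M_2(\xi_n)}$-terms then disappear.
\item Pass to the limit using Theorem \ref{thm:approxI2} and Lemma \ref{lem333}. Here $\ol\mu\in L^3$ follows from the uniform bound, so $\mathcal I_2(\xi,\cdot)$ is differentiable at $\ol\mu$. You get the unexpanded inequality with gradient $D_\mu\phi(\ol\mu,\ol t)-S$, where $S$ is the optimal map from $\ol\mu$ to $\xi$. In it, the only discontinuous term, $(\delta+\frac12\delta^2)\int H[\mu_n]^2\,d\mu_n$, has the favorable sign.
\item Only now expand, using Cauchy--Schwarz and $\int S^2\,d\ol\mu=M_2(\xi)\le\eta$. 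This yields exactly \eqref{takis123}, which is the mechanism the paper uses at the end of its truncated sub-solution proof.
\end{itemize}
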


\begin{remark}
The local uniform convergence assumption of the previous result is a strong requirement which is not always verified. We shall comment on it once again at the end of the paper.
\end{remark}

\subsection{The comparison principle for continuous terminal data} We state the comparison principle for viscosity sub- and super-solutions to \eqref{takis100}. Then we discuss a first attempt of a proof in order to point out some of the difficulties. The technical steps needed to overcome the main obstacles are formulated  as  separate lemmata. The proof of the comparison is then given at the end of the subsection. Note that because we are dealing with a time dependent problem, the continuity of the sub- and super-solutions at time $t = T$ plays an important role. 
\vs

The comparison result deals with two separate cases. In the first one, both sub-and super-solutions are assumed to be uniformly continuous. The second deals with  sub-and super-solutions that are only locally uniformly continuous  but the  super-solution is bounded from below by a "quadratic" function.

\begin{theorem}\label{thm:comp}
Let $U:\Pt\times [0,T]\to \R$  and $V:\Pt\times [0,T]\to \R$ be respectively  viscosity sub- and super-solutions to  \eqref{takis100}, both bounded from below and such that $U(\cdot, T) \leq V(\cdot, T)$ on  $\Pt$. Assume that, for some $h > 0$, either 
%\vskip.0125in
%(i)~$U$ and $V$ are uniformly continuous on $\mcl P_2\times[T-h,T] $ for some $h > 0$, or \\
%(ii)~$U$ and $V$ are  locally uniformly continuous on $\mcl P_2\times[T-h,T] $ for some $h > 0$ and there exists $C > 0, \nu \in \mcl P_2$ such that, for all $(t,\mu) \in [T-h,T]\times \mcl P_2$, $V(\mu,t) \geq C^{-1} W_2^2(\mu,\nu) - C.$
%%
(i)~$U$ and $V$ are uniformly continuous on $\mcl P_2\times[T-h,T] $, or 
(ii)~there exists $C > 0$ and $ \nu \in \mcl P_2$ such that for all $(t,\mu) \in [T-h,T]\times \mcl P_2$, $V(\mu,t) \geq C^{-1} W_2^2(\mu,\nu) - C.$
%\[
%\begin{split}
%&\text{either $U$ and $V$ are uniformly continuous on $\mcl P_2\times[T-h,T] $ for some $h > 0$,}\\
%&\text{
%or, $U$ and $V$ are locally uniformly continuous on $\mcl P_2\times[T-h,T] $ for some $h > 0$ and}\\
%&\text{  there exists $C > 0, \nu \in \mcl P_2$ such that for all $(t,\mu) \in [T-h,T]\times \mcl P_2$, $V(\mu,t) \geq C^{-1} W_2^2(\mu,\nu) - C.$}
%\end{split}
%\]
Then, $U \leq V$ \ in \  $\Pt\times [0,T]$. 
\end{theorem}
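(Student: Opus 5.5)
We argue by contradiction and assume that $\sup_{\Pt\times[0,T]}(U-V)>0$. The first reduction is to replace $U$ by $U\wedge A$ for a large constant $A$. Since constants are solutions of \eqref{takis100} and the Hamiltonian is convex in $D_\mu U$, Alvarez's observation shows that $U\wedge A$ is still a sub-solution. This is exactly why Definition~\ref{takis122}(i) allows the truncation parameter $A$. Because $V$ is bounded from below, $U\wedge A$ is bounded above. It is also bounded above on the set where $V$ is large, so with $A$ large enough the supremum of $U\wedge A-V$ stays positive.

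Next I localize in time and space. Using either uniform continuity (case (i)) or the quadratic lower bound on $V$ (case (ii)), I add a penalization $\kappa M_2(\mu)$ with small $\kappa$. In case (ii), the bound $V\geq C^{-1}W_2^2(\cdot,\nu)-C$ together with $U\wedge A\le A$ forces $U\wedge A-V\to-\infty$ as $M_2\to\infty$, so very little penalization is needed. I also add a term $\lambda/(t-\tau)$ or $\lambda(T-t)$ to keep the maximum away from the lower time boundary and to gain strictness in $\partial_t$. Maxima near $t=T$ are excluded by $U(\cdot,T)\le V(\cdot,T)$ and the continuity on $[T-h,T]$.

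I then double variables. I consider
\[
\Phi(\mu,t,\mu',t')=(U\wedge A)(\mu,t)-V(\mu',t')-\frac{1}{2\eps}\bigl(W_2^2(\mu,\mu')+|t-t'|^2\bigr)-\delta E(\mu)-\delta E(\mu')-\kappa(M_2(\mu)+M_2(\mu')).
\]
The $W_2^2$ penalty is not $C^{1,1}$, so I replace it by the regularization $\Phi_\gamma$ of Theorem~\ref{thm:approxW2}, or its $\mcl I_2$ version in Theorem~\ref{thm:approxI2}. This gives a $C^{1,1}$ test function in each variable. Because $\Pt$ is not locally compact, I apply Theorem~\ref{thm:stegall} (the Stegall variant) separately in each variable. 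This creates strongly exposed maxima at the cost of perturbations $\mcl I_2(\xi,\cdot)$ and $\mcl I_2(\xi',\cdot)$ with $M_2(\xi),M_2(\xi')\le\eta$. At the resulting points $(\ol\mu,\ol t)$ and $(\ol\mu',\ol t')$, the definitions give \eqref{takis123.0} and \eqref{takis124.0}, so both points lie in $L^3$. The singular terms $\int D_\mu\phi\,H[\ol\mu]\,d\ol\mu$ therefore make sense. I subtract inequality \eqref{takis124} from \eqref{takis123}.

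\textbf{Main obstacle: the Hilbert transform terms.} These must be controlled across the two points. The gradients are $p(x)=\eps^{-1}(x-\mathcal T(x))$ at $\ol\mu$ and the transported $p$ at $\ol\mu'$, up to $\gamma$-errors vanishing by Theorem~\ref{thm:approxW2}. After pushing forward along the optimal coupling $\pi$, the difference of the cross terms becomes
\[
\frac1\eps\iint (x-y)\Bigl(H[\ol\mu](x)-H[\ol\mu'](y)\Bigr)\,d\pi(x,y).
\]
The computation in the proof of Proposition~\ref{prop:decreas} shows this quantity is $\le0$. This monotonicity is the structural fact that replaces a Lipschitz estimate. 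The remaining mismatches of size $\delta$ and $\eta$ are of the form $\delta\int p\,H$, $\eta\int p^2$ and $\sqrt\eta\|H\|$. I absorb them using the good terms $(\delta+\delta^2/2)\int H^2\,d\ol\mu$ and $(\delta-\eta-\dots)\int H^2\,d\ol\mu'$ via Young's inequality. This is what the constraint $\eta<\delta(1-\delta)/2$ in Definition~\ref{takis122}(ii) is designed for. The quadratic terms $\frac12\int p^2$ versus $\frac1{2(1-2\eta)}\int p^2$ differ by $O(\eta)\eps^{-2}W_2^2$. This difference is harmless if I send $\eta\to0$ first, then $\gamma\to0$, then $\delta\to0$, then $\eps\to0$. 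The $\kappa M_2$ terms contribute $O(\kappa)$ (times moments controlled by the penalization), and $F$ is continuous, so $F[\ol\mu]-F[\ol\mu']\to0$.

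To finish, I check the standard facts: $\eps^{-1}W_2^2(\ol\mu,\ol\mu')\to0$, which uses uniform continuity in case (i) and local uniform continuity plus the $M_2$ bounds in case (ii), and $\delta E(\ol\mu)\to0$. Letting the parameters go to zero in this order leaves $\lambda\le0$, a contradiction. The delicate points are the order of the limits and verifying that the $\delta,\eta$ error terms are genuinely absorbed by the $\int H^2$ terms without requiring uniform $L^3$ bounds.
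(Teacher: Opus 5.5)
\textbf{Overall.} Your architecture matches the paper's. It has the truncation $U\wedge A$, a moment penalization, doubling with $\frac1{2\eps}W_2^2$ and $\delta E$ in both variables, and Stegall perturbations. It uses $\Phi_\gamma$ to legitimize $W_2^2$ as a test function, gets the sign of the Hilbert cross term from the computation in Proposition~\ref{prop:decreas}, and absorbs the $\delta$- and $\eta$-mismatches by Young's inequality. Your residual $C\delta W_2^2/\eps^2\le C\delta/\eps$, killed by sending $\delta\to0$ before $\eps\to0$, does the same job as the paper's case split and its choice $\delta=\eps^2$.

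\textbf{The gap: the $\gamma\to0$ passage in the Hilbert term.} You dispose of this step in half a sentence: getting the two viscosity inequalities at $\ol\mu,\ol\mu'$ with the \emph{exact} gradients $\eps^{-1}(x-\mcl T(x))$ and its pushforward. The monotonicity of the cross term holds only for these exact gradients along the optimal coupling. It is not available for $D_\mu\Phi_\gamma$ evaluated at the perturbed maximizers $\mu_\gamma,\mu'_\gamma$. So you must pass $\gamma\to0$ in each inequality separately, and Theorem~\ref{thm:approxW2} only handles the quadratic term. It gives $L^2$ convergence of $D_\mu\Phi_\gamma(\mu_\gamma)$, but $\int D_\mu\Phi_\gamma(\mu_\gamma)H[\mu_\gamma]\,d\mu_\gamma$ also contains $H[\mu_\gamma]$, which moves with $\gamma$. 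The paper handles this in Lemmas~\ref{lemma:Uwas} and~\ref{lem333}, in two steps:
\begin{itemize}
\item First, a bound on $\|\mu_\gamma\|_3$ uniform in $\gamma$. It comes from Lemma~\ref{lemma:estL3} together with the uniform Lipschitz bound on $\Phi_\gamma$ near $\ol\mu$, and it makes $H[\mu_\gamma](X_\gamma)$ bounded in $L^2(\Omega)$.
\item Second, an identification of the limit through $\mathbb E[f(X)H[\mu](X)]=\frac12\iint\frac{f(x)-f(y)}{x-y}\,d\mu\,d\mu$ for regular $f$, after truncating and mollifying the transport maps.
\end{itemize}
So your aim to proceed ``without requiring uniform $L^3$ bounds'' is fine for the outer absorption, but fails at this step.

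\textbf{Order of limits.} The $\gamma$-regularization, with its own inner Stegall perturbation of size $\eta_\gamma\to0$, is attached to a \emph{fixed} strongly exposed maximizer of the doubled functional. Moreover, $\mu_\gamma\to\ol\mu$ only because that maximizer is strongly exposed. Hence $\gamma\to0$ must be taken first, at fixed $\eta,\delta,\kappa,\eps$. Only after combining the two limiting inequalities can you send $\eta,\delta,\kappa,\eps$ to zero. Your order ``$\eta$ first, then $\gamma$'' does not work: the maximizers depend on $\eta$ and have no limit in $\Pt$, so nothing is left to run the $\gamma$-argument on.

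\textbf{Terminal time.} At $\ol t=T$ or $\ol s=T$ you do not need ``$\delta E(\ol\mu)\to0$'', which is not a standard fact. What you need is the one-sided bound $-\delta E(\ol\mu)\le\frac\delta2(\log M_2(\ol\mu)+1)$ from \eqref{entropy10}, combined with $M_2(\ol\mu)\lesssim\kappa^{-1}$. This requires $\delta\log(1/\kappa)\to0$, which holds if $\kappa$ is kept fixed while $\delta\to0$.
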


Before we go into the details, we 
continue with a brief sketch of the general strategy of the proof of the comparison. In what follows we ignore the issues arising because we work in $\mcl P_2$ and we assume that  upper semicontinuous functions on $\Pt \times [0,T]$ attain their maximum.
\vs

Let $u$ and $v$ be respectively an upper semicontinuous sub-solution and lower semicontinuous super-solution and assume that the map
\beq\label{takis111}
(\mu,\nu) \to u(\mu,t) -v(\nu,t) - \dfrac{1}{2\ep} W^2_2(\mu,\nu) -\delta E[\mu]-\delta E[\nu]
\eeq
attains a global maximum at  $(\ol \mu, \ol \nu,\ol t) \in \mcl P_2\times \mcl P_2 \times [0,T)$. 
\vs
It then follows from the definitions of the sub- and super-solutions   that 
\beq\label{takis111}
\ol \mu, \, \ol\nu\in L^3.
\eeq

Since \eqref{takis111} implies that both $\ol\mu$ and $\ol\nu$ have no atoms,  the map  $(\mu,\nu)\to W^2_2(\mu,\nu)$ is differentiable at $(\ol \mu,\ol \nu)$ and 
\beq\label{takis117}
\int_\R |D_\mu W^2_2(\ol \mu,\ol\nu)|^2\ol \mu(dx)=\int_\R |D_\nu W^2_2(\ol \mu,\ol\nu)|^2\ol \nu(dx).
\eeq
\vs
Indeed, if  $\mcl T$ (resp. $S$) is  the optimal transport map for the $W_2-$distance of $\ol \mu$ to $\ol \nu$ (resp. $\ol \nu$ to $\ol \mu$), we have that $\mcl T_\#\ol \mu = \nu$ and $D_\mu W^2_2(\ol \mu,\ol\nu) (x)= x-\mcl T(x)$ (resp. $S_\#\ol \nu = \mu$ and $D_\nu W^2_2(\ol \mu,\ol\nu) (x)= x-S(x)$).
\vs

If we had ``$1 +\del=1-\del$'', 
then we could conclude the uniqueness proof using that 
the $W_2^2-$distance between two solutions of the Dyson equation is decreasing. 
\vs

To guarantee  the existence of a maximum point and deal with the fact that $1 +\del\neq 1-\del$, it is necessary to introduce a number of approximations. Then we need to know that the map $(\mu,\nu) \to W^2_2(\mu,\nu)$ is differentiable in a neighborhood of  $(\ol \mu,\ol \nu)$, a fact which, in general, is not true. As a result, $W^2_2$ is not  a good test function to apply the definitions of sub- and super-solutions, since it is necessary to work with measures near the global maximum. 
\vs
Hence, before proceeding with a proof of the comparison, we need a stability result which justifies the use of squared Wasserstein distances as test functions.

\vs 

The first step is a precise estimate for  the $L^3-$norm of the point of maximum and minimum 
in terms of $\phi$ and its derivatives. 
%Since the arguments leading to the estimate are  similar for both %Below we derive such an estimate for the 
%the possible maximum  and minimum point, we only show the details for the former, and simply state the claim for the latter.

%possible minima.
%
%Since the argument is similar, we only state the bound for the possible minimum points. The proof follows the formal derivation of such bound shown at the beginning of the section.

\vs

\begin{lemma}\label{lemma:estL3}
Let $U$ be a viscosity sub-solution (resp. super-solution) to \eqref{takis100}, $\phi \in C^{1,1}\big(\Pt \times [0,T];\R\big)$, $\delta \in (0,1/2)$ and $(\ol \mu,\ol t)$  a point of maximum (resp. minimum) of the map $(\mu,t) \to U(\mu, t) -\phi(\mu,t) - \delta E(\mu)$ (resp. $(\mu,t) \to U(\mu,t) - \phi(\mu, t) + \delta E(\mu)$). Then, there exists $C >0$ such that
\[
\|\ol \mu \|^3_3 \leq \dfrac{C}{\delta } \left(\|\partial_t \phi\|_\infty + \|F\|_\oo + \delta^{-1} \|D_\mu \phi\|_{L^2(\mu)}^2\right).
\]
%\[\qquad \Big( \text{resp.} \hskip.5in 
%\|\ol \mu \|^3_3 \leq \dfrac{C}{\delta} \left(\|\partial_t \phi\|_\infty + \|F\|_\oo + \delta^{-1} \|D_\mu \phi\|_{L^2(\mu)}^2\right).  \Big)\]

\end{lemma}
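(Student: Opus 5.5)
The plan is to apply Definition~\ref{takis122} with the given test function, taking $A=+\infty$ and letting the Stegall-type perturbation vanish ($\eta\to 0$, or simply $\xi=\delta_0$, for which $\mathcal I_2(\xi,\cdot)\equiv 0$, so that $(\ol\mu,\ol t)$ is an admissible extremum with $\eta=0$). We then read off an $L^3$ bound from the coercive entropy term $(\delta+\frac12\delta^2)\int_\R H[\ol\mu]^2\,d\ol\mu$ via Cotlar's identity \eqref{cotlar1}, which gives $\int_\R H[\ol\mu]^2\,d\ol\mu=\frac{c_0}{3}\|\ol\mu\|_3^3$. A priori the right-hand side could be infinite, but \eqref{takis123.0} already guarantees $\ol\mu\in L^3$. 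Hence all terms in \eqref{takis123} are finite and the manipulations below are legitimate.

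\textbf{Sub-solution case.} We start from \eqref{takis123} with $\eta=0$ and move the cross term to the right-hand side:
\[
\Big(\delta+\tfrac12\delta^2\Big)\int_\R H[\ol\mu]^2d\ol\mu+\tfrac12\int_\R (D_\mu\phi)^2d\ol\mu \le F[\ol\mu]+\partial_t\phi+(1+\delta)\int_\R D_\mu\phi\, H[\ol\mu]\,d\ol\mu .
\]
We bound the cross term with Young's inequality, using a weight adapted to $\delta$:
\[
(1+\delta)\Big|\int_\R D_\mu\phi\, H[\ol\mu]\,d\ol\mu\Big| \le \frac{\delta}{2}\int_\R H[\ol\mu]^2d\ol\mu+\frac{(1+\delta)^2}{2\delta}\|D_\mu\phi\|_{L^2(\ol\mu)}^2 .
\]
Absorbing the first piece into the left-hand side leaves at least $\frac{\delta}{2}\int_\R H[\ol\mu]^2 d\ol\mu$. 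Since $\delta<1/2$, we have $(1+\delta)^2\le 9/4$, so
\[
\frac{\delta}{2}\cdot\frac{c_0}{3}\|\ol\mu\|_3^3\le \|F\|_\infty+\|\partial_t\phi\|_\infty+\frac{9}{8\delta}\|D_\mu\phi\|^2_{L^2(\ol\mu)} .
\]
This is the claimed bound with $C$ depending only on $c_0$.

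\textbf{Super-solution case.} With $\eta=0$, \eqref{takis124} reads
\[
-\partial_t\phi+\tfrac12\int_\R (D_\mu\phi)^2d\ol\mu-(1-\delta)\int_\R D_\mu\phi\, H\,d\ol\mu-\Big(\delta-\tfrac{\delta^2}{2}\Big)\int_\R H^2d\ol\mu\ge F .
\]
We rearrange to isolate the entropy term:
\[
\Big(\delta-\tfrac{\delta^2}{2}\Big)\int_\R H^2d\ol\mu\le \|\partial_t\phi\|_\infty+\|F\|_\infty+\tfrac12\|D_\mu\phi\|^2+(1-\delta)\Big|\int_\R D_\mu\phi\, H\,d\ol\mu\Big| .
\]
The coefficient satisfies $\delta-\delta^2/2\ge \frac34\delta$. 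Young's inequality with weight $\delta/4$ absorbs the cross term, leaving a multiple of $\delta^{-1}\|D_\mu\phi\|^2$. Since $\delta<1$, the $\tfrac12\|D_\mu\phi\|^2$ term is dominated by $\delta^{-1}\|D_\mu\phi\|^2$.

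\textbf{Main obstacle.} The only delicate step is justifying the choice $\eta=0$. The definition demands $\eta>0$, and in the super-solution case also $\eta<\delta(1-\delta)/2$. If one insists on $\eta>0$, the plan is to keep $\eta$ and track the extra terms. The term $\sqrt\eta\,\|H\|_{L^2(\ol\mu)}$ is absorbed by Young's inequality into $\frac{\delta}{4}\int_\R H^2 d\ol\mu$ at cost $\eta/\delta$. The constraint $\eta<\delta(1-\delta)/2$ keeps the coefficient $\delta-\eta-\dots$ above a fixed fraction of $\delta$. The remaining $\eta$-terms, $\frac12\eta$ and $\sqrt\eta\,\|D_\mu\phi\|\le\frac12(\eta+\|D_\mu\phi\|^2)$, are bounded by constants. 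We then let $\eta\to 0$, or simply note that these terms only change $C$.
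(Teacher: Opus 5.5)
Your argument is correct and is essentially the paper's: evaluate the sub-/super-solution inequality at the extremum, absorb the cross term $(1\pm\delta)\int_\R D_\mu\phi\, H[\ol\mu]\,d\ol\mu$ with a $\delta$-weighted Young inequality, and convert $\int_\R H[\ol\mu]^2\, d\ol\mu$ into $\frac{c_0}{3}\|\ol\mu\|_3^3$ via Cotlar's identity. Your choice $\xi=\delta_0$ (so $\mathcal I_2(\xi,\cdot)\equiv 0$ and $\eta\downarrow 0$ is legitimate) spells out a step the paper leaves implicit, and your Young split with $\delta/2$ on the $H^2$ term is the one the absorption needs (the paper's displayed inequality swaps the two weights, though its conclusion agrees with yours).
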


\begin{proof}
The sub-solution property implies that 
\[
\dfrac{1}{2} \dint_\R |D_\mu \phi(\ol \mu,y,\ol t)|^2 \ol \mu(y) dy - (1 +\del) \dint_\R D_\mu \phi (\ol \mu, y,\ol t) H[\ol \mu] \ol \mu dy
+\del \dint_\R H[\ol \mu]^2\ol \mu dy \leq \|\pt_t \phi\|_\infty + \|F\|_\oo, 
\]
while an application of the Cauchy-Schwarz inequality yields that 
\[
(1+\del) \dint_\R D_\mu \phi (\ol \mu, y,\ol t) H[\ol \mu] \ol \mu dy\leq \dfrac{ \del}{2} \dint_\R |D_\mu \phi(\ol \mu,y,\ol t)|^2 \ol \mu dy + \dfrac{(1+\del)^2}{2 \del} \dint_\R H[\ol \mu]^2\ol \mu dy.
\]
Combining the last two inequalities we find 
\[\dfrac{\delta}{2} \dint_\R H[\ol \mu]^2\ol \mu dy\leq \dfrac 12 \left( 1 + \dfrac{(1 +\del)^2}{\del} \right) \dint_\R |D_\mu \phi(\ol \mu,y,\ol t)|^2 \ol \mu(y) dy + \|\partial_t \phi\|_\infty + \|F\|_\oo,\]
\vs
and the claim follows.
\vs
The super-solution property gives that, for all $\del\in (0,\frac12)$,
\[\del \dint_\R H[\ol \mu]^2\ol \mu dy \leq \dfrac{1}{2} \dint_\R |D_\mu \phi(\ol \mu,y,\ol t)|^2 \ol \mu(y) dy -  (1 -\del) \dint_\R D_\mu \phi (\ol \mu, y,\ol t) H[\ol \mu] \ol \mu dy
-\|\pt_t \phi\|_\infty + \|F\|_\oo. \]
Observing that 
\[-(1-\del) \dint_\R D_\mu \phi (\ol \mu, y,\ol t) H[\ol \mu] \ol \mu dy\leq \dfrac{ \del}{2} \dint_\R H[\ol \mu]^2\ol \mu dy  + \dfrac{(1-\del)^2}{2  \del} \dint_\R |D_\mu \phi(\ol \mu,y,\ol t)|^2 \ol \mu(y) dy,\]
we may conclude following the previous argument.

\end{proof}

The choice of the regularity of the test functions  in Definition~\ref{takis122}  is made in order to deal with the existence part, and we refer the reader to the next section for more details on this.
However, when we double variables to prove the comparison principle, we need to use  the Wasserstein distance which is not smooth. 
\vs
The next lemma provides the technical justification that allows  for the  use of  the squared Wasserstein metric. 

\begin{lemma}\label{lemma:Uwas}
Let $U$ be a viscosity sub-solution to \eqref{takis100}, fix $A \in \R$, $\eps, \theta, \alpha> 0$, $s\in (0,T)$, $\nu \in \Pt \cap L^3$, $\delta \in (0,1/2)$, $\eta \in (0,\delta)$, $\xi \in \mcl P_2$ such that $M_2(\xi) \leq \eta$,  and assume that $(\ol \mu, \ol t)\in \Pt\times (0,T)$ is a strongly exposed maximum on $\Pt\times [0,T]$ of the map 
\[
(\mu, t)  \to (U\wedge A)(\mu,t) - \dfrac{1}{2 \eps}W_2^2(\mu,\nu) - \delta E(\mu) - \mcl I_2(\xi,\mu) - \dfrac{1}{2\eps}(t-s)^2 + \theta t - \alpha M_2(\mu).
\]
Then $\ol \mu \in L^3$ and, if $S_1$ and  $S_2$ are respectively   the optimal transport maps for the quadratic cost to move $\ol \mu$ toward $\nu$ and  $\xi$, then

\[
\ba
-\dfrac{\bar t -s}{\eps} &+ \theta + \dfrac{1}{2} \dint_\R \Big(\dfrac{x - S_1(x)}{\ep}  + S_2(x) + 2 \alpha x\Big )^2d\ol \mu+  \delta \int_\R H[\ol \mu ]^2(x)d\ol \mu \\
&  -(1 + \delta)\int_\R \Big(\dfrac{x - S_1(x)}{\ep} + S_2(x) + 2\alpha x\Big) H[\ol \mu](x) d\ol \mu \leq F[\ol \mu].
\ea
\]
\end{lemma}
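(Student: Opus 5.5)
The idea is to replace the non-smooth test function $\frac{1}{2\eps}W_2^2(\cdot,\nu)$ by its regularization from Theorem~\ref{thm:approxW2}, apply the sub-solution property with a genuine $C^{1,1}$ test function, and pass to the limit in the regularization parameter. The exposedness of the maximum is what lets this limit go through.

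\textbf{Step 1: regularized problem.} For $\gamma\in(0,1)$ let $\Phi_\gamma$ be the regularization of $\Phi_*=W_2^2(\cdot,\nu)$ from Theorem~\ref{thm:approxW2}. It is of class $C^{1,1}$ (semiconcave sup-convolution type), with constants depending on $\gamma$. Set
\[
\phi_\gamma(\mu,t)=\tfrac{1}{2\eps}\Phi_\gamma(\mu)+\tfrac{1}{2\eps}(t-s)^2-\theta t+\alpha M_2(\mu),
\]
which is in $C^{1,1}$ since $M_2$ is. The map $(U\wedge A)-\phi_\gamma-\delta E-\mcl I_2(\xi,\cdot)$ need not attain its maximum. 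We therefore apply Theorem~\ref{thm:stegall} once more, with a small perturbation $\mcl I_2(\xi_\gamma,\cdot)$ where $M_2(\xi_\gamma)\le \eta_\gamma\to0$. The coercivity in $M_2$ holds because of the $\alpha M_2$ term together with the lower bound \eqref{entropy10} on $E$.

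Since $\mcl I_2(\xi,\cdot)+\mcl I_2(\xi_\gamma,\cdot)$ is not a single $\mcl I_2$, I would absorb $\mcl I_2(\xi_\gamma,\cdot)$ into $\phi_\gamma$ after regularizing it as in Theorem~\ref{thm:approxI2}. Alternatively, I would simply treat its gradient as an extra $O(\sqrt{\eta_\gamma})$ error. Either way we obtain a maximum point $(\mu_\gamma,t_\gamma)$.

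\textbf{Step 2: sub-solution inequality and uniform $L^3$ bounds.} Apply Definition~\ref{takis122}(i) at $(\mu_\gamma,t_\gamma)$. To get bounds uniform in $\gamma$, I use Lemma~\ref{lemma:estL3} rather than the constant $C(\|\phi_\gamma\|_{C^{1,1}})$, which blows up as $\gamma\to0$. The lemma only involves $\|\partial_t\phi_\gamma\|_\infty$ and $\|D_\mu\phi_\gamma\|_{L^2(\mu_\gamma)}$, and the latter is controlled by $\eps^{-1}W_2(\mu_\gamma,\nu)$ plus moments, hence is bounded. This gives $\sup_\gamma\|\mu_\gamma\|_3<\infty$.

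\textbf{Step 3: convergence of the maximizers.} Show that $(\mu_\gamma,t_\gamma)\to(\ol\mu,\ol t)$ in $W_2$. By the local uniform convergence $\Phi_\gamma\to\Phi_*$ and $\eta_\gamma\to0$, the points $(\mu_\gamma,t_\gamma)$ form a maximizing sequence for the original functional. This needs care because $U\wedge A-\delta E$ is only upper semicontinuous, but the comparison of values suffices. Strong exposedness of $(\ol\mu,\ol t)$ then gives the convergence. The uniform $L^3$ bound and weak lower semicontinuity of the $L^3$ norm yield $\ol\mu\in L^3$.

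\textbf{Step 4: passing to the limit, the main obstacle.} Since $\ol\mu\in L^3$ has no atoms, $\Phi_*$ is differentiable at $\ol\mu$ with gradient $2(x-S_1(x))$. Theorem~\ref{thm:approxW2} then gives
\[
D_\mu\Phi_\gamma(\mu_\gamma)\to 2(x-S_1(x))
\]
along couplings of $\mu_\gamma$ and $\ol\mu$. Similarly $D_\mu\mcl I_2(\xi,\cdot)$ at $\mu_\gamma$ converges to $S_2$ at $\ol\mu$, using differentiability at $\ol\mu$ together with the uniform $L^3$ bound, or again via Theorem~\ref{thm:approxI2}. The gradient of the $\alpha M_2$ term is $2\alpha x$. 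Hence the quadratic and time terms pass to the limit.

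The delicate terms are $\int D_\mu\phi_\gamma\, H[\mu_\gamma]\,d\mu_\gamma$ and $\int H[\mu_\gamma]^2\,d\mu_\gamma=\frac{c_0}{3}\|\mu_\gamma\|_3^3$. For the second I use only weak lower semicontinuity, which is enough because it enters with the favorable sign $+\delta$ in a ``$\le$'' inequality. For the cross term, I would write $H[\mu_\gamma]\mu_\gamma$, which is bounded in $L^{3/2}$, and use that $H[\mu_\gamma]\rightharpoonup H[\ol\mu]$ weakly in $L^3$ by the $L^3$-boundedness of the Hilbert transform. Combined with the strong $L^2$ convergence of the gradients along couplings, this should give convergence of the product.

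If this convergence turns out to be too weak, the fallback is to expand the square first. One then keeps $\frac12\int|D_\mu\phi-(1+\delta)H|^2$ together with the $\delta$-terms as a single convex, lower semicontinuous expression, and passes to the liminf on that expression instead.

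Finally, the $\eta_\gamma$ error terms vanish in the limit. One must also check that the $\eta$-terms from $\xi$ in \eqref{takis123} combine, after expanding with $S_2$ kept inside the square, into exactly the stated inequality. This is an algebraic rearrangement mirroring the derivation preceding Definition~\ref{takis122}.
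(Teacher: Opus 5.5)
Your overall scheme matches the paper's: the sup-convolution $\Phi_\gamma$, a Stegall perturbation with $\eta_\gamma\to 0$, the $\gamma$-uniform $L^3$ bound from Lemma~\ref{lemma:estL3}, convergence of the maximizers by strong exposedness, Theorem~\ref{thm:approxW2} for the gradients, and mere lower semicontinuity for $\int H[\mu_\gamma]^2\,d\mu_\gamma$. The gap is the cross term $\int G_\gamma\, H[\mu_\gamma]\,d\mu_\gamma$, where $G_\gamma$ is the gradient of the test function at $\mu_\gamma$ and $G$ its limit. The paper isolates exactly this step as Lemma~\ref{lem333}, and your argument does not prove it.

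\textbf{Why your argument fails.} Weak convergence $H[\mu_\gamma]\rightharpoonup H[\ol\mu]$ in $L^3$, together with $\mu_\gamma\rightharpoonup\ol\mu$, says nothing by itself about the product $H[\mu_\gamma]\mu_\gamma$, since both factors converge only weakly. Moreover, $G_\gamma$ converges only in the Lagrangian sense $\int|G_\gamma(x)-G(y)|^2\,d\pi_\gamma(x,y)\to 0$, not strongly in any $L^p(dx)$. So it cannot be paired with an Eulerian weak limit.

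Your fallback fails too. We have $\frac12 G^2-(1+\delta)GH+\delta H^2=\frac12\big(G-(1+\delta)H\big)^2-\frac{1+\delta^2}{2}H^2$, an indefinite quadratic form. With the coefficient $\delta+\frac12\delta^2$ of Definition~\ref{takis122} the leftover is $-\frac12 H^2$ instead. Either way, after your completion of the square a negative multiple of $\int H[\mu_\gamma]^2\,d\mu_\gamma$ remains, and lower semicontinuity goes the wrong way.

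\textbf{The two missing ingredients.}
\begin{enumerate}
\item The symmetric-kernel identity $\int f\,H[\mu]\,d\mu=\frac12\iint\frac{f(x)-f(y)}{x-y}\,d\mu(x)\,d\mu(y)$ for smooth $f$. Its kernel is bounded and continuous, so $\int f\,H[\mu_\gamma]\,d\mu_\gamma\to\int f\,H[\ol\mu]\,d\ol\mu$ under narrow convergence. This is the structure that makes the flux weakly continuous.
\item A reduction of the $\gamma$-dependent integrand to a fixed smooth $f$. The paper truncates and mollifies the transport maps $\mcl T_\gamma$, with a modulus uniform in $\gamma$ coming from the $L^3$ and $M_2$ bounds.
\end{enumerate}

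A shorter route is available in your setting:
\begin{itemize}
\item Couple $X_\gamma\sim\mu_\gamma$ and $\ol X\sim\ol\mu$ optimally.
\item Replace $G_\gamma(X_\gamma)$ by $G(\ol X)$, at cost $\|G_\gamma(X_\gamma)-G(\ol X)\|_{L^2}\,(\frac{c_0}{3}\|\mu_\gamma\|_3^3)^{1/2}\to 0$.
\item Approximate $G$ by some $f\in C_c^\infty$ in $L^2(\ol\mu)$.
\item Swap $f(\ol X)$ for $f(X_\gamma)$, at cost $O(\|f'\|_\infty W_2(\mu_\gamma,\ol\mu))$.
\item Conclude with the identity in item 1.
\end{itemize}

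\textbf{A smaller point.} It is $\mcl I_2(\xi,\cdot)$ that must be replaced by $\Psi_\gamma$ inside the test function before invoking Theorem~\ref{thm:stegall}, as the paper does. Regularizing the Stegall perturbation $\mcl I_2(\xi_\gamma,\cdot)$ afterwards destroys the maximum property of $(\mu_\gamma,t_\gamma)$.
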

\vs
\begin{proof}
Fix $\gamma > 0$ and let  $\Phi_\gamma$ and $\Psi_\gamma$ be the $\text{sup}-$convolution regularizations of the maps $\mu  \to W_2^2(\mu,\nu)$  and $\mu  \to \mcl I_2(\xi,\mu)$ respectively introduced in Section \ref{sec:regW2}.
\vs

%$\Psi_\gamma$ the one associated to $\mcl I_2(\xi,\cdot)$.  

Let  $\eta_\gamma > 0$ be such that $\lim_{\gamma\to 0} \eta_\gamma =0$ and   use Theorem \ref{thm:stegall} to find $\xi_\gamma$ such that $M_2(\xi_\gamma) \leq \eta_\gamma$  and  a maximum point $(\mu_\gamma, t_\gamma)$ in $\Pt\times[0,T]$ of the map 
\[
(\mu,t) \to (U\wedge A)(\mu,t) - \frac{1}{2\eps} \Phi_\gamma(\mu)  - \delta E(\mu) - \Psi_\gamma(\mu) - \frac{1}{2\eps}(t-s)^2 + \theta t  - \alpha M_2(\mu). %- \mcl I_2(\xi_\gamma,\mu). 
\]

A simple computation yields
\[M_2(\mu_\gamma) \ \text{ is bounded independently of $\gamma$},\]
\vs
and, since  $(\ol \mu, \ol t)$ is assumed to be exposed, we know that, as $\gamma \to 0$, %provided that $\eta_\gamma \to %0$ which we assume,
\[|t_\gamma -\bar t| + W_2(\mu_\gamma,\ol \mu) \to 0.\]

Before we continue we note that, 
as shown in \citep{BL}, $\Phi_\gamma$ is, uniformly in $\gamma$, Lipschitz continuous in $(\Pt,W_2)$ around $\ol  \mu$, thus  the bound on $\|\mu_\gamma\|_3$  in \eqref{takis10001.0}   is uniform in $\gamma$. 
\vs
The function
\[
\tilde\Phi(\mu,t) = \frac{1}{2\eps}\Phi_\gamma(\mu)  + \Psi_\gamma(\mu) +\frac{1}{2\eta}(t-s)^2 - \theta t  + \alpha M_2(\mu)
\]
is sufficiently smooth, that is, in $C^{1,1}$,  and can  be used as a test function. Moreover, for all $ (\mu ,t)\in \Pt \times [0,T]$ and $x \in \R$,
\[
D_\mu\tilde\Phi(\mu,x, t)) = \frac{1}{2\eps}D_\mu\Phi_\gamma(\mu)(x) + D_\mu\Psi_\gamma(\mu)(x) + 2\alpha x.
\]
\vs
Since $U$  is a viscosity sub-solution to \eqref{takis100}, it follows  that $\mu_\gamma \in L^3$ and, in view of Lemma~\ref{lemma:estL3}, we have 
\beq\label{takis10001.0}
\| \mu_\gamma\|^3_3 \leq \dfrac{C}{ \delta}\left(\dfrac{1}{\eps} +\|F\|_\oo +  \dfrac{1}{\delta}\|D_\mu\Psi(\mu_\gamma)\|^2_{L^2(\mu_\gamma)}\right).
\eeq
and
\be\label{eq:671}
\ba
-&\frac{ t_\gamma -s}{\eps}+ \theta + \frac 12 \int_\R\left(\frac{1}{2\ep}D_\mu\Phi_\gamma (\mu_\gamma)( x) + D_\mu\Psi_\gamma(\mu_\gamma)(x) + 2\alpha x\right)^2 \mu_\gamma(dx) \\
&-(1 + \delta)\int_\R \left(\frac{1}{2\ep}D_\mu\Phi_\gamma (\mu_\gamma)( x) + D_\mu\Psi_\gamma(\mu_\gamma)(x) + 2\alpha x\right)H[\mu_\gamma] d\mu_\gamma\\
&+ (\delta+ \frac12\delta^2)\dint_\R  (H[\mu_\gamma])^2\mu_\gamma dx   \leq F[\mu_\gamma] +\\
&\quad \quad \quad+ \frac12\eta_\gamma + \sqrt{\eta_\gamma}\|D_\mu\tilde\Phi( \mu_\gamma)\|_{L^2( \mu_\gamma)} + (1+\delta)\sqrt{\eta_\gamma}\|H[\mu_\gamma]\|_{L^2( \mu_\gamma)}.
\ea
\ee
\vs
%
%Before we continue we note that, 
%as shown in \citep{BL}, $\Phi_\gamma$ is, uniformly in $\gamma$, Lipschitz continuous in $(\Pt,W_2)$ around $\ol  \mu$, thus  the bound on $\|\mu_\gamma\|_3$  in \eqref{takis10001.0}   is uniform in $\gamma$. 
%\vs

%Since $U$ is a viscosity sub-solution to \eqref{takis100}, we deduce that
%
%
%\be\label{eq:671}
%\ba
%-&\frac{ t_\gamma -s}{\eps}+ \theta + \frac 12 \int_\R\left(\frac{1}{2\ep}D_\mu\Phi_\gamma (\mu_\gamma)( x) + D_\mu\Psi_\gamma(\mu_\gamma)(x) + 2\alpha x\right)^2 \mu_\gamma(dx) \\
%&-(1 + \delta)\int_\R \left(\frac{1}{2\ep}D_\mu\Phi_\gamma (\mu_\gamma)( x) + D_\mu\Psi_\gamma(\mu_\gamma)(x) + 2\alpha x\right)H[\mu_\gamma] d\mu_\gamma\\
%&+ \lambda(\delta+ \frac12\delta^2)\dint_\R  (H[\mu_\gamma])^2\mu_\gamma dx   \leq F[\mu_\gamma] \\
%&+ \frac12\eta_\gamma + \sqrt{\eta_\gamma}\|D_\mu\tilde\Phi( \mu_\gamma)\|_{L^2( \mu_\gamma)} + (1+\delta)\sqrt{\eta_\gamma}\|H[\mu_\gamma]\|_{L^2( \mu_\gamma)}.
%\ea
%\ee
%Our objective is now to pass to the limit $\gamma \to 0$ in \eqref{eq:671} to obtain the result. 
Since $\ol \mu\in L^3$, there exist optimal transport maps $S_1,S_2$ as in the statement of the lemma, and, formally, as $\gamma \to 0$, we have that 
\[D_\mu \Phi_\gamma(\mu_\gamma)(x) \rightarrow x-S_1(x) \ \ \text{ and} \ \  D_\mu \Psi_\gamma(\mu_\gamma)(x) \rightarrow -S_2(x).\] 

%Also, we take a sequence $\eta_\gamma$ converging toward $0$ as $\gamma \to 0$.

Observe now that,  as $\gamma \to 0$ and along, if necessary, subsequences, 
\[
M_2(\mu_\gamma) \to M_2(\ol \mu) \ \ \text{and} \ \ 
\limsup \int_\R H[\mu_\gamma]^2d\mu_\gamma \geq \int_\R H[\ol \mu]^2d\ol \mu.
\]
Moreover, an immediate extension of Corollary~1 in \citep{BL} yields that
\[
%\ba
\lim_{\gamma \to 0}\int_\R\left(\frac{1}{2\ep}D_\mu \Phi_\gamma( \mu_\gamma)(x) + D_\mu\Psi_\gamma(\mu_\gamma)(x) + 2\alpha x\right)^2 \mu_\gamma(dx)= \int_\R\left(\frac{1}{\eps}(x -S_1(x)) -S_2(x) + 2\alpha x\right)^2 \ol \mu(dx).
% \\
%&= \int_\R\left(\frac{1}{\eps}(x -S_1(x)) -S_2(x) + 2\alpha x\right)^2 \ol \mu(dx).
%\ea
\]

\vs

The claim follows,  if we establish the convergence of the second line in \eqref{eq:671}. Since this passage to the limit is rather technical, we present it as a separate lemma next.
 \vs
Using the above and Lemma~\ref{lem333} below completes the proof. 

\end{proof}

\begin{lemma}\label{lem333}
Under the notation of the proof of Lemma \ref{lemma:Uwas}, we have 

\[
%\ba
\lim_{\gamma \to 0} \int_\R \left(\frac{1}{2\ep} D_\mu\Phi_\gamma(\mu_\gamma)(x) + D_\mu\Psi_\gamma(\mu_\gamma)(x) + 2 \alpha x\right)H[\mu_\gamma] d\mu_\gamma= \int_\R \left(\frac{1}{\ep}(x- S_1(x)) -S_2(x)+ 2\alpha x\right)H[\ol \mu] d\ol \mu. \\
%= \int_\R \left(\frac{1}{\ep}(x- S_1(x)) -S_2(x)+ 2\alpha x\right)H[\ol \mu] d\ol \mu. 
%\ea
\]
\end{lemma}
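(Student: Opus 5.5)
The convergence will follow from a strong--weak pairing in $L^2$ along the couplings. Write $\psi_\gamma(x)=\frac{1}{2\eps}D_\mu\Phi_\gamma(\mu_\gamma)(x)+D_\mu\Psi_\gamma(\mu_\gamma)(x)+2\alpha x$ and $\psi(x)=\frac1\eps(x-S_1(x))-S_2(x)+2\alpha x$. The idea is that $\psi_\gamma$ converges strongly to $\psi$ along couplings, while $H[\mu_\gamma]$ converges only weakly. Let $\pi_\gamma\in\Pi(\mu_\gamma,\ol\mu)$ be optimal couplings, so that $\int|x-y|^2d\pi_\gamma\to0$, since $W_2(\mu_\gamma,\ol\mu)\to 0$. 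By Theorems \ref{thm:approxW2} and \ref{thm:approxI2}, which apply because $\ol\mu\in L^3$ has no atoms and hence $W_2^2(\cdot,\nu)$ and $\mcl I_2(\xi,\cdot)$ are differentiable at $\ol\mu$, we have
\[
\int_{\R^2}|\psi_\gamma(x)-\psi(y)|^2\,\pi_\gamma(dx,dy)\to0 .
\]
The term $2\alpha x$ is handled trivially by the same estimate. Here I will need $\xi_\gamma\to\xi$, or else take $\Psi_\gamma$ built on $\xi$ itself, which is how I read the construction.

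Next I would split the difference of the two pairings as
\[
\int\psi_\gamma H[\mu_\gamma]d\mu_\gamma-\int\psi H[\ol\mu]d\ol\mu
=\int_{\R^2}(\psi_\gamma(x)-\psi(y))H[\mu_\gamma](x)\,d\pi_\gamma
+\Big(\int_{\R^2}\psi(y)H[\mu_\gamma](x)\,d\pi_\gamma-\int\psi H[\ol\mu]d\ol\mu\Big).
\]
The first term is bounded by Cauchy--Schwarz by $\|\psi_\gamma(x)-\psi(y)\|_{L^2(\pi_\gamma)}\|H[\mu_\gamma]\|_{L^2(\mu_\gamma)}$. By Cotlar's identity \eqref{cotlar1}, $\|H[\mu_\gamma]\|^2_{L^2(\mu_\gamma)}=\frac{c_0}{3}\|\mu_\gamma\|_3^3$. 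This is bounded uniformly in $\gamma$ by \eqref{takis10001.0}, together with the remark that $D_\mu\Psi_\gamma$ and $D_\mu\Phi_\gamma$ are uniformly bounded in $L^2(\mu_\gamma)$ (uniform local Lipschitz bounds from \citep{BL}). Hence the first term tends to $0$.

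For the second term, the main obstacle is the weak convergence of $H[\mu_\gamma]\mu_\gamma$. I would first replace $\psi$ by a smooth compactly supported $\zeta$ with $\|\psi-\zeta\|_{L^2(\ol\mu)}$ small. The error made by this replacement is controlled uniformly, since
\[
\Big|\int(\psi(y)-\zeta(y))H[\mu_\gamma](x)\,d\pi_\gamma\Big|\le\|\psi-\zeta\|_{L^2(\ol\mu)}\sup_\gamma\|H[\mu_\gamma]\|_{L^2(\mu_\gamma)},
\]
and the analogous estimate holds at $\ol\mu$. Moreover, replacing $\zeta(y)$ by $\zeta(x)$ costs at most $\|\zeta'\|_\infty\|x-y\|_{L^2(\pi_\gamma)}\sup_\gamma\|H[\mu_\gamma]\|_{L^2(\mu_\gamma)}\to0$. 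It then remains to show that
\[
\int\zeta H[\mu_\gamma]d\mu_\gamma=\frac12\iint\frac{\zeta(x)-\zeta(y)}{x-y}\,d\mu_\gamma(x)d\mu_\gamma(y)\to\frac12\iint\frac{\zeta(x)-\zeta(y)}{x-y}\,d\ol\mu(x)d\ol\mu(y).
\]
This holds because the kernel $\frac{\zeta(x)-\zeta(y)}{x-y}$ is bounded and continuous (extended by $\zeta'$ on the diagonal) and $\mu_\gamma\otimes\mu_\gamma\rightharpoonup\ol\mu\otimes\ol\mu$; this is the same symmetrization used in the proof of Proposition \ref{takis1250}. Letting the approximation of $\psi$ by $\zeta$ become exact then yields the lemma.
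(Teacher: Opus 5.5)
Your argument is correct, but it is organized differently from the paper's.

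\textbf{The paper's route.} It works on a common probability space with $X_\gamma\to\ol X$ in $L^2$. It then replaces $D\Phi_\gamma$ by $Z_\gamma=2(X_\gamma-\mathcal T_\gamma(X_\gamma))$, where $\mathcal T_\gamma$ is the optimal map from $\mu_\gamma$ to $\nu$. It must then pass to the limit in $\mathbb{E}[\mathcal T_\gamma(X_\gamma)H[\mu_\gamma](X_\gamma)]$, where the test function itself moves with $\gamma$. This forces three further steps: a truncation at level $M$; a mollification whose error has to be controlled uniformly in $\gamma$, using the $L^3$ and second-moment bounds; and the a.e.\ convergence $\mathcal T_\gamma\to S_1$, to get locally uniform convergence of the difference quotients.

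\textbf{Your route.} You use the coupling $\pi_\gamma$ to transfer the strong convergence of the gradients onto the fixed limit $\psi(y)$ on the $\ol\mu$-marginal. Only one fixed function in $L^2(\ol\mu)$ then needs to be smoothed. The cost of moving $\zeta(y)$ to $\zeta(x)$ is paid by $W_2(\mu_\gamma,\ol\mu)\to0$. After that the kernel $\frac{\zeta(x)-\zeta(y)}{x-y}$ is fixed, bounded and continuous, so narrow convergence of $\mu_\gamma\otimes\mu_\gamma$ suffices, with the diagonal carrying no mass since all measures involved are atomless.

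\textbf{Comparison.} Both proofs rest on the same two inputs: Theorems \ref{thm:approxW2} and \ref{thm:approxI2}, and the $\gamma$-uniform $L^3$ bound that, through Cotlar's identity, bounds $H[\mu_\gamma]$ in $L^2(\mu_\gamma)$. Yours has three advantages:
\begin{enumerate}
\item It never introduces the intermediate maps $\mathcal T_\gamma$, so it does not need their convergence to $S_1$, which the paper invokes with little justification.
\item It treats the $\Phi_\gamma$, $\Psi_\gamma$ and $2\alpha x$ terms in one stroke.
\item It needs no uniform-in-$\gamma$ mollification modulus.
\end{enumerate}
The paper's version stays inside the Lagrangian (Hilbert-lift) formalism and uses monotonicity of transport maps. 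For this lemma, however, your placement of the smoothing on the fixed limit side is the more economical strong--weak pairing.

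A minor simplification: your first step already gives, for small $\gamma$, a uniform $L^2(\mu_\gamma)$ bound on the regularized gradients. So the separate appeal to uniform Lipschitz bounds on $\Phi_\gamma$ and $\Psi_\gamma$ is not needed to make the $L^3$ estimate uniform.
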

\vs
\begin{proof}
We first note that,  since $\mu_\gamma$ converges weakly to  $\ol \mu$, the convergence of the term involving $\alpha$ is immediate. Moreover, since the approximation $\Phi_\gamma$ and $\Psi_\gamma$ are built similarly, we only prove here the convergence of the term involving $D_\mu \Phi_\gamma$, and leave the proof of the convergence of $D_\mu \Psi_\gamma$ to the reader. % the last one follows from the same argument.
\vs

We use  the formalism of random variables.  We assume that 
$(\Omega, \mathcal{A}, \mathbb P)$ is  a Polish atomless probability space and denote by $(X_\gamma)_{\gamma>0}$ a family of random variables with law  $(\mu_\gamma)_{\gamma >0}$ which converges in $L^2(\Omega,\R)$ toward $\ol X$ with law $\ol \mu$. 
\vs
The claim we want to prove is that 
\[\ds \underset{\gamma \to 0} \lim \mathbb{E}[D V_\gamma(X_\gamma) H[\mu_\gamma](X_\gamma)] = 2\mathbb{E}[(\ol X- S(\ol X))H[\ol \mu](\bar X)],\]
where $V_\gamma$ is the lift of $\Phi_\gamma$ in $L^2(\Omega,\R)$ defined by $V_\gamma(X) = \Phi_\gamma(\mathcal L(X))$.
\vs
Next recall that, in view of  Theorem \ref{thm:approxW2},  we have 
\[
\underset{\gamma \to 0} \lim \mathbb{E}[|D \wtilde \Phi_\gamma(X_\gamma) - 2(\ol X- S_1(\ol X))|^2]  = 0.
\]
Then the uniform bound on $\|\mu_\gamma\|_3$ yields   that
\beq\label{takis4444}
\text{ the family} \  (H[\mu_\gamma](X_\gamma))_{\gamma > 0} \ \text{ is bounded in} \  L^2(\Omega, \R).
\eeq 
\vs
Moreover,  the properties of the Hilbert transform also give  
\be\label{eq:726}
\lim_{\gamma \to 0} \mathbb{E}[X_\gamma H[\mu_\gamma](X_\gamma)] = \mathbb{E}[\ol X H[\ol \mu](\ol X)].
\ee
\vs
Let  $Z_\gamma = 2(X_\gamma - \mcl T_\gamma(X_\gamma))$ where $ \mcl T_\gamma$ is the optimal transport map of $\mu_\gamma \in L^3$ toward $\nu$. It is immediate that  $(\mcl T_\gamma(X_\gamma))_{\gamma>0}$ converges toward $S_1(\ol X)$ strongly in $L^2(\Omega, \R)$, and, thus,
\beq\label{takis4445}
\mathbb{E}[|Z_\gamma -2(\ol X - S_1(\ol X))|^2]=0.
\eeq
\vs
In view of \eqref{takis4444}, \eqref{eq:726} and \eqref{takis4445}, to conclude it suffices to check that 
\beq\label{takis4446}
\underset{\gamma \to 0} \lim \mathbb{E}[Z_\gamma H[\mu_\gamma](X_\gamma)]=2\mathbb{E}[(\bar X- S_1(X))H[\ol \mu](\bar X)].
\eeq

For any nondecreasing map $f:\R \to \R$ and $M> 0$ we write  $f_{\land M}(x) = \max(-M,\min(f(x),M))$ and observe that, for all $\gamma>0$, 
\[
\underset{M\to \oo} \lim \mathbb{E}[|(\mcl T_\gamma)_{\land M}(X_\gamma) - \mcl T_\gamma(X_\gamma)|^2] = 0;
\]
note that the same holds with $S_1$ and $\ol X$ in place of  $\mcl T_\gamma$ and $X_\gamma$. 
\vs

In view of the uniform estimate on $\|\mu_\gamma\|_3$ and  \eqref{eq:726}, it follows that \eqref{takis4446}  will hold,  if it is shown that, for any $M > 0$, 

\be\label{eq:734}
\lim_{\gamma \to 0} \mathbb{E}[(\mcl T_\gamma)_{\land M}(X_\gamma) H[\mu_\gamma](X_\gamma)] = \mathbb{E}[(S_1)_{\land M}(\ol X) H[\ol \mu](\ol X)].
\ee
\vs
Given a family  $(\phi_{\eps'})_{\eps'>0}$ of approximations of the unity, let $\mcl T_{\gamma,M,\eps'} = ((\mcl T_\gamma)_{\land M} \ast\phi_{\eps'}$. Then, for any $M > 0$, there exists a modulus of continuity $\omega$ such that 
\be\label{eq:738}
\mathbb{E}[|\mcl T_{\gamma,M,\eps'} (X_\gamma)- (\mcl T_\gamma)_{\land M}(X_\gamma)|^2] \leq \omega(\eps') \ \ \text{and} \ \ \mathbb{E}[|(S_1)_{M,\eps'} (\ol X)- (S_1)_{\land M}(\ol X)|^2] \leq \omega(\eps').
\ee
\vs
It is important to note that this modulus does not depend on $\gamma$. Indeed, it follows from the bound on $(M_2(\mu_\gamma))_{\gamma>0}$ that, for any $\wtilde \eps$, there exists $K > 0$ such that 
\[
\ba
\mathbb{E}[|\mcl T_{\gamma,M,\eps'} (X_\gamma)- (\mcl T_\gamma)_{\land M}(X_\gamma)|^2] &\leq \dint_{-K}^K |\mcl T_{\gamma,M,\eps'} (x)- (\mcl T_\gamma)_{\land M}(x)|^2\mu_\gamma(dx) + 4M^2\tilde \eps\\[1mm]
& \leq \|\mcl T_{\gamma,M,\eps'} - (\mcl T_\gamma)_{\land M}\|_3^{\frac 23} \|\mu_\gamma\|_3 + 4 M^2\tilde \eps\\[1mm]
& \leq CK^{\frac 23}M^2 \omega'(\eps') + 4 M^2\tilde \eps.
\ea
\]
\vs
The goal is  to pass to the  $\gamma \to 0$ limit in $\mathbb{E}[\mcl T_{\gamma,M,\eps'}(X_\gamma)H[\mu_\gamma](X_\gamma)]$, which can be written as 

\[
\mathbb{E}[\mcl T_{\gamma,M,\eps'}(X_\gamma)H[\mu_\gamma](X_\gamma)] = \frac 12\dint_\R\dint_\R \frac{\mcl T_{\gamma,M,\eps'}(x) - \mcl T_{\gamma,M,\eps'}(y)}{x-y}\mu_\gamma(x)\mu_\gamma(y)dxdy. 
\]
\vs
Using the regularity of the convolution and the fact that, since $\mu_\gamma$ converges to $\ol \mu$, $\mcl T_{\gamma}$ converges almost everywhere to $S_1$, we find that $(x-y)^{-1}(\mcl T_{\gamma,M,\eps'}(x) - \mcl T_{\gamma,M,\eps'}(y))$ converges locally uniformly toward $(x-y)^{-1}((S_1)_{M,\eps'}(x) -(S_1)_{M,\eps'}(y)$.
\vs
It then follows that 
\[
\lim_{\gamma \to 0} \mathbb{E}[\mcl T_{\gamma,M,\eps'}(X_\gamma)H[\mu_\gamma](X_\gamma)] = \mathbb{E}[(S_1)_{M,\eps'}(\ol X)H[\ol \mu](\ol  X)].
\]
Hence, using \eqref{eq:738}, we obtain  \eqref{eq:734} and, hence, the claim.

\end{proof}

A result similar to Lemma~\ref{lemma:Uwas} holds for viscosity super-solutions and is stated in the next lemma. Since its proof is similar to the one of Lemma~\ref{lemma:Uwas}  we omit it.
\begin{lemma}\label{lemma:Vwas}
Let $V$ be a viscosity super-solution to \eqref{takis100} which is bounded from below. Fix $\mu,\xi \in \Pt$,  $\eps, \theta,t > 0$ and $\delta \in (0, \frac12)$. Assume that  $\ol s,\ol \nu$ is a strongly exposed minimum on $ \Pt \times [0,T]$ of the map 
\[
(s,\nu) \to V(s,\nu) + \frac{1}{2 \eps}W_2^2(\mu,\nu) + \delta E(\nu) -\mcl I_2(\xi,\nu) + \frac{1}{2\eta}(t-s)^2 + \theta s.
\]
Then $\ol \nu \in L^3$ and, if $\wtilde S_1$  and $\wtilde S_2$  are respectively  the optimal transport for the quadratic cost of $\ol  \nu$ toward $\mu$ and towards  $\xi$, we have
\[
\ba
-&\frac{ t -\bar s}{\eta} + \theta + \frac{1}{2} \int_\R(\eps^{-1}(\wtilde S_1(x) -x) + \wtilde S_2(x))^2d\bar \nu\\
& -(1 - \delta)\int_\R (\eps^{-1}( \wtilde S_1(x)-x) +\wtilde S_2(x)) H[\bar\nu](x) d\bar\nu
- \delta \int_\R H[\ol \nu]^2d\ol\nu \geq 0.
\ea
\]
%where $\wtilde T$ is the optimal transport for the quadratic cost of $\ol  \nu$ toward $\mu$.
\end{lemma}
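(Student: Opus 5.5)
The proof will mirror that of Lemma~\ref{lemma:Uwas}, with the inequalities reversed and the super-solution part of Definition~\ref{takis122} used in place of the sub-solution part.

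Fix $\gamma>0$. Replace $\nu\mapsto W_2^2(\mu,\nu)$ by the $\sup$-convolution $\Phi_\gamma$ of Section~\ref{sec:regW2}. Because the regularization must enter the minimized functional with the correct sign, I will use the $\inf$-convolution of $W_2^2(\mu,\cdot)$, that is $-\Phi_\gamma$ built from $-W_2^2$. This is again $C^{1,1}$, and Theorem~\ref{thm:approxW2} applies to it after a sign change. Similarly, replace $\nu\mapsto -\mcl I_2(\xi,\nu)$ by the corresponding regularization $\Psi_\gamma$ of Theorem~\ref{thm:approxI2}.

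Next, choose $\eta_\gamma\to0$. Since $V$ is bounded from below and the penalization grows quadratically, the minimization version of Theorem~\ref{thm:stegall} (applied to the negative of the functional) gives $\xi_\gamma$ with $M_2(\xi_\gamma)\le\eta_\gamma$ and a strongly exposed minimum $(\nu_\gamma,s_\gamma)$ of
\[
V+\frac1{2\eps}\Phi_\gamma+\delta E+\Psi_\gamma+\mcl I_2(\xi_\gamma,\cdot)+\frac1{2\eta}(t-s)^2+\theta s .
\]
Bounded-from-below arguments give uniform bounds on $M_2(\nu_\gamma)$. Strong exposedness of $(\ol\nu,\ol s)$ then gives $W_2(\nu_\gamma,\ol\nu)+|s_\gamma-\ol s|\to0$.

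The test function $\tilde\Phi=\frac1{2\eps}\Phi_\gamma+\Psi_\gamma+\frac1{2\eta}(t-s)^2+\theta s$ is in $C^{1,1}$. I therefore apply the super-solution inequality \eqref{takis124} at $(\nu_\gamma,s_\gamma)$ with the parameters $\delta$ and $\eta_\gamma$; this is admissible since $\eta_\gamma<\min(\frac14,\delta\frac{1-\delta}{2})$ for small $\gamma$. Lemma~\ref{lemma:estL3} then gives $\|\nu_\gamma\|_3\le C$ uniformly in $\gamma$. This uses the uniform $W_2$-Lipschitz bound on $\Phi_\gamma$ near $\ol\nu$ from \citep{BL}, which bounds $\|D_\mu\tilde\Phi\|_{L^2(\nu_\gamma)}$.

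Weak lower semicontinuity of $\|\cdot\|_3$ gives $\ol\nu\in L^3$. Hence $\ol\nu$ has no atoms, and the maps $\wtilde S_1,\wtilde S_2$ exist.

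It remains to pass to the limit $\gamma\to0$ term by term.
\begin{enumerate}
\item The time terms converge trivially.
\item All $\eta_\gamma$-dependent coefficients in \eqref{takis124} tend to their $\eta=0$ values, namely $\frac12$, $1-\delta$ and $\delta-\frac{\delta^2}{2}$.
\item By Theorems~\ref{thm:approxW2} and \ref{thm:approxI2}, $D_\mu\tilde\Phi(\nu_\gamma)$ converges in the coupled $L^2$ sense to $\eps^{-1}(x-\wtilde S_1(x))\cdot(\text{sign})+\wtilde S_2$ term. So the quadratic term converges, as in the proof of Lemma~\ref{lemma:Uwas} via Corollary~1 of \citep{BL}. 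The sign conventions must be tracked here; the statement writes $\eps^{-1}(\wtilde S_1(x)-x)+\wtilde S_2(x)$.
\item The cross term with $H[\nu_\gamma]$ converges by the argument of Lemma~\ref{lem333}. That argument uses the truncation at level $M$, mollification of the monotone transport maps uniformly in $\gamma$ thanks to the $L^3$ bound, and the symmetric difference-quotient representation of $\mathbb E[f(X)H[\mathcal L(X)](X)]$.
\end{enumerate}

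The main obstacle is the entropy-type term $-(\delta-\eta_\gamma-\frac{\delta^2}{2(1-2\eta_\gamma)})\int H[\nu_\gamma]^2d\nu_\gamma$. Here only lower semicontinuity is available: $\liminf\int H[\nu_\gamma]^2d\nu_\gamma\ge\int H[\ol\nu]^2d\ol\nu$ by Cotlar's identity and weak $L^3$ convergence. This term appears with a negative sign in a $\ge$ inequality, so lower semicontinuity points the wrong way. I would handle it as follows. Since $\nu_\gamma$ is a minimum point involving $+\delta E$, comparing values at $\nu_\gamma$ and $\ol\nu$ gives $\limsup E(\nu_\gamma)\le E(\ol\nu)$. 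Combined with Lemma~\ref{T1}, this yields $E(\nu_\gamma)\to E(\ol\nu)$. I would then use the uniform $L^3$ bound together with the $\frac{\delta^2}{2}$ slack and a small loss in $\delta$ to absorb the defect. Alternatively, one keeps the coefficient $\delta-\frac{\delta^2}{2}$ and notes that the statement only requires the weaker coefficient $\delta$ after rearranging, so passing to a $\limsup$ of that single term suffices. This gives the stated inequality.
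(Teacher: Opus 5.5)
Your overall route is the one the paper intends, since it omits the proof as ``similar to Lemma~\ref{lemma:Uwas}''. Everything up to the uniform $L^3$ bound, and the convergence of the quadratic and cross terms, is fine. The last step, however, is wrong in two ways.

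First, lower semicontinuity points the right way here, not the wrong way. At $(\nu_\gamma,s_\gamma)$, \eqref{takis124} can be written as $Q_\gamma\ge c_\gamma\int_\R H[\nu_\gamma]^2d\nu_\gamma$. Here $Q_\gamma$ collects the time, quadratic, cross and $\eta_\gamma/2$ terms, and $c_\gamma=\delta-\eta_\gamma-\frac{\delta^2}{2(1-2\eta_\gamma)}\to\delta-\frac{\delta^2}{2}>0$. Since $Q_\gamma$ converges, the bound $\liminf_{\gamma}\int_\R H[\nu_\gamma]^2d\nu_\gamma\ge\int_\R H[\ol\nu]^2d\ol\nu$ (Cotlar's identity plus weak lower semicontinuity of $\|\cdot\|_3$) is exactly what is needed. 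This is the same situation as the favorable entropy term in Lemma~\ref{lemma:Uwas}. Your proposed workarounds are therefore unnecessary, and they would not work anyway: $E(\nu_\gamma)\to E(\ol\nu)$ says nothing about $\|\nu_\gamma\|_3$.

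Second, your claim that the coefficient $\delta$ is the weaker one is backwards. Since $\int_\R H[\ol\nu]^2d\ol\nu>0$, the inequality with $-\delta\int_\R H[\ol\nu]^2d\ol\nu$ is strictly stronger than the one with $-(\delta-\frac{\delta^2}{2})\int_\R H[\ol\nu]^2d\ol\nu$. The limit of \eqref{takis124} gives only the latter, and that is also all the paper's omitted argument yields. Formally, for a classical solution the first-order condition is $D_\mu V=p+\delta H[\ol\nu]$ with $p=\eps^{-1}(\wtilde S_1(x)-x)+\wtilde S_2(x)$. This makes the relation an \emph{equality} with coefficient $\delta-\frac{\delta^2}{2}$, so the version with $\delta$ would force $-\frac{\delta^2}{2}\int_\R H[\ol\nu]^2d\ol\nu\ge0$. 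You should state the conclusion with $\delta-\frac{\delta^2}{2}$. This is harmless in Theorem~\ref{thm:comp}, where only a positive multiple of $\delta\|\ol\nu\|_3^3$ is used.

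Your regularization step also needs repair. There is no sign obstruction, and the inf-convolution you choose is not $C^{1,1}$. The lift $X\mapsto W_2^2(\mathcal L(X),\mu)$ is semiconcave, and inf-convolution preserves concave kinks. Already in one dimension, $\inf_y\{-|y|+\gamma^{-1}(x-y)^2\}=-|x|-\gamma/4$. So the inf-convolution is not an admissible test function, and Theorem~\ref{thm:approxW2} does not cover it. Instead, use the same sup-convolution $\Phi_\gamma$ as in Lemma~\ref{lemma:Uwas}, entering the minimized functional as $+\frac{1}{2\eps}\Phi_\gamma$. It is $C^{1,1}$ and satisfies $\Phi_\gamma\ge W_2^2(\mu,\cdot)$, so coercivity and the $M_2$ bound for the minimum problem are immediate. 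It also has the derivative convergence of Theorem~\ref{thm:approxW2}. The $\mathcal I_2(\xi,\cdot)$ term is handled the same way by $\pm\Psi_\gamma$.
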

\vs 

We are now ready to prove the comparison principle.

\begin{proof}[Proof of Theorem \ref{thm:comp}]
Throughout the proof we assume for simplicity that $F\equiv 0$, and we start by establishing that, for any $A \in \R$, $(U\wedge A) \leq V$.
\vs
We argue by contradiction assuming that the result is false, that is, that there exist  $K, \rho > 0$, $\delta^* \in (0, \frac12)$ and $ \alpha^* > 0$ such that,  for any $0< \delta \leq \delta^\ast$ and $0 < \alpha \leq \alpha^\ast$,
\be\label{takis3000}
\ba
\sup_{t,s \in [0,T], \mu,\nu \in \Pt} &\Big((U\wedge A)(\mu, t) - V(\nu,s) -\frac{1}{2\eps}\left(W_2^2(\mu,\nu) +(t-s)^2\right) \\
&- \delta E(\mu) - \delta E(\nu) - \alpha M_2(\mu) + \rho(2T -t-s)\Big)\geq K.
\ea
\ee

This inequality, which contradicts the claim, follows  from the  upper semicontinuity of $U$ and the lower semicontinuity of $V$ and the fact that $\widetilde {\mcl P}_2$ 
 %$\{\mu : \; E(\mu) < \infty\}$ 
is a dense subset of $\Pt$. 
%and  the upper semicontinuity of $U$ and the lower semicontinuity of $V$. 
\vs

Using Theorem \ref{thm:stegall}\footnote{More precisely, we use its corollary, Corollary 3.4 in \citep{BL2}.}, we  find a point of strongly exposed maximum of the expression inside \eqref{takis3000}, that is, for any $\eta > 0$, there exist $\theta_1,\theta_2 \in (0,\eta)$ and $\xi_1,\xi_2 \in \mcl P_2$ such that  $M_2(\xi_1) \leq \eta$ and $M_2(\xi_2) \leq \eta$ and 
\[
\ba
(\mu,\nu,t,s) \to (U\wedge A)(\mu,t)& - V(\nu,s) -\frac{1}{2\eps}\left(W_2^2(\mu,\nu) +(t-s)^2\right) - \delta E(\mu) - \delta E(\nu) - \alpha M_2(\mu)\\
&+ \rho(2T -t-s)- \mcl I_2(\xi_1,\mu) - \mcl I_2(\xi_2,\nu) - \theta_1 t + \theta_2 s
\ea
\]
has a strongly exposed maximum at some $(\ol \mu, \ol \nu,\ol t,\ol s)\in \Pt\times\Pt\times[0,T]\times[0,T]$. 
\vs

If either  of $\ol s=0$ or $\ol t=0$, we  perturb the test functions by  $\beta t^{-1}$ and reduce to the case $\ol s, \ol t>0$. 
\vs

We first assume that $\ol t, \ol s \in (0,T)$ and note that,  % and treat the case of either of  $\ol s$ or $\ol t$  is equal to $T$ later.
% at the end, and we focus for the moment on the case of $\ol t, \ol s \in (0,T)$.
since $U$ and $V$ are respectively viscosity sub-solution  and super-solution, we have 
\[\|\ol \mu \|_3 + \|\ol \nu \|_3 < + \infty,\] 

and, thus there exist optimal transport maps $S$, $\wtilde S$, $S_1,S_2$ respectively associated to the pairs $(\ol \mu, \ol \nu)$, $(\ol \nu,\ol \mu)$, $(\ol \mu,\xi_1)$ and $(\ol \nu,\xi_2)$.
\vs
Using Lemma~\ref{lemma:Uwas} and Lemma~\ref{lemma:Vwas}, we obtain that
\[
\ba
\rho -\frac{\bar t -\bar s}{\eps} &+ \theta_1 + \frac{1}{2} \dint_\R \Big(\frac{1}{\eps}(x - S(x)) -S_1(x) + 2 \alpha x \Big)^2\ol \mu(dx) + \delta  \int_\R H[\ol \mu]^2d\ol \mu\\[1.2mm]
& -(1+ \delta)\dint_\R \Big(\frac{1}{\eps}(x - S(x)) -S_1(x) + 2 \alpha x\Big) H[\ol \mu](x) \ol \mu(dx) \leq 0,
\ea
\]
\vs

and
\[
\ba
-\rho -\frac{ \bar t -\bar s}{\eps} + &\theta_2 + \frac{1}{2} \dint_\R\Big(\frac{1}{\eps}( \wtilde S(y)-y) +S_2(y) \Big)^2\ol \nu(dy) -(1 - \delta)\dint_\R \Big(\frac{1}{\eps}(\wtilde S(y)-y) S_2(y)\Big) H[\ol\nu](y) \ol \nu(dy)\\[1.2mm]
& -\delta \dint_\R H[\ol\nu]^2d\ol\nu \geq 0.
\ea
\]

Taking the difference of the two inequalities yields

\[
\ba
\dfrac{1}{2} &\dint_\R \Big(\frac{1}{\eps}(x - S(x)) -S_1(x) + 2 \alpha x\Big)^2\ol \mu(dx) 
- \frac{1}{2} \dint_\R\Big(\frac{1}{\eps}(  \wtilde S(y)-y) +S_2(y)\Big)^2\ol \nu(dy)\\
&+  \delta  \dint_\R H[\ol \mu]^2d\ol \mu  +  \delta \dint_\R H[\ol\nu]^2d\ol\nu\\[1.2mm]
&-(1 + \delta)\dint_\R \Big(\frac{1}{\eps}(x - S(x)) -S_1(x) + 2 \alpha x\Big) H[\ol \mu](x) \ol \mu(dx)\\[1.2mm]
& + (1 - \delta)\dint_\R \Big(\frac{1}{\eps}(\wtilde S(y)-y) + S_2(y)\Big) H[\ol\nu](y) \ol\mu(dy) +2 \rho + \theta_1 - \theta_2 \leq 0.
\ea
\]
\vs

The contraction properties of the Dyson flow also yields that 
\[
- \frac{1}{\eps}\left (\dint_\R \Big(x - S(x)\Big) H[\ol \mu](x) \ol \mu(dx) - \dint_\R \Big( \widetilde S(y)-y\Big) H[\ol\nu](y) \ol\nu(dy)\right ) \geq 0.
\]
\vs
To estimate the difference of the two quadratic terms, we note that, by the definition of $S$ and $\wtilde S$, 
\[
\ba
W_2^2(\ol \mu ,\ol \nu)=\dint_\R \Big((x - S(x)\Big)^2\ol \mu(dx) =  \dint_\R\Big((y - \wtilde S(y) \Big )^2\ol \nu(dy).
\ea
\]
\vs
Moreover,
\[
\delta  \dint_\R H[\ol \mu]^2d\ol \mu  + \delta \dint_\R H[\ol\nu]^2d\ol\nu \geq 0.
\]

It follows that 
\[
\ba
2 \rho \leq& \theta_2 - \theta_1 +\frac 12 \dint_\R \Big(2
\dfrac{\wtilde S(y)-y}{\ep}S_2(y) + (S_2(y))^2 \Big)\ol \nu(dy)\\[1.2mm]
& - \frac{1}{2} \dint_\R \Big(2\dfrac{x - S(x)}{\ep}(-S_1(x)  + 2\alpha x) + (-S_1(x) + 2 \alpha x)^2\Big)\ol \mu(dx) 
-\delta  \dint_\R H[\ol \mu]^2d\ol \mu  -\delta  \dint_\R H[\ol \nu]^2d\ol \nu\\[1.2mm]
& +\delta \dint_\R \Big(\dfrac {x - S(x)}{\ep}  -S_1(x) + 2 \alpha x \Big) H[\ol \mu](x)\ol \mu(dx)
+\delta \dint_\R \Big(\dfrac{\wtilde S(y)-y}{\ep} + S_2(y)\Big) H[\ol\nu](y) \ol \nu(dy).
\ea
\]
\vs
We also find that 
\[
\ba
\delta \dint_\R \dfrac {\wtilde S(y)-y}{\ep}  H[\ol \nu](y)\ol\nu(dy) \leq \delta \frac{W_2(\ol\mu,\ol \nu)}{\eps}\|\ol\nu\|_3^{\frac32} 
\ea
\]
and
\[
\ba
\delta \dint_\R \dfrac{x - S(x)}{\ep} H[\ol \mu](x)\ol\mu(dx) \leq \delta \frac{W_2(\ol\mu,\ol\nu)}{\eps}\|\ol\mu\|_3^{\frac32},
\ea
\]
\vs

while  \[\|S_1\|^2_{L^2(\ol\mu)}, \|S_2\|^2_{L^2(\ol\nu)}, |\theta_1|,|\theta_2| \leq \eta.\] %the assumption on $\psi_1$ and $
Combining all the above leads to

\be\label{est:last?}
\ba
 \rho & \leq \eta + \eps^{-1}W_2(\ol\mu,\ol\nu)\sqrt\eta + \frac\eta4 +  \alpha\eps^{-1} \sqrt{M_2(\ol\mu)}W_2(\ol\mu,\ol\nu) + \frac\delta2 \sqrt{\eta}(\|\ol\mu\|_3^{\frac32} + \|\ol\nu\|_3^{\frac32})\\
& \qquad +  \alpha \sqrt{M_2(\ol \mu)}\|\ol\mu\|_3^{\frac32} +  \delta\frac{W_2(\ol\mu,\ol\nu)}{2\eps}(\|\ol\mu\|_3^{\frac32} + \|\ol\nu\|_3^{\frac32}) - \frac\delta2\Big( \lambda \|\ol\mu\|_3^3 + \kappa\|\ol\nu\|_3^3\Big)\\
& \leq  2\eta + \eta^{1/2}\Big(\frac{W_2(\ol\mu,\ol\nu)}{\ep} + \frac\delta2 (\|\ol\mu\|_3^{\frac32} + \|\ol\nu\|_3^{\frac32}) \Big) + 
\alpha \big(M_2(\ol\mu)\big)^{1/2}\left(\frac{W_2(\ol\mu,\ol\nu)}{\ep}+ \|\ol \mu\|^\frac32_3\right) \\
& \qquad + \delta \|\ol\mu\|_3^{\frac32}\left(\frac{W_2(\ol\mu,\ol\nu)}{\eps} - \lambda \|\ol\mu\|_3^{\frac32}\right) + \delta \|\ol\nu\|_3^{\frac32}\left(\frac{W_2(\ol\mu,\ol\nu)}{\eps} - \kappa \|\ol\nu\|_3^{\frac32}\right).
\ea
\ee
\vs
Next we  pass to the $\eta \to 0$, $\alpha \to 0$, $\delta \to 0$ and, finally, $\eps \to 0$ limits. For this, we need a number of estimates which are independent of $\eta,\alpha, \delta$ or $\eps$.

%To explain such a passage to the limit, we gather first some estimates. In all what follows, $C$ denotes a constant which does not depend on $\eta,\alpha, \delta$ or $\eps$. 
\vs

As it is usual in the viscosity solution theory, we know that 
 $$W_2(\ol\mu,\ol\nu) = o(\sqrt{\eps}),$$
while Lemma \ref{lemma:estL3} yileds that
$$
\|\ol \mu\|^3_3, \|\ol\nu\|_3^3 \leq \frac{C}{\eps \delta^2}.
$$
Above, and in all what follows, $C$ denotes a constant which does not depend on $\eta,\alpha, \delta$ or $\eps$.
\vs
Furthermore, since $U\wedge A$ is bounded from above by $A$, we know that $\alpha M_2(\ol\mu)$ is also bounded and, thus, for sufficiently small $\alpha$,  we have 
$$\alpha^2M_2(\ol \mu) {\approx}C \alpha .$$
Next we argue that the last two terms in \eqref{est:last?} converge to $0$  provided that $\delta/\ep\to 0$. Indeed, either 
$$\left(\frac{W_2(\ol\mu,\ol\nu)}{\eps} - \lambda \|\ol\mu\|_3^{\frac32}\right) \leq 0$$
and, thus, the terms can be ignored or 
$$\ds  \lambda \|\ol\mu\|_3^{\frac32} \leq \frac{W_2(\ol\mu,\ol\nu)}{\eps},$$
which then implies that  $$\delta \|\ol\mu\|_3^{\frac32}\left(\frac{W_2(\ol\mu,\ol\nu)}{\eps} - \lambda \|\ol\mu\|_3^{\frac32}\right)\leq \delta \frac{W_2^2(\ol\mu,\ol\nu)}{\lambda\eps^2}.$$
%Hence, those terms will vanish in the limit provided that $\delta/\ep\to 0$. 
\vs
%$$
%\lim \frac{\delta}{\eps} = 0.
%$$
For the other terms, the estimate is straightforward provided 
$$
\lim\sqrt{\alpha} \left(\frac{1}{\sqrt \eps} + \frac{1}{\eps\delta^2}\right) = 0 \ \text{and} \  \lim \sqrt\eta\left(\frac{1}{\sqrt \eps} + \frac{1}{\delta\eps}\right) = 0.
$$
\vs
Hence, if we set, for instance,  $\delta = \eps^2$, $\alpha = \eps^{11}$ and $\eta = \eps^{12}$ ((the reason for this choice for  the parameter $\eta$  will become clear when we consider next that either  $\ol t = T$ or $\ol s = T$), we can indeed pass to the limit $\eps \to 0$ in \eqref{est:last?} and obtain the contradiction $\rho \leq 0$.
\vs
%(the reason for this choice for  the parameter $\eta$  will become clear when we consider next that either  $\ol t = T$ or $\ol s = T$)
%\vs
%
%

%
%$$
%\delta\frac{W_2(\ol\mu,\ol\nu)}{\eps}\|\ol\mu\|_3^{\frac32}  - \delta \lambda \|\ol\mu\|_3^3= \delta \|\ol\mu\|_3^{\frac32}\left(\frac{W_2(\ol\mu,\ol\nu)}{\eps} - \|\ol\mu\|_3^{\frac32}\right).
%$$
%\vs
%Observe, that either $\|\ol\mu\|_3^{\frac32}$ is bounded by some constant depending on $\eps$, and thus 
%\[\lim_{\delta \to 0} \Big(\delta\frac{W_2(\ol\mu,\ol\nu)}{\eps}\|\ol\mu\|_3^{\frac32}  - \delta \color{red}\lambda\color{black}\|\ol\mu\|_3^3\Big)=0,
%\]
%or $\|\ol\mu\|_3^{\frac32}$ will be larger $\frac{W_2(\ol\mu,\ol\nu)}{\eps}\leq C\frac{1}{\sqrt{\eps}}$ at some point, and then we can also discard this term.\\
%
%The terms in $\eta$ do not raise any particular issue. Finally, since $U\wedge A$ is bounded from above by $A$, we know that $\alpha M_2(\ol\mu)$ is also bounded and, thus,   $\lim_{\alpha\to 0}\alpha^2M_2(\ol \mu) = 0$. 
%\vs
%We finally pass to the limit as announced and obtain a contradiction since $\rho > 0$.
%\vs
It remains to treat the case of either $\ol t = T$ or $\ol s = T$ and note that it is here that the distinction between the two cases in the statement of the result plays a role. 
\vs

Assume that, for some $h > 0$, $U$ are $V$ are uniformly continuous in $\Pt\times [T-h,T]$  with a common modulus of continuity $\omega$. 
\vs

Then, since  $U(T,\cdot) - V(T,\cdot) \leq 0$ yields  $(U\wedge A)(T,\cdot) - V(T,\cdot) \leq 0$, we find that, at the point of maximum,  
%, since $(U\wedge A)(T,\cdot) - V(T,\cdot) \leq 0$, 
\beq\label{estT}
\begin{aligned}
&\omega(W_2(\ol \mu,\ol \nu) + |\ol t - \ol s|) -\frac{1}{2\eps}(W_2^2(\ol \mu,\ol \nu) + (\ol t - \ol s)^2)\\
&- \delta(E(\ol \mu) + E(\ol \nu)) - \alpha M_2(\ol \mu) + \rho|\ol t - \ol s| + \eta T + \sqrt{\eta}(\sqrt{M_2(\ol \mu)} + \sqrt{M_2(\ol \nu)})
\geq K.
 \end{aligned}
\eeq
\vs
The goal is to pass  in  the $\eta \to 0$, $\alpha \to 0$, $\delta \to 0$ and, finally, $\eps \to 0$ limits in \eqref{estT} to arrive in  a contradiction. 
\vs
This time, we do not have an estimate involving $\delta$ similar to the one of Lemma \ref{lemma:estL3}. However,  we still have that 
$$\alpha M_2(\ol \mu) \text{ is bounded}$$
and 
$$
\lim_{\eps \to 0} \frac{1}{2\eps}(W_2^2(\ol \mu,\ol \nu) + (\ol t - \ol s)^2) = 0. $$
% \ \text{and} \  (\alpha M_2(\ol \mu)) \text{ is bounded}.

Moreover, using that $$M_2(\ol \mu) \leq \frac C\alpha \ \ \text{and} \ \ M_2(\ol \nu) \leq 2W_2^2(\ol \mu,\ol\nu) + 2 M_2(\ol \mu),$$
we note that, thanks to \eqref{entropy10}, we have 
\be\label{eq:2375}
\ba
- \delta(E(\ol \mu) + E(\ol \nu)) &\leq C\delta ( \log(M_2(\ol \mu)) + \log(M_2(\ol \nu)) +1)\\
& \leq C \delta\left( -\log(\alpha) + \log\left(\frac1\eps + \frac 1 \alpha \right) +1 \right).
\ea
\ee
%where we used that $M_2(\ol \mu) \leq \frac C\alpha$ and $M_2(\ol \nu) \leq 2W_2^2(\ol \mu,\ol\nu) + 2 M_2(\ol \mu)$. 
\vs
Thus we can pass to the limit $\eps \to 0$ (with the other parameters fixed as above) in \eqref{estT} to obtain that $K \leq 0$ which is a contradiction as well.
\vs
Next assume that  $U$ and $V$ are only locally uniformly continuous and $V$ is bounded, uniformly on $\Pt\times [T-h,T]$,   from below by a  "quadratic" function. 
\vs
In this setting, we can let $\alpha= 0$ directly without  changing  the arguments above. In particular, we know that $M_2(\ol \mu)$ and $M_2(\ol \nu)$ are bounded uniformly in all parameters which we send to $0$, and thus the equivalent of \eqref{eq:2375} raises no issue in the limit $\delta \to 0$.
\vs
Hence, the result follows from the previous argument since we can restrict our attention to a bounded set where  $U$ and $V$ are uniformly continuous.
\vs

In conclusion, we have proved that $U \wedge A \leq V$, and since $A \in \R$ is arbitrary, the original claim follows.

\end{proof}

\section{Continuity and viscosity solutions properties of the value function}\label{sec:valuevisc}
%As we saw in the proof of the comparison principle, 
The  continuity properties of viscosity sub- and super-solutions especially near $t=T$ are  very important for the comparison principle. %, especially near $t=T$. 

\vs

In this section, we prove that the value functions of the optimal control problems we are considering are indeed continuous. In fact, we first prove  a regularizing effect, which implies that,  even if the terminal cost is singular as in the target problem, the value function is locally uniformly continuous  away from the terminal time. Then, we show that,  for a terminal cost which is locally uniformly continuous, the value function is locally uniformly continuous near $T$.
\vs

Once these continuity estimates are proven, we establish that the value function is a viscosity solution to \eqref{takis100}. In turn, this concludes the proof of the fact that the value function is characterized as  the unique viscosity solution to the associated HJB equation.

\subsection{A regularizing result}
This subsection is devoted to the regularizing effect property of the Dyson control problem.

\begin{theorem}\label{thm:estlip}
Assume that $F$ is locally uniformly continuous and $\nu \in \widetilde{\mathcal{P}_2}$. Then, for each $\ep>0$, the value functions $U(\cdot,t)$ and $\widehat U(\cdot,\nu,t)$ are locally uniformly continuous on $\mcl P_2$ for all $t\in [0,T-\ep]$, with moduli of continuity  depending  on $\eps$. Furthermore, if $F$ is Lipschitz continuous, then $U(\cdot,t)$ and $\widehat U(\cdot,\nu,t)$ are locally uniformly Lipschitz in $\mcl P_2$ for all $t\in [0,T-\ep]$ with constants which depend only on $\eps$.
\end{theorem}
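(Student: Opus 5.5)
The plan is a control-translation argument. Fix $t\le T-\eps$, two measures $\mu_1,\mu_2$ in a bounded set of $\Pt$ (say $M_2(\mu_i)\le R$), and an $\epsilon'$-optimal pair $(\alpha,m)\in\mcl A(\mu_1,t)$ (resp. $\mcl A(\mu_1,\nu,t)$). By Proposition~\ref{prop:bounds} the energy $L(\alpha,m)$ is bounded by a constant depending only on $R$, $\eps$, $\|F\|_\oo$, $M_2(\nu)$, $E(\nu)$ (and $U_T(\nu)$), so Proposition~\ref{prop:comp} gives uniform bounds on $M_2(m_s)$. We then build a competitor $(\alpha',m')$ starting from $\mu_2$ that ends at the same $m_T$ and costs at most $\omega(W_2(\mu_1,\mu_2))$ more. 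This gives $U(\mu_2,t)\le U(\mu_1,t)+\omega$, and by symmetry the modulus. Because the competitor hits exactly $m_T$, the argument covers $\widehat U(\cdot,\nu,t)$ and the singular terminal cost $\mathbb 1_\nu$ uniformly, and it never uses the regularity of $U_T$.

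The construction works in the Lagrangian setting of the subsection on translating controls. Let $Y_s$ be a nondecreasing (in $\mcl K$) process on $[0,1]$ with $\mcl L(Y_s)=m_s$ and $\dot Y_s=\alpha(s,Y_s)+H[\mcl L(Y_s)](Y_s)$. Take $X_t\in\mcl K$ with law $\mu_2$; since both are monotone rearrangements, $\|X_t-Y_t\|_{\mcl H}=W_2(\mu_1,\mu_2)$. Apply Lemma~\ref{lemma:brezis} on $[t,T)$ with $f(s)=\alpha(s,Y_s)+\dot Y_s$-compatible source, namely $f(s)=\alpha(s,Y_s)$, and the pull term $g(s)=Y_s$, so that
\[
\dot X_s=\alpha(s,Y_s)+H[\mcl L(X_s)](X_s)+\frac{Y_s-X_s}{T-s}.
\]
We then estimate $D_s=\|X_s-Y_s\|_{\mcl H}$. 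The monotonicity computation in Proposition~\ref{prop:decreas} applies because both processes lie in $\mcl K$, so the comonotone coupling is optimal. It shows that the Hilbert terms contribute nonpositively to $\frac{d}{ds}D_s^2$, which gives $\frac{d}{ds}D_s^2\le -\frac{2}{T-s}D_s^2$ and hence $D_s\le D_t\frac{T-s}{T-t}$. In particular $X_T=Y_T$, so $m'_T=m_T$. The new control is $\alpha'_s=\alpha(s,Y_s)+\frac{Y_s-X_s}{T-s}$, read as a function of $X_s$ via conditional expectation, which only lowers the energy by Jensen. Its extra energy is bounded by
\[
2\sqrt{L}\Big(\int_t^T\frac{D_s^2}{(T-s)^2}ds\Big)^{1/2}+\int_t^T\frac{D_s^2}{(T-s)^2}ds\lesssim \frac{D_t}{\sqrt{T-t}}+\frac{D_t^2}{T-t},
\]
which is Lipschitz-type in $W_2(\mu_1,\mu_2)$ with constants depending on $\eps$, since $T-t\ge\eps$. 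The running cost changes by $\int_t^T|F[m'_s]-F[m_s]|ds$, and $W_2(m'_s,m_s)\le D_s\le W_2(\mu_1,\mu_2)$ with moments bounded. So local uniform continuity of $F$ gives a modulus, and Lipschitz $F$ gives a Lipschitz bound.

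The main obstacle is justifying the differential inequality for $D_s^2$ near the singular time $s=T$ and for merely $\Pt$ initial data. Lemma~\ref{lemma:brezis} only gives a solution on $[t,T)$, and the monotonicity argument of Proposition~\ref{prop:decreas} needs $L^3$ regularity. I would first replace $t$ by $t+h$, using that both $m_s$ and $m'_s$ have finite entropy, hence are $L^3$ for a.e.\ $s$, by Proposition~\ref{prop:comp}. I would also use the Yosida/maximal-monotone structure from the second proof of Lemma~\ref{lemma:brezis}: the map $\partial\phi$ is monotone, so $\langle \partial\phi(X)-\partial\phi(Y),X-Y\rangle\ge0$ directly, with no $L^3$ needed. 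Then I would let $s\uparrow T$ using the bound $D_s\le D_t(T-s)/(T-t)$ to get continuity at $T$ and $X_T=Y_T$. Finally, $\epsilon'\to0$ and swapping $\mu_1,\mu_2$ conclude.
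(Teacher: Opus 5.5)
Your proposal is correct and follows essentially the same route as the paper. Both arguments lift a (near-)optimal pair for $\mu_1$ to a monotone Lagrangian process, and drive a new monotone process from the quantile function of $\mu_2$ with the same control plus the pull term $(Y_s-X_s)/(T-s)$ via Lemma~\ref{lemma:brezis}. Both then use the contraction of Proposition~\ref{prop:decreas} to get $W_2(m_s,m'_s)\le W_2(\mu_1,\mu_2)\frac{T-s}{T-t}$, so the two paths meet at time $T$. Finally, both bound the extra energy by Cauchy--Schwarz by $\lesssim W_2(\mu_1,\mu_2)/\sqrt{T-t}+W_2^2(\mu_1,\mu_2)/(T-t)$.

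Your differences are harmless refinements of details the paper leaves implicit: using $\epsilon'$-optimal pairs instead of exact minimizers, the conditional-expectation/Jensen step to get an Eulerian control, and the monotone-operator justification of the contraction near $s=t$.
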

\begin{proof}
Since the proofs for $\widehat U$ and $U$ are similar, here we concentrate on $U$. We also assume $F \equiv 0$ for the moment and consider the general case at the end. 
\vs
In what follows we fix  $t \in [0,T-\eps]$, $M > 0$ and $\mu^1,\mu^2 \in \mcl P_2$ such that $M_2(\mu^1) \leq M$ and $M_2(\mu^2) \leq M$.

% and fix an optimal control $(\ol \al^1, \ol m^1)$ for $U(\mu^1,t)$. To compare $U(\mu^2,t)$ to $U(\mu^1,t)$, we need  to find an appropriate  admissible control for $U(\mu^2,t)$. 
\vs

As in the classical optimal control theory, we would like to use for $U(\mu^2,t)$ a control similar to the one for $U(\mu^1,t)$ since this would imply that the two associated trajectories remain close to one another. Moreover, since no regularity is assumed on the terminal data, it is necessary to perturb the new admissible control so that the two trajectories in fact coincide at time $T$. Once such a control is constructed, it remains to estimate the difference of  between the two payoffs. % In order to do so, we shall use Lemma \ref{lemma:brezis}.
\vs
Given an optimal pair $(\ol \alpha^1, \ol m^1)$ for $U(\mu^1,t)$, 
we follow  an idea from \citep{tangent} and adopt the Lagrangian point of view.  
\vs
Using  Ambrosio's superposition principle (Theorem 8.2.1 in \citep{ags}), we  consider a process $(X^1_s)_{s \geq t}$ solving %solution to
$$
\frac{dX^1_s}{ds} = \ol \al ^1(s,X^1_s) + H[\ol m^1_s](X^1_s) \ \  \text{in} \ \ (t,T) \ \ \text{and} \ \ \mathcal L(X^1_s) = \ol m^1_s \ \ \text{for all \ \ $s \in [t,T]$},
$$
%such that, for all $s \in [t,T]$, $\mathcal L(X^1_s) = \ol m^1_s$. 
%We argue thanks to Ambrosio superposition principle (Theorem ? in \citep{ags}). 
and note that we can indeed use this result of \citep{ags} since, thanks to the bound obtained in Proposition \ref{prop:comp}, we know that $$\|H[\ol m^1_s]\|^2_{L^2(\ol m^1_s)} \lesssim t^{-\frac 32}.$$
It follows that,  for all $p \in (1,4/3)$,  
$$\|H[\ol m^1_s]\|^p_{L^p(\ol m^1_s)} \lesssim t^{-\frac {3p}{4}},$$
and, thus,  for some $p \in (1,4/3)$,

%In particular, $\|H[\ol m^1_s]\|^p_{L^p(\ol m^1_s)} \leq \frac{C}{t^{\frac {3p}{4}}}$. Hence, we know that 
$$
\int_t^T(\ol \al^1 + H[\ol m^1])^pm^1_sdxds < \infty,
$$
which is the required assumption for Theorem 8.2.1 in \citep{ags}. 
\vs
It follows that  the  process $(X^1_s)_{s \in [t,T]}$ is thus absolutely continuous in time in $L^p([0,1],Leb)$.
\vs
A classical argument in measure theory, that we do not detail here, implies that we can change the probability space on which $X^1_s$ is defined to be $([0,1],Leb)$.
\vs
We now argue that  we can assume  that $(X^1_s(\omega))_{s \in [t,T]}$ is non-decreasing in $\omega$ for all $s$. 
\vs
%The  change of the probability space is a classical argument in measure theory that we do not detail.
%\vs
Indeed, in view  Proposition \ref{prop:comp}, for any $\eps > 0$ we have 
$$
\sup_{s \geq t+ \eps} E[\mathcal L(X^1_s)] < \infty.
$$
Then, in view  of  the continuity of $(X^1_s)_{s \in [t,T]}$ in $L^p$, we know that the map $s \mapsto E[\mathcal L(X^1_s)]$ is continuous as well. In particular, this implies that, if $X^1_s$ is non-decreasing in $\omega$, then it remains non-decreasing for all future times. Considering a rearrangement of the probability space, we can always assume that  $X^1_{t + \frac1n}$ is non-decreasing in $\omega$. Passing to the limit $n \to \infty$ yields a process which satisfies our  requirements.
\vs
Next,   we take  $X^2_t$ to be the inverse cumulative distribution function (CDF) of $\mu^2$, and seek  a process $(X^2_s)_{s \in [t,T]}$ with initial condition $X^2_t$ such that  % solution to
\beq\label{defX2}
\frac{dX^2_s}{ds} = \ol \al ^1(s,X^1_s) + H[\ol m^2_s](X^2_s) + \frac{X^1_s-X^2_s}{T-s} \  \ \text{ in } \ \ (t,T) \ \ \text{and} \ \  X^2_t = X^1_t. 
\eeq
%and 
%\beq\label{defX2.1}
%X^2_s = X^1_s \ \ \text{for} \ \ s \geq t_c.
%\eeq
The existence of such a process, which is non-decreasing in $\omega$ on any time interval $(t, T-\delta)$ for $\delta > 0$,  was proven in Lemma \ref{lemma:brezis}.  
\vs
We now show how to extend such a process by continuity to $s= T$.
\vs

Set $m^2_s = \mathcal L(X^2_s)$. Then, since both processes are non-decreasing in $\omega$ for all $s$, we have that, for all $s \in [t,T]$, 
\[W_2^2(\ol m^1_s,\mathcal L(X^2_s)) = \int_0^1|X^1_s(\omega) - X^2(\omega)| d\omega. \]
Then Proposition~\ref{prop:decreas} implies that  
$$
\frac{d}{ds}W^2_2(\ol m^1_s,m^2_s) \leq -2\frac{W_2^2(\ol m^1_s,m^2_s)}{T-s},
$$
and, thus, 
\[W_2(\ol m^1_s,m^2_s) \leq W_2(\mu^1,\mu^2)\frac{T -s}{T-t} \ \  \text{ on}  \ \ [t,T).\] 

In particular, $m^2$ is continuous in $\mcl P_2$ and $m^2_T = \ol m^1_T$, and, thus,  $\Big(\ol \alpha^1(s, X^1_s) + \frac{X^1_s-x}{t+h-s}\Big)_{s\in [t,T]}$ is an admissible control for $U(\mu^2,t)$, from which we deduce that 
%We now estimate the control associated to $(X^2_s)_{s \in [t,T]}$ to 
$$
U(\mu^2,t) \leq \frac12\int_t^{T} \mathbb E\Big[\Big(\ol \al^1(s,X^1_s) + \frac{X^1_s-X^2_s}{t+h-s}\Big)^2\Big]ds,
$$
and, after expanding the square, % leads to
$$
U(\mu^2,t) - U(\mu^1,t) \leq \frac12\int_t^{T} \mathbb E\Big[2\ol \al^1(s,X^1_s)\frac{X^1_s-X^2_s}{t_c-s} + \Big(\frac{X^1_s-X^2_s}{t_c-s}\Big)^2\Big]ds.
$$
Using the Cauchy-Schwarz inequality and the estimate on $\mathbb E[|X^1_s-X^2_s|^2]= W_2^2(\ol m^1_s,m^2_s)$ finally imply that 

$$
\begin{aligned} 
\ds U(\mu^2,t) - U(\mu^1,t) &\leq \left(\int_t^{T}\mathbb E[(\ol\al(s,X^1_s))^2]ds\right)^{1/2}\; \left(\int_t^{T}\frac{W_2^2(\ol m^1_s,m^2_s)}{(T-s)^2}ds\right)^{1/2}\; + \frac12 \int_t^{T}\frac{W_2^2(\ol m^1_s,m^2_s)}{(T-s)^2}ds\\[1.5mm]
&\leq \Big(2 U(\mu^1,t)\Big)^{1/2}\;\frac{W_2(\mu^1,\mu^2)}{\sqrt{T -t}} + \frac12\frac{W^2_2(\mu^1,\mu^2)}{T -t}.
\end{aligned}
$$
\vs
Recalling that $W_2^2$ is locally Lipschitz continuous and the bound on $U$ given in Proposition \ref{prop:bounds}, we deduce the Lipschitz regularity of $U(\cdot,t)$ from the previous estimate.
\vs
%When $F \ne 0$, the argument is similar. 
The Lipschitz case follows directly from the estimate on the trajectories $W_2(\ol m^1_s,m^2_s) \leq W_2(\mu^1,\mu^2)\frac{T -s}{T-t}$. 
\vs
When $F$ is locally uniformly continuous, we recall that, in view of 
Proposition \ref{prop:comp}, we can restrict our attention to a bounded set on  which $F$ is uniformly continuous. And thus a similar estimate once again yields the result.

\end{proof}

From the regularity in space of the value function, we can now deduce the regularity in time.
%because of the regularity of the optimal trajectories, as the next result shows.
\begin{proposition}\label{prop:regularitytime}
Assume that $F$ is locally uniformly continuous and $\nu \in \widetilde{\mathcal{P}_2}$. Then, the value functions $U(\cdot,\cdot)$ and $\widehat U(\cdot,\nu,\cdot)$ are locally uniformly continuous on $\mcl P_2\times [0,T-\eps]$ for any $\eps > 0$ with moduli of continuity which only depend on $\eps$.
\end{proposition}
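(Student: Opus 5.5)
The plan is to combine the spatial modulus from Theorem~\ref{thm:estlip} with the dynamic programming principle and the time-continuity of optimal trajectories given by Proposition~\ref{takis1250}. The argument for $\widehat U(\cdot,\nu,\cdot)$ is identical to the one for $U$, since the DPP holds for both and the terminal constraint is never touched on $[0,T-\eps]$. I therefore concentrate on $U$.

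Fix $\eps>0$, $M>0$ and $\mu$ with $M_2(\mu)\le M$, and let $t<t'$ in $[0,T-\eps]$ with $t'-t=\tau$.

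For the upper bound on $U(\mu,t)-U(\mu,t')$, I use the uncontrolled Dyson flow on $[t,t']$ (that is, $\alpha\equiv 0$). The DPP then gives
\[
U(\mu,t)\le \tau\|F\|_\oo + U(S_\tau\mu,t').
\]
Moreover, $W_2(S_\tau\mu,\mu)\le\sqrt\tau$ by the estimate $W_2(S_t\mu,\mu)\le\sqrt t$ proven earlier, and $M_2(S_\tau\mu)$ stays bounded in terms of $M$ and $T$. Theorem~\ref{thm:estlip} at time $t'\le T-\eps$ therefore yields $|U(S_\tau\mu,t')-U(\mu,t')|\le\omega_{\eps,M}(\sqrt\tau)$. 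Combining the two gives $U(\mu,t)-U(\mu,t')\le \tau\|F\|_\oo+\omega_{\eps,M}(\sqrt\tau)$.

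For the reverse inequality, I take an (approximately) optimal pair $(\alpha,m)\in\mcl A(\mu,t)$. By the DPP,
\[
U(\mu,t)\ge -\tau\|F\|_\oo + U(m_{t'},t') - o(1).
\]
The energy of $(\alpha,m)$ is bounded by a constant depending only on $M$ and $T$. Indeed, $U(\mu,t)\le U(\mu,T-\eps)$-type upper bounds come from Proposition~\ref{prop:bounds}: one can compare with $\nu=S_{(T-t)}\mu$-style competitors, or simply use the zero control together with the lower bound on $U_T$ (or use $\widehat U$ bounds in the target case). The lower bound $U\ge -K-T\|F\|_\oo$ then controls $\int\alpha^2dm$.

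With this energy bound, Proposition~\ref{takis1250} gives a modulus $\omega$, depending only on $M$, $T$ and the energy bound, such that $W_2(m_{t'},\mu)\le\omega(\tau)$. Proposition~\ref{prop:comp} keeps $M_2(m_{t'})$ bounded. Applying Theorem~\ref{thm:estlip} at $t'$ then gives $U(m_{t'},t')\ge U(\mu,t')-\omega_{\eps,M}(\omega(\tau))$. Hence $U(\mu,t')-U(\mu,t)\le \tau\|F\|_\oo+\omega_{\eps,M}(\omega(\tau))$.

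Finally, joint local uniform continuity follows from the triangle inequality
\[
|U(\mu,t)-U(\mu',t')|\le |U(\mu,t)-U(\mu,t')|+|U(\mu,t')-U(\mu',t')|,
\]
using the spatial modulus of Theorem~\ref{thm:estlip} for the second term.

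The main obstacle is the uniform energy bound on near-optimal controls in the second step. For $U$, it requires an upper bound on $U(\mu,t)$ that is uniform over $M_2(\mu)\le M$. If $U_T$ is merely continuous rather than bounded, I would obtain this from the zero-control competitor and local boundedness of $U_T$ on bounded sets of $\Pt$, which holds because the Dyson flow preserves bounded second moments. For $\widehat U$, the corresponding bound is \eqref{takis410}, which gives $\widehat U(\mu,\nu,t)\lesssim 1+W_2^2(\mu,\nu)+|E(\nu)|$ uniformly on bounded sets. The modulus $\omega$ from Proposition~\ref{takis1250} is uniform once this energy bound holds, so everything depends only on $\eps$ and $M$.
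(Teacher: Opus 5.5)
Your proof is correct, and its first half is the paper's: zero control on $[t,t']$, the estimate $W_2(S_\tau\mu,\mu)\le\sqrt\tau$, then the spatial modulus of Theorem~\ref{thm:estlip} at time $t'$.

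For the reverse inequality you take a genuinely different and simpler route.

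\emph{Your route.} You take an $\eta$-optimal pair for $U(\mu,t)$ and apply the dynamic programming principle at the intermediate time $t'$. Proposition~\ref{takis1250} then gives $W_2(m_{t'},\mu)\le\omega(\tau)$, and you apply the spatial modulus at $t'\le T-\eps$. This is the standard ``time regularity from space regularity plus DPP'' argument.

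\emph{The paper's route.} The paper instead starts from an optimal pair $(\ol\alpha,\ol m)$ for $U(\mu,t)$. It builds a new competitor for $U(\mu,s)$ from $\mu$ at time $s$ by re-running the rescaled control $\frac{T-t}{T-s}\ol\alpha$ at the Lagrangian level, using Lemma~\ref{lemma:brezis}. It tracks the distance to $\ol m$ with the Wasserstein contraction of the Dyson flow, and only compares values at the fixed time $T-\eps$, using the spatial Lipschitz bound there.

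\emph{Comparison.} Your route avoids the Lagrangian construction, the monotone-rearrangement issues and the contraction estimate. It also needs only near-optimal pairs, not the existence of an optimal one. The paper's construction gains no better rate, since it too relies on Proposition~\ref{takis1250} to control $W_2(\ol m_s,\mu)$.

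\emph{One point to tighten: the uniform energy bound.} The zero-control competitor gives an upper bound only if $U_T$ is bounded on $W_2$-bounded sets. Continuity alone does not give this on the non-locally-compact $\Pt$. Also, ``zero control together with the lower bound on $U_T$'' yields no upper bound at all. The clean justification is
\[
U(\mu,t)\le U_T(\nu')+\widehat U(\mu,\nu',t)
\]
for a fixed $\nu'\in\widetilde{\mcl P}_2$, combined with \eqref{takis410}, which is uniform for $M_2(\mu)\le M$ and $t\le T-\eps$. Then $U_T\ge -K$ converts this upper bound into an energy bound for near-optimal pairs. This is exactly the local bound the paper itself uses in Theorem~\ref{thm:estlip}, so your argument goes through with it.
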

\begin{proof}
We assume $F \equiv 0$, and leave the general case to the reader. 
\vs
Let $0 \leq t < s < T-\eps$ for some $\eps > 0$ and $\mu \in \mcl P_2$, and note that we always have
$$
U(\mu,t) \leq U(S_{s-t}\mu,s) \leq U(\mu,s) + C_{T-\eps,M_2{\mu}}W_2(S_{s-t}\mu,\mu) \leq U(\mu,s) + C_{T-\eps,M_2{\mu}}\sqrt{s-t}.
$$

It thus remains to obtain a locally modulus of continuity in the other direction. 
\vs
Consider an optimal trajectory $(\ol \alpha, \ol m)$ in $U(\mu,t)$, and denote by $\ol X_u$ the inverse cdf of $\ol m_u$. Then, it follows from Proposition \ref{takis1250} that there exists a modulus of continuity $\omega$ depending only $T-t$ and $U(\mu,t)$ such that \[W_2(\ol m_s,\mu) \leq \omega (|s-t|).\] 
\vs
It now remains to choose an admissible pair for $U(\mu,s)$ which yields a cost close to $U(\mu,t)$ while the two trajectories remain close to each other. Since the dynamics are homogeneous, we simply use a time rescaling of the control.
\vs
In order to do so, we use once again Lemma \ref{lemma:brezis} to find a process on $[0,1]$ solving 
$$
\frac{d}{du} X_u = \frac{T-t}{T-s}\ol \alpha(u,\ol X_u) + H[\mcl L(X_u)](X_u) \ \text{ in } \ (s,T-\eps),
$$
which is non-decreasing for all $u \in (s,T)$, and such that $\mathcal L(X_s) = \mu$. 
\vs
The fact that the Dyson flow contracts distance yields 
$$
\frac{d}{du}W_2^2(\mathcal L(X_u),\ol m_u) \leq 2 W_2(\mathcal L(X_u),\ol m_u)\Big(\mathbb E \Big[\Big(\Big(\frac{T-t}{T-s}-1\Big)\ol \alpha(u,\ol X_u)\Big)^2\Big]\Big)^{1/2}.
$$
It then follows that, for all $u \in [s,T-\eps]$, 

$$
W_2(\mathcal L(X_u),\ol m_u) \leq W_2(\mu,\ol m_s) + \left(\frac{T-t}{T-s}-1\right)\int_s^u\mathbb E[(\ol \alpha(u',\ol X_{u'})^2]du' \leq W_2(\mu,\ol m_s) + \left(\frac{T-t}{T-s}-1\right)U(t,\mu).
$$

Using dynamic programming, the sub-optimality of the trajectory and the optimality of $(\ol \alpha,\ol m)$,  we get, after  changing time, that  
$$
\begin{aligned}
\ds U(\mu,s) &\leq \int_s^{T-\eps} \frac12\mathbb E[(\frac{T-t}{T-s}\ol \alpha(u,\ol X_u))^2]du + U(\mathcal L(X_{T-\eps}),T-\eps)\\[1.2mm]
&\leq U(\mu,t) - U( \ol m_{T-\eps},T-\eps) + U(\mathcal L(X_{T-\eps}),T-\eps)\\[1.2mm]
& \leq U(\mu,t) + C_{T-\eps,M_2(\mu)}W_2(\ol m_{T-\eps},\mathcal L(X_{T-\eps}))\\[1.2mm]
& \leq U(\mu,t) + C_{T-\eps,M_2(\mu)}\Big(W_2(\mu,\ol m_s) + \Big(\frac{T-t}{T-s}-1\Big)U(\mu,t)\Big).
\end{aligned}
$$
\vs
We are then able to conclude using Proposition \ref{prop:comp} to bound $W_2(\mu,\ol m_s)$ by a modulus of $(s-t)$ depending only on $U(\mu,t)$, and by remarking that the last term in the estimate above goes to $0$ as $|s-t| \to 0$. 

\vs
The global continuity then follows from combining this time estimate with the previous spatial one.

\end{proof}

\subsection{The continuity near the terminal time for a regular terminal cost}
The estimate obtained in the previous section is a  regularizing effect that  is supposed to deteriorate as $t \to T$. 
However,  when the terminal data is uniformly continuous, we can show quite simply an estimate all the way up to the terminal time.
\begin{proposition}
Assume that the terminal data $U_T$ is locally uniformly continuous on $\mcl P_2$. Then, $U$ is locally uniformly continuous on $ \mcl P_2\times [0,T]$.
\end{proposition}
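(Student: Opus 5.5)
Since Proposition~\ref{prop:regularitytime} already gives local uniform continuity of $U$ on $\mcl P_2\times[0,T-\eps]$ for every $\eps>0$, it remains to control $U$ near $t=T$. Concretely, we aim for a modulus $\omega_M$ such that, for $M_2(\mu)\le M$ and $t\in[T-h,T]$,
\[
|U(\mu,t)-U_T(\mu)|\le \omega_M(T-t).
\]
Combined with the local uniform continuity of $U_T$ and of $U(\cdot,t)$ for $t<T$, this gives the claim. Indeed, for $t,t'$ close to $T$ and $\mu,\mu'$ close we write
\[
|U(\mu,t)-U(\mu',t')|\le |U(\mu,t)-U_T(\mu)|+|U_T(\mu)-U_T(\mu')|+|U_T(\mu')-U(\mu',t')|.
\]
For pairs of times bounded away from $T$ we use Proposition~\ref{prop:regularitytime}.

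\textbf{Upper bound.} We use the zero control, that is, the Dyson trajectory $m_s=S_{s-t}\mu$. This gives
\[
U(\mu,t)\le (T-t)\|F\|_\oo+U_T(S_{T-t}\mu).
\]
The estimate $W_2(S_{T-t}\mu,\mu)\le\sqrt{T-t}$ keeps $M_2(S_{T-t}\mu)\le 2M+2$. The local uniform continuity of $U_T$ on this bounded set then yields
\[
U(\mu,t)\le U_T(\mu)+(T-t)\|F\|_\oo+\omega_{U_T}(\sqrt{T-t}).
\]
(If $F$ is only locally bounded, we use its bound on the relevant moment ball instead of $\|F\|_\oo$.)

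\textbf{Lower bound.} This is the main step. We take an $\eps'$-optimal pair $(\al,m)\in\mcl A(\mu,t)$. The upper bound just proved, together with the lower bound of Proposition~\ref{prop:bounds}, gives $L(\al,m)\le C_M$ uniformly for $t$ near $T$, and Proposition~\ref{prop:comp} gives $M_2(m_s)\le C$.

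The difficulty is that the drift $H[m_s]$ is singular, so Proposition~\ref{takis505} cannot be applied directly to the total velocity $\al+H[m]$. Instead, we use Proposition~\ref{takis1250}, which provides a modulus $\omega$ depending only on the energy bound $K$ and on $M$ with $W_2(m_T,\mu)\le\omega(T-t)$. As a cleaner alternative, one can combine the Dyson contraction with Proposition~\ref{takis505} applied to the control part alone. One compares $m$ with $S_{s-t}\mu$: the contraction of Proposition~\ref{prop:decreas} together with an argument as in Proposition~\ref{prop:comp} gives
\[
\frac{d}{ds}W_2(m_s,S_{s-t}\mu)\le \|\al_s\|_{L^2(m_s)},
\]
and hence
\[
W_2(m_T,\mu)\le \sqrt{T-t}\,\sqrt{2K}+\sqrt{T-t}.
\]
We expect this to be the delicate point, since it requires justifying the differential inequality for non-smooth $m$. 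If that justification fails, we fall back on Proposition~\ref{takis1250}, which suffices for a modulus.

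\textbf{Conclusion.} From the lower-bound step,
\[
U(\mu,t)+\eps'\ge -(T-t)\|F\|_\oo+U_T(m_T)\ge U_T(\mu)-(T-t)\|F\|_\oo-\omega_{U_T}\bigl(C\sqrt{T-t}\bigr),
\]
where we used that $m_T$ stays in a bounded moment set. Letting $\eps'\to0$ and combining with the upper bound gives the desired estimate, and the splicing argument above then yields local uniform continuity on $\mcl P_2\times[0,T]$.
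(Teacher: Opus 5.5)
Your proof is correct at the paper's level of rigor, but it takes a different route from the paper.

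\textbf{The paper's route.} It fixes $t$ and takes an optimal pair for $U(\mu,t)$. Via Lemma~\ref{lemma:brezis}, it reuses the very same Lagrangian control starting from $\mu'$, this time without the correction term $\frac{X^1_s-X^2_s}{T-s}$ used in Theorem~\ref{thm:estlip}. The two costs coincide, and the Dyson contraction gives $W_2(\ol m_T,m'_T)\le W_2(\mu,\mu')$. Hence $|U(\mu,t)-U(\mu',t)|\le\omega_K(W_2(\mu,\mu'))$, with the modulus of $U_T$ and uniformly in $t\in[0,T]$. Continuity in time is then left to the argument of Proposition~\ref{prop:regularitytime}.

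\textbf{Your route.} You never transfer a control at this stage. You prove the terminal estimate $|U(\mu,t)-U_T(\mu)|\le\omega_M(T-t)$ directly: the zero control gives the upper bound, and a stability estimate for bounded-energy trajectories gives the lower bound. You then splice this with the interior continuity of Proposition~\ref{prop:regularitytime}.

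\textbf{What each buys.} The paper's argument gives the stronger spatial statement that $U(\cdot,t)$ inherits the modulus of $U_T$ at every time, without using the $\eps$-dependent interior constants. Yours handles explicitly the time direction at $t=T$, which the paper does not really do. The time-rescaling bound in Proposition~\ref{prop:regularitytime} carries the factor $\frac{T-t}{T-s}-1=\frac{s-t}{T-s}$, which is not uniformly small as $s\to T$. So your terminal estimate is exactly the piece that deferred step needs. The price is that your moduli on $[0,T-h]$ come from Proposition~\ref{prop:regularitytime}, hence from Theorem~\ref{thm:estlip}, which itself still rests on Lemma~\ref{lemma:brezis}.

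\textbf{One small correction.} The energy bound on your $\eps'$-optimal pair does not come from the lower bound on $U$. It comes from the lower bound $U_T\ge -K$ applied to $U_T(m_T)$, which is the standing assumption behind Proposition~\ref{prop:bounds}.
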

\begin{proof}
Fix $t \in [0,T)$, $\mu, \mu' \in \mcl P_2$ and let   $(\ol \alpha, \ol m)$ be an optimal control for $U(t,\mu)$. Arguing as in the proof of Theorem \ref{thm:estlip}, we can  use, at the Lagrangian level thanks to Lemma \ref{lemma:brezis}, the same control in $U(t,\mu')$, which yields an admissible control $(\alpha',m')$. 
\vs
Since the controls have the same cost, we get 
$$
|U(\mu,t)- U(\mu',t)|\leq |U_T(\ol m_T) - U_T(m'_T)|.
$$
Hence, the local uniform continuity in space follows because  the Dyson flow contracts distances and, thus, $W_2(\ol m_t,m'_T) \leq W_2(\mu,\mu')$,  Proposition \ref{prop:comp} yields $M_2(\ol m_T),M_2(m'_T) \leq K$ for $K$ depending only on $U(\mu,t), U(\mu',t)$, and, finally,  the local uniform continuity of $U_T$ and the local boundedness of $U$.
\vs
The existence of such $K$ follows from the fact that, if $M$ is a bound for  $M_2(\mu),M_2(\mu')$,  then, $U(\mu,t)$ and $U(\mu',t)$ are bounded by a constant depending only on $\sup \{U_T(m)| M_2(m) \leq C\}$ for some $C$ depending only on $M$. %This yields a  $K$ as above.
\vs

Finally, noting that $\omega_K$ a modulus of continuity of $U_T$ on $\{m, M_2(m)\leq K\}$, we obtain
$$
|U(\mu,t)- U(\mu',t)| \leq \omega_K(W_2(\mu,\mu')).
$$
% then yields the result.
\vs

Since the local uniform continuity in time can be established as in Proposition \ref{prop:regularitytime}, we leave the details to the reader. % so we do not detail it here again.

\end{proof}

\subsection{The viscosity super-solution property}

We check that $U$ and $\what U$ are viscosity super-solutions to \eqref{takis100}.

\begin{proposition}\label{super}
For any $\delta> 0$, $\ds \eta \in (0,\min(1/4; \delta(1-\delta)/2))$, %\frac{\delta(1-\delta)}{2}))$, 
any $C^{1,1}-$function $\phi :  \Pt \times (0,T)\to \R$, any $\xi \in \mcl P_2$ such that $M_2(\xi)\leq \eta$, and any  minimum point $(\ol \mu, \ol t) \in  \Pt \times (0,T)$ of the map $(\mu,t)  \to \what U(\mu, t) -\phi(\mu, t) + \delta E(\mu) + \mathcal I_2(\xi,\mu)$ on $\Pt \times (0,T)$, we have that $\ol \mu \in L^3$ and 
\[\ba
-\partial_t \phi(\ol \mu, \ol t) +& \dfrac{1}{2(1-2\eta)} \dint_\R(D_\mu \phi(\ol \mu, \ol t))^2d\ol \mu\\
& - \left(1 - \frac{\delta}{1-2\eta}\right)\dint_\R D_\mu \phi H(\ol \mu) d\ol \mu- \left( \delta-\eta - \dfrac{\delta^2}{2(1-2\eta)}\right)\dint_\R H[\ol \mu]^2d\ol \mu \geq F[\ol \mu] -\frac \eta 2.
\ea
\]
The same claim holds for $\what U(\cdot, \nu, \cdot)$ provided we assume, in addition,  that $\nu \in \wtilde \Pt.$ 
%(ii)~Assume that $\nu \in \wtilde \Pt.$ For any $\delta> 0$, $\eta \in (0,\min(\frac14;\delta\frac{1-\delta}{2}))$, any $C^{1,1}-$function $\phi :  \Pt \times (0,T)\to \R$, $\xi \in \mcl P_2$ such that $M_2(\xi) \leq \eta$,  and any minimum point  $(\ol \mu, \ol t) \in  \Pt \times (0,T)$ of the map $(\mu,t)  \to \what U(\mu, \nu, t) -\phi(\mu, t) + \delta E(\mu) + \mathcal I_2(\xi,\mu)$ on $\Pt \times (0,T)$, we have that $\ol \mu \in L^3$ and 
%\[\ba
%-\partial_t \phi(\ol \mu, \ol t) +& \dfrac{1}{2(1-2\eta)} \dint_\R(D_\mu \phi(\ol \mu, \ol t))^2d\ol \mu \\
%&- \left(1 - \frac{\delta}{1-2\eta}\right)\dint_\R D_\mu \phi H(\ol \mu) d\ol \mu- ( \delta-\eta - \dfrac{\delta^2}{2(1-2\eta)})\dint_\R H[\ol \mu]^2d\ol \mu \geq  F[\ol \mu] -\frac \eta 2.
%\ea\]
\end{proposition}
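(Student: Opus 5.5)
The plan is to combine the dynamic programming principle with a one-sided Taylor expansion along near-optimal trajectories. Fix $(\ol\mu,\ol t)$ a minimum point of $\what U-\phi+\delta E+\mathcal I_2(\xi,\cdot)$; I treat $U$, and $\what U(\cdot,\nu,\cdot)$ is identical once $\nu\in\wtilde\Pt$ guarantees finiteness (Proposition \ref{prop:bounds}). Finiteness of the value and of $\phi$ at the minimum forces $E(\ol\mu)<\oo$.

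\textbf{Step 1: a single inequality along a near-optimal path.} For small $h>0$, pick $(\al_h,m_h)\in\mcl A(\ol\mu,\ol t)$ that is $h^2$-optimal. The DPP gives
\[
U(\ol\mu,\ol t)+h^2\ge \int_{\ol t}^{\ol t+h}\Big[\tfrac12\int\al_h^2dm_{h,s}+F[m_{h,s}]\Big]ds+U(m_{h,\ol t+h},\ol t+h).
\]
Combining this with the minimality inequality at $(m_{h,\ol t+h},\ol t+h)$ yields
\[
0\ge \int\!\!\Big[\tfrac12\!\int\!\al_h^2+F\Big]ds+\big[\phi\big]_{\ol t}^{\ol t+h}-\delta\big[E\big]_{\ol t}^{\ol t+h}-\big[\mathcal I_2(\xi,\cdot)\big]_{\ol t}^{\ol t+h}-h^2 .
\]
I then bound each increment as follows.
\begin{itemize}
\item For $E$, I use the entropy identity behind \eqref{est:fond} (Proposition \ref{prop:comp}):
\[
E(m_{h,\ol t+h})-E(\ol\mu)=-\int\!\!\int H[m_{h,s}](\al_h+H[m_{h,s}])\,dm_{h,s}\,ds.
\]
\item For $\phi$, I use the $C^{1,1}$ Taylor proposition of Section \ref{sec:pre} with velocity $v_h=\al_h+H[m_h]$. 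This gives the first order term $\int\!\int D_\mu\phi\, v_h\,dm\,ds$ plus an error $\lesssim h\int\|v_h\|^2_{L^2(m_s)}ds$.
\item For $\mathcal I_2$, I lift to $([0,1],{\rm Leb})$ via Lemma \ref{lemma:brezis} and the superposition principle, keeping $Y$ with $\mathcal L(Y)=\xi$ coupled optimally to $X_{\ol t}$. This gives
\[
\mathcal I_2(\xi,m_{\ol t+h})-\mathcal I_2(\xi,\ol\mu)\le \mathbb E\big[Y(X_{\ol t}-X_{\ol t+h})\big]\le\sqrt\eta\int\|v_h\|_{L^2}ds\le \int\!\big(\eta\|\al_h\|^2+\eta\|H\|^2+\tfrac14\big)\cdots ,
\]
after Young's inequality with weights arranged so that exactly $\eta\|\al_h\|^2$ is absorbed. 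This absorption is where the factor $\tfrac12-\eta$ in front of $\al^2$ and the $\eta/2$, $-\eta\int H^2$ corrections come from.
\end{itemize}

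\textbf{Step 2: uniform estimates.} After dividing by $h$, the resulting integrand contains $(\tfrac12-\eta)\al_h^2+\delta H^2+\delta \al_h H$ plus lower-order terms. Under $\eta<\delta(1-\delta)/2$ this quadratic form in $(\al_h,H)$ is coercive. Hence
\[
\frac1h\int_{\ol t}^{\ol t+h}\!\big(\|\al_h\|^2_{L^2(m_s)}+\|H[m_{h,s}]\|^2_{L^2(m_s)}\big)ds\le C(\|\phi\|_{C^{1,1}},\delta).
\]
This makes the Taylor error $o(1)$ after division by $h$. By Cotlar \eqref{cotlar1}, it also gives $\frac1h\int\|m_{h,s}\|_3^3ds\le C$. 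Proposition \ref{takis1250} gives $m_{h,s}\to\ol\mu$ in $W_2$ uniformly on $[\ol t,\ol t+h]$. Lower semicontinuity (choosing good times $s_h$) then yields $\ol\mu\in L^3$ with the bound \eqref{takis124.0}.

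\textbf{Step 3: passage to the limit (main obstacle).} Rather than pass to the limit in $\al_h$, I minimise the integrand pointwise in $s$ over $\al$, i.e. $\inf_\al\{(\tfrac12-\eta)\al^2-\al(D_\mu\phi-\delta H)\}$. This leaves the integrand
\[
-\frac{(D_\mu\phi-\delta H)^2}{2(1-2\eta)}+\big(\delta-\eta\big)H^2-D_\mu\phi\,H,
\]
evaluated at $m_{h,s}$. It depends only on $m_{h,s}$, and rearranging gives exactly the coefficients of the statement. Its average over $[\ol t,\ol t+h]$ must be compared with its value at $\ol\mu$, plus $-\partial_t\phi$, which comes from the time increment.
\begin{itemize}
\item The $H^2$ term has a positive coefficient on the correct side, so lower semicontinuity suffices.
\item The $\|D_\mu\phi\|^2$ term converges by the $C^{1,1}$ continuity of $D_\mu\phi$ along couplings.
\item The delicate term is the cross term $\int D_\mu\phi(m_{h,s})H[m_{h,s}]dm_{h,s}$, where only a uniform $L^3$ bound is available on average. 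Here I would mimic Lemma \ref{lem333}: replace $D_\mu\phi$ by truncated mollified Lipschitz approximations, use the symmetrised form $\frac12\iint\frac{g(x)-g(y)}{x-y}dm\,dm$ to pass to the limit by weak convergence, and control the approximation error uniformly with the $L^3$ bound.
\end{itemize}
Letting $h\to0$ then produces the claimed inequality with the $-\eta/2$ remainder.
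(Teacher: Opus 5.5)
Your overall route is the paper's: dynamic programming combined with the minimality of $(\ol\mu,\ol t)$, the entropy identity, the $C^{1,1}$ expansion of $\phi$, a Lipschitz bound on the $\mathcal I_2$-increment absorbed into the quadratic terms, pointwise minimisation in $\alpha$, the coercivity bound $\int\int H[m_s]^2\,dm_s\,ds\le C\theta$ giving $\ol\mu\in L^3$ via Cotlar, and division by $\theta$. Your variants are fine. Using $h^2$-optimal pairs together with Proposition~\ref{takis1250}, instead of the exact minimiser $(\alpha^\ast,m^\ast)$, is harmless. Your handling of the cross term $\int D_\mu\phi\,H[m_s]\,dm_s$ also works: you take a fixed $C^1_b$ approximant of $D_\mu\phi(\ol\mu,\ol t)$, use the symmetrised kernel, and control the error by the $C^{1,1}$ coupling estimate times the time-averaged $L^3$ bound. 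This is more careful than the paper, which simply lets $\theta\to0$ in \eqref{eq:417}.

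Two steps, however, do not deliver the stated inequality.

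(1) The remainder. Your bound $\sqrt\eta\,\|v\|\le\eta\|\alpha_h\|^2+\eta\|H\|^2+\frac14$ leaves a remainder $\frac14$ per unit time. You therefore end with $F[\ol\mu]-\frac14$, not $F[\ol\mu]-\frac\eta2$, so your claim that this absorption produces the $\frac\eta2$ correction is false. No reweighting of that Young step helps: $\sqrt{\eta\theta A}\le a\theta+bA$ forces $4ab\ge\eta$, so $a=b=\eta/2$ is impossible. The paper obtains exactly the stated constants by bounding the increment by $\eta\,W_2(m^\ast_{\ol t+\theta},\ol\mu)$ and then using \eqref{eq:esttraj2}. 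This amounts to treating $\mathcal I_2(\xi,\cdot)$ as $\eta$-Lipschitz, which holds when $M_2(\xi)\le\eta^2$. Under $M_2(\xi)\le\eta$, your constant $\sqrt\eta$ is the sharp one: take $\xi=\delta_{\sqrt\eta}$ and translate $\ol\mu$. So either adopt the stronger normalisation of $\xi$, or state what your argument actually yields. With Young weights $\frac{\sqrt\eta}{2}$, you get the inequality with $\eta$ replaced by $\sqrt\eta$ throughout, under the corresponding smallness condition on $\sqrt\eta$.

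(2) A sign slip in Step 3. From your own Step 1 inequality, $+[\phi]$ contributes $+D_\mu\phi\,(\alpha+H)$ and $-\delta[E]$ contributes $+\delta H(\alpha+H)$. The coefficient of $\alpha$ is therefore $D_\mu\phi+\delta H$, and the minimised integrand is $-\frac{(D_\mu\phi+\delta H)^2}{2(1-2\eta)}+D_\mu\phi\,H+(\delta-\eta)H^2$, as in \eqref{eq:417}. Your version, with $D_\mu\phi-\delta H$ and $-D_\mu\phi\,H$, amounts to replacing $D_\mu\phi$ by $-D_\mu\phi$. After expanding, it gives $+\big(1-\frac{\delta}{1-2\eta}\big)\int D_\mu\phi\,H\,d\ol\mu$, the opposite sign to the statement, so it does not yield exactly the statement's coefficients.
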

\begin{proof}
Since the proofs of the two claims are almost identical, here we provide the details for $\what U(\cdot, \nu, \cdot)$. 
%\vs
%Note that the assumption that 
%$\nu\in \wtilde \Pt$ implies that $\what U(\mu,\nu,t)<\oo$ for all $(\mu,t)\in \Pt\times [0,T)$.

\vs
Let $(\ol \mu, \ol t)$ be as in the statement of the proposition and recall that  
$\nu\in \wtilde \Pt$ implies that $\what U(\mu,\nu,t)<\oo$ for all $(\mu,t)\in \Pt\times [0,T)$. Hence, since $\what U$ is bounded near $\ol t$, it follows that $E(\ol \mu) < \infty$.
\vs

Take $0 < \theta^* < T-  \bar t$. Since $\what U$ satisfies the dynamic programming principle, we have
\beq\label{takis550}
\what U(\ol \mu, \nu, \ol t) = \inf_{(\alpha,m) \in \mcl A(\ol \mu,\nu,\ol t)} \left \{\dint_{\ol t}^{\ol t+\theta^\ast}\Big[\dint_\R \dfrac 12\alpha^2_s m_s dx +F[m_s] \Big]ds + \what U(m_{\ol t+\theta^\ast}, \nu, \ol  t + \theta^\ast) \right\}. 
\eeq
It follows from the lower semicontinuity of  $\what U$ in $\mu$  and the convexity in $(\alpha m,m)$ of the cost  that the infimum in \eqref{takis550} is achieved at some $(\alpha^\ast,m^\ast) \in \mcl A(\ol \mu,\nu,\ol t)$. 
\vs
Moreover, again from dynamic programming, we find that, for all $0 < \theta \leq \theta^\ast$, 
\[
\what U(\ol \mu, \nu, \ol t) \geq \dint_{\ol t}^{ \ol t+\theta}\Big[\dint_\R \dfrac 12(\alpha_s^\ast)^2 m_s^\ast dx +F[m_s^\ast] \Big]ds + \what U(m^\ast_{\ol t+\theta}, \nu, \ol t + \theta).
\]
The definition of  $(\ol \mu, \ol t)$ then implies that 
\[\ba
0 \geq  \dint_{\ol t  }^{ \ol t+\theta}&\Big[\dint_\R \dfrac 12(\alpha_s^\ast)^2 m_s^\ast dx +F[m_s^\ast] \Big]ds -\delta\big(E(m^\ast_{\ol t+ \theta}) - E(\ol \mu)\big )\\[1.2mm]
&+ \phi(m^\ast_{\ol t + \theta}, \ol t + \theta) - \phi(\ol \mu, \ol  t) - \mathcal I_2(\xi,m^\ast_{\ol t + \theta}) + \mathcal I_2(\xi,\ol \mu).
\ea
\]

Since $E(\ol \mu) < \infty$,  Proposition~\ref{prop:comp} yields  that, for all $s \geq t, E(m_s^\ast) < \infty$ and 
\[
E(m^\ast_{\ol t + \theta}) - E(\ol \mu) = -\dint_{\ol t}^{ \ol t + \theta}\dint_\R H[m^\ast_s](\alpha^\ast_s + H[m^\ast_s])m^\ast_s dx ds.
\]
Using this relation, the $C^{1,1}-$regularity of $\phi$ and the Lipschitz continuity of $\mathcal I_2$, we deduce that there exists a constant $C_\phi>0$, depending only on $\phi$, such that
\beq\label{alex10}
\bal
0 \geq&  \dint_{\ol t}^{\ol t + \theta }\Big[\dint_\R \left[\dfrac 12(\alpha_s^\ast)^2 + \delta H[m^\ast_s](\alpha^\ast_s + H[m^\ast_s])\right]m_s^\ast dx +F[m_s^\ast] \Big]ds
+ \theta \partial_t \phi( \ol \mu, \ol t)\\[1.2mm]
& +  \dint_{\ol t}^{\ol t+\theta } \dint_\R D_\mu \phi(m^\ast_s,s)\big(\alpha^\ast_s + H[m^\ast_s]\big)m_s^\ast ds -C_\phi\left(\theta^2 +  W^2_2(m_{ \ol t + \theta}^\ast, \ol \mu)\right)
 - \eta W_2(m_{ \ol t + \theta}^\ast, \ol \mu).
\eal
\eeq
Moreover, it follows from 
\[
W_2(m_{\ol t + \theta}^\ast,\ol \mu) \leq \Big(\theta \; \dint_{\ol t }^{\ol t +\theta }\dint_\R\big (\alpha^\ast_s + H[m^\ast_s]\big )^2m^\ast_s dx ds\Big)^{1/2},
\]
that $$W^2_2(m_{ \ol t + \theta}^\ast, \ol \mu)= o(\theta).$$ 

To handle  the term in $\eta$, we need to compensate it with terms in the equation, so we first use the estimate
\beq\label{eq:esttraj2}
\eta W_2(m_{\ol t + \theta}^\ast,\ol \mu) \leq \frac 12 \eta \theta + \eta\frac12\dint_\R\big (\alpha^\ast_s + H[m^\ast_s]\big )^2m^\ast_s dx ds.
\eeq
The first term on the right hand side of \eqref{eq:esttraj2} is treated as an error  in $\eta$, while the second one will be compensated by other terms. 
\vs
Using \eqref{eq:esttraj2} we rewrite \eqref{alex10} as %We now write
\vs
$$
\bal
0 \geq&  \dint_{\ol t}^{\ol t + \theta }\Big[\dint_\R \Big[\left(\dfrac 12-\eta\right)(\alpha_s^\ast)^2 + (\delta H[m^\ast_s] +D_\mu \phi(m^\ast_s,s)) \alpha^\ast_s\\
&\quad \quad\quad+ H[m^\ast_s](D_\mu \phi(m^\ast_s,s) + (\delta - \eta) H[m^\ast_s])\Big]m_s^\ast dx +F[m_s^\ast] \Big]ds
+ \theta \partial_t \phi( \ol \mu, \ol t)
  -C_\phi\, o(\theta) - \frac12 \eta \theta,
\eal
$$
and, thus, 
\beq\label{eq:417}
\bal
0 \geq&  \dint_{\ol t}^{ \ol t + \theta}\Big[\dint_\R \left\{-\dfrac{1}{2(1-2\eta)}(D_\mu \phi(m^\ast_s,s) +\delta H[m^\ast_s])^2 + H[m^\ast_s](D_\mu \phi(m^\ast_s,s) +(\delta-\eta) H[m^\ast_s] )\right\}m_s^\ast dx \\[1.2mm]
&+ F[m_s^\ast] \Big]ds+ \theta \partial_t \phi(\ol \mu ,\ol t) -C_\phi\,o(\theta) - \frac 12 \eta \theta.
\eal
\eeq
It follows from \eqref{eq:417}  that there exists   constant $C>0$ depending on $\delta,\eta$ and $\phi$ such that
$$
\int_{\ol t}^{\ol t + \theta} H[m^\ast_s]^2 m_s^\ast dxds \leq C\theta.
$$
Thus, recalling Cotlar's identity, the lower semicontinuity of the $L^3$ norm for the weak topology and the continuity of $s \mapsto m^\ast_s$ in $\mcl P_2$, we conclude  that $\ol \mu \in L^3$. 
\vs
Moreover, dividing by $\theta$ and letting $\theta \to 0$  in \eqref{eq:417} we obtain the viscosity solution relation.

\end{proof}

\subsection{The viscosity sub-solution property}

We now establish that $\hat U$ and $U$ are viscosity sub-solutions to \eqref{takis100}. As mentioned in Section \ref{sec:visc}, a truncation of unbounded sub-solutions is typically needed in order to obtain a comparison principle. To highlight the techniques we use, we start with proving a viscosity relation in the non-truncated case, and then show how it implies the required estimate in the truncated case. 

\begin{proposition}
For any $\frac12>\delta >\eta > 0$, any $C^{1,1}-$function $\phi : \Pt \times (0,T)\to \R$, any $\xi \in \mcl P_2$ such that $M_2(\xi) \leq \eta$ and any  strongly exposed maximum point of $(\ol \mu, \ol t ) \in (0,T)\times \Pt$ of $(\mu, t) \to U(\mu, t) -\phi(\mu,t) - \delta E(\mu) - \mathcal I_2(\xi,\mu)$ on $\Pt \times (0,T)$, we have that $\ol \mu \in L^3$ and 
\[
-\pt_t \phi(\ol \mu, \ol t ) + (\dfrac 12 -\eta)\dint_\R(D_\mu \phi(\ol \mu, \ol t ))^2d\ol \mu  -(1+ \delta(1 -2\eta))\int_\R D_\mu \phi H(\ol \mu) d\ol \mu + (\delta -\eta)\int_\R H(\ol \mu)^2d\ol \mu \leq F[\ol \mu].
\]
The same claim holds for $\widehat U(\cdot, \nu, \cdot)$ provided we assume, in addition,  that  $\nu\in \wtilde P$.
%(ii)~Fix  $\nu\in \wtilde P$. For any $\frac 12 >\delta> \eta > 0$, any $C^{1,1}$ function $\phi : \Pt \times (0,T)\to \R$, $\xi \in \mcl P_2$ such that $M_2(\xi)\leq \eta$ and strongly exposed maximum point  of $(\ol \mu, \ol t ) \in (0,T)\times \Pt$ of $(\mu, t) \to \widehat U(\mu, \nu, t) -\phi(\mu,t) - \delta E(\mu) - \mcl I_2(\xi,\mu)$ on $\Pt \times (0,T)$, we have that $\ol \mu \in L^3$ and 
%\[
%-\pt_t \phi(\ol \mu, \ol t ) + (\dfrac 12 -\eta)\dint_\R(D_\mu \phi(\ol \mu, \ol t ))^2d\ol \mu  -(1+ \delta(1 -2\eta))\int_\R D_\mu \phi H(\ol \mu) d\ol \mu + (\delta -\eta)\int_\R H(\ol \mu)^2d\ol \mu \leq F[\ol \mu].
%\]
\end{proposition}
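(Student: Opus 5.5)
The plan is to test the maximality of $(\ol \mu,\ol t)$ against a well-chosen admissible trajectory issued from $\ol\mu$ at time $\ol t$, and to combine this with the dynamic programming principle. Since $U$ is finite near $\ol t$ and $(\ol\mu,\ol t)$ is a maximum of $U-\phi-\delta E-\mathcal I_2(\xi,\cdot)$, we first get $E(\ol \mu)<\infty$. For $\theta>0$ small and any $(\alpha,m)\in\mcl A(\ol\mu,\ol t)$, the DPP gives
\[
U(\ol\mu,\ol t)\le \int_{\ol t}^{\ol t+\theta}\Big[\int_\R\tfrac12\alpha_s^2\,dm_s+F[m_s]\Big]ds+U(m_{\ol t+\theta},\ol t+\theta).
\]
Maximality then bounds $U(m_{\ol t+\theta},\ol t+\theta)-U(\ol\mu,\ol t)$ by the increments of $\phi$, of $\delta E$ and of $\mathcal I_2(\xi,\cdot)$. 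Several earlier results control these increments:
\begin{itemize}
\item the Taylor expansion for $C^{1,1}$ functions controls the $\phi$-increment;
\item the entropy identity $E(m_{\ol t+\theta})-E(\ol\mu)=-\int\!\!\int H[m_s](\alpha_s+H[m_s])\,dm_s\,ds$ from the proof of Proposition~\ref{prop:comp} handles the $\delta E$-increment;
\item the bound $W_2(m_{\ol t+\theta},\ol\mu)\le(\theta\int\!\!\int(\alpha+H[m])^2dm)^{1/2}$ from Proposition~\ref{takis505} handles the $\mathcal I_2$-increment, since $\mathcal I_2(\xi,\cdot)$ is $\sqrt\eta$-Lipschitz.
\end{itemize}
Splitting $\sqrt\eta\, W_2\le \frac12\eta\theta+\frac{\eta}{2}\int\!\!\int(\alpha+H)^2dm$, exactly as in \eqref{eq:esttraj2}, we arrive at
\[
0\le \int_{\ol t}^{\ol t+\theta}\!\!\int_\R\Big[\tfrac12\alpha^2+D_\mu\phi(\alpha+H)-\delta H(\alpha+H)+\eta(\alpha+H)^2\Big]dm_s\,ds+\theta\,\pt_t\phi+\int F+o(\theta)+\tfrac12\eta\theta .
\]
This is the sub-solution analogue of \eqref{alex10}, with $\theta$ absorbing the $C_\phi$ error terms.

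The choice of control is the key concrete step. We take the feedback that minimizes the integrand pointwise,
\[
\alpha=-\frac{D_\mu\phi(m,s)-\delta H[m]+2\eta H[m]}{1+2\eta}.
\]
In practice we use $\alpha=-(1-2\eta)\big(D_\mu\phi(m,s)-\delta H[m]\big)$ up to $O(\eta)$ adjustments matching the stated coefficients $(\frac12-\eta)$, $(1+\delta(1-2\eta))$ and $(\delta-\eta)$. The resulting dynamics is
\[
\pt_s m+\pt_x\big(m(c\,H[m]+\tilde c\, D_\mu\phi(m,s))\big)=0,\qquad c=1+\delta(1-2\eta)>0 .
\]
After the time change $s\mapsto cs$ this is exactly the equation of Proposition~\ref{T2}. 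Since $\ol\mu$ may fail to lie in $L^3$, we do not invoke Proposition~\ref{T2} directly. Instead we obtain the trajectory from the maximal-monotone construction of Lemma~\ref{lemma:brezis} (second proof), which only needs $\ol\mu$ in the domain of the entropy. The $D_\mu\phi$ part is a Lipschitz perturbation in $\mathcal H$, so it is handled as there, for instance by a fixed point on the source term $f$. The resulting pair is admissible with finite energy on $[\ol t,\ol t+\theta]$.

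Plugging this control into the inequality and using $\delta>\eta$, the $H^2$ terms come with the net coefficient $-(\delta-\eta)+O(\delta^2)<0$ on the favourable side. Absorbing the cross terms by Young's inequality as in Lemma~\ref{lemma:estL3} yields
\[
\int_{\ol t}^{\ol t+\theta}\!\!\int_\R H[m_s]^2\,dm_s\,ds\le C\theta .
\]
Through Cotlar's identity this is an averaged $L^3$ bound. Using the continuity of $s\mapsto m_s$ in $\Pt$ and the weak lower semicontinuity of the $L^3$ norm, we conclude $\ol\mu\in L^3$ with $\|\ol\mu\|_3^3\le C$.

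The main obstacle is the final step: dividing by $\theta$ and letting $\theta\to0$. This requires $\frac1\theta\int_{\ol t}^{\ol t+\theta}\int_\R G(m_s)\,dm_s\,ds\to\int_\R G(\ol\mu)\,d\ol\mu$ for integrands containing $H[m_s]D_\mu\phi$ and $H[m_s]^2$. For the cross terms the plan is to argue as in Lemma~\ref{lem333}: the averaged $L^3$ bound together with the convergence $m_s\to\ol\mu$ gives convergence of $\int D_\mu\phi\,H[m]\,dm$ via the symmetrized kernel representation. The term $(\delta-\eta)\int H^2 dm$ sits on the side where lower semicontinuity is enough. We still need its sign to be the right one after optimization, and we check that the chosen feedback leaves $-(\delta-\eta)\int H^2$ on the side that only requires $\liminf\ge$. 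If that bookkeeping fails, we fall back on an exact entropy identity: $\frac1\theta\big(E(m_{\ol t+\theta})-E(\ol\mu)\big)$ converges to $-\int H(\alpha+H)\,d\ol\mu$ along a subsequence, using the continuity of $E$ along the trajectory. The statement for $\widehat U(\cdot,\nu,\cdot)$ follows identically, because for $\nu\in\wtilde\Pt$ the value $\widehat U(m_{\ol t+\theta},\nu,\ol t+\theta)$ is finite by Proposition~\ref{prop:bounds}.
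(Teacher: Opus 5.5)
Your proof is correct and follows the same skeleton as the paper's:
\begin{itemize}
\item DPP plus maximality of $(\ol\mu,\ol t)$;
\item the $C^{1,1}$ Taylor expansion of $\phi$ and the entropy identity along the trajectory;
\item the Lipschitz bound on $\mathcal I_2(\xi,\cdot)$, split as in \eqref{eq:esttraj2};
\item a feedback control that closes the inequality;
\item Young's inequality and Cotlar's identity, giving $\int_{\ol t}^{\ol t+\theta}\int H[m_s]^2dm_s\,ds\le C\theta$ and hence $\ol\mu\in L^3$;
\item finally $\theta\to0$.
\end{itemize}

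The genuine difference is how the test trajectory issued from $\ol\mu$ is built. The paper takes $\alpha_s=-(D_\mu\phi(m_s,s)-\delta H[m_s])\mathbb{1}_{\{s\ge\ol t+\eps\}}$. The free Dyson flow first brings $\ol\mu$ to $S_\eps\ol\mu\in L^\infty\cap\Pt$. Proposition~\ref{T2} is then applied from $\ol t+\eps$, and $\eps\to0$ is handled by the stability estimate from the uniqueness part of that proposition, a step the paper only sketches.

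You instead solve the feedback equation directly from $\ol\mu$, using only $E(\ol\mu)<\infty$. You use the maximal monotone formulation of Lemma~\ref{lemma:brezis}, with $c$ times the entropy functional on $\mcl K$ and $X\mapsto D_\mu\phi(\mcl L(X),X)$ as a Lipschitz perturbation on $\mcl H$. Since $X_{\ol t}$ lies in the domain of the convex functional, the strong solution has $\frac{dX}{ds}\in L^2$. So $\iint H[m_s]^2dm_s\,ds<\infty$ near $\ol t$ from the outset, and no $\eps$-limit is needed. The price is extending Lemma~\ref{lemma:brezis} to Lipschitz feedback drifts, which is standard. You also make the $\theta\to0$ passage explicit: lower semicontinuity for the $H^2$ term, and a Lemma~\ref{lem333}-type weak-limit argument for $\int D_\mu\phi\,H[m]\,dm$. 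The paper leaves both implicit.

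Three bookkeeping remarks.

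First, the coefficients. The $H^2$ term always lands on the side where lower semicontinuity suffices, so your entropy-identity fallback is unnecessary. It would also need continuity of $s\mapsto\int H[m_s](\alpha_s+H[m_s])dm_s$ at $\ol t$, which you do not have. To get exactly $\frac12-\eta$, $1+\delta(1-2\eta)$ and $\delta-\eta$, bound $\frac\eta2(\alpha+H)^2\le\eta(\alpha^2+H^2)$ and use the paper's control $\alpha=-(D_\mu\phi-\delta H[m])$. This leaves an extra $(\frac12-\eta)\delta^2\int H[\ol\mu]^2d\ol\mu\ge0$ on the left, which you discard.

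Second, your splitting $\sqrt\eta\,W_2\le\frac12\eta\theta+\frac\eta2\iint(\alpha+H)^2dm$ is false. With $A=\int_{\ol t}^{\ol t+\theta}\int(\alpha+H)^2dm\,ds$ one only has $\sqrt\eta\,W_2\le\sqrt{\eta\theta A}\le\frac{\sqrt\eta}{2}(\theta+A)$. The paper's proof has the same defect in another form: it calls $\mathcal I_2(\xi,\cdot)$ $\eta$-Lipschitz, whereas the constant is $\sqrt{M_2(\xi)}$. So the stated constants really require $M_2(\xi)\le\eta^2$. Otherwise $\eta$ must be replaced by $\sqrt\eta$ in the coefficients and remainder, and one needs $\delta>\sqrt\eta$.

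Third, both your argument and the paper's leave a remainder of order $\eta$ on the right-hand side. The printed statement omits it.
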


\begin{proof}

Since the arguments are identical, we present the details for $ U$. 
\vs
It follows from the definition of  $U$ that, for any $\theta \in (0,T-\ol t)$ and any admissible pair $(\alpha,m)$ for $ U(\ol \mu, \ol t)$,
\beq\label{eq:529}
 U(\ol \mu, \ol t) \leq \dint_{\ol t}^{\ol t + \theta}\Big[\dint_\R \dfrac 12 \alpha^2_s m_s dx  +F[m_s] \Big] ds +  U(m_{\ol t + \theta},  \ol t + \theta)).
\eeq

Since $ U$ is locally bounded, we  have that $E(\ol \mu ) < \infty$. Then it follows from Proposition~\ref{prop:comp}  that, for any admissible pair $(\alpha,m)$, we have  $E(m_{\ol t + \theta}) < \infty$. %, see Proposition \ref{prop:comp}. 
\vs

Moreover, the fact that $(\ol \mu, \ol t)$ is a maximum point  gives 
\[
\delta(E(\ol \mu) - E(m_{\ol  t + \theta}) )\leq \dint_{\ol t}^{\ol t + \theta} \Big[\dint_\R \dfrac 12 \alpha^2_s m_s dx + F[m_s] \Big]ds + \phi( m_{\ol t + \theta}, \ol t + \theta) -\phi(\ol \mu, \ol t) + \mathcal I_2(\xi,m_{\ol t + \theta}) - \mathcal I_2(\xi,\ol \mu).
\]
We now proceed  as in the super-solution case. We use the $C^{1,1}-$regularity of $\phi$, the $\eta$-Lipschitz estimate on $\mathcal I_2(\xi,\cdot)$ as well as the expression of the entropy to deduce that, for some $C>0$ that depends only on $\phi$, 

$$
\bal
\delta \dint_{\ol t }^{\ol t + \theta}\dint_\R & H[m_s](\alpha_s + H[m_s])m_s dx ds \leq \dint_{\ol t}^{\ol t + \theta} \dint_\R \dfrac 12 \alpha^2_s m_s dx ds +  \theta \partial_t \phi(\ol \mu, \ol t) + \\
&+\dint_{\ol t }^{\ol t + \theta}\dint_\R D_\mu \phi(m_s,s)(\alpha_s + H[m_s])m_s dx ds
+ C(\theta^2 + W_2^2(m_{\bar t + \theta},\ol \mu)) + \eta W_2(m_{\ol t + \theta},\ol \mu).
\eal
$$
\vs
Using above that  $\eta W_2(m_{\ol t + \theta},\ol \mu)$ is bounded as in \eqref{eq:esttraj2}, we obtain
$$
\bal
(\delta- \eta) \dint_{\ol t }^{\ol t + \theta}\dint_\R & (H[m_s])^2m_s dx ds \leq\left(\dfrac 12 + \eta\right) \dint_{\ol t}^{\ol t + \theta} \dint_\R  \alpha^2_s m_s dx ds +  \theta \partial_t \phi(\ol \mu, \ol t) \\
&+ \dint_{\ol t }^{\ol t + \theta}\dint_\R D_\mu \phi(m_s,s) H[m_s]m_s dx ds
+\dint_{\ol t }^{\ol t + \theta}\dint_\R(D_\mu \phi(m_s,s)- \delta H[m_s])\alpha_s m_s dx ds\\
&\qquad + C(\theta^2 + W_2^2(m_{\bar t + \theta},\ol \mu)) + \frac12\eta\theta .
\eal
$$

We choose the control 
\[\alpha_s(x) = -(D_\mu \phi(m_s, s)(x) -\delta H[m_s](x))\mathbb{1}_{\{s \geq \ol t + \eps\}},\]
which is admissible because $D_\mu \phi$ is smooth, and observe that with this choice the  state equation reduces to a standard Dyson equation with a  drift, which can be handled using Proposition~\ref{T2}. This leads to the estimate
\beq\label{eq:530}
\bal
(\delta- \eta) \dint_{\ol t }^{\ol t + \theta}\dint_\R & (H[m_s])^2m_s dx ds \leq  \theta \partial_t \phi(\ol \mu, \ol t) + \dint_{\ol t }^{\ol t + \theta}\dint_\R D_\mu \phi(m_s,s) H[m_s]m_s dx ds+\\
&+\left(\eta-\frac12\right)\dint_{\ol t + \eps}^{\ol t + \theta}\dint_\R(D_\mu \phi(m_s,s)- \delta H[m_s])^2 m_s dx ds
+ C(\theta^2 + W_2^2(m_{\bar t + \theta},\ol \mu)) + \frac12\eta\theta .
\eal
\eeq

\vs

Now that we can let $\eps \to 0$ in \eqref{eq:530}, using the same stability argument which was employed  in the  proof
of the uniqueness of the controlled Dyson equation established in Proposition \ref{T2}. 
\vs
%In fact it is not this uniqueness which is used but rather the same stability argument which was used in its proof). 
Once this limit in $\eps$ is taken, we can divide by $\theta$ and let $\theta \to 0$ to obtain the viscosity relation. 
\vs

The fact that $\bar \mu \in L^3$ follows from the same argument as in the super-solution case.

\end{proof}

We now turn to the case in which the sub-solution is truncated, which we recall is a fundamental step in order to establish the comparison principle. The following proof is quite technical, although all  the arguments  used are standard in the viscosity solution theory. The main issue is that those techniques do not combine  particularly well with each other. Thus, we shall establish the next result via a stability argument, which is the main reason why the error terms in $\eta$ appear differently here.

\begin{proposition}
For any $\frac12>\delta >\eta > 0$, any $C^{1,1}$ function $\phi : \Pt \times (0,T)\to \R$, any $\xi \in \mcl P_2$ such that  $M_2(\xi) \leq \eta$, any $A \in \mathbb R\cup\{+\infty\}$ and any exposed maximum point  $(\ol \mu, \ol t ) \in (0,T)\times \Pt$ of $(\mu, t) \to (U\wedge A)(\mu, t) -\phi(\mu,t) - \delta E(\mu) - \mathcal I_2(\xi,\mu)$ on $\Pt \times (0,T)$, we have that $\ol \mu \in L^3$ and 
$$
\bal
\left(\delta+ \frac12\delta^2\right)\dint_\R & (H[\ol \mu])^2\ol \mu dx -(1+\delta)\sqrt\eta\|H[\ol \mu]\|_{L^2(\ol \mu)}- \partial_t \phi(\ol \mu, \ol t) - ( 1+\delta)\dint_\R D_\mu \phi(\ol \mu,\ol t) H[\ol \mu]\ol \mu dx+\\
&+\frac12\dint_\R(D_\mu \phi(\ol \mu,\ol t))^2 \ol \mu dx \leq F[\ol \mu]+ \frac12\eta + \sqrt{\eta}\|D_\mu\phi(\ol \mu,\ol t)\|_{L^2(\ol \mu)}.
\eal
$$
The same claim holds true for $\what U(\cdot,\nu,\cdot)$ provided we assume, in addition, that $\nu\in \wtilde P$.
%(ii)~Fix  $\nu\in \wtilde P$. For any $\frac 12 >\delta> \eta > 0$, any $C^{1,1}$ function $\phi : \Pt \times (0,T)\to \R$, $\xi \in \mcl P_2$ such that $M_2(\xi)\leq \eta$, any $A \in \mathbb R \cup \{+\infty\}$ and any exposed maximum point $(\ol \mu, \ol t ) \in (0,T)\times \Pt$ of $(\mu, t) \to (\wtilde U\wedge A)(\mu, \nu, t) -\phi(\mu,t) - \delta E(\mu) - \mcl I_2(\xi,\mu)$ on $\Pt \times (0,T)$, we have that $\ol \mu \in L^3$ and 
%$$
%\bal
%(\delta+ \frac12\delta^2)\dint_\R & (H[\ol \mu])^2\ol \mu dx -(1+\delta)\sqrt\eta\|H[\ol \mu]\|_{L^2(\ol \mu)}- \partial_t \phi(\ol \mu, \ol t) - ( 1+\delta)\dint_\R D_\mu \phi(\ol \mu,\ol t) H[\ol \mu]\ol \mu dx+\\
%&+\frac12\dint_\R(D_\mu \phi(\ol \mu,\ol t))^2 \ol \mu dx \leq F[\ol \mu]+ \frac12\eta + \sqrt{\eta}\|D_\mu\phi(\ol \mu,\ol t)\|_{L^2(\ol \mu)},
%\eal
%$$
\end{proposition}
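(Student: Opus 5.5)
The plan is to split according to whether the truncation is active at the maximum point, and in each case reduce to a relation already available for a \emph{smooth} object. The specific $\eta$-error terms in the statement will come from treating $\mathcal I_2(\xi,\cdot)$ as part of the test function, and not from the trajectory estimate \eqref{eq:esttraj2}. Throughout I take $F\equiv0$, or more generally that constants are sub-solutions, as in the proof of Theorem \ref{thm:comp}. This is a genuine extra hypothesis: for a general $F$ the argument of Case 1 needs it, so I am assuming it rather than deriving it.

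\textbf{Case 1: $U(\ol\mu,\ol t)\ge A$.} Then $(\ol\mu,\ol t)$ is a maximum point of $(\mu,t)\mapsto A-\phi-\delta E-\mathcal I_2(\xi,\cdot)$. I would run the sub-solution argument of the previous proposition with $U$ replaced by the constant $A$. This means using the trajectory given by Proposition~\ref{T2} with drift $-(D_\mu\phi-\delta H)$ switched on after time $\ol t+\eps$, and the entropy identity along it. Since the constant has zero cost, this yields an $L^3$ bound on $\ol\mu$ (Lemma~\ref{lemma:estL3}) together with the relation for a smooth solution. Expanding the $\mathcal I_2$ contribution exactly as in the formal derivation preceding Definition~\ref{takis122} then produces the stated inequality.

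\textbf{Case 2: $U(\ol\mu,\ol t)<A$.} By the local uniform continuity of $U$ (Theorem~\ref{thm:estlip} and Proposition~\ref{prop:regularitytime}), $U\wedge A=U$ on a $W_2$-neighborhood of $(\ol\mu,\ol t)$. So $(\ol\mu,\ol t)$ is a strict, strongly exposed \emph{local} maximum of $U-\phi-\delta E-\mathcal I_2(\xi,\cdot)$. To reduce to the non-truncated proposition, I would make it global and remove the nonsmooth $\mathcal I_2$ in one step.
\begin{itemize}
\item Replace $\mathcal I_2(\xi,\cdot)$ by its sup-convolution $\Psi_\gamma$ of Theorem~\ref{thm:approxI2}, and add a $C^{1,1}$ localizing penalty built from $\Phi_\gamma$ (a power of the regularized distance to $\ol\mu$) plus $(t-\ol t)^2$.
\item Apply Theorem~\ref{thm:stegall} with a new $\xi_\gamma$ satisfying $M_2(\xi_\gamma)\le\eta_\gamma\to0$. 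This yields strongly exposed global maxima $(\mu_\gamma,t_\gamma)\to(\ol\mu,\ol t)$ of $U-\tilde\phi_\gamma-\delta E-\mathcal I_2(\xi_\gamma,\cdot)$, where $\tilde\phi_\gamma=\phi+\Psi_\gamma+\text{penalty}$ belongs to $C^{1,1}$.
\item The previous proposition applies at $(\mu_\gamma,t_\gamma)$ with test function $\tilde\phi_\gamma$ and error parameter $\eta_\gamma$.
\end{itemize}

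\textbf{Passage to the limit and recovery of the $\eta$-terms.} Lemma~\ref{lemma:estL3} gives $\|\mu_\gamma\|_3$ bounded uniformly in $\gamma$, since $D_\mu\Psi_\gamma$ is uniformly $L^2$-bounded. Letting $\gamma\to0$, I would pass to the limit term by term:
\begin{itemize}
\item the quadratic terms, via Theorem~\ref{thm:approxI2};
\item the cross term $\int D_\mu\tilde\phi_\gamma\,H[\mu_\gamma]\,d\mu_\gamma$, via Lemma~\ref{lem333};
\item the term $\int H^2\,d\mu$, via lower semicontinuity.
\end{itemize}
In the limit, $D_\mu\phi$ is replaced by $D_\mu\phi-S_2$, where $S_2$ is the optimal map from $\ol\mu$ to $\xi$, and all $\eta_\gamma$-errors vanish. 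Expanding $\frac12\int(D_\mu\phi-S_2)^2$ and $\int(D_\mu\phi-S_2)H[\ol\mu]$, and bounding the $S_2$-terms by Cauchy–Schwarz using $\int S_2^2\,d\ol\mu=M_2(\xi)\le\eta$, produces exactly the errors $\frac12\eta+\sqrt\eta\|D_\mu\phi\|_{L^2(\ol\mu)}+(1+\delta)\sqrt\eta\|H[\ol\mu]\|_{L^2(\ol\mu)}$.

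\textbf{Main obstacle.} The delicate step is matching the $H^2$ coefficient: the previous proposition gives $(\delta-\eta)$ and a modified coefficient $1+\delta(1-2\eta)$ on the cross term, whereas the statement requires $\delta+\frac12\delta^2$ and $1+\delta$. I would first try to recover the target coefficients from the limit $\eta_\gamma\to0$, using $\frac12(D_\mu\phi-\delta H)^2$, and then complete the square. If this does not close, the fallback is to rederive the relation directly at $(\mu_\gamma,t_\gamma)$ with the explicit Proposition~\ref{T2} control, taking $\eta=0$ in the trajectory estimate since $\Psi_\gamma$ is now smooth.
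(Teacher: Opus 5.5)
\textbf{The gap is the kink case $U(\ol\mu,\ol t)=A$.} Your Case 1 includes this equality, and there the claim that $(\ol\mu,\ol t)$ maximizes $A-\phi-\delta E-\mathcal I_2(\xi,\cdot)$ is false. Write $\Phi=\phi+\delta E+\mathcal I_2(\xi,\cdot)$. Maximality of $U\wedge A-\Phi$ only gives
\[
\Phi(\mu,t)\ge\Phi(\ol\mu,\ol t)-(A-U(\mu,t))^+ .
\]
So at points arbitrarily close to $(\ol\mu,\ol t)$ where $U<A$, nothing prevents $A-\Phi$ from exceeding its value at $(\ol\mu,\ol t)$. Symmetrically, $U-\Phi$ need not have a local maximum there either, because it can be larger at nearby points where $U>A$. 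A one-dimensional picture: take $u(x)=x$, $A=0$, $\phi(x)=x/2$. Then $\min(u,A)-\phi$ peaks at $0$, but neither $A-\phi$ nor $u-\phi$ does.

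At such a point you can test neither the constant nor the non-truncated proposition. This is exactly the situation where the convexity of the Hamiltonian has to enter (Alvarez's observation that a minimum of sub-solutions is a sub-solution), and your proposal never uses it.

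The rest of your plan is sound:
\begin{itemize}
\item When $U(\ol\mu,\ol t)>A$, continuity makes $(\ol\mu,\ol t)$ a local maximum of $A-\Phi$, so your Case 1 argument is fine there.
\item Case 2, together with your limit $\gamma\to0$ and the Cauchy--Schwarz step using $\int S_2^2\,d\ol\mu\le\eta$, is essentially the final step of the paper's proof.
\item Your coefficient worry disappears if you keep the unexpanded term $\frac12\int(D_\mu\tilde\phi_\gamma-\delta H)^2$ from \eqref{eq:530}. Letting $\eta_\gamma\to0$ then gives exactly $\delta+\frac12\delta^2$ and $1+\delta$.
\end{itemize}

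\textbf{How the paper handles it.} The paper never splits into cases. It works with $G_\kappa(U,A)$, a smooth approximation of the minimum with $\partial_xG_\kappa\in(\kappa,1-\kappa)$, and applies Stegall to $G_\kappa(U,A)-\phi-\psi_\gamma-\delta E$. It then inverts $G_\kappa(\cdot,A)$ inside the dynamic programming inequality \eqref{eq:529} for $U$ and linearizes with slope $\lambda_\kappa^{-1}$, where $\lambda_\kappa=\partial_xG_\kappa(U(\ol\mu_\kappa,\ol t_\kappa),A)$. It shows the quadratic remainder is $o(\theta)$. Convexity is used when it multiplies back by $\lambda_\kappa$: the factor $\lambda_\kappa^{-1}\ge1$ in front of the negative quadratic term can be dropped. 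Finally it lets $\kappa\to0$ using strong exposedness and Lemma~\ref{lemma:Uwas}.

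\textbf{A shorter repair under your assumption $F\equiv0$.} The running cost $c=\int_{\ol t}^{\ol t+\theta}\int\frac12\alpha^2\,dm_s\,ds$ is then nonnegative, and $\min(c+u,A)\le c+\min(u,A)$ for $c\ge0$. Hence $U\wedge A$ itself satisfies the sub-optimality inequality \eqref{eq:529}. Moreover, $U\wedge A$ is locally bounded, so $E(\ol\mu)<\infty$. The non-truncated argument, combined with your Case 2 machinery (the smoothing $\Psi_\gamma$, Stegall, Lemma~\ref{lem333}), therefore applies directly to $U\wedge A$. This needs no case distinction, and strong exposedness makes the localizing penalty unnecessary.
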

\begin{proof}
Once again, since the two proofs are identical here  we show the details for $U$. % on the first point.
\vs 
%Let $A \in \mathbb R$, $\frac12>\delta >\eta > 0$, $\phi$ be a $C^{1,1}$ function, $\xi \in \mcl P_2$ such that $M_2(\xi) \leq \eta$, such that $(\ol \mu, \ol t ) \in \Pt\times (0,T)$ is a point of exposed maximum of $(\mu, t) \to ( U\wedge A)(\mu, t) -\phi(\mu,t) - \delta E(\mu) - \mathcal I_2(\xi,\mu)$ on $\Pt \times (0,T)$. 
%\vs
We  introduce  the approximation $(\psi_\gamma)_{\gamma > 0}$ of $\mathcal I_2(\xi,\cdot)$ given by Theorem~ \ref{thm:approxI2}, and, following  an approach similar to the one of \citep{BL2},  we use a smooth regularization $G_\kappa(x,y)$ of $x\wedge y$ such that $\partial_x G_\kappa,\partial_y G_\kappa$ are valued in $(\kappa,1-\kappa)$ for $\kappa \in (0,\frac12)$.
\vs
%We want to consider a point of maximum of $(\mu, t) \to G_\kappa(\wtilde U(\mu, \nu, t),A) -\phi(\mu,t) - \delta E(\mu) - \psi_\gamma(\mu)$. 

Let $\phi_\gamma = \phi + \psi_\gamma$ ,which is in $C^{1,1}$, and note that,  up to a slight change of the test function $\phi_\gamma$, we can restrict our attention to a bounded set. Then, thanks to Theorem \ref{thm:stegall}, for any $\omega_\kappa$, there exists $\xi_\kappa \in \mcl P_2$ with $M_2(\xi_\kappa) \leq \omega_\kappa$ such that $(\mu, t) \to G_\kappa(\wtilde U(\mu, \nu, t),A) -\phi_\gamma(\mu,t) - \delta E(\mu) - \mathcal I_2(\xi_\kappa,\mu)$ has strongly exposed maximum point at some $(\ol \mu_\kappa,\ol t_\kappa)$. 

\vs

Using \eqref{eq:529} and the fact that $G_\kappa(\cdot,A)$ is invertible, we deduce
$$
\bal
0\leq \dint_{\ol t_\kappa}^{\ol t_\kappa + \theta} \Big[\dint_\R \dfrac 12 \alpha^2_s m_s dx + F[m_s] \Big]ds + &G_\kappa(\cdot,A)^{-1}\Big( \phi_\gamma( m_{\ol t_\kappa + \theta}, \ol t_\kappa + \theta) + \mathcal I_2(\xi_\kappa,m_{\ol t_\kappa + \theta})  +\delta E(m_{\ol  t_\kappa + \theta} )\Big)\\
& - G_\kappa(\cdot,A)^{-1}\Big( \phi_\gamma(\ol \mu_\kappa, \ol t)+ \mathcal I_2(\xi_\kappa,\ol \mu_\kappa) + \delta E(\ol \mu_\kappa) \Big).
\eal
$$
In view of the smoothness of $G_\kappa$, there exists $C_\kappa > 0$ such that
$$
\bal
&0\leq \dint_{\ol t_\kappa}^{\ol t_\kappa + \theta} \Big[\dint_\R \dfrac 12 \alpha^2_s m_s dx + F[m_s] \Big]ds\\[1.2mm]
& + C_\kappa\Big( \phi_\gamma( m_{\ol t_\kappa + \theta}, \ol t_\kappa + \theta) -\phi(\ol \mu_\kappa, \ol t_\kappa) + \mathcal I_2(\xi_\kappa,m_{\ol t_\kappa + \theta}) - \mathcal I_2(\xi_\kappa,\ol \mu_\kappa) -\delta(E(\ol \mu_\kappa) - E(m_{\ol  t_\kappa + \theta}) )\Big)^2 +\\[1.2mm]
& \frac{1}{\partial_xG_\kappa(\wtilde U(\ol \mu_\kappa,\ol t_\kappa),A)}\Big( \phi_\gamma( m_{\ol t_\kappa + \theta}, \ol t_\kappa + \theta) -\phi_\gamma(\ol \mu_\kappa, \ol t_\kappa) + \mathcal I_2(\xi_\kappa,m_{\ol t_\kappa + \theta}) - \mathcal I_2(\xi_\kappa,\ol \mu_\kappa) -\delta(E(\ol \mu_\kappa) - E(m_{\ol  t_\kappa + \theta}) )\Big).
\eal
$$
\vs
We now argue by analogy with the case $A = +\infty$. Using similarly defined  controls $\alpha$ and thanks to Proposition \ref{takis1250}, we know that the associated path $(m_s)_{s \geq \ol t_\kappa}$ can be taken to be  uniformly continuous. 
\vs
Then, in view of  the regularity of $\phi_\gamma$ and $\mathcal I_2(\xi_\kappa,\cdot)$, we have 
$$(\phi_\gamma( m_{\ol t_\kappa + \theta}, \ol t_\kappa + \theta) -\phi_\gamma(\ol \mu_\kappa, \ol t_\kappa))^2 = o(\theta) \ \text{ and} \ (\mathcal I_2(\xi_\kappa,m_{\ol t_\kappa + \theta}) - \mathcal I_2(\xi_\kappa,\ol \mu_\kappa))^2 = o(\theta).$$ 
\vs
Let $\lambda_\kappa= \partial_xG_\kappa(\wtilde U(\ol \mu_\kappa,\ol t_\kappa),A) \in (\kappa,1-\kappa)$. It follows that 

$$
\bal
0\leq &\dint_{\ol t_\kappa}^{\ol t_\kappa + \theta} \Big[\dint_\R \dfrac 12 \alpha^2_s m_s dx + F[m_s] \Big]ds +C_\kappa\delta^2(E(\ol \mu_\kappa) - E(m_{\ol  t_\kappa + \theta}) )^2\\[1.2mm]
&+ \lambda_\kappa^{-1}\Big( \phi_\gamma( m_{\ol t_\kappa + \theta}, \ol t_\kappa + \theta) -\phi_\gamma(\ol \mu_\kappa, \ol t_\kappa) + \mathcal I_2(\xi_\kappa,m_{\ol t_\kappa + \theta}) - \mathcal I_2(\xi_\kappa,\ol \mu_\kappa) -\delta(E(\ol \mu_\kappa) - E(m_{\ol  t_\kappa + \theta}) )\Big) + o(\theta).
\eal
$$

%Ignoring for now the term $C_\kappa\delta^2(E(\ol \mu_\kappa) - E(m_{\ol  t_\kappa + \theta}) )^2$, 
We argue as in the case $A = + \infty$, using  $\lambda_\kappa^{-1}\delta, \lambda_\kappa^{-1}\phi_\gamma$ and  $\lambda_\kappa^{-1}\eta_\kappa$ in place of  $\delta, \phi$ and $\eta$, and deduce that 
% \leftarrow \lambda_\kappa^{-1}\delta$, $\phi \leftarrow \lambda_\kappa^{-1}\phi_\gamma$ and $\eta \leftarrow \lambda_\kappa^{-1}\eta_\kappa$, 

$$
\bal
&\frac{\delta- \eta_\kappa}{\lambda_\kappa} \dint_{\ol t_\kappa }^{\ol t_\kappa + \theta}\dint_\R  (H[m_s])^2m_s dx ds -C_\kappa\delta^2(E(\ol \mu_\kappa) - E(m_{\ol  t_\kappa + \theta}) )^2\\
&\quad\leq  \theta \lambda_\kappa^{-1}\partial_t \phi_\gamma(\ol \mu_\kappa, \ol t_\kappa) + \lambda_\kappa^{-1}\dint_{\ol t_\kappa }^{\ol t_\kappa + \theta}\dint_\R D_\mu \phi_\gamma(m_s,s) H[m_s]m_s dx ds+\\
&\quad\quad+(\lambda_\kappa^{-1}\eta_\kappa-\frac12)\dint_{\ol t_\kappa}^{\ol t_\kappa + \theta}\dint_\R(\lambda_\kappa^{-1}(D_\mu \phi_\gamma(m_s,s)- \delta H[m_s]))^2 m_s dx ds
+ C(\theta^2 + W_2^2(m_{\bar t_\kappa + \theta},\ol \mu_\kappa)) + \frac{1}{2\lambda_\kappa}\eta_\kappa\theta .
\eal
$$

We show next that $$C_\kappa\delta^2(E(\ol \mu_\kappa) - E(m_{\ol  t_\kappa + \theta}) )^2 = o(\theta).$$
Proceeding as in the viscosity sub-solution case when we established the $L^3-$estimate, 
we begin with the observation   that $L_\theta^2$ is negligible in front of $L_\theta$ because we know that 
$$L_\theta :=  \dint_{\ol t_\kappa }^{\ol t_\kappa + \theta}\dint_\R  (H[m_s])^2m_s dx ds \to 0 \ \ \text{as} \ \ \theta \to 0.$$ 
%we know that $L_\theta^2$ is negligible in front of $L_\theta$.
\vs
Then,  recalling that 
$$
\ba
|E(\ol \mu_\kappa) - E(m_{\ol  t_\kappa + \theta})| &=  \left|\dint_{\ol t_\kappa }^{\ol t_\kappa + \theta}\dint_\R  H[m_s](\alpha_s + H[m_s])m_s dx ds\right|\\
&\leq C\left(L_\theta + \dint_{\ol t_\kappa }^{\ol t_\kappa + \theta}\dint_\R\alpha^2m_sdxds\right)\\
& \leq C \left(L_\theta + \dint_{\ol t_\kappa }^{\ol t_\kappa + \theta}\dint_\R |D_\mu \phi_\gamma(m_s,s)|^2m_sdxds\right)\\
& \leq C(L_\theta + \theta).
\ea
$$
we find that $$C_\kappa\delta^2(E(\ol \mu_\kappa) - E(m_{\ol  t_\kappa + \theta}) )^2 = o(\theta)$$ is also negligible in front of $L_\theta$, and, hence,  an estimate of the form $$L_\theta \leq C \theta$$  still holds in this case. 
\vs
Thus, the $L^3-$estimate holds and  $$C_\kappa\delta^2(E(\ol \mu_\kappa) - E(m_{\ol  t_\kappa + \theta}) )^2 = o(\theta).$$ 
 \vs
Dividing by $\theta$ and taking the limit $\theta \to 0$ yields
$$
\bal
\frac{\delta- \eta_\kappa}{\lambda_\kappa} \dint_\R & (H[\ol \mu_\kappa])^2\ol \mu_\kappa dx \leq  \lambda_\kappa^{-1}\partial_t \phi_\gamma(\ol \mu_\kappa, \ol t_\kappa) + \lambda_\kappa^{-1}\dint_\R D_\mu \phi_\gamma(\ol \mu_\kappa,\ol t_\kappa) H[\ol \mu_\kappa]\ol \mu_\kappa dx+\\
&+(\lambda_\kappa^{-1}\eta_\kappa-\frac12)\dint_\R(\lambda_\kappa^{-1}(D_\mu \phi_\gamma(\ol \mu_\kappa,\ol t_\kappa)- \delta H[\ol \mu_\kappa]))^2 \ol \mu_\kappa dx 
+ \frac{1}{2\lambda_\kappa}\eta_\kappa.
\eal
$$
Multiplying by $\lambda_\kappa^{-1}$ and recalling that $\lambda_\kappa \in (0,1)$, we arrive at
$$
\bal
(\delta- \eta_\kappa)\dint_\R & (H[\ol \mu_\kappa])^2\ol \mu_\kappa dx \leq \partial_t \phi_\gamma(\ol \mu_\kappa, \ol t_\kappa) +\dint_\R D_\mu \phi_\gamma(\ol \mu_\kappa,\ol t_\kappa) H[\ol \mu_\kappa]\ol \mu_\kappa dx+\\
&+(\lambda_\kappa^{-1}\eta_\kappa-\frac12)\dint_\R(D_\mu \phi_\gamma(\ol \mu_\kappa,\ol t_\kappa)- \delta H[\ol \mu_\kappa])^2 \ol \mu_\kappa dx 
+ \frac{1}{2}\eta_\kappa.
\eal
$$
%It now remains to pass to the limit $\kappa \to 0$. 
%First, we could always have chosen $\eta_\kappa$ sufficiently small so that $\frac{\eta_\kappa}{\lambda_\kappa} \to 0$ as $\kappa \to 0$ since $\lambda_\kappa \geq \kappa$. 
We note that,  since $\lambda_\kappa \geq \kappa$, above we can choose $\eta_\kappa$ sufficiently small so that $\frac{\eta_\kappa}{\lambda_\kappa} \to 0$ as $\kappa \to 0$.

\vs

Next, observe that the sequence $(\|\ol \mu_\kappa\|_{L^3})_{\kappa}$ is bounded, and, since $\ol \mu$ is a strongly exposed point of maximum, we know that $W_2(\ol \mu_\kappa ,\ol \mu) \to 0$ as $\kappa \to 0$, because of the local uniform convergence of $G_\kappa(U,A)$ toward $U\wedge A$. 

\vs
To pass to the limit $\kappa\to 0$, it  suffices  to recall that, in view of  Lemma \ref{lemma:Uwas}, we have  
%and, noting $S: \mathbb R \mapsto \mathbb R$ the optimal transport map from $\ol \mu$ to $\xi$, we finally obtain
$$
\bal
\left(\delta+ \frac12\delta^2\right)\dint_\R & (H[\ol \mu])^2\ol \mu dx \leq \partial_t \phi(\ol \mu, \ol t) +( 1+\delta)\dint_\R (D_\mu \phi(\ol \mu,\ol t) + S) H[\ol \mu]\ol \mu dx+\\
&-\frac12\dint_\R(D_\mu \phi(\ol \mu,\ol t) +S)^2 \ol \mu dx, 
\eal
$$
where $S: \mathbb R \mapsto \mathbb R$ the optimal transport map from $\ol \mu$ to $\xi$.
\vs
Then Cauchy-Schwarz inequality and the estimate $\int_\R S^2\bar \mu dx \leq \eta$ yield
$$
\bal
(\delta+ \frac12\delta^2)\dint_\R & (H[\ol \mu])^2\ol \mu dx -(1+\delta)\sqrt\eta\|H[\ol \mu]\|_{L^2(\ol \mu)}\leq \partial_t \phi(\ol \mu, \ol t) +( 1+\delta)\dint_\R D_\mu \phi(\ol \mu,\ol t) H[\ol \mu]\ol \mu dx+\\
&-\frac12\dint_\R(D_\mu \phi(\ol \mu,\ol t))^2 \ol \mu dx + \frac12\eta - \sqrt{\eta}\|D_\mu\phi(\ol \mu,\ol t)\|_{L^2(\ol \mu)},
\eal
$$
which is the claim.

\end{proof}

\subsection{The characterization of the value function as the unique viscosity solution}
Characterizing the value function as the unique viscosity solution to \eqref{takis100} with appropriate boundary condition at time $t = T$ is a trivial consequence of several results above in the the case in which $U_T$ is locally uniformly continuous. Thus we only state the result without proof.
\begin{theorem}\label{thm:charles}
Assume that $F$ and $U_T$ are locally uniformly continuous and that, either $U_T$ is bounded from below by some $\eps W_2^2(\cdot,\nu) - C$ or that $U_T$ is uniformly continuous. Then $U$ is the unique (continuous) viscosity solution to \eqref{takis100} which satisfies $U(\mu,T) = U_T(\mu)$ for all $\mu \in \mcl P_2$.
\end{theorem}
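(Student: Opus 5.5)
The proof consists of three ingredients, all established above: the value function is a continuous viscosity solution, it attains the terminal datum continuously, and the comparison principle of Theorem~\ref{thm:comp} applies to it and to any other solution. The plan is to check each in order.

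\textbf{Existence.} First, $U$ is a viscosity solution of \eqref{takis100}. The super-solution property is Proposition~\ref{super}. The sub-solution property, including the truncated form with $U\wedge A$ and the $\eta,\delta$ error terms required by Definition~\ref{takis122}(i), is the last proposition of the previous subsection. Continuity of $U$ on $\Pt\times[0,T]$ comes from two results. Theorem~\ref{thm:estlip} and Proposition~\ref{prop:regularitytime} give local uniform continuity on $\Pt\times[0,T-\eps]$. The proposition on continuity near the terminal time for locally uniformly continuous $U_T$ extends this up to $t=T$. In that argument the same control is transported from $\mu$ to $\mu'$ via Lemma~\ref{lemma:brezis}, and the Dyson flow contracts $W_2$, so $U(\cdot,T)=U_T$ holds continuously.

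Boundedness from below comes from the lower bound in Proposition~\ref{prop:bounds}. I note that $U_T\ge \eps W_2^2(\cdot,\nu)-C$ implies $U_T\ge -C$, and a uniformly continuous $U_T$ is bounded below on bounded sets. In the latter case the inequality $U\ge -K-(T-t)\|F\|_\infty$ needs $U_T$ bounded below, and I would add this as an implicit standing assumption if necessary.

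\textbf{Uniqueness.} Let $V$ be another continuous viscosity solution with $V(\cdot,T)=U_T$. I apply Theorem~\ref{thm:comp} twice, once to the pair $(U,V)$ and once to $(V,U)$, to get $U\le V$ and $V\le U$.
\begin{itemize}
\item In the quadratic case, the proposition giving $U(\mu,t)\ge C^{-1}W_2^2(\mu,\nu)-C$ whenever $U_T\ge K W_2^2(\cdot,\nu)$ (adapted to the additive constant and to $F$) verifies hypothesis (ii) when $U$ plays the role of the super-solution.
\item When $V$ plays the super-solution, the same lower bound must hold for $V$. I would obtain it by comparing $V$ from below with an explicit sub-solution, or simply by restricting the uniqueness class to solutions bounded below by such a quadratic.
\item In the uniformly continuous case, hypothesis (i) requires uniform continuity of $U$ on $\Pt\times[T-h,T]$. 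The estimate $|U(\mu,t)-U(\mu',t)|\le\omega(W_2(\mu,\mu'))$ from the terminal-continuity proposition is then global, and the time modulus near $T$ follows similarly from $W_2(S_{s-t}\mu,\mu)\le\sqrt{s-t}$ and the uniform modulus of $U_T$.
\end{itemize}

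\textbf{Main obstacle.} The delicate point is the uniqueness class. Theorem~\ref{thm:comp} needs the growth or uniform-continuity hypotheses on both functions, so I would state that uniqueness holds among continuous solutions bounded below that satisfy the same structural condition (uniform continuity near $T$, or the quadratic lower bound) as $U$. Checking that $U$ itself meets these conditions is the part that actually requires the earlier estimates.
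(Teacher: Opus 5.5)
Your proposal is the argument the paper intends; the paper states this theorem without proof as a direct consequence of earlier results. Existence comes from Proposition~\ref{super}, the truncated sub-solution proposition and the continuity results, and uniqueness from Theorem~\ref{thm:comp} applied in both directions. Your caveats reflect looseness in the statement, not gaps in your argument: Theorem~\ref{thm:comp} needs $U_T$ bounded below, and uniqueness only holds among solutions bounded below that also satisfy hypothesis (i) or (ii) (the paper imposes exactly such a quadratic lower bound on $V$ in Theorem~\ref{takis100000}). Of your two options for $V$, only restricting the class works, since comparing $V$ from below with a sub-solution would itself need Theorem~\ref{thm:comp} with $V$ as super-solution, whose hypotheses are what you are trying to establish.
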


%
%We establish here a uniqueness result for the viscosity solutions to the target problem 
%\beq\label{takis5000000}
%\begin{cases}
%-\pt_t  V +\dfrac{1}{2}\dint_\R |D_m V|^2 dm(y) - \dint_\R D_m V H[m] dm(y)=F \ \ \text{in} \ \ \Pt\times [0.T)\\
%V(\cdot, T)=\mathbb 1_\nu,
%\end{cases}
%\eeq
%where 
%the terminal condition is $\mathbb 1_\nu$, which we recall that equals to $\oo$ away from $\nu$ and is equal to $0$ at $\nu$. 
%\vs

In the case of the target problem, the proof is more involved, since it uses the comparison principle. %that we  detail next.
\begin{theorem}\label{takis100000}
Assume $F$ is locally uniformly continuous. If $V$ is a locally uniformly continuous in $\Pt \times \wtilde \Pt \times [0,T)$ viscosity solution to \eqref{takis100} which, in addition to being bounded from below by some $C^{-1}W_2^2(\cdot,\nu) - C$,  satisfies
\beq\label{takis5000.1}
\bal
&(i)~V(\mu,\nu,t)\geq -C(T-t) \ \ \text{for some}\ \ C=C(\|F\|_\oo)\geq 0\ \ \text{and \ \ all} \ \ (\mu,\nu, t) \in \Pt\times \wtilde \Pt \times [0,T) ,\\
&(ii)~ \underset{t\to T} \lim  V(\nu, \nu, t)=0 \ \text{and}\\
&(iii)~ \underset{t\to T} \lim  V(\mu, \nu, t)=\oo \ \text{locally uniformly in} \ \{\mu \in \Pt: W_2(\mu, \nu)>\rho \} \ \text{for each} \ \rho>0, 
\eal
\eeq
then $V$ is  the value function $\widehat U$ of the target problem for the controlled Dyson equation.
\end{theorem}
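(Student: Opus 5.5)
We will show $V\le \widehat U$ and $V\ge \widehat U$ by reducing to the comparison principle of Theorem~\ref{thm:comp}, which is available only for continuous terminal data. The singular terminal condition $\mathbb 1_\nu$ will be handled by working on $\Pt\times[0,T-h]$ for small $h>0$, with the values at time $T-h$ as continuous terminal data. We then compare the two functions at time $T-h$ using (i)--(iii) together with the matching properties of $\widehat U$ from Proposition~\ref{takis252.1}. Throughout we use that $\widehat U(\cdot,\nu,\cdot)$ is a viscosity solution (the super- and sub-solution propositions of Section~\ref{sec:valuevisc}). We also use that it is locally uniformly continuous on $\Pt\times[0,T-h]$ (Theorem~\ref{thm:estlip}, Proposition~\ref{prop:regularitytime}), and that it satisfies the quadratic lower bound needed for case (ii) of Theorem~\ref{thm:comp}.

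\emph{Step 1 ($V\le\widehat U$).} Fix $h>0$ and a small $\tau>0$, and set $W^\tau(\mu,t)=\widehat U(\mu,\nu,t-\tau)$ on $[\tau,T]$. By the DPP, $W^\tau$ is a super-solution and is continuous up to $t=T$, with $W^\tau(\cdot,T)=\widehat U(\cdot,\nu,T-\tau)$. It is cleaner, however, to compare directly through the control representation, since $V$ is only known up to $T$ as a limit. The plan is to use the DPP in the form
\[
\widehat U(\mu,\nu,t)=\inf\Big\{\int_t^{T-h}\Big[\int_\R\tfrac12\alpha^2dm_s+F[m_s]\Big]ds+\widehat U(m_{T-h},\nu,T-h)\Big\}.
\]
This says that $\widehat U$ is the value function of the regular problem on $[0,T-h]$ with terminal cost $\widehat U(\cdot,\nu,T-h)$. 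Likewise $V$ is a viscosity solution on $[0,T-h]$ with terminal datum $V(\cdot,T-h)$. Both data are locally uniformly continuous, and $V$ has the quadratic lower bound, so Theorem~\ref{thm:comp}(ii) applies on $[0,T-h]$. It yields, in both directions, that $V\le \widehat U+\sup_\mu (V-\widehat U)^+(\mu,T-h)$, where the supremum must be controlled. Since the payoff is additive, an approximately optimal path for $\widehat U(\mu,\nu,t)$ gives
\[
V(\mu,t)\le \widehat U(\mu,\nu,t)+\big[V(m_{T-h},T-h)-\widehat U(m_{T-h},\nu,T-h)\big]+o(1).
\]
It therefore suffices to show that $\limsup_{h\to0}$ of the bracket is $\le 0$ along such near-optimal paths. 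Along them $W_2(m_{T-h},\nu)\to0$ by Proposition~\ref{takis505} and the energy bound, and $\widehat U(m_{T-h},\nu,T-h)\ge -Ch$ by Proposition~\ref{takis252.1}(i). Hence we need $\limsup V(m_{T-h},T-h)\le 0$ for $m_{T-h}\to\nu$. This is a continuity statement for $V$ at $(\nu,T)$, and it is exactly where (ii) and local uniform continuity must be combined.

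\emph{Step 2 ($V\ge\widehat U$).} Symmetrically, we apply the comparison on $[0,T-h]$ with $V$ as super-solution, and use the representation of the value function with terminal cost $V(\cdot,T-h)$. By Theorem~\ref{thm:charles} this value function equals $V$ on $[0,T-h]$, so $V(\mu,t)$ is an infimum over controls of running cost plus $V(m_{T-h},T-h)$. Take a near-optimal control. If $W_2(m_{T-h},\nu)\ge\rho$, then (iii) makes the cost blow up as $h\to0$, so for small $h$ near-optimal controls satisfy $W_2(m_{T-h},\nu)<\rho$, and by (i) the terminal term is $\ge -Ch$. We then concatenate with a cheap bridge from $m_{T-h}$ to $\nu$ on $[T-h,T]$; its cost is controlled by \eqref{takis410}, namely $\lesssim W_2^2/h+|E(\nu)|$-type terms plus $2[E(S_{h/2}\nu)-E(\nu)]$. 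This produces an admissible pair for $\widehat U(\mu,\nu,t)$ and shows $\widehat U\le V+o(1)$.

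\emph{Main obstacle.} The delicate point is the mismatch of scales in Step~2. The bridge cost from Proposition~\ref{prop:bounds} is of order $W_2^2(m_{T-h},\nu)/h$, so we need a quantitative statement: $W_2^2(m_{T-h},\nu)=o(h)$ whenever $V(m_{T-h},T-h)$ stays bounded. Hypothesis (iii) is only qualitative. We would first try to upgrade (iii) by a comparison argument on $[T-h,T)$ against explicit barriers of the form $c\,W_2^2(\mu,\nu)/(T-t)-C(T-t)$, using the smooth approximations $\Phi_\gamma$ of Theorem~\ref{thm:approxW2} as test functions. Failing that, we would use the lower-semicontinuity of the entropy (Lemma~\ref{T1}) and Proposition~\ref{takis2004} to extract limits of the near-optimal paths, and argue by contradiction as in the proof of Proposition~\ref{takis252.1}(iii).
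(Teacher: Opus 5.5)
Both directions have a genuine gap. In each case the missing idea is the one the paper's proof uses.

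\emph{The inequality $V\le\widehat U$.} You reduce it to showing $\limsup_{h\to0}V(m_{T-h},\nu,T-h)\le 0$ when $m_{T-h}\to\nu$. You treat this as a continuity property of $V$ at $(\nu,T)$ that follows from (ii) and local uniform continuity. The hypotheses do not provide it. Continuity is assumed only on $[0,T)$, and the moduli necessarily degenerate as $t\to T$: a modulus valid up to $T$ on a bounded set containing $\nu$ and some $\mu$ with $W_2(\mu,\nu)>\rho$ would make (ii) and (iii) incompatible.

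In fact, as a continuity statement the claim is false for the very function you want to identify. Take $F\equiv0$ and let $\mu_h$ be the translate of $\nu$ by $h^{1/4}$. The alternative formulation of Section~\ref{sec:bound} together with Proposition~\ref{takis505} gives $\widehat U(\mu_h,\nu,T-h)\ge W_2^2(\mu_h,\nu)/(2h)+E(\mu_h)-E(\nu)=\tfrac12 h^{-1/2}\to\infty$, although $\mu_h\to\nu$. The bound could only hold along your particular finite-energy path, and nothing in the proposal shows that.

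The paper avoids the issue. In the representation of $V$ on $[t,T-h]$ furnished by Theorem~\ref{thm:charles}, it restricts to pairs with $m_{T-h}=\nu$ exactly. This gives $V(\mu,\nu,t)\le\widehat U(\mu,\nu,t;T-h)+V(\nu,\nu,T-h)$, so hypothesis (ii) is only ever used at $\mu=\nu$. One then lets $h\to0$ in the shortened-horizon target value, which by time homogeneity equals $\widehat U(\mu,\nu,t+h)$ and converges by Proposition~\ref{prop:regularitytime}.

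\emph{The inequality $V\ge\widehat U$.} The bridge step does not close, for two reasons. First, you need $W_2^2(m_{T-h},\nu)=o(h)$, which (iii) does not give; a quadratic barrier would only give $O(h)$. Second, the estimate you import from Proposition~\ref{prop:bounds} does not tend to zero with $h$. The bound \eqref{takis410} contains $|E(\nu)|$, and in its proof the contribution $\frac{2c_0}{3}\iint\rho_s^3\,dx\,ds$ is only shown to be $O(1)$, because $\|\rho_s\|_3^3$ is controlled by the inverse of a regularization time that is at most of order $h$. So concatenation gives at best $\widehat U\le V+O(1)$.

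No bridge is needed. Take $\varepsilon$-optimal pairs $(\alpha^h,m^h)$ for the representation of $V$ on $[t,T-h]$.
\begin{itemize}
\item By (i), their energies are bounded uniformly in $h$, and by Proposition~\ref{prop:comp} so are their second moments.
\item Hence (iii) forces $W_2(m^h_{T-h},\nu)\to0$.
\item Propositions~\ref{takis1250} and~\ref{takis2004} give a subsequence converging uniformly to some $(\ol\alpha,\ol m)$.
\item The uniform modulus in time turns $W_2(m^h_{T-h},\nu)\to0$ into $\ol m_T=\nu$, so $(\ol\alpha,\ol m)\in\mathcal A(\mu,\nu,t)$.
\end{itemize}
Lower semicontinuity of the kinetic energy $\iint\beta^2/m$ under this convergence then does the rest, together with (i) for the terminal term. (It is this lower semicontinuity that is needed, not that of the free entropy, as your fallback suggests.) The result is $\widehat U(\mu,\nu,t)\le V(\mu,\nu,t)+\varepsilon$ directly, with no contradiction argument.
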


\begin{proof} For simplicity in the proof we take $F\equiv 0$ in which case  the first condition in \eqref{takis5000.1} becomes $V(\mu,\nu,t)\geq 0$ for all $(\mu,\nu, t) \in \Pt\times \wtilde \Pt \times [0,T)$.
\vs

Assume that $V$ is a solution  satisfying the hypotheses. Then, for any $h\in (0,T)$, the uniqueness of viscosity solutions with regular terminal data, which is an immediate consequence of the comparison principle, yields, for any $t\in [0,T-h]$, that 
\beq\label{takis10001}
V(\mu,\nu, t)=\underset{(\alpha, m) \in \mcl A(\mu, t; T-h)}\inf \Big[ \dfrac{1}{2} \dint_t^{T-h} \dint_\R \alpha^2_s(x) dm_s(x) + V(m_{T-h},\nu, T-h)\Big],
\eeq
where $\mcl A(\mu, t; T-h)$, and  $\mcl A(\mu,\nu,t; T-h)$ are similar with the sets $\mcl A (\mu,t)$ and $\mcl A(\mu,\nu,t)$, the only difference being the final time $T-h$ instead of $T$. 
\vs
In other words, we are using Theorem \ref{thm:charles} to say that any viscosity solution to \eqref{takis100} satisfying appropriate continuity and growth condition must satisfy the dynamic programming principle as well, even if it is not defined as the value function of the target problem.
\vs

Since clearly $\mcl A(\mu,\nu,t; T-h)\subset \mcl A(\mu, t; T-h)$, it follows from \eqref{takis10001} that 
\beq\label{takis10002}
V(\mu,\nu,t)\leq \what U(\mu,\nu,t; T-h) + V(\nu,\nu,T-h),
\eeq
where $\what U(\mu,\nu,t; T-h)$ is the value of the target control problem in $[0,T-h]$.
\vs

The compactness of the optimal trajectories clearly imply that
\[\underset{h\to 0} \lim \; \what U(\mu,\nu,t; T-h)=\what U(\mu,\nu,t),\]
while the second  condition in \eqref{takis5000.1} yields that 
\[\underset{h\to 0} \lim \; V(\nu,\nu,t-h)=0.\]

\vs

Combining the last two facts with \eqref{takis10002} yields
\beq\label{takis10002}
V(\mu,\nu,t)\leq \what U(\mu,\nu,t).
\eeq
\vs

For the other inequality needed to prove the claim, we fix $\ep>0$ and choose an $\ep-$optimal pair $(\alpha, m) \in \mcl A(\mu, t; T-h)$ for the equality in \eqref{takis10001}. Note that to keep the notation simple, we do not write the dependence of the $\ep-$optimal pair on $\ep$ and $h$. 
\vs
It follows that 
\beq\label{takis10003}
V(\mu,\nu,t) +\ep \geq  \dfrac{1}{2} \dint_t^{T-h} \dint_\R \alpha^2_s (x) dm_s(x) + V(m_{T-h},\nu, T-h).
\eeq

\vs

Since the left hand side of \eqref{takis10003} is bounded,  the third condition in \eqref{takis5000.1} implies that 
\beq\label{takis10004}
\underset{h\to 0} \lim W_2(m_{T-h},\nu)=0.
\eeq
In view of the compactness of the optimal trajectories, it follows that, as $h\to 0$, the $\ep-$optimal pairs  $(\alpha, m)$ converge to a pair $(\ol \alpha, m)\in \mcl A(\mu,\nu,t)$ which together with the first condition in \eqref{takis10001} give that 
\[V(\mu,\nu,t) +\ep \geq \what U(\mu,\nu,t) + \underset{h\to 0} \liminf \; V(m_{T-h},\nu, T-h) \geq \what U(\mu,\nu,t), \]
\vs
and, after $\ep\to 0$, the reverse inequality of \eqref{takis10002},  and, hence, the claim.

\end{proof}

\subsection{The stability of viscosity solutions and an approximation of the target problem with continuous terminal data} 
Using a penalization argument, we construct here   a family of solutions to \eqref{takis100} which increase to the value function $\what U(\cdot,\cdot;\nu)$ of the target problem.
\vs

For each  $\eps > 0$,  we define $U_\eps:\Pt\times[0,T]\to \R$ by
\beq\label{takis460}
U_\eps(\mu, t; \nu) = \inf_{(\alpha,m) \in \mathcal{A}(\mu,t)}\left\{ \dint_t^T \Big[\dfrac12\dint_\R (\alpha_s(x))^2dm_s(x) + F[m_s] \Big] ds + \dfrac{1}{2\eps}W_2^2(m_T,\nu)\right\},
\eeq
which, in view of discussion in  the previous subsection, is the unique continuous solution to \eqref{takis100} with terminal datum $U_T=\dfrac{1}{2\ep} W_2^2(\cdot,\nu)$ which  satisfies  an  appropriate bound from below.

\begin{proposition}\label{utou}
Assume that $\nu \in \wtilde \Pt$. Then, as $\eps \to 0$, the family $(U_\eps)_{\eps > 0}$, defined by \eqref{takis460}, converges to  the value function of the target problem $\what U(\cdot,\nu,t)$.
\end{proposition}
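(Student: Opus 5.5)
The plan is a standard penalization argument, organized as monotonicity, an upper bound, and a lower bound obtained by compactness. First, for each fixed $(\mu,t)$ the map $\eps\mapsto U_\eps(\mu,t;\nu)$ is nonincreasing as $\eps$ decreases, i.e.\ increasing toward its limit. This holds because the terminal penalty $\frac{1}{2\eps}W_2^2(m_T,\nu)$ increases pointwise as $\eps\downarrow 0$. Second, every pair in $\mcl A(\mu,\nu,t)$ belongs to $\mcl A(\mu,t)$ and has zero penalty, so
\[
U_\eps(\mu,t;\nu)\le \what U(\mu,\nu,t) \quad \text{for every } \eps>0 .
\]
Hence $\lim_{\eps\to0}U_\eps=:U_0\le \what U$ exists pointwise, and only the reverse inequality $U_0\ge \what U$ remains. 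We may assume $U_0(\mu,t)<\infty$, since otherwise there is nothing to prove.

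For the reverse inequality, fix $(\mu,t)$ with $t<T$ and pick, for each $\eps$, an $\eps$-optimal pair $(\al_\eps,m_\eps)\in\mcl A(\mu,t)$ for \eqref{takis460}. Then
\[
\frac12\int_t^T\!\!\int_\R \al_\eps^2\,dm_{\eps,s}\,ds+\frac{1}{2\eps}W_2^2(m_{\eps,T},\nu)\le \what U(\mu,\nu,t)+(T-t)\|F\|_\oo+\eps .
\]
By Proposition \ref{prop:bounds}, the right-hand side is finite because $\nu\in\wtilde\Pt$. This yields two consequences: the energies $L(\al_\eps,m_\eps)$ are uniformly bounded, and $W_2(m_{\eps,T},\nu)\le C\sqrt{\eps}\to 0$.

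Proposition \ref{takis2004} then provides a subsequence and a limit pair $(\ol\al,\ol m)\in\mcl A(\mu,t)$ with $\sup_s W_2(m_{\eps,s},\ol m_s)\to0$. In particular $\ol m_T=\nu$, so $(\ol\al,\ol m)\in\mcl A(\mu,\nu,t)$. To pass to the limit in the cost, I need lower semicontinuity of the energy along this convergence. I would obtain it in the $(\beta,m)$ formulation of \eqref{takis31.1}: the functional $(\beta,m)\mapsto\int\!\int \beta^2/m$ is jointly convex and weakly lower semicontinuous, and the proof of Proposition \ref{takis2004} shows $\beta_\eps=\al_\eps m_\eps\rightharpoonup \ol\beta=\ol\al\,\ol m$. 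The running-cost term $\int_t^T F[m_{\eps,s}]\,ds$ converges by continuity of $F$ on the $W_2$-compact set swept out by the trajectories, together with uniform convergence. Dropping the nonnegative penalty, these facts give
\[
U_0(\mu,t)\ge \liminf_{\eps\to0}\Big(\frac12\int_t^T\!\!\int_\R\al_\eps^2\,dm_{\eps,s}\,ds+\int_t^T F[m_{\eps,s}]\,ds\Big)\ge \what P(\ol\al,\ol m,t)\ge \what U(\mu,\nu,t).
\]
This proves pointwise convergence.

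The main obstacle is the identification of the limit pair together with lower semicontinuity of the energy, in particular the nonlinear term $H[m_\eps]m_\eps$. Proposition \ref{takis2004} settles this only on intervals $(t',T)$ with $t'>t$, because the $L^3$ bounds of Proposition \ref{prop:comp} degenerate as $s\to t$. I would handle it by applying the argument on $[t',T]$, using the time continuity in $W_2$ to glue the pieces, and letting $t'\downarrow t$; the energy is additive in time, so its lower semicontinuity passes to the limit. Finally, if locally uniform convergence is wanted rather than pointwise convergence, I would combine monotonicity with the continuity of $\what U(\cdot,\nu,\cdot)$ on $\Pt\times[0,T-h]$ from Theorem \ref{thm:estlip} and Proposition \ref{prop:regularitytime}, and apply a Dini-type argument on sets that are compact in $\Pt$.
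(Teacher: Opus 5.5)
Your proposal is correct and is essentially the paper's argument. The paper uses the monotonicity $U_{\eps'}\le U_\eps\le\what U$ for $\eps<\eps'$, then gets the reverse inequality from compactness of near-optimal paths (Propositions~\ref{prop:comp}, \ref{takis1250}, \ref{takis2004}); you supply the details it calls immediate, namely the uniform energy bound, $W_2(m_{\eps,T},\nu)=O(\sqrt{\eps})$, and lower semicontinuity of the energy in the $(\beta,m)$ variables, and your justification of monotonicity via the increasing penalty is right, though the first sentence should say $U_\eps$ is non\emph{decreasing} as $\eps\downarrow 0$.
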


\begin{proof}
For any $\eps < \eps'$, $U_\eps' (\cdot,\nu,\cdot)\leq U_\eps (\cdot,\nu,\cdot)\leq \what U(\cdot, \nu,\cdot)$. It is then immediate from the compactness of the optimal paths, Proposition~\ref{prop:comp} and Proposition~\ref{takis1250} that, for each  $(\mu,t)\in \Pt\times [0,T)$, 
\[\sup_{\eps > 0} \;U_\eps(\mu, t;\nu) =\underset{\ep\to 0} \lim \; U_\ep(\mu, t; \nu)= \what U(\mu,t ; \nu).\]
\end{proof}

%We note that  the uniform $W_2-$continuity estimates that follow from Theorem \ref{thm:estlip} yield uniform continuity 
%We note that, because of the uniform $W_2-$continuity estimates that follow from Theorem \ref{thm:estlip}, we could have imagined that, using standard techniques from the theory of  viscosity solutions, we should be able to obtain a local convergence uniform through passing to the limit in the PDE itself. This is, however, 
%%We believe that it is important to insist upon the fact that the previous is 
%not clear at all, since uniform modulus of continuity yields uniform convergence on compact sets, which are quite different than bounded sets, and that the latter are the one on which information is required if we want to use classical viscosity solution techniques.

%\section*{Acknowledgments}
%The first author acknowledge a partial support from the chair FDD (Institut Louis Bachelier), the Lagrange Mathematics and Computing Research Center, and the ERC under the project PaDiESeM. \color{red} To do\color{black}
%
 
\bibliographystyle{plainnat}
\bibliography{bibDyson}

\begin{thebibliography}{35}
\providecommand{\natexlab}[1]{#1}
\providecommand{\url}[1]{\texttt{#1}}
\expandafter\ifx\csname urlstyle\endcsname\relax
  \providecommand{\doi}[1]{doi: #1}\else
  \providecommand{\doi}{doi: \begingroup \urlstyle{rm}\Url}\fi

\bibitem[Alfonsi and Jourdain(2020)]{AJ}
Aur{\'e}lien Alfonsi and Benjamin Jourdain.
\newblock Squared quadratic {Wasserstein distance: optimal couplings and L}ions
  differentiability.
\newblock \emph{ESAIM: Probability and Statistics}, 24:\penalty0 703--717,
  2020.

\bibitem[Alvarez(1994)]{alvarez}
Olivier Alvarez.
\newblock \emph{Contributions {\`a} la th{\'e}orie des solutions de
  viscosit{\'e} des {\'e}quations de Hamilton-Jacobi-Bellman et applications
  {\`a} la finance}.
\newblock PhD thesis, Paris 9, 1994.

\bibitem[Ambrosio et~al.(2005)Ambrosio, Gigli, and Savar{\'e}]{ags}
Luigi Ambrosio, Nicola Gigli, and Giuseppe Savar{\'e}.
\newblock \emph{Gradient flows in metric spaces and in the space of probability
  measures}.
\newblock Springer, 2005.

\bibitem[Anderson et~al.(2010)Anderson, Guionnet, and Zeitouni]{AGZ}
Greg~W Anderson, Alice Guionnet, and Ofer Zeitouni.
\newblock \emph{An introduction to random matrices}.
\newblock Number 118. Cambridge university press, 2010.

\bibitem[Arous and Guionnet(1997)]{BG}
G\'{e}rard~Ben Arous and Alice Guionnet.
\newblock Large deviations for {Wigner's law and V}oiculescu's non-commutative
  entropy.
\newblock \emph{Probability theory and related fields}, 108\penalty0
  (4):\penalty0 517--542, 1997.

\bibitem[Bayraktar et~al.(2025)Bayraktar, Ekren, and Zhang]{BEZ}
Erhan Bayraktar, Ibrahim Ekren, and Xin Zhang.
\newblock Comparison of viscosity solutions for a class of second-order {PDE}s
  on the {W}asserstein space.
\newblock \emph{Communications in Partial Differential Equations}, 50\penalty0
  (4):\penalty0 570--613, 2025.

\bibitem[Ben~Arous and Zeitouni(1998)]{BZ}
G{\'e}rard Ben~Arous and Ofer Zeitouni.
\newblock Large deviations from the circular law.
\newblock \emph{ESAIM: Probability and Statistics}, 2:\penalty0 123--134, 1998.

\bibitem[Benamou and Brenier(2000)]{BB}
Jean-David Benamou and Yann Brenier.
\newblock A computational fluid mechanics solution to the {Monge-Kantorovich}
  mass transfer problem.
\newblock \emph{Numerische Mathematik}, 84\penalty0 (3):\penalty0 375--393,
  2000.

\bibitem[Bertucci(2024)]{B}
Charles Bertucci.
\newblock {Stochastic optimal transport and Hamilton--Jacobi--Bellman equations
  on the set of probability measures}.
\newblock \emph{Annales de l'Institut Henri Poincar{\'e} C}, 42\penalty0
  (6):\penalty0 1543--1600, 2024.

\bibitem[Bertucci(2025)]{tangent}
Charles Bertucci.
\newblock The tangent space to the {W}asserstein space: parallel transport and
  other applications.
\newblock \emph{arXiv preprint arXiv:2512.09763}, 2025.

\bibitem[Bertucci(2026)]{Bbook}
Charles Bertucci.
\newblock \emph{Analysis on spaces of measures: {Mean field game master
  equations and first order mean field Hamilton-Jacobi equations}}.
\newblock Forthcoming, 2026.

\bibitem[Bertucci and Lions(2024)]{BL}
Charles Bertucci and Pierre~Louis Lions.
\newblock {An approximation of the squared Wasserstein distance and an
  application to Hamilton-Jacobi equations}.
\newblock \emph{arXiv preprint arXiv:2409.11793}, 2024.

\bibitem[Bertucci and Lions(2026)]{BL2}
Charles Bertucci and Pierre-Louis Lions.
\newblock {Non-linear Stegall's lemma and general Hamilton-Jacobi-Bellman
  equations on Wasserstein spaces}, 2026.

\bibitem[Bertucci et~al.(2022)Bertucci, Debbah, Lasry, and Lions]{BDLL}
Charles Bertucci, M{\'e}rouane Debbah, Jean-Michel Lasry, and Pierre-Louis
  Lions.
\newblock A spectral dominance approach to large random matrices.
\newblock \emph{Journal de Math{\'e}matiques Pures et Appliqu{\'e}es},
  164:\penalty0 27--56, 2022.

\bibitem[Bertucci et~al.(2024)Bertucci, Lasry, and Lions]{BLL}
Charles Bertucci, Jean-Michel Lasry, and Pierre-Louis Lions.
\newblock A spectral dominance approach to large random matrices: part ii.
\newblock \emph{Journal de Math{\'e}matiques Pures et Appliqu{\'e}es},
  192:\penalty0 103630, 2024.

\bibitem[Bertucci et~al.(forthcoming)Bertucci, Lions, and Souganidis]{BLS}
Charles Bertucci, Pierre-Louis Lions, and Panagiotis~E Souganidis.
\newblock {Large deviations of the Dyson Brownian motion}.
\newblock forthcoming.

\bibitem[Biane and Voiculescu(2001)]{BV}
Philippe Biane and Dan Voiculescu.
\newblock {A free probability analogue of the Wasserstein metric on the
  trace-state space}.
\newblock \emph{Geometric \& Functional Analysis GAFA}, 11\penalty0
  (6):\penalty0 1125--1138, 2001.

\bibitem[Brezis(1973)]{brezis}
Haim Brezis.
\newblock \emph{Op\'erateurs maximaux monotones et semi-groupes de contractions
  dans les espaces de {H}ilbert}, volume~5.
\newblock Elsevier, 1973.

\bibitem[Cardaliaguet and Souganidis(2023)]{CS}
Pierre Cardaliaguet and Panagiotis~E Souganidis.
\newblock Regularity of the value function and quantitative propagation of
  chaos for mean field control problems.
\newblock \emph{Nonlinear Differential Equations and Applications NoDEA},
  30\penalty0 (2):\penalty0 25, 2023.

\bibitem[Cecchin and Delarue(2025)]{CD}
Alekos Cecchin and Fran{\c{c}}ois Delarue.
\newblock \emph{Weak solutions to the master equation of potential mean field
  games}, volume 315.
\newblock American Mathematical Society, 2025.

\bibitem[C{\'e}pa and L{\'e}pingle(1997)]{CL}
Emmanuel C{\'e}pa and Dominique L{\'e}pingle.
\newblock Diffusing particles with electrostatic repulsion.
\newblock \emph{Probability theory and related fields}, 107\penalty0
  (4):\penalty0 429--449, 1997.

\bibitem[Chan(1992)]{chan}
Terence Chan.
\newblock The {W}igner semi-circle law and eigenvalues of matrix-valued
  diffusions.
\newblock \emph{Probability theory and related fields}, 93\penalty0
  (2):\penalty0 249--272, 1992.

\bibitem[Cosso et~al.(2024)Cosso, Gozzi, Kharroubi, Pham, and
  Rosestolato]{cosso}
Andrea Cosso, Fausto Gozzi, Idris Kharroubi, Huy{\^e}n Pham, and Mauro
  Rosestolato.
\newblock {Master Bellman equation in the Wasserstein space: Uniqueness of
  viscosity solutions}.
\newblock \emph{Transactions of the American Mathematical Society},
  377\penalty0 (01):\penalty0 31--83, 2024.

\bibitem[Daudin and Seeger(2024)]{DS}
Samuel Daudin and Benjamin Seeger.
\newblock {A comparison principle for semilinear Hamilton--Jacobi--Bellman
  equations in the Wasserstein space}.
\newblock \emph{Calculus of Variations and Partial Differential Equations},
  63\penalty0 (4):\penalty0 106, 2024.

\bibitem[Daudin et~al.(2025)Daudin, Jackson, and Seeger]{DJS}
Samuel Daudin, Joe Jackson, and Benjamin Seeger.
\newblock {Well-posedness of Hamilton-Jacobi equations in the Wasserstein
  space: non-convex Hamiltonians and common noise}.
\newblock \emph{Communications in Partial Differential Equations}, 50\penalty0
  (1-2):\penalty0 1--52, 2025.

\bibitem[Dembo et~al.(2003)Dembo, Guionnet, and Zeitouni]{DGZ}
Amir Dembo, A~Guionnet, and Ofer Zeitouni.
\newblock Moderate deviations for the spectral measure of certain random
  matrices.
\newblock In \emph{Annales de l'Institut Henri Poincare (B) Probability and
  Statistics}, volume~39, pages 1013--1042. Elsevier, 2003.

\bibitem[Gangbo and Tudorascu(2019)]{GT}
Wilfrid Gangbo and Adrian Tudorascu.
\newblock On differentiability in the {Wasserstein space} and well-posedness
  for {Hamilton-Jacobi} equations.
\newblock \emph{Journal de Math{\'e}matiques Pures et Appliqu{\'e}es},
  125:\penalty0 119--174, 2019.

\bibitem[Guionnet and Zeitouni(2000)]{GZ1}
Alice Guionnet and Ofer Zeitouni.
\newblock Concentration of the spectral measure for large matrices.
\newblock \emph{Electronic Communications in Probability [electronic only]},
  5:\penalty0 119--136, 2000.

\bibitem[Guionnet and Zeitouni(2002)]{GZ2}
Alice Guionnet and Ofer Zeitouni.
\newblock Large deviations asymptotics for spherical integrals.
\newblock \emph{Journal of functional analysis}, 188\penalty0 (2):\penalty0
  461--515, 2002.

\bibitem[Ishii(1984)]{Ishii}
Hitoshi Ishii.
\newblock Uniqueness of unbounded viscosity solution of {Hamilton-Jacobi}
  equations.
\newblock \emph{Indiana University Mathematics Journal}, 33\penalty0
  (5):\penalty0 721--748, 1984.

\bibitem[Lions(2009-2010)]{Lionscours}
Pierre-Louis Lions.
\newblock Cours au college de france.
\newblock \emph{Available at www. college-de-france. fr}, 2009-2010.

\bibitem[Rogers and Shi(1993)]{RC}
Leonard~CG Rogers and Zhan Shi.
\newblock Interacting {B}rownian particles and the {W}igner law.
\newblock \emph{Probability theory and related fields}, 95\penalty0
  (4):\penalty0 555--570, 1993.

\bibitem[Santambrogio(2015)]{Santambrogio}
Filippo Santambrogio.
\newblock Optimal transport for applied mathematicians.
\newblock \emph{Birk{\"a}user, NY}, 55\penalty0 (58-63):\penalty0 94, 2015.

\bibitem[Soner and Yan(2024)]{SY}
H~Mete Soner and Qinxin Yan.
\newblock {Viscosity solutions for McKean--Vlasov control on a torus}.
\newblock \emph{SIAM Journal on Control and Optimization}, 62\penalty0
  (2):\penalty0 903--923, 2024.

\bibitem[Stegall(1978)]{stegall}
Charles Stegall.
\newblock Optimization of functions on certain subsets of {B}anach spaces.
\newblock \emph{Mathematische Annalen}, 236\penalty0 (2):\penalty0 171--176,
  1978.

\end{thebibliography}

\end{document}